\documentclass[11pt]{amsart}
\usepackage{amssymb, adjustbox, enumerate, amsbsy, stmaryrd}
\usepackage{amsthm}
\usepackage{geometry}
\usepackage[all]{xy}
\usepackage{ulem}
\usepackage{todonotes}

\usepackage{amsfonts, amssymb, amscd}
\numberwithin{equation}{section}

\usepackage{bm}
\usepackage{verbatim}
\usepackage{mathrsfs}
\usepackage{graphicx}
\usepackage{tikz-cd}
\usepackage{subcaption}
\usepackage{listings}
\usepackage{subfiles}
\usepackage[toc,page]{appendix}
\usepackage{mathtools}
\usepackage{comment}
\usepackage{enumerate}
\usepackage{enumitem}
\usepackage[all]{xy}

\usepackage{graphicx}
\graphicspath{{images/}}

\usepackage{appendix}
\usepackage{hyperref}
\hypersetup{
    colorlinks=true,
    citecolor=red,
    linkcolor=blue,
    filecolor=magenta,      
    urlcolor=red,
}
\def\ideal#1.{I_{#1}}
\def\ring#1.{\mathcal {O}_{#1}}
\def\fring#1.{\hat{\mathcal {O}}_{#1}}
\def\proj#1.{\mathbb P(#1)}
\def\pr #1.{\mathbb P^{#1}}
\def\af #1.{\mathbb A^{#1}}
\def\Hz #1.{\mathbb F_{#1}}
\def\Hbz #1.{\overline{\mathbb F}_{#1}}
\def\pic#1.{\operatorname {Pic}\,(#1)}
\def\pico#1.{\operatorname{Pic}^0(#1)}
\def\picg#1.{\operatorname {Pic}^G(#1)}
\def\ner#1.{NS (#1)}
\def\rdown#1.{\llcorner#1\lrcorner}
\def\rup#1.{\ulcorner#1\urcorner}
\def\cone#1.{\operatorname {NE}(#1)}
\def\Ker{\operatorname {Ker}}
\def\ccone#1.{\overline{\operatorname {NE}}(#1)}
\def\coef#1.{\frac{(#1-1)}{#1}}
\def\vit#1.{D_{\langle #1 \rangle}}
\def\mm#1.{\overline {M}_{0,#1}}
\def\H1#1.{H^1(#1,{\ring #1.})}
\def\ac#1.{\overline {\mathbb F}_{#1}}

\def\adj#1.{\frac {#1-1}{#1}}
\def\spn#1.{\overline{#1}}
\def\ses#1.#2.#3.{0\to #1\to #2\to #3 \to 0}
\def\pek#1.#2.{\Cal P^{#1}(#2)}
\def\plk#1.#2.{\Cal P^{\leq #1}(#2)}
\def\ev#1.{\operatorname{ev_{#1}}}
\def\bminv#1.{(\nu_1,s_1;\nu_2,s_2;\dots ;\nu_{#1},s_{#1};\nu_{r+1})}
\def\zinv#1.{(\nu_1,s_1;\nu_2,s_2;\dots ;\nu_{#1},s_{#1};0)}
\def\iinv#1.{(\nu_1,s_1;\nu_2,s_2;\dots ;\nu_{#1},s_{#1};\infty)}
\def\map#1.#2.{#1 \longrightarrow #2}
\def\rmap#1.#2.{#1 \dasharrow #2}
\def\emb#1.#2.{#1 \hookrightarrow #2}

\def\Proj{\operatorname{Proj}}
\def\Supp{\operatorname{Supp}}

\def\dim{\operatorname{dim}}

\def\Pic{\operatorname{Pic}}

\def\Div{\operatorname{Div}}

\def\mult{\operatorname{mult}}

\def\bbeta{\boldsymbol{\beta}}
\def\ggamma{\boldsymbol{\gamma}}

\def\ddelta{\boldsymbol{\delta}}
\def\DDelta{\boldsymbol{\Delta}}

\def\C{\mathbb C}

\def\N{\mathbb N}

\def\Z{\mathbb Z}

\def\e{\Cal E}

\def\e1{E_1}
\def\e2{E_2}

\def\OO{\mathcal O}

\newcommand{\po}{\ar@{}[dr]|{\text{\pigpenfont R}}}
\newcommand{\pb}{\ar@{}[dr]|{\text{\pigpenfont J}}}

\newcommand\Q{{\mathbb{Q}}}
\newcommand\R{{\mathbb{R}}}

\newtheorem{thm}{Theorem}[section]

\newtheorem{prop}[thm]{Proposition}

\newtheorem{claim}[thm]{Claim}

\theoremstyle{definition}

\theoremstyle{definition}

\newtheorem{theorem}{Theorem}[section]
\newtheorem{lemma}[theorem]{Lemma}
\newtheorem{proposition}[theorem]{Proposition}
\newtheorem{corollary}[theorem]{Corollary}
\newtheorem*{notation}{Notation ($\star$)}

\theoremstyle{definition}
\newtheorem{definition}[theorem]{Definition}

\newtheorem{remark}[theorem]{Remark}
\newtheorem{conjecture}[theorem]{Conjecture}

\begin{document}
\author{Christopher Hacon} \address{Department of Mathematics\\ University of Utah\\ 155 S 1400 E\\ Salt Lake City, Utah 84112} \email{hacon@math.utah.edu}
\thanks{Christopher Hacon was partially supported by the NSF research grants no: DMS-1952522, DMS-1840190, DMS-2301374. The authors would like to thank M. Paun, V. Tosatti, W. Ou, and O. Das for many useful suggestions. In particular several arguments in this paper are inspired by joint work of O. Das and the first author, especially \cite{DH26}.}
\author{Lingyao Xie} \address{Department of Mathematics, University of California San Diego, 9500 Gilman Drive
 0112, La Jolla, CA 92093-0112, USA} \email{l6xie@ucsd.edu}

\date{\today}
\title[On the K\"ahler MMP]{On the K\"ahler MMP and the transcendental base-point-free theorem}

\begin{abstract}
    In this article,   we establish the minimal model program for big gklt K\"ahler pairs, and in particular we prove Tosatti's transcendental base-point-free conjecture. \end{abstract}
    \maketitle
\tableofcontents
\section{Introduction}    
The purpose of this paper is to extend the celebrated results of \cite{BCHM10} to the context of K\"ahler varieties, that is, we  prove the following.
\begin{theorem}\label{t-KahlerBCHM}
Let $(X,B+\bbeta)$ be a compact K\"ahler gklt pair.
\begin{enumerate}
\item If $K_X+B+\bbeta _X$ is not pseudo-effective, then there is a $(X,B+\bbeta)$ Mori fiber space.
\item If $K_X+B+\bbeta _X$ is pseudo-effective and $B+\bbeta _X$ is big (or if $K_X+B+\bbeta _X$ is big), then $(X,B+\bbeta )$ has a good log terminal model.\end{enumerate}
\end{theorem}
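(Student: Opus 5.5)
The plan is to mimic the structure of \cite{BCHM10}, replacing ample divisors by modified K\"ahler classes and using the existing K\"ahler MMP machinery: the cone theorem and contraction theorem for gklt K\"ahler pairs, together with the existence of flips, which is local on the base and reduces to the projective relative case. First I reduce to the case where $\bbeta$ contains a K\"ahler class. If $B+\bbeta_X$ is big, I take a log resolution $\nu\colon X'\to X$ and write $B+\bbeta_X\equiv \omega+E$, with $\omega$ K\"ahler on $X'$ and $E\ge 0$. Perturbing by a small multiple of this decomposition and using that gklt is an open condition, I replace the boundary by one of the form $B'+\bbeta'$ with $\bbeta'_{X'}$ K\"ahler, still gklt. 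If instead $K_X+B+\bbeta_X$ is big, I write it as K\"ahler plus effective and absorb a small multiple of it into the boundary. Because the new class and the old one are proportional up to a positive multiple, both have the same minimal models.

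Next, in the case where $\bbeta$ is K\"ahler (modulo effective), I run a $(K_X+B+\bbeta_X)$-MMP with scaling of a K\"ahler class $\omega$. The cone and contraction theorems give each step, and flips exist because each flipping contraction is projective over the base. For (1) I use a standard argument: if $K_X+B+\bbeta_X$ is not pseudo-effective, then a nef threshold argument with a K\"ahler class yields a $K$-negative extremal ray. Finiteness of models then implies that the MMP with scaling terminates in a Mori fiber space. For (2) the key input is finiteness of log terminal models, in the spirit of \cite{BCHM10} Theorem E, for the family $K_X+B+t\omega$ with $t\in[0,1]$. I would prove this by induction on dimension, following the BCHM scheme: special termination, existence of pl-flips via adjunction to a divisor (which is projective over the base), and then nonvanishing. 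Once termination is established, the final model $X_m$ has $K+B_m+\bbeta_m$ nef and equal to $\beta$-K\"ahler plus $(K+B)$, so it is nef and big.

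The last step is the transcendental base-point-free theorem: a nef class $K_{X_m}+B_m+\bbeta_m$ with $\bbeta_m$ big and modified K\"ahler should be the pullback of a K\"ahler class from the ample model. My approach would be to show that the null locus is a union of subvarieties on which the class restricts to one that is not big. Then I would apply the relative cone/contraction theorem to contract them, and use Collins--Tosatti to conclude. This yields a \emph{good} model.

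I expect the main obstacle to be nonvanishing/abundance for the transcendental part. In the K\"ahler setting there is no $\mathbb{Q}$-linear system to work with, so "good" must be verified through the basepoint-free property of a transcendental class. To handle this, I would first try induction on dimension via adjunction to lc centers and the null locus, combined with Kawamata--Viehweg-type vanishing for K\"ahler classes.
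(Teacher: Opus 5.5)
\textbf{Gap 1: the contraction theorem is not available.} You list the contraction theorem for $(K_X+B+\bbeta_X)$-negative extremal rays as existing K\"ahler machinery. In the global K\"ahler setting, however, only two ingredients were available: the cone theorem (Theorem \ref{t-cone}) and the existence of a flip once a flipping contraction is given (Theorem \ref{t-flip}). The contraction of a ray $R$ comes from choosing a K\"ahler form $\omega$ with $K_X+B+\bbeta_X+\omega$ nef and supporting $R$. One then applies the transcendental base-point-free theorem to this class, whose boundary $B+\bbeta_X+\omega$ is big. This is how Proposition \ref{prop: MMP can be run} produces the contraction, together with the projectivity you invoke for flips.

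So base-point-freeness is not a last step that makes the final model good. It is needed to perform every step of the MMP, and the argument has to be an intertwined induction on dimension: the base-point-free theorem, MMP with scaling, and a non-klt contraction theorem in dimension $n-1$ feed the same statements in dimension $n$.

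Your sketch of the base-point-free theorem does not supply this. Contracting pieces of ${\rm Null}(\alpha)$ ``by the relative cone/contraction theorem'' is circular: there is no base to be relative over, and producing that contraction is the whole problem. The equality ${\rm Null}(\alpha)=E_{\rm nK}(\alpha)$ (Theorem \ref{thm:null-non-kahler}) is only the input; it gives a K\"ahler current $\psi\in\alpha$ with analytic singularities along ${\rm Null}(\alpha)$. The paper's mechanism in the big case is Theorem \ref{t-nkltcontraction}:
\begin{enumerate}
\item run through the jumping numbers of $\mathcal J(X,B+\bbeta+\lambda\psi)$;
\item build contractions on the successive, typically non-reduced, non-klt subspaces, using adjunction to the new non-klt places and the non-klt contraction theorem in dimension $n-1$;
\item glue these contractions by analytic push-outs;
\item extend from a thickening of the singular locus to the whole space by Fujiki's contraction criterion.
\end{enumerate}

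\textbf{Gap 2: the case where $K_X+B+\bbeta_X$ is pseudo-effective but not big.} Your termination strategy via finiteness of models matches the paper (Theorem \ref{t-MMPgeography}). It presupposes existence of log terminal models, though, and this is where the second gap lies.

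You claim the final model has $K_{X_m}+B_m+\bbeta_{X_m}$ nef \emph{and big} because $\bbeta_{X_m}$ is K\"ahler. This is false. Take $X=\mathbb P^1\times A$, with $A$ a general $2$-dimensional complex torus, $B=0$, and the K\"ahler class $\bbeta_X=p_1^*c_1(\mathcal O_{\mathbb P^1}(2))+p_2^*\omega_A$. Then $K_X+\bbeta_X\equiv p_2^*\omega_A$ is nef, not big, and not numerically equivalent to any effective $\R$-divisor. The same example shows that BCHM's nonvanishing has no transcendental analogue. Adjunction to lc centres is also unavailable here, since there is no K\"ahler current in the class. You flag this as the main obstacle but leave it open.

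The missing idea runs as follows.
\begin{enumerate}
\item Since $B+\bbeta_X$ is big and $K_X+B+\bbeta_X$ is not, $K_X$ is not pseudo-effective, so $X$ is uniruled by Ou's theorem (Theorem \ref{t-ou}).
\item Resolve the MRC fibration to get $X\to Z$ with $K_Z$ pseudo-effective; then $K_X+B+\bbeta_X$ is not big over $Z$ by Theorem \ref{t-psef}.
\item Run a relative projective MMP over $Z$, ending in a Mori fiber space $X'\to Z'$.
\item Descend the log canonical class to a gklt pair on $Z'$ with modified big boundary, via the canonical bundle formula (Theorem \ref{t-gcbf}, which rests on Paun's nefness of the moduli part).
\item Run the MMP on $Z'$ by induction and lift it back.
\end{enumerate}
The same reduction proves the non-big case of the base-point-free theorem.

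\textbf{A smaller point on (1).} Neither $B+\bbeta_X$ nor $K_X+B+\bbeta_X$ is assumed big there, so your reductions do not apply as stated. One first adds a small K\"ahler form $\omega$ keeping $K_X+B+\bbeta_X+\omega$ non-pseudo-effective. This makes the boundary big, so finiteness of models, and hence termination with scaling, can be used.
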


Note that the above result is proved in \cite{BCHM10} when $X$ is projective and $\bbeta _X=0$. The case when $X$ is projective and $\bbeta $ is a nef b-divisor (i.e. the case of traditional generalized klt pairs) also follows easily from the results of \cite{BCHM10}.
Theorem \ref{t-KahlerBCHM} is proved in full generality in the projective case in \cite{DH24}.
The standard approach to proving Theorem \ref{t-KahlerBCHM} is to prove the cone theorem, the transcendental base-point-free theorem, the existence of flips and the termination of flips with scaling. Two of these results have been previously established.
\begin{theorem}[Existence of flips {\cite[Theorem 5.12]{DH23}}]\label{t-flip}
Let $(X,B+\bbeta )$ be a $\Q$-factorial, compact, K\"ahler, gklt pair, and $f:X\to Z$  a flipping contraction, then the flip $f^+:X^+\to Z$ exists.
\end{theorem}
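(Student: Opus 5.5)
The flip is constructed as the relative log canonical model over $Z$. Concretely, I would set
\[
X^+=\operatorname{Projan}_Z\bigoplus_{m\ge 0} f_*\mathcal O_X\bigl(\lfloor m(K_X+B+M)\rfloor\bigr),
\]
where $M$ is a $\mathbb Q$-divisor that stands in for the transcendental class $\bbeta_X$ over a neighborhood of $Z$. The proof therefore comes down to two things: replacing $\bbeta$ locally over $Z$ by an honest boundary divisor, and showing that the resulting relative canonical algebra is finitely generated.

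\textbf{Step 1: remove the transcendental part.} Since $f$ is a flipping contraction, the relative Picard number is one and $-(K_X+B+\bbeta_X)$ is $f$-ample. The question is local over $Z$, so I replace $Z$ by a small relatively compact Stein neighborhood $U$ of the image of the exceptional locus, which is a finite set of points. Over such a neighborhood, a nef b-$(1,1)$-class should agree numerically over $U$ with a class that is semiample relative to $U$, up to an arbitrarily small ample perturbation; this uses that $H^2$ of a Stein space with $\Q$-factorial singularities is controlled by line bundles near a point. More precisely, on a log resolution $\nu:X'\to X$ on which $\bbeta$ descends, $\bbeta_{X'}$ is nef over $U$. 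I would write
\[
\bbeta_{X'}\equiv_U \nu^*A + (\text{small } \nu\text{-exceptional perturbation}),
\]
with $A$ an $f$-ample or $f$-numerically trivial $\R$-divisor. Choosing a general effective $\Q$-divisor $0\le \Delta\sim_{\R,U}\bbeta_X$ with small coefficients, I obtain a klt pair $(X,B+\Delta)$ whose log canonical class is $\equiv_U K_X+B+\bbeta_X$. Here I use that relative numerical and linear equivalence agree over a Stein base for a Moishezon (indeed projective) morphism $f$, which holds because $f$ has an $f$-ample divisor $-(K_X+B+\bbeta_X)$ that can be perturbed to be rational.

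\textbf{Step 2: construct the flip.} Now $f$ is a projective morphism over $U$ and $(X,B+\Delta)$ is klt. I apply the relative analytic version of BCHM, namely finite generation of the relative log canonical ring for klt pairs projective over a Stein space, which is available through Fujino's or Lyu–Murayama's analytic MMP. This gives the $(K_X+B+\Delta)$-flip $X^+_U\to U$. It is also the flip for $K_X+B+\bbeta_X$, because the two classes are numerically equivalent over $U$. By uniqueness of the relative log canonical model, the local flips glue, and the flip is trivially an isomorphism away from the exceptional image. $\Q$-factoriality and the K\"ahler property of $X^+$ are checked as in the projective case: $X^+$ is projective over $Z$, and $Z$ is K\"ahler since it is the image of a contraction defined by a K\"ahler-type class.

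\textbf{Main obstacle.} The delicate point is Step 1, namely showing that $\bbeta$ can be replaced by an effective $\R$-divisor over a neighborhood of the flipping locus. The nef b-class need not be $\R$-Cartier in a naive sense, and near the singular points one needs the vanishing of $H^2(\text{neighborhood},\mathcal O)$ and the exponential sequence to represent classes by line bundles. My first attempt would be to pass to the resolution $X'$, which is projective over a small Stein $U$, and represent $\bbeta_{X'}$ there as a limit of $\R$-divisors that are ample over $U$. I would then use the generalized-pair perturbation technique, where nef becomes semiample after adding an ample class, to absorb the error into $B$ while preserving klt singularities.
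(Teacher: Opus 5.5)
The paper does not prove this statement; it quotes it from \cite[Theorem 5.12]{DH23}. Your architecture is the expected one: work over small Stein opens $U\subset Z$, replace $\bbeta$ there by an honest klt boundary, take the relative log canonical model via Fujino's analytic finite generation, and glue by uniqueness of the relative ample model. The paper uses the same local reduction in Lemma \ref{l-birgklt} and Theorem \ref{t-projcone}(3).

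The gap is in Step 1, which you rightly call the crux but do not carry out, and several claims there are false.
\begin{itemize}
\item $f({\rm Ex}(f))$ need not be finite once $n\ge 4$ (take a $3$-fold flipping contraction times a curve). A compact positive-dimensional set has no Stein neighbourhood, so covering and gluing are genuinely needed.
\item The formula $\bbeta_{X'}\equiv_U\nu^*A+(\text{small exceptional})$ with $A$ $f$-ample or $f$-trivial is wrong. In fact $\nu^*\bbeta_X=\bbeta_{X'}+E$ with $E\ge 0$ exceptional but not small, and $\bbeta_X$ can be negative on the flipping ray. So you cannot choose a general $\Delta\sim_{\R,U}\bbeta_X$ with small coefficients on $X$ itself.
\item The mechanism is not ``$H^2$ of a Stein space with $\Q$-factorial singularities'': $X_U$ is not Stein, and $\Q$-factoriality plays no role.
\end{itemize}
What you need is $H^1(X'_U,\OO_{X'_U})=H^2(X'_U,\OO_{X'_U})=0$. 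By Leray over the Stein set $U$, this follows from $R^i(f\circ\nu)_*\OO_{X'}=0$ for $i>0$, i.e.\ from rational singularities of $X$ (Lemma \ref{L-gklt-rational}) together with $R^if_*\OO_X=0$. The exponential sequence then makes $\bbeta_{X'}|_{X'_U}$ exactly the class of an $\R$-line bundle $N'$ (no limits needed). This $N'$ is nef over $U$, and automatically big over $U$ because $f\circ\nu$ is bimeromorphic. Write $N'\sim_{\R,U}A'+E'$ with $A'$ relatively ample, and set $\Delta':=H+\epsilon E'$ with $H$ a general member of $(1-\epsilon)N'+\epsilon A'$. Then $(X',B'+\Delta')$ is sub-klt, and $\Delta:=\nu_*\Delta'$ gives the klt pair you want, with $K_{X'}+B'+\Delta'=\nu^*(K_X+B+\Delta)$.

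Second, your argument for projectivity of $f$ over $U$ is circular. You deduce it from ``the $f$-ample divisor $-(K_X+B+\bbeta_X)$'', and you also take $X'$ projective over $U$. But a priori this is only an $f$-K\"ahler transcendental class. It becomes an $\R$-line bundle class only after the vanishing above, and that vanishing, if taken from Theorem \ref{t-kvv}, already presupposes $f$ projective. You need an independent input:
\begin{itemize}
\item in this paper, the flipping contraction is already known to be projective whenever Theorem \ref{t-flip} is applied (Theorem \ref{t-bpf}, Proposition \ref{prop: MMP can be run}(1));
\item otherwise, you need a relative vanishing theorem for proper K\"ahler morphisms. After that, a rational perturbation of the $f$-K\"ahler $\R$-line bundle is relatively ample over $U$.
\end{itemize}

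Finally, gluing local ${\rm Projan}$'s only produces a locally projective $X^+\to Z$. This does not show that $X^+$ is K\"ahler: Hironaka's example is locally projective over $\mathbb P^3$ but not K\"ahler. Nor does it show that $K_{X^+}+B^++\bbeta_{X^+}$ is a class in $H^{1,1}_{\rm BC}(X^+)$ with $\rho_{\rm BC}(X^+/Z)=1$, as the definition of a flip requires. Once $f$ is projective this is repaired globally:
\begin{itemize}
\item pick a line bundle $\mathcal L$ on $X$ with $\mathcal L\cdot R<0$;
\item since $\rho_{\rm BC}(X/Z)=1$, $\mathcal L$ is positively proportional to $K_X+B+\Delta$ on curves over each $U$;
\item so the local flips are restrictions of $\operatorname{Proj}_Z\bigoplus_{m\ge 0}f_*\mathcal L^{\otimes m}$, which carries a global $f^+$-ample line bundle;
\item the identification $H^{1,1}_{\rm BC}(X)\cong H^{1,1}_{\rm BC}(X^+)$ then follows from Lemma \ref{l-rccfibres}, as in Proposition \ref{prop: MMP can be run}(3).
\end{itemize}
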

\begin{theorem}[Cone theorem {\cite[Theorem 3.1, Corollary 3.2]{HP24}, \cite[Corollary 3.6]{HLX26}}]\label{t-cone}
Let $(X,B+\bbeta )$ be a compact, K\"ahler, gklt pair,  then there are countably many rational
curves $\{\Gamma _i\}_{i\in I}$ such that $0<-(K_X +B +\bbeta _X)\cdot \Gamma _i\leq  2\dim X$ for all $i\in I$ and \[\overline{\rm NA}(X)=\overline{\rm NA}(X)_{(K_X +B +\bbeta _X)\geq 0}+\sum \R^+[\Gamma _i].\]
If $B+\bbeta _X$ is big then $I$ is finite.
\end{theorem}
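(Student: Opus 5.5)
The plan is to reduce the statement to a local problem along each $(K_X+B+\bbeta_X)$-negative extremal ray, and to produce rational curves there by combining the Collins--Tosatti description of the non-K\"ahler locus with known existence results for rational curves on K\"ahler spaces. Throughout, I pass to a log resolution whenever convenient. Since $\bbeta$ is a (modified) nef b-class, this only changes things by pullbacks and by exceptional divisors with controlled coefficients.

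\emph{Step 1 (supporting classes).} Set $\mathcal K:=\overline{\rm NA}(X)_{(K_X+B+\bbeta_X)\geq 0}+\overline{\sum \R^+[\Gamma]}$, where the sum ranges over rational curves $\Gamma$ with $0<-(K_X+B+\bbeta_X)\cdot\Gamma\le 2\dim X$. Suppose $\mathcal K\neq\overline{\rm NA}(X)$. By duality in $H^{1,1}_{BC}(X)$, with $\overline{\rm NA}(X)$ dual to the nef cone, I will find a nef class $\alpha$ and a K\"ahler class $\omega$ with the following properties: $\alpha=K_X+B+\bbeta_X+\omega$ after rescaling, $\alpha$ is positive on $\mathcal K\setminus\{0\}$, and $\alpha$ is not K\"ahler.

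\emph{Step 2 (locating rational curves).} Because $\alpha$ is nef but not K\"ahler, Collins--Tosatti (extended to singular spaces via a resolution) shows that its non-K\"ahler locus equals its null locus. This locus is a nonempty union of subvarieties $V$ with $\int_V\alpha^{\dim V}=0$.
\begin{itemize}
\item Take a component $V$ and its normalization $\nu:V^\nu\to V$. A generalized adjunction for gklt pairs gives $\nu^*(K_X+B+\bbeta_X)=K_{V^\nu}+B_{V}+\bbeta_{V}$, where $B_V\ge0$ and $\bbeta_V$ is b-nef (on a resolution of $V$).
\item Then $\nu^*\alpha|_V$ is nef but not big, while $\nu^*\alpha-(K_{V^\nu}+B_V+\bbeta_V)$ is K\"ahler. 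Hence $K_{V^\nu}+B_V+\bbeta_V$ is not pseudo-effective on a resolution $\tilde V$.
\item It follows that $K_{\tilde V}$ is not pseudo-effective. By the results of Brunella, Ou and Cao--H\"oring on non-pseudo-effective canonical class for compact K\"ahler manifolds, $\tilde V$ is uniruled.
\item Working on the base of the MRC fibration, which carries an $\alpha$-trivial direction, I would use Miyaoka--Mori bend-and-break on the general fibres to get a rational curve $\Gamma\subset V$. The aim is $\alpha\cdot\Gamma=0$ and $0<-(K_X+B+\bbeta_X)\cdot\Gamma\le 2\dim X$. The general fibres are projective since they are rationally connected, so the length bound can be run there.
\end{itemize}
This yields a point of $\mathcal K$ on which $\alpha$ vanishes, which contradicts Step 1. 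Hence $\overline{\rm NA}(X)=\mathcal K$. Countability follows because rational curves of bounded degree lie in countably many components of the Douady/Barlet space. The closure can then be removed by the usual Mori argument, since the negative rays are locally discrete away from the $(K_X+B+\bbeta_X)^{\ge 0}$ part.

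\emph{Step 3 (finiteness when $B+\bbeta_X$ is big).} Write $B+\bbeta_X\equiv \epsilon\omega+E$, with $\omega$ K\"ahler, $E\ge0$, and $(X,B'+\bbeta')$ still gklt for the corresponding perturbation. Applying Step 2 to this pair, every $(K_X+B+\bbeta_X)$-negative ray is $(K_X+B'+\bbeta'_X+\epsilon\omega)$-negative. Such rays satisfy $\omega\cdot\Gamma\le 2\dim X/\epsilon$, and curves of bounded $\omega$-degree have classes in a finite set, so there are only finitely many rays.

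The main obstacle I expect is Step 2: producing a rational curve with the length bound inside the null locus in the non-projective setting. This needs both the adjunction to $V$ for the b-divisor $\bbeta$ and a version of bend-and-break that only sees $\alpha$-trivial directions. My first attempt is to run bend-and-break on the projective general fibres of the MRC fibration of $\tilde V$ and compare intersection numbers via the K\"ahler class $\omega$.
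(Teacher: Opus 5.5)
The paper gives no proof of this theorem; it quotes it from HP24 and HLX26. Your Steps 1 and 3 are the standard reductions and are fine. Separating a point of $\overline{\rm NA}(X)\setminus\mathcal K$ does produce a nef, non-K\"ahler class $\alpha=K_X+B+\bbeta_X+\omega$ that is positive on $\mathcal K\setminus\{0\}$. Finiteness follows from $\omega\cdot\Gamma<2\dim X/\epsilon$ together with compactness of bounded-volume cycles in the Barlet space.

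So everything rests on Step 2, and its first bullet fails as stated. There is no adjunction $\nu^*(K_X+B+\bbeta_X)=K_{V^\nu}+B_V+\bbeta_V$ with $B_V\geq 0$ for an arbitrary component $V$ of ${\rm Null}(\alpha)$. A gklt pair has no lc centres. Already for a smooth divisor one has $K_X|_V=K_V-V|_V$, with $-V|_V$ typically not effective. In codimension $\geq 2$ there is no adjunction at all without an lc structure.

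The missing idea is to create an lc centre. Take a K\"ahler current $T\in\alpha$ with analytic singularities along ${\rm Null}(\alpha)$ (Theorem \ref{thm:null-non-kahler}). Let $c$ be the log canonical threshold and $V$ a minimal lc centre of $(X,B+\bbeta+cT)$. Then apply the K\"ahler canonical bundle formula (Theorem \ref{t-gcbf}, which rests on Paun's nefness of the moduli part) to a divisorial lc place over $V$ on a dlt model. This is the same mechanism the paper uses in its sketch of the base-point-free theorem. It yields $(1+c)\alpha|_V\equiv K_V+B_V+\bbeta_V+\omega|_V$ with $(V,B_V+\bbeta_V)$ gklt, which is not your formula.

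Moreover, such a $V$ need not be a component of ${\rm Null}(\alpha)$, and subvarieties of the null locus can have $\alpha|_V$ big. For example, blow up $\mathbb P^3$ along a line, let $\alpha$ be the pulled-back hyperplane class, and let $V$ be a section of the exceptional $\mathbb P^1\times\mathbb P^1$. So ``$\alpha|_V$ is not big'' is no longer automatic. The big case must then be run as an induction on dimension, applying the cone theorem to $(V,B_V+\bbeta_V)$ when $\alpha|_V$ is not K\"ahler. You would still have to deal with centres on which $\alpha$ is K\"ahler.

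Your last bullet is also a genuine gap, even when $V=X$ (the case $\alpha$ not big, where no adjunction is needed). Uniruledness together with $\alpha|_V$ nef and not big does not produce an $\alpha$-trivial rational curve. Take $V=\mathbb P^1\times A$ with $A$ an abelian surface, and $\alpha=p_1^*\OO(1)+p_2^*\gamma$ with $\gamma\neq 0$ nef and $\gamma^2=0$. Then $\alpha$ is nef and not big and $V$ is uniruled. Yet every rational curve is a fibre $\mathbb P^1\times\{a\}$, with $\alpha\cdot\Gamma=1$. Miyaoka--Mori bend-and-break bounds $-K\cdot\Gamma$ but gives no control of $\alpha\cdot\Gamma$.

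What you must use is that $\alpha$ is adjoint. Let $V\dasharrow Z$ be the MRC fibration, made a morphism on a resolution; $K_Z$ is pseudo-effective by Theorem \ref{t-ou}. Suppose $\alpha$ were big on the general fibre $F$. Then positivity of relative adjoint classes (Theorem \ref{t-psef}) would make $K_V+B_V+\bbeta_V+(1-\epsilon)\omega|_V\equiv(1+c)\alpha|_V-\epsilon\omega|_V$ big, and hence $\alpha|_V$ big, a contradiction. So $\alpha|_F$ is nef and not ample on the projective, rationally connected $F$, where every $(1,1)$-class is an $\R$-divisor class. The projective cone theorem for generalized pairs then gives an extremal ray in $\alpha|_F^{\perp}$ spanned by a rational curve $\Gamma$ with $0<-(K_F+B_F+\bbeta_F)\cdot\Gamma\le 2\dim F$. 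That is the curve you need.
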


The main contribution of this paper is to prove the following version of the transcendental base-point-free conjecture (see \cite[Conjecture 1.2]{FT18}, \cite{Tos24}) and of the termination of flips with scaling.
\begin{theorem}
 [Transcendental base-point-free theorem]\label{t-bpf}
    Let $(X,B+\bbeta)$ be a compact, $n$-dimensional, K\"ahler, gklt pair such that  $\alpha=K_X+B+\bbeta _X$ is nef, and $B+\bbeta_X$ is modified big. Then $\alpha$ is semiample, more precisely, there exists a Moishezon contraction $f: X\to Y$ such that
    \begin{enumerate}
    \item $Y$ is a compact K\"ahler space, and
    \item $\alpha\equiv f^*\gamma$ for some K\"ahler class $\gamma$ on $Y$.
    \end{enumerate}
    Moreover, if $X$ is strongly $\Q$-factorial, then $f$ is projective.
\end{theorem}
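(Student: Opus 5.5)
We will deduce the statement from a relative minimal model program combined with a reduction to the projective (Moishezon) setting, where the generalized base-point-free theorem of \cite{DH24} is available. First we pass to a small $\Q$-factorialization or a log resolution $\pi: X'\to X$ and write $K_{X'}+B'+\bbeta_{X'}=\pi^*\alpha$ (plus effective exceptional terms if needed), so that we may assume $X$ is strongly $\Q$-factorial with $B+\bbeta_X$ modified big. The bigness gives a decomposition $B+\bbeta_X\equiv \omega+E$ with $\omega$ a K\"ahler class (after pulling back to a suitable model) and $E\geq 0$ an effective $\R$-divisor. For $0<\epsilon\ll 1$ we replace the boundary by $(1-\epsilon)(B+\bbeta_X)+\epsilon E$. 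The pair stays gklt, and $\alpha-\epsilon\omega$ is then represented by an adjoint class of a gklt pair whose ``nef part'' contains the K\"ahler form $\epsilon\omega$. The key consequence is that $\alpha$ is nef while $\alpha+t\omega$ is K\"ahler for every $t>0$.

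Second, we study the null locus of $\alpha$. By the Collins--Tosatti theorem (and its singular versions), the non-K\"ahler locus $E_{nK}(\alpha)$ equals the null locus $\operatorname{Null}(\alpha)$, the union of subvarieties $V$ with $\alpha^{\dim V}\cdot V=0$. If $\alpha^n>0$ we show $\alpha$ is big, and we apply the cone theorem, Theorem \ref{t-cone}, with $B+\bbeta_X$ big. This yields finitely many extremal rays $R_i=\R^+[\Gamma_i]$ with $\alpha\cdot\Gamma_i=0$. These span an extremal face $F=\alpha^\perp\cap\overline{\rm NA}(X)$ that is $(K_X+B'+\bbeta'_X)$-negative for the perturbed pair. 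Contracting $F$ uses the contraction theorem for K\"ahler gklt pairs (from \cite{HP24,DH23}). This produces a Moishezon, indeed projective, contraction $f:X\to Y$ with $\alpha\equiv f^*\gamma$. We verify that $\gamma$ is K\"ahler on $Y$ by checking it is strictly positive on all curves of $Y$ and applying a Demailly--P\u{a}un type criterion on the K\"ahler space $Y$.

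Third, and this is the main obstacle, is the case $\alpha$ not big, i.e.\ $\nu(\alpha)<n$. Here the fibration must be built rather than read off from finitely many rays. Our plan is to run a $(K_X+B+\bbeta_X-\epsilon\omega)$-MMP with scaling of $\omega$; equivalently, we exploit that $\alpha-\epsilon\omega$ is not pseudo-effective on the fibers. Part (1) of Theorem \ref{t-KahlerBCHM} in the relative/inductive form, or the cone theorem with finitely many rays, should then give a Mori fiber space $X\dashrightarrow X_1\to Z_1$ whose fibers are $\alpha$-trivial. Since $\alpha$ is nef, the MMP steps are $\alpha$-trivial, so $\alpha$ descends to a nef class $\alpha_Z$ on the lower-dimensional base $Z_1$. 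Via the canonical bundle formula for gklt pairs, $\alpha_Z$ is again of the form $K_Z+B_Z+\bbeta_Z$ with $B_Z+\bbeta_Z$ big, and $\dim Z_1<n$, so induction applies.

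The delicate points are as follows. (a) Termination of this MMP with scaling, which we expect to get from finiteness of rays under bigness together with special termination. (b) The fact that the fiber dimension forces $\alpha$ to be trivial on general fibers, which requires showing the numerical dimension drop is realized by an actual fibration. (c) Preservation of modified bigness under the canonical bundle formula. Finally, if $X$ is strongly $\Q$-factorial, each contraction used is projective by the relative cone and contraction theorem, hence so is $f$.
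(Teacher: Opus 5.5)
Your argument is circular at its two load-bearing steps.

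\textbf{The big case.} You say that contracting the $\alpha$-trivial face $F=\alpha^\perp\cap\overline{\rm NA}(X)$ ``uses the contraction theorem for K\"ahler gklt pairs''. No such theorem is available as an input in dimension $n$. Of the standard ingredients, only the cone theorem (Theorem \ref{t-cone}) and the existence of flips once the flipping contraction is given (Theorem \ref{t-flip}) are known beforehand. Contractions of extremal rays and faces are deduced \emph{from} Theorem \ref{t-bpf} (Proposition \ref{prop: MMP can be run}), just as in the projective theory the contraction theorem is a corollary of base-point-freeness. Knowing that $F$ is spanned by finitely many $(K_X+B'+\bbeta'_X)$-negative rays does not produce a map $X\to Y$.

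The paper's real work here is Theorem \ref{t-ind1}:
\begin{enumerate}
\item take a K\"ahler current $\psi\in\alpha$ with analytic singularities along ${\rm Null}(\alpha)$ (Theorem \ref{thm:null-non-kahler});
\item use the jumping numbers of $\mathcal J(X,B+\bbeta+\lambda\psi)$ to get a nested sequence of non-klt subschemes $Z_i$;
\item build the contraction on each $Z_i$ by adjunction to the new lc place together with the \emph{non-gklt} contraction theorem in dimension $n-1$;
\item glue along push-outs, and extend from a neighbourhood of the null locus by Fujiki's criterion.
\end{enumerate}
Step (3) is why Theorem \ref{t-nkltcontraction}, not just Theorem \ref{t-bpf}$_{n-1}$, must be carried through the induction: the pairs induced on lc centres are not gklt.

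Even once $f$ exists, positivity of $\gamma$ on curves of $Y$ is not a Demailly--Paun criterion. That criterion needs $\int_V\gamma^{\dim V}>0$ for all subvarieties $V$, and $Y$ is not yet known to be K\"ahler. The paper uses $R^if_*\OO_X=0$ to descend $\alpha=f^*\gamma$ and then Theorem \ref{t-bigKahler}.

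\textbf{The non-big case.} You run an MMP on $X$ itself. That needs divisorial and flipping contractions, termination, and, for the final Mori fibre space, the contraction of a non-big nef supporting class. The last of these is an instance of the very statement you are proving in dimension $n$. Your proposed sources (Theorem \ref{t-KahlerBCHM}, finiteness of rays, special termination) either depend on Theorem \ref{t-bpf}$_n$ or do not give termination. In the paper, termination comes from Theorem \ref{t-MMPgeography}, which again uses Theorem \ref{t-bpf}$_n$.

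Moreover, ``since $\alpha$ is nef, the MMP steps are $\alpha$-trivial'' is unjustified for an $(\alpha-\epsilon\omega)$-MMP with scaling of $\omega$. Nothing keeps the scaling parameter at $1$ once $\omega$ has been pushed forward. As soon as $t_i<1$, the contracted ray satisfies $\alpha\cdot R=(1-t_i)\epsilon\,\omega_i\cdot R>0$. The paper obtains $\alpha$-triviality from the NQC property (Lemma \ref{lem: gklt pair+Kahler is NQC}). It combines this with the length bound on extremal rays for $K+\Delta+\bbeta+a\alpha$, $a\gg0$ (Lemma \ref{lem: NQC can run alpha-trivial MMP}).

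\textbf{The missing idea} is how to avoid any global MMP in dimension $n$. Since $\alpha$ is not big and $B+\bbeta_X$ is big, $K_X$ is not pseudo-effective. So $X$ is uniruled by \cite{Ou25}, and resolving the MRC fibration gives a \emph{projective} morphism $X'\to Z$ with $K_Z$ pseudo-effective. Only a relative projective MMP over $Z$ is then needed (Proposition \ref{p-relMMP}). Theorem \ref{t-psef} shows its output is not big over $Z$, which yields a positive-dimensional $\alpha$-trivial fibration $X''\to Z''$. The proof is finished by:
\begin{itemize}
\item the canonical bundle formula (Theorem \ref{t-gcbf}, which also settles your point (c));
\item an MMP on $Z''$ from Theorem \ref{t-MMPscaling}$_{n-1}$, lifted to $X''$;
\item Theorem \ref{t-bpf}$_{n-1}$;
\item a Boucksom--Zariski argument killing the pushed-forward exceptional part $F''_n$.
\end{itemize}
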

\begin{theorem}[MMP with scaling]\label{t-MMPscaling} Let $(X,B+\bbeta )$ be a compact, $n$-dimensional, K\"ahler, strongly $\Q$-factorial, gklt pair such that $B+\bbeta _X$ is big, and $\omega$ a modified K\"ahler form such that $K_X+B+\bbeta_X +\omega$ is nef. Then we can run the $K_X+B+\bbeta _X$ minimal model program with scaling of $\omega$ which terminates with either a good log terminal model or a Mori fiber space.
\end{theorem}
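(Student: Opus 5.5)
We prove Theorem \ref{t-MMPscaling} by running the usual MMP with scaling, using the cone theorem (Theorem \ref{t-cone}) and the existence of flips (Theorem \ref{t-flip}) to make each step, and the transcendental base-point-free theorem (Theorem \ref{t-bpf}) to produce the contractions. Termination will come from a finiteness-of-models argument in the spirit of \cite{BCHM10}.

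\emph{Constructing the steps.} Set $t_0=\inf\{t\geq 0 : K_X+B+\bbeta_X+t\omega \text{ is nef}\}$. If $t_0=0$ we are already at a log terminal model and skip to the goodness statement below. If $t_0>0$, we use that $B+\bbeta_X$ is big, so Theorem \ref{t-cone} gives only finitely many $(K_X+B+\bbeta_X)$-negative extremal rays. Hence $t_0$ is attained and there is an extremal ray $R=\R^+[\Gamma_i]$ with $(K_X+B+\bbeta_X+t_0\omega)\cdot R=0$.

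To contract $R$ we apply Theorem \ref{t-bpf} to the pair $(X,B+\bbeta+t_0\omega+\epsilon\eta)$, where $\eta$ is a K\"ahler class chosen with $(K_X+B+\bbeta_X+t_0\omega+\epsilon\eta)^\perp\cap\overline{\rm NA}(X)=R$. Such a perturbation is possible because there are only finitely many negative rays, and the perturbed class is still nef. Adding a modified K\"ahler class keeps the pair gklt and keeps the boundary big, so Theorem \ref{t-bpf} applies. It yields a projective contraction $f_R:X\to Z$ whose curves are exactly those with class in $R$; projectivity uses strong $\Q$-factoriality.

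\emph{The three cases.} If $\dim Z<\dim X$, we have a Mori fiber space. If $f_R$ is divisorial, we check that $Z$ is again strongly $\Q$-factorial and Kähler, and that $\omega$ pushes forward to a modified Kähler class. If $f_R$ is small, we take the flip given by Theorem \ref{t-flip}, and the same properties persist on $X^+$. We then repeat the construction, obtaining a non-increasing sequence $t_0\geq t_1\geq\cdots$.

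\emph{Termination (main obstacle).} Because $B+\bbeta_X$ is big, we can write $B+\bbeta_X\equiv A+E$ with $A$ a modified Kähler class and $E\geq 0$. This lets us reduce to a finiteness statement: for $t\in[0,t_0]$ there are only finitely many weak log canonical models of the pairs $(X,B+\bbeta+t\omega)$. We would prove this in the BCHM style, by induction on dimension combined with non-vanishing, the latter available here because the boundary is big. Since each $X_i$ is a log terminal model for $t\in[t_{i+1},t_i]$, and flips strictly increase discrepancies, no model can repeat. Finiteness of models therefore forces the sequence of steps to stop.

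The hardest part is establishing this finiteness, in particular the non-vanishing and the lifting of extension results to the transcendental setting. My first attempt would be to reduce to the projective case. Bigness of $K_X+B+\bbeta_X+t\omega$ near the relevant range makes $X$ Moishezon, hence projective after a modification, and I would then import \cite{DH24}.

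\emph{Goodness.} Once the program stops at a model with $K+B+\bbeta$ nef, Theorem \ref{t-bpf} applies again and shows that this class is semiample. So the output is a good log terminal model.
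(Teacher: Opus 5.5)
Your overall skeleton agrees with the paper. The steps come from the cone theorem, Theorem \ref{t-bpf} and Theorem \ref{t-flip} (this is Proposition \ref{prop: MMP can be run}). Termination is then deduced, exactly as in Theorem \ref{t-runMMP}, from finiteness of weak log canonical models together with the strict increase of discrepancies.

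There is a minor slip in the step construction. If $\eta$ is K\"ahler then $(K_X+B+\bbeta_X+t_0\omega+\epsilon\eta)\cdot R=\epsilon\,\eta\cdot R>0$, so this class cannot cut out $R$. Instead, pick a K\"ahler form $\omega'$ with $K_X+B+\bbeta_X+\omega'$ nef and vanishing exactly on $R$, as the paper does.

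The genuine gap is the finiteness of models, which is the heart of the theorem, and the route you propose for it fails.
\begin{itemize}
\item Bigness of $K_X+B+\bbeta_X+t\omega$ does not make $X$ Moishezon. A big $(1,1)$-class is just one containing a K\"ahler current, and every K\"ahler class is big. Take $X$ a non-algebraic complex torus, $B=0$ and $\bbeta=\overline{\omega_X}$ with $\omega_X$ K\"ahler. All hypotheses hold and $K_X+\bbeta_X$ is big, but $X$ is not Moishezon. So there is no reduction to \cite{DH24}.
\item For the same reason the BCHM chain ``non-vanishing $\Rightarrow$ finiteness'' cannot be transplanted. The statement $K_X+B+\bbeta_X\equiv D$ with $D\geq 0$ an $\R$-divisor is simply false for transcendental classes; it already fails in the torus example.
\end{itemize}

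The missing idea is the paper's substitute for non-vanishing. It first proves existence of good log terminal models in dimension $n$ directly (Propositions \ref{p-bigmm} and \ref{p-notbigmm}). Only then does it derive finiteness of weak log canonical models (Theorem \ref{t-MMPgeography}), by induction on the dimension of the polytope in the style of \cite[Corollary 1.1.5]{BCHM10}, using those models and Theorem \ref{t-bpf}.
\begin{itemize}
\item If $K_X+B+\bbeta_X$ is big, one writes $K_X+B+\bbeta_X\equiv\omega+D$ with $\omega$ K\"ahler, $D\geq 0$ and ${\rm Supp}(D)=N_+(K_X+B+\bbeta_X)$; this is what replaces non-vanishing. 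One then runs Birkar's induction on the number of components of $D$ not in $\lfloor B\rfloor$ (Definition \ref{d-M}, Proposition \ref{p-Md}). The base case (Lemma \ref{l-t=0}) is special termination, which uses the geography theorem in dimension $n-1$ on the components of $\lfloor B\rfloor$. This is where the inductive hypothesis enters.
\item If $K_X+B+\bbeta_X$ is pseudo-effective but not big, then $K_X$ is not pseudo-effective, so $X$ is uniruled by Theorem \ref{t-ou}. One runs a relative MMP over the MRC base to a Mori fiber space $X'\to Z'$. The canonical bundle formula (Theorem \ref{t-gcbf}) gives a gklt pair on $Z'$ with modified big boundary. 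One runs the MMP on $Z'$ in dimension $n-1$ and lifts it.
\end{itemize}
Without an argument of this kind, your termination step only restates the theorem.
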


\subsection{The structure of the proof}
In this section we set up the inductive proof of our main theorems. Note that we will say that Theorem 1.X$_{n}$ holds to mean that Theorem 1.X holds in dimension $\le n$.
\begin{theorem}
[Contraction theorem]\label{t-nkltcontraction}
Let $(X,B+\bbeta)$ be a $n$-dimensional compact K\"ahler generalized pair such that $B+\bbeta_X$ is modified big and $\alpha=K_X+B+\bbeta _X$ is nef and NQC. Assume that the contraction defined by $[\alpha]|_N$ exists and is Moishezon, where $N={\rm Nklt}(X,B+\bbeta)$ is the closed subspace defined by the multiplier ideal $\mathcal{J}(X,B+\bbeta)$. 

Then the contraction defined by $[\alpha]$ exists and is Moishezon.
\end{theorem}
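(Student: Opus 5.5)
Our plan is to deduce the statement from the projective-type machinery of the base-point-free theorem, after reducing to a situation where $X$ is Moishezon relative to the contraction. We argue by induction on $n$ and assume the contraction theorem and Theorems \ref{t-bpf}, \ref{t-MMPscaling} in dimension $\le n-1$.

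\textbf{Step 1: $X$ is Moishezon relative to the candidate contraction.} Since $B+\bbeta_X$ is modified big and $\alpha$ is nef, we first write $\alpha$ as $K_X+B+\bbeta_X$ with $B+\bbeta_X\equiv A+E$, where $A$ is a modified K\"ahler class and $E\geq 0$ is an $\R$-divisor. Replacing $X$ by a log resolution and $\bbeta$ by its trace, we may assume $A$ is K\"ahler on a model. Then, for small $\epsilon>0$,
\[
\alpha-(K_X+B+\bbeta_X-\epsilon\alpha)=\epsilon\alpha,
\]
and $(1+\epsilon)\alpha-(K_X+B+\bbeta_X)$ is nef and big. The NQC hypothesis lets us write $\alpha=\sum r_i\alpha_i$ with the $\alpha_i$ nef and rational (on the b-level), and we reduce to the rational case. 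We then use the null locus: by Collins--Tosatti, the non-K\"ahler locus of the nef and big class $\alpha+\epsilon A$ equals ${\rm Null}(\alpha)$, which is a proper analytic subset. We will show that each irreducible component $V$ of ${\rm Null}(\alpha)$ not contained in $N$ carries an induced gklt-type pair with nef log canonical class $\alpha|_V$ (by adjunction for generalized pairs on a dlt modification), so that by Theorem \ref{t-bpf}$_{n-1}$ the class $\alpha|_{V}$ is semiample and defines a Moishezon contraction of $V$. Together with the hypothesis on $N$, this gives a contraction on ${\rm Null}(\alpha)\cup N$.

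\textbf{Step 2: gluing and extension.} Next we construct $Y$ by contracting ${\rm Null}(\alpha)\cup N$ via these fibrations. Away from this set $\alpha$ is K\"ahler, so $X\setminus({\rm Null}(\alpha)\cup N)$ embeds. To produce an honest analytic contraction we use the transcendental analog of Kawamata's argument: choose a sufficiently divisible multiple and show that, relative to a Moishezon model, $H^1$-vanishing (Nadel vanishing for the multiplier ideal $\mathcal J(X,B+\bbeta)$ with the positive current $\alpha-(K_X+B+\bbeta_X)+\epsilon\alpha$) gives surjectivity of the restriction to $N$. The sections on $N$ coming from the given Moishezon contraction then lift to $X$. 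This gives a morphism of $X$ separating points outside ${\rm Null}(\alpha)$, and gluing with the fibrations of Step 1 via Stein factorization yields $f:X\to Y$ with $\alpha\equiv f^*\gamma$, where $\gamma$ is K\"ahler by the Demailly--P\u{a}un criterion on $Y$.

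\textbf{Step 3: Moishezon.} Fibers of $f$ lie in ${\rm Null}(\alpha)\cup N$ and are Moishezon by Step 1 and by hypothesis, which gives relative Moishezonness.

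\textbf{Main obstacle.} The hardest point is Step 2: lifting sections requires line bundles, while $\alpha$ is only a transcendental class. We first try to show that, on a neighborhood of ${\rm Null}(\alpha)\cup N$, the class $\alpha$ is represented by a $\Q$-line bundle, using the fact that the contracted loci are Moishezon and that $H^{2}$ classes that vanish on fibers descend locally. With this in hand, we then run the relative (local) base-point-free theorem over small Stein opens of the target.
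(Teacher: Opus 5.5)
\textbf{Non-big case.} Your plan only works when $\alpha$ is big, and the statement does not assume this. If $\alpha$ is nef but not big, then $\int_X\alpha^n=0$, so $\mathrm{Null}(\alpha)=X$. There is then no proper locus to analyse, and Theorem \ref{thm:null-non-kahler}, which concerns the nef and big class $\alpha$ itself, does not apply. Also $(1+\epsilon)\alpha-(K_X+B+\bbeta_X)=\epsilon\alpha$ is not big, so neither Step 1 nor the vanishing in Step 2 is available.

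The paper handles this case with a different idea, using Theorems \ref{t-bpf}$_n$ and \ref{t-MMPscaling}$_n$ in dimension $n$, which are not among your hypotheses. Since $B+\bbeta_X$ is big and $\alpha$ is not, $K_X+t_0\alpha$ is not pseudo-effective. The NQC property makes the $(K_X+t_0\alpha)$-MMP $\alpha$-trivial. This is where NQC is used; your ``reduction to the rational case'' is not available, since the rational summands are not adjoint classes. After replacing $X$ by the output, you reach a Mori fibre space $f\colon X\to Z$ with $\alpha\equiv f^*\gamma$. If $\mathrm{Nklt}(X,B+\bbeta)$ dominates $Z$, Nadel vanishing gives $f_*\OO_N=\OO_Z$, and the given contraction of $N$ factors through $Z$. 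Otherwise the canonical bundle formula (Theorem \ref{t-gcbf}) produces $(Z,B_Z+\ddelta)$, and an explicit coefficient computation shows $f_*\OO_{\mathrm{Nklt}(X,B+\bbeta)}=\OO_{\mathrm{Nklt}(Z,B_Z+\ddelta)}$. Then Theorem \ref{t-nkltcontraction}$_{n-1}$ applies on $Z$. This descent to the base is why the theorem is stated for non-klt pairs, and it is missing from your plan.

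\textbf{Big case.} Here too the steps fail as written. Components of $\mathrm{Null}(\alpha)$ are not log canonical centres, so there is no adjunction to them. The paper manufactures centres: it takes a K\"ahler current in $\alpha$ with singularities along $\mathrm{Null}(\alpha)$, writes $\nu^*\alpha=D'+\eta'$ on a resolution, and runs through the jumping numbers $\lambda_i$ of $\mathcal J(X,B+\bbeta+\lambda(D+\overline{\eta'}))$. The adjoint pair on (a dlt model of) a new centre $S_i$ has class $(1+\lambda_i)\alpha|_{S_i}$ and a modified big boundary coming from $\lambda_i\eta'$. But it is in general \emph{not} gklt, since its non-klt locus lies over $S_i\cap Z_{i-1}$. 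So Theorem \ref{t-bpf}$_{n-1}$ does not apply. One needs Theorem \ref{t-nkltcontraction}$_{n-1}$, with its hypothesis supplied by the contraction already built on $Z_{i-1}$, then an Artin-type push-out to glue, and a separate argument for nilpotent thickenings.

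The non-reduced structure matters because the final step is not a section-lifting argument. A transcendental class has no sections. Representing $\alpha$ by a $\Q$-line bundle near the null locus and running a relative base-point-free theorem over Stein opens of the target is circular, since the target is what you are trying to build. Instead, once $Z_i\supseteq\Supp D'$, the paper has a non-reduced divisor $G=k\hat D$ with $-G$ ample relative to the already constructed $f_G$ and $R^1f_{G*}\OO_G(-kG)=0$. Fujiki's contraction criterion then extends $f_G$ to a bimeromorphic $f\colon X'\to Y$ that is the identity off $G$.

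Two smaller points. Moishezon fibres do not by themselves give a Moishezon morphism in the paper's sense. And K\"ahlerness of $\gamma$ is not part of this statement; the paper obtains it separately, via Theorem \ref{t-bigKahler}, and only in the gklt case.
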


Note that Theorems \ref{t-bpf}$_1$, \ref{t-MMPscaling}$_1$, \ref{t-nkltcontraction}$_1$ are trivially seen to hold (i.e. these results hold in dimension 1).
In this paper we will prove these theorems in arbitrary dimension via the following induction. We will need to distinguish two separate cases of Theorem \ref{t-nkltcontraction}$_{n}$ depending on whether $\alpha$ is big or not. Therefore we will indicate the $\alpha$ is big case of this theorem by  Theorem \ref{t-nkltcontraction}$_{n,{\rm big}}$. 
\begin{itemize}
\item Theorem \ref{t-nkltcontraction}$_{n-1}$
implies Theorem \ref{t-nkltcontraction}$_{n,{\rm big}}$  (see Section \ref{s-contr-big}).

\item Theorems \ref{t-bpf}$_{n-1}$, \ref{t-MMPscaling}$_{n-1}$, \ref{t-nkltcontraction}$_{n,{\rm big}}$
imply Theorem \ref{t-bpf}$_{n}$ (see Section \ref{s-semiample}).

\item Theorems \ref{t-bpf}$_{n}$, \ref{t-MMPscaling}$_{n-1}$ 
imply Theorem \ref{t-MMPscaling}$_{n}$ (see Section \ref{s-MMPS}).

\item Theorems \ref{t-bpf}$_{n}$, \ref{t-MMPscaling}$_{n}$, \ref{t-nkltcontraction}$_{n-1}$ 
imply 
Theorem \ref{t-nkltcontraction}$_{n}$ (see Section \ref{s-contr-nonbig}).

\end{itemize}
In order to show the third implication, namely that Theorems \ref{t-bpf}$_{n}$, \ref{t-MMPscaling}$_{n-1}$  
imply Theorem \ref{t-MMPscaling}$_{n}$, it is necessary to show that the minimal model program with scaling terminates. Similarly to \cite{BCHM10}, this is achieved by proving a theorem on the boundedness of weak log canonical models as the 'boundary'  $B+\bbeta$ varies in a compact set (see Theorem \ref{t-MMPgeography}).

We note that in order to run the minimal model program with scaling for K\"ahler gklt pairs, it is necessary to check that the K\"ahler condition is preserved by steps of the minimal model program (see \cite{HLX26}).
\begin{theorem} \label{t-bigKahler} 
Let $X$ be a compact Fujiki class $\mathcal C$ variety of dimension $n$,  $(X,B+\bbeta)$ a gklt pair, where $\alpha:=[K_X+B+\bbeta _X]\in H^{1,1}_{\rm BC}(X)$ is nef and big. If $\alpha \cdot C>0$ for all rational curves $C\subset X$, then $\alpha$ is K\"ahler. 
\end{theorem}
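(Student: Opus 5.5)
We will reduce to the criterion of Collins--Tosatti, in its singular version: on a compact Fujiki class $\mathcal C$ space, a nef and big class is K\"ahler exactly when its non-K\"ahler locus $E_{nK}(\alpha)$ is empty. Moreover, $E_{nK}(\alpha)$ is the union of the positive-dimensional subvarieties $V\subset X$ with $\alpha^{\dim V}\cdot V=0$. So it suffices to show the following: if $V\subset X$ is an irreducible subvariety of dimension $d\ge 1$ with $(\alpha|_V)^d=0$, then $V$ contains a rational curve $C$ with $\alpha\cdot C=0$. This contradicts the hypothesis.

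First, replace $X$ by a small $\Q$-factorialization, or more generally a log resolution $\mu\colon X'\to X$ with $K_{X'}+B'+\bbeta_{X'}=\mu^*(K_X+B+\bbeta_X)+E$. Rational curves on $X'$ that are not contracted map to rational curves on $X$, and $\mu$-exceptional curves have $\mu^*\alpha$-degree zero anyway. So the aim is to find curves $C\subset V$ with $\alpha\cdot C=0$ that are rational. Since $\alpha$ is nef and big, write $\alpha=\omega+[E]$ with $\omega$ a K\"ahler current class, and use that the non-K\"ahler locus is contained in the analytic set where a K\"ahler current with analytic singularities fails to be smooth. Choose a maximal $V$ among the components of $E_{nK}(\alpha)$.

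Next, I would run a bend-and-break argument on $V$. Take the normalization $\nu\colon V^\nu\to V$. Since $\alpha$ is big on $X$, we have $V\subset \mathbf B_+(\alpha)$, and $\nu^*\alpha$ is nef but not big. The key step is adjunction: because $(X,B+\bbeta)$ is gklt, one can perturb, writing $\alpha=K_X+B+\bbeta_X$ with $\alpha-\epsilon\omega$ pseudo-effective and $\omega$ modified K\"ahler, and push $V$ into a log canonical center. Concretely, pick a K\"ahler current $T\in\alpha$ with $\nu_V(T)$ large, so that $(X,B+\bbeta+tT)$ has $V$ (or a subvariety of it) as a minimal generalized lc center for some $t$. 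Generalized adjunction then gives $(K_X+B+\bbeta_X+tT)|_{V^\nu}\sim K_{V^\nu}+B_V+\bbeta_V$ with $(V^\nu,B_V+\bbeta_V)$ gklt. Now $K_{V^\nu}+B_V+\bbeta_V\equiv(1+t)\alpha|_{V^\nu}$ is nef but not big. By the cone theorem (Theorem \ref{t-cone}) applied in the form of a rational-curve statement for nef-not-big classes (a bend-and-break for $K$-trivial directions), $V^\nu$ is covered by rational curves $\Gamma$ with $(K_{V^\nu}+B_V+\bbeta_V)\cdot\Gamma=0$. More precisely, perturb by $-\delta\omega|_V$. Since $\alpha|_V$ is not big, $\alpha|_V-\delta\omega|_V$ is not pseudo-effective, so $K_{V^\nu}+B_V+\bbeta_V-\delta\omega$ is not pseudo-effective. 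By the Kähler version of BDPP / non-pseudo-effectivity of $K$ (Brunella, Ou, Das--Hacon), $V^\nu$ is uniruled by curves $\Gamma_\delta$ with $\alpha\cdot\Gamma_\delta\le C\delta\,\omega\cdot\Gamma_\delta$ and bounded degree. Letting $\delta\to0$ and using Barlet-space compactness, we get a rational curve in the limit with $\alpha\cdot\Gamma=0$. Its image in $X$ is a rational curve with $\alpha\cdot C=0$, which is the desired contradiction.

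The main obstacle is the adjunction step. We must produce a gklt structure on $V$, or at least control $K_{V^\nu}$, when $V$ is not a priori an lc center and $X$ is only Fujiki rather than projective, so the usual tie-breaking with ample divisors must be replaced by tie-breaking with K\"ahler currents with analytic singularities. If that proves too delicate, I would instead use the weaker route: $V$ Fujiki and $\alpha|_V$ nef and not big with $\alpha|_V\equiv(K_X+B+\bbeta_X)|_V$. Apply the Kähler uniruledness criterion on a resolution of $V$ via the inequality $K_{\tilde V}\cdot\omega^{d-1}<0$ obtained from $\alpha=(\text{big})+(\text{effective not containing }V)$. Then use the uniform degree bounds from Theorem \ref{t-cone} to pass to the limit.
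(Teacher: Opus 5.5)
The paper does not prove this statement itself. It is quoted from \cite{HLX26} and then used, for instance in Case 1 of Theorem \ref{t-ind2}, where the target $Z$ is only known to lie in Fujiki's class $\mathcal C$. Your proposal therefore has to stand on its own, and it does not. The reduction to ${\rm Null}(\alpha)$ via Theorem \ref{thm:null-non-kahler} is fine. The step that should produce an \emph{$\alpha$-trivial rational} curve fails, for two reasons.

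First, there is no K\"ahler BDPP-type result that turns non-pseudo-effectivity of $K_{V^\nu}+B_V+\bbeta_V-\delta\omega$ into a covering family of rational curves that are negative against that class. Theorem \ref{t-ou} only says that a non-pseudo-effective $K$ forces uniruledness, with no control of degrees against a transcendental class.

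Second, even if you grant such curves $\Gamma_\delta$, the degree bound is not uniform in $\delta$. The natural bound $-(K_{V^\nu}+B_V+\bbeta_V-\delta\omega)\cdot\Gamma_\delta\le 2\dim V$ only gives $\delta\,\omega\cdot\Gamma_\delta\le 2\dim V+(1+t)\alpha\cdot\Gamma_\delta$, so $\omega\cdot\Gamma_\delta$ may grow like $1/\delta$. Barlet compactness then fails, and the limit is only a normalized class $z\in\overline{\rm NA}$ with $\alpha\cdot z=0$, not a curve. You get such a class anyway, since $\alpha|_V$ is nef and not K\"ahler.

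Your fallback has the same defect: uniruledness of $V$ gives rational curves but no control of their $\alpha$-degree. Your appeal to Theorem \ref{t-cone} for a covering family of $\alpha$-trivial rational curves also misstates it, since the cone theorem only produces \emph{negative} extremal rays. A covering family of $\alpha$-trivial rational curves is what an $\alpha$-trivial $(K+t_0\alpha)$-MMP ending in a Mori fibre space gives, as in the proof of Theorem \ref{t-Kahler}. That route needs Theorem \ref{t-MMPscaling} in dimension $\dim V$ and the NQC property.

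There are two further holes.
\begin{itemize}
\item Your lc center may be a proper subvariety $W\subsetneq V$ (your words: ``or a subvariety of it''). Then $\alpha|_W$ can be big and the rest of the argument yields nothing. You need tie-breaking to make $V$ itself the unique center; this is possible on a resolution with $\nu^*T=D'+\eta'$, because the K\"ahler cone is open.
\item When $X$ is only Fujiki, $V^\nu$ need not be K\"ahler, so Theorem \ref{t-cone} and Theorem \ref{t-gcbf} do not apply as stated. Passing to a K\"ahler resolution of $V$ brings in exceptional curves, which is exactly why one wants a covering family rather than a single extremal ray.
\end{itemize}

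In the K\"ahler case, the missing idea is to use the extremal-ray decomposition at a \emph{fixed} perturbation instead of taking a limit. Write $\nu^*\alpha\equiv D'+\eta'$ and take $0<\epsilon,\delta\ll1$. The pair $(X,B+\epsilon\nu_*D'+\bbeta+\epsilon\overline{\eta'-\delta\nu^*\omega})$ is gklt with adjoint class $(1+\epsilon)\alpha-\epsilon\delta\omega$. This class is negative on every $0\neq z\in\overline{\rm NA}(X)$ with $\alpha\cdot z=0$. Theorem \ref{t-cone} then writes $z=z_0+\sum a_i[\Gamma_i]$ with some $a_i>0$, and nefness of $\alpha$ forces $\alpha\cdot\Gamma_i=0$ for those $i$. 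This argument needs neither adjunction nor Null loci. The Fujiki case still needs a separate reduction, which your proposal does not supply.
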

\begin{corollary}[See Proposition \ref{prop: MMP can be run}]\label{c-contractions}
Let $(X,B+\bbeta )$ be a strongly $\Q$-factorial, compact, K\"ahler, gklt pair, where $B+\bbeta_X$ is big. If $f:X\to Y$ is a flipping or divisorial contraction then $Y$ is K\"ahler.
\end{corollary}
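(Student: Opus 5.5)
Let $(X,B+\bbeta)$ and $f:X\to Y$ be as in Corollary \ref{c-contractions}. The plan is to exhibit a nef and big class on $Y$ of the form $K_Y+B_Y+\bbeta_Y$, with $(Y,B_Y+\bbeta)$ gklt, that is positive on every rational curve of $Y$, and then to apply Theorem \ref{t-bigKahler}. Since $f$ is the contraction of a $(K_X+B+\bbeta_X)$-negative extremal ray $R=\R^+[\Gamma]$, which exists by Theorem \ref{t-cone}, there is a K\"ahler class $\omega$ on $X$ and a supporting class for $R$ of the form $\alpha_X=K_X+B+\bbeta_X+\omega$. This class is nef, vanishes exactly on $R$, and descends to $\alpha_X=f^*\alpha_Y$ for a class $\alpha_Y\in H^{1,1}_{\rm BC}(Y)$; the descent uses that $f$ has connected fibres and that $\alpha_X$ is trivial on the fibres. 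Since $Y$ is the image of a compact K\"ahler space, it is a compact Fujiki class $\mathcal C$ variety. Strong $\Q$-factoriality together with $B+\bbeta_X$ big lets us perturb $\omega$: we write $\omega\equiv \omega'+\epsilon(B+\bbeta_X)$-type combinations so that $\omega$ is absorbed into the boundary, replacing $B+\bbeta$ by $B'+\bbeta'$ with $(X,B'+\bbeta')$ still gklt. The result is that $\alpha_X\equiv K_X+B'+\bbeta'_X$.

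Next we push forward. Set $B_Y=f_*B'$ and take $\bbeta'$ to be the same b-divisor traced on $Y$. Since $f$ is $\alpha_X$-trivial, the negativity lemma in the divisorial case, and the equality of discrepancies away from the exceptional locus in the small case, give $K_X+B'+\bbeta'_X=f^*(K_Y+B_Y+\bbeta'_Y)$. Hence $(Y,B_Y+\bbeta')$ is gklt and $\alpha_Y\equiv K_Y+B_Y+\bbeta'_Y$. For the flipping case we must check that $K_Y+B_Y+\bbeta'_Y$ is $\R$-Cartier in the Bott--Chern sense even though $Y$ is not $\Q$-factorial. This holds because it is already given as a descended class; the gklt condition is then checked on a resolution. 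The class $\alpha_Y$ is nef and big on $Y$ because $\alpha_X$ is nef and big: it is the sum of a K\"ahler class and a nef class, hence big, and bigness is preserved by the bimeromorphic $f$.

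It remains to verify that $\alpha_Y\cdot C>0$ for every rational curve $C\subset Y$. Lift $C$ to a curve $C'\subset X$ mapping onto $C$; after normalizing, one can take $C'$ in the strict transform or in a fibre-covering component. Then $\alpha_Y\cdot C=\frac{1}{\deg}\,\alpha_X\cdot C'$, and $\alpha_X\cdot C'=0$ only if $[C']\in R$, that is, only if $C'$ is contracted. This contradicts $f(C')=C$ being a curve. So $\alpha_X\cdot C'>0$, and Theorem \ref{t-bigKahler} gives that $\alpha_Y$ is K\"ahler, in particular $Y$ is K\"ahler.

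The main obstacle is the first step: ensuring that the supporting class $\alpha_X$ genuinely descends to a $(1,1)$ Bott--Chern class on the possibly singular $Y$, and that it can be rewritten as a gklt log canonical class with big boundary. The descent needs a relative version of the statement that a class trivial on the fibres is a pullback, which should follow from the rationality of $R^1f_*\mathcal O_X$-type vanishing for gklt contractions. The absorption of $\omega$ into the boundary relies on bigness of $B+\bbeta_X$ to write it as a K\"ahler class plus an effective divisor, and this is where strong $\Q$-factoriality is used.
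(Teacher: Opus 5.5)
Your strategy breaks down at the bigness of $\alpha_Y$, which Theorem \ref{t-bigKahler} requires. (The strategy is: descend the supporting class $\alpha_X=K_X+B+\bbeta_X+\omega$ to $Y$, put a gklt structure on $Y$, check positivity on rational curves, and invoke Theorem \ref{t-bigKahler}.)

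Your justification, that $\alpha_X$ is a K\"ahler class plus a nef class, is false. In $\alpha_X=(K_X+B+\bbeta_X)+\omega$ the first summand is strictly negative on $R$. The honest decomposition $\alpha_X=(\alpha_X-\epsilon(K_X+B+\bbeta_X))+\epsilon(K_X+B+\bbeta_X)$ is a K\"ahler class plus a class that need not even be pseudo-effective. Nor does nefness of $\alpha_Y$ plus positivity on all curves of $Y$ give bigness in the non-projective setting.

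In the divisorial case you can repair this. The exceptional divisor $E$ is $\Q$-Cartier with $E\cdot R<0$ (Remark \ref{r-div}), Theorem \ref{t-cone} gives only finitely many negative extremal rays, and $\alpha_X\geq\omega$ on $\overline{\rm NA}(X)_{K_X+B+\bbeta_X\geq 0}$. Hence $\alpha_X-\delta E$ is positive on $\overline{\rm NA}(X)\setminus\{0\}$ for $0<\delta\ll 1$, so it is K\"ahler, and $\alpha_X=(\alpha_X-\delta E)+\delta E$ is big.

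In the flipping case there is no effective divisor negative on $R$ to play this role, and bigness of $\alpha_X$ is genuinely the hard part. In the paper it comes from Theorem \ref{t-bpf}: the contraction $g$ defined by $\alpha$ satisfies $\alpha\equiv g^*\gamma$ with $\gamma$ K\"ahler. In the non-big case this is Case 2 of the proof of Theorem \ref{t-ind2}, which uses Ou's theorem, the MRC fibration, the canonical bundle formula and induction. If $\alpha$ were not big, $g$ could not be bimeromorphic. Yet $g$ contracts exactly the curves in $R$, so by the uniqueness in Lemma \ref{l-contraction} it coincides with the birational $f$, a contradiction.

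So you cannot bypass Theorem \ref{t-bpf}, and once you use it the detour through Theorem \ref{t-bigKahler} is superfluous. This is the paper's argument (Proposition \ref{prop: MMP can be run}(1)). Apply Theorem \ref{t-bpf} to the gklt pair $(X,B+\bbeta+\bar\omega)$, whose boundary $B+\bbeta_X+\omega$ is still big. This gives a contraction $g$ with $\alpha\equiv g^*\gamma$ and $\gamma$ K\"ahler; $g=f$ by Lemma \ref{l-contraction}, so $Y$ is K\"ahler. Lemma \ref{l-birgklt} then shows that $Y$ carries a gklt structure and has rational singularities.

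Two smaller remarks:
\begin{itemize}
\item The descent $\alpha_X=f^*\alpha_Y$ that you flag as the main obstacle is exactly Lemma \ref{l-birgklt} combined with Lemma \ref{l-rccfibres}.
\item There is no need to ``absorb'' $\omega$ into $B$. Simply replace $\bbeta$ by $\bbeta+\bar\omega$, which is again b-nef; strong $\Q$-factoriality plays no role at that step.
\end{itemize}
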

\subsection{Sketch of the proof}
Throughout the proof we will study gklt pairs $(X,B+\bbeta )$ where $B+\bbeta _X$ is modified big and $X$ is compact K\"ahler. Our arguments will deal separately with the following two alternatives:
\begin{enumerate}
\item $K_X+B+\bbeta_X$ is not big, and
\item $K_X+B+\bbeta_X$ is big.
\end{enumerate}
The main crux of our inductive argument is to prove the transcendental base-point-free theorem (Theorem \ref{t-bpf}) by induction on the dimension. For technical reasons that we explain below, it is necessary to prove a non-gklt version of this result (see Theorem \ref{t-nkltcontraction}).

The key to understanding case (1), is the following fundamental result of Wenhao Ou.
\begin{theorem}[\cite{Ou25}]\label{t-ou} Let $X$ be a compact K\"ahler manifold. Then $X$ is uniruled if and only if the
canonical line bundle is not pseudoeffective.
\end{theorem}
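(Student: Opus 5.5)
The easy implication comes first. Suppose $X$ is uniruled, and choose a covering family of rational curves on $X$. A general member $C$ of this family is a free rational curve, so $T_X|_C$ is nef, and deformations of $C$ sweep out a dense open subset of $X$. Hence $K_X\cdot C=-\deg T_X|_C\le -2<0$. Now suppose $K_X$ were pseudo-effective, represented by a closed positive $(1,1)$-current $T$. By Demailly's regularization, $T$ is a limit of currents with analytic singularities whose singular loci are proper analytic subsets. A general deformation of $C$ is not contained in these loci, so the restriction of each regularized current to $C$ is well defined and has degree bounded below by a quantity tending to $0$. This gives $K_X\cdot C\ge 0$, a contradiction. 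So uniruledness forces $K_X$ to be non-pseudo-effective.

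For the converse I would follow a Campana--P\u{a}un / Boucksom--Demailly--P\u{a}un--Peternell type strategy, adapted to the K\"ahler setting. Assume $K_X$ is not pseudo-effective.

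\textbf{Step 1: a negative movable class.} I first want a class $\gamma$ in the closure of the movable cone (the cone generated by classes $\mu_*(\tilde\omega_1\wedge\dots\wedge\tilde\omega_{n-1})$ for modifications $\mu$ and K\"ahler classes $\tilde\omega_i$) with $K_X\cdot\gamma<0$. In the projective case this follows from BDPP duality. In the K\"ahler case the natural route is the transcendental duality: the pseudo-effective cone is dual to the movable cone. I would try to obtain it from the transcendental Morse inequality $\mathrm{vol}(\alpha-\beta)\ge\alpha^n-n\,\alpha^{n-1}\cdot\beta$ for nef $\alpha,\beta$, together with orthogonality estimates for Zariski decompositions.

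\textbf{Step 2: a foliation with algebraic, rationally connected leaves.} With $\gamma$ in hand, I take the Harder--Narasimhan filtration of $T_X$ with respect to the movable class $\gamma$; slopes of torsion-free sheaves are well defined against such classes. Since $\mu_\gamma(T_X)=-K_X\cdot\gamma/n>0$, the maximal destabilizing subsheaf $\mathcal F$ satisfies $\mu_{\gamma,\min}(\mathcal F)>0$. A Campana--P\u{a}un criterion, extended to movable classes on K\"ahler manifolds, then shows that $\mathcal F$ is a foliation with algebraic leaves whose closures are rationally connected. The proof of this criterion goes through Bost's arithmetic/algebraicity argument, or alternatively through a positivity property of the relative canonical bundle on the graph of the family of leaves. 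Since the leaves pass through a general point of $X$, $X$ is covered by rational curves, i.e.\ $X$ is uniruled.

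I expect Step 1 to be the main obstacle: transcendental BDPP duality for arbitrary compact K\"ahler manifolds is only known conditionally, so it cannot simply be invoked. If a direct proof of the duality fails, my fallback is to avoid the full duality. One option is to work directly with classes $\omega^{n-1}$ for K\"ahler $\omega$ on suitable modifications: if $K_X\cdot\omega^{n-1}\ge0$ for all of these, I would try to derive pseudo-effectivity of $K_X$ from a Hodge-index/Morse argument applied only to $K_X+t\omega$. The other option is to argue by induction on the algebraic dimension, using the MRC fibration and deformation to projective models. In both cases, the non-vanishing of $\mu_{\gamma,\max}$ should be enough to produce the destabilizing foliation in Step 2, and Step 2 itself should adapt from the projective case with standard analytic modifications.
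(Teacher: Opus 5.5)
The paper does not prove this statement; it imports it from \cite{Ou25} and uses it as a black box. So the only question is whether your outline stands on its own.

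Your proof that a uniruled $X$ has non-pseudo-effective $K_X$ is fine. A general member of a covering family is free, so $K_X\cdot C\le -2$, while regularizing a positive current in $\{K_X\}$ gives $K_X\cdot C\ge 0$ for a very general member.

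The converse, however, is only a roadmap, and neither of its two load-bearing steps is supplied. Step~1 asks, for the class $K_X$, for exactly the hard half of the transcendental BDPP duality. Run the usual argument at the pseudo-effective threshold $L=K_X+t_0\omega$ with $\gamma_\epsilon=\langle (L+\epsilon\omega)^{n-1}\rangle$. For it to give $K_X\cdot\gamma_\epsilon<0$ you need two things:
\begin{itemize}
\item the orthogonality relation $\langle\beta^{n-1}\rangle\cdot\beta=\mathrm{vol}(\beta)$ for big $\beta$;
\item a bound of the form $\mathrm{vol}(L+\epsilon\omega)\le n\epsilon\,\omega\cdot\gamma_\epsilon$.
\end{itemize}
Together these amount essentially to differentiability of the volume on the big cone of a compact K\"ahler manifold. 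This is exactly the input you name (the volume form of the Morse inequality plus orthogonality), but you neither prove nor cite it, and you say yourself that you cannot invoke it.

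Your fallbacks do not replace it. Testing $K_X$ only against classes $\mu_*\tilde\omega^{n-1}$ and concluding pseudo-effectivity is the same duality statement with a smaller test cone, so it is no easier. The route through deformation to projective models fails in general, because a compact K\"ahler manifold need not deform to a projective one (Voisin's examples). The MRC-based induction is too unspecific to evaluate.

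Step~2 is the more serious gap, and it is not a matter of ``standard analytic modifications''. In the projective case, algebraicity of the leaves of a foliation $\mathcal F$ with $\mu_{\gamma,\min}(\mathcal F)>0$ comes from Bost's criterion or the Bogomolov--McQuillan formal-leaf argument. Both rest on algebraic input: an ample line bundle on $X$ against which the growth of sections along the formal leaf is measured, and, for Bogomolov--McQuillan, curves along which $\mathcal F$ is ample. For a transcendental movable class on a non-projective K\"ahler manifold neither is available. Showing that the leaves of $\mathcal F$ are compact, i.e.\ that $\mathcal F$ is induced by a meromorphic fibration, is therefore precisely where the K\"ahler case differs from the projective one.

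Your alternative mechanism does not address this. Positivity of the relative canonical bundle of the family of leaves is how one proves the leaves are rationally connected \emph{once} they are known to be compact; before that there is no family of leaves (no graph in the cycle space) to work on. As it stands, the implication ``$K_X$ not pseudo-effective $\Rightarrow$ $X$ uniruled'' is not established.
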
 
Since $B+\bbeta _X$ is modified big, then (1) implies that $K_X$ is not pseudoeffective and so $X$ is uniruled. Therefore, we may consider the maximally rationally connected fibration (MRC) $X\dasharrow Y$. After resolving singularities, we may assume that $X\to Y$ is a projective morphism with rationally connected fibers. Running a relative log terminal model, we obtain a Mori fiber space. This means that after a finite sequence of flips and divisorial contractions over $Y$ (which we suppress for convenience), we obtain a $K_X+B+\bbeta _X$ Mori fiber space $f:X\to Z$ over $Y$. By the canonical bundle formula we have $K_X+B+\bbeta _X\equiv f^*(K_Z+\Delta+\ddelta _Z)$ where $(Z,\Delta +\ddelta )$ is a gklt pair, $\Delta +\ddelta _Z$ is modified big and $Z$ is in Fujiki's class $\mathcal C$ (and hence birational to a K\"ahler variety, but for simplicity we suppose here that $Z$ is K\"ahler). If $K_X+B+\bbeta _X$ is nef, then so is $K_Z+\Delta +\ddelta _Z$. Since   $\dim Z<\dim X$, we can now conclude by induction on the dimension that $K_Z+\Delta +\ddelta _Z$ is semiample and hence so is $K_X+B+\bbeta _X$.

The key to understanding case (2) is the following result of \cite{HP24} and \cite{CT15}.
\begin{theorem}[Theorem 4.22 \cite{HP24}]\label{thm:null-non-kahler}
       Let $X$ be a compact  analytic variety in Fujiki's class $\mathcal C$, and $\alpha$ a smooth (1,1)-form, which is locally $\partial\bar\partial$-exact and such that the corresponding class is nef and big. Then $E_{nK}^{as}(\alpha)= {\rm Null}(\alpha)$. In particular,
the set ${\rm Null}(\alpha)$ is analytic.   
\end{theorem}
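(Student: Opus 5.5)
Theorem \ref{thm:null-non-kahler} is cited from \cite[Theorem 4.22]{HP24}, which generalizes Collins--Tosatti \cite{CT15} from compact K\"ahler manifolds to singular spaces in Fujiki's class $\mathcal C$. The plan is to reduce to the smooth K\"ahler case by a resolution and then push the equality of loci forward. The inclusion ${\rm Null}(\alpha)\subseteq E_{nK}(\alpha)$ is formal. If $V$ is an irreducible analytic subvariety not contained in $E_{nK}(\alpha)$, then there is a K\"ahler current $T\in\alpha$ with analytic singularities that is smooth at the generic point of $V$. Restricting $T$ to $V$ gives a K\"ahler current in $\alpha|_V$. The locally $\partial\bar\partial$-exact hypothesis is what makes the restriction of the class meaningful. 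Hence $\alpha|_V$ is big, and since it is nef, $\int_V\alpha^{\dim V}>0$, so $V\not\subseteq{\rm Null}(\alpha)$.

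For the reverse inclusion $E_{nK}(\alpha)\subseteq{\rm Null}(\alpha)$, choose a resolution $\pi:\tilde X\to X$. Because $X$ is in class $\mathcal C$, we may take $\tilde X$ to be a compact K\"ahler manifold, and arrange that $\pi$ is an isomorphism over $X_{\rm reg}$ and that ${\rm Exc}(\pi)$ is a divisor. Then $\pi^*\alpha$ is nef and big, and Collins--Tosatti gives $E_{nK}(\pi^*\alpha)={\rm Null}(\pi^*\alpha)$, which is analytic. The projection formula gives $\pi({\rm Null}(\pi^*\alpha))\subseteq {\rm Null}(\alpha)\cup \pi({\rm Exc}(\pi))$, so the question becomes how $E_{nK}$ behaves under $\pi$.

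The key step is to show $E_{nK}(\alpha)\subseteq \pi(E_{nK}(\pi^*\alpha))$, modulo a correction over the singular and exceptional locus. To do this, take a K\"ahler current $\tilde T\in\pi^*\alpha$ with analytic singularities along ${\rm Null}(\pi^*\alpha)$. Since $\pi$ has connected fibres and $\tilde T$ lies in the pullback class, its singular part descends. Write $\tilde T=\pi^*\theta+dd^c\varphi$. Then $\varphi$ is quasi-psh with $\pi^*\theta+dd^c\varphi$ K\"ahler, hence $\varphi$ is constant on fibres and descends to $X$. This gives a current on $X$ that is smooth and strictly positive away from $\pi({\rm Null}(\pi^*\alpha))$. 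The strict positivity is measured against a K\"ahler form on $X$, which is available because we are working over $X$ in class $\mathcal C$ after possibly modifying.

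The main obstacle is the fibres of $\pi$. A positive-dimensional fibre $F$ over a point $x\notin{\rm Null}(\alpha)$ satisfies $\pi^*\alpha|_F\equiv0$, so $F\subseteq{\rm Null}(\pi^*\alpha)$. Therefore $\pi({\rm Null}(\pi^*\alpha))$ may be strictly larger than ${\rm Null}(\alpha)$, and the descended current is not K\"ahler near such points. To handle this, I would follow \cite{CT15} and use a gluing argument with induction on dimension. Apply the result inductively on each irreducible component $V$ of $\pi({\rm Exc}(\pi))\cap X_{\rm sing}$ that is not contained in ${\rm Null}(\alpha)$. This is legitimate because $\alpha|_V$ is nef and big and $V$ is again in class $\mathcal C$. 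It produces K\"ahler currents on $V$ with analytic singularities contained in ${\rm Null}(\alpha|_V)\subseteq{\rm Null}(\alpha)$. Then use the extension theorem for K\"ahler currents from subvarieties (Collins--Tosatti) to extend them to a neighborhood. Finally, glue them with the descended current via a regularized maximum. The resulting K\"ahler current in $\alpha$ is smooth outside ${\rm Null}(\alpha)$, which proves $E_{nK}(\alpha)\subseteq{\rm Null}(\alpha)$.

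Analyticity of ${\rm Null}(\alpha)$ then follows because $E_{nK}$ is the intersection of the singular loci of K\"ahler currents with analytic singularities, and by Noetherianity this intersection is attained by a single such current.
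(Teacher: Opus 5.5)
The paper does not prove this statement. It quotes it from \cite[Theorem 4.22]{HP24}, which adapts \cite{CT15} to singular spaces in class $\mathcal C$ using K\"ahler currents with weak analytic singularities \cite[Definition 4.11]{HP24}. That is why the statement concerns $E^{as}_{nK}$: on a singular space you cannot regularize a K\"ahler current to analytic singularities as you can on a manifold. Your inclusion ${\rm Null}(\alpha)\subseteq E^{as}_{nK}(\alpha)$ and your closing Noetherianity remark are fine. The reverse inclusion, however, has a genuine gap.

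Your descent step is correct only in a degenerate sense. On a positive-dimensional fibre $F$ you have $\pi^*\theta|_F=0$ while $\tilde T\ge\epsilon\tilde\omega$, so $\varphi|_F\equiv-\infty$. The descended current is therefore just $\pi_*\tilde T$, and its potential is $-\infty$ over every point with positive-dimensional fibre. So the resolution only gives you some K\"ahler current in $\alpha$ that is smooth off ${\rm Null}(\alpha)\cup\pi({\rm Exc}\,\pi)$, and the entire content of the theorem is pushed into your last paragraph. Two smaller errors: you cannot in general take $\tilde X$ K\"ahler with $\pi$ an isomorphism over $X_{\rm reg}$ (take $X$ a non-K\"ahler Moishezon manifold), so restricting to $\pi({\rm Exc}\,\pi)\cap X_{\rm sing}$ drops components. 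Also, $X$ need not carry a K\"ahler form; a hermitian form is what measures strict positivity.

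Your last paragraph is where the proof actually lives, and it is only asserted.
\begin{itemize}
\item Components of $\pi({\rm Exc}\,\pi)$ inside $X_{\rm sing}$ cannot be avoided by changing the resolution. So you must extend K\"ahler currents from $V$ and glue them inside a singular, possibly non-K\"ahler ambient space. The extension and gluing arguments of \cite{CT15} are carried out for subvarieties of a complex manifold. Making them work on singular $X$, with a class of singularities stable under maxima and having analytic $E_+$, is exactly what the machinery of \cite[\S 4]{HP24} is for.
\item A regularized maximum of $\pi_*\tilde T$ with an extension of a current from $V$ does not work as stated. The potential of $\pi_*\tilde T$ is $-\infty$ on ${\rm Null}(\alpha)$ and on the other components of $\pi({\rm Exc}\,\pi)$ that meet $V$. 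Near $V\cap V'$, and near the boundary of the neighbourhood, you need quantitative control, for instance by first making the current on $V$ sufficiently singular along $V\cap V'$. This is the delicate point of \cite{CT15}.
\item Even after such gluing, the new current is smooth only at the generic point of $V$, not off ${\rm Null}(\alpha)$.
\end{itemize}

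The argument should instead be organized as a contradiction. Take a component $W$ of $E^{as}_{nK}(\alpha)$ with $\int_W\alpha^{\dim W}>0$. Obtain a K\"ahler current on $W$, for example via a K\"ahler resolution of $W$ and push-forward. Glue it with a current realizing $E^{as}_{nK}(\alpha)$ to produce a current with weak analytic singularities that is smooth at the generic point of $W$. That contradicts $W\subseteq E^{as}_{nK}(\alpha)$. Organized this way, the resolution of $X$ plays no role.
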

Therefore, if $\alpha =K_X+B+\bbeta _X$ is nef and big where $(X,B+\bbeta _X)$ is a gklt pair, we consider a K\"ahler current $\psi \in[K_X+B+\bbeta _X]$ with singularities concentrated along ${\rm Null}(\alpha)$.
The key fact here is that we may assume that $\psi$ has weak analytic
singularities (see \cite[Definition 4.11]{HP24}) and that the singular set $E_+(\psi)$ is a closed, analytic subset
of $X$ (\cite[Lemma 4.16]{HP24}) containing 
${\rm Null}(\alpha)$.
Since $\psi$ has weak analytic
singularities, replacing $\psi$ by another locally exact current with $[\psi]=\alpha$, we may assume that there is a resolution $\nu :X'\to X$ such that $\nu ^*\psi \equiv D'+\eta '$ where $D'$ is an effective $\R$-divisor and $\eta'$ is a K\"ahler form.
If $E_+(\psi)=\emptyset$,
then $\alpha $ is K\"ahler and hence $K_X+B+\bbeta _X$ is semiample. Otherwise we consider the log canonical threshold $\lambda$ for $(X,B+\bbeta _X)$ with respect to $\psi$. 
Therefore $(X,B+\bbeta +\lambda \psi)$ has a minimal log canonical center say $Z$ and by adjunction we have $(K_{X}+B+\bbeta +\lambda \psi)|_Z=K_Z+\Delta +\ddelta _Z$ where $(Z, \Delta +\ddelta _Z)$ is a gklt pair. It is easy to arrange that $\Delta +\ddelta _Z$ is modified big and hence by induction on the dimension $K_Z+\Delta +\ddelta _Z$ is semiample. 
We now repeat this procedure by considering successive jumping numbers for the multiplier ideals $\mathcal J(X,B+\bbeta +t\psi)$ so that we obtain a sequence of numbers $\lambda _1=\lambda <\lambda _2<\lambda _3<\ldots $
and multiplier ideals \[\mathcal I_Z=\mathcal J(X,B+\bbeta +\lambda _1\psi)\supset \mathcal I_{Z_2}=\mathcal J(X,B+\bbeta +\lambda_2 \psi)\supset \mathcal I_{Z_3}=\mathcal J(X,B+\bbeta +\lambda_3 \psi)\supset \ldots \]
Proceeding by induction, we assume that $(K_{X}+B+\bbeta +\lambda \psi)|_{Z _i}$ is semiample (note however that $Z_i$ is typically not normal and even not reduced). Suppose that $Z_{i+1}=Z'\cup Z_i$, then we expect that \[(1+\lambda _{i+1})\alpha|_{Z'}=(K_{X}+B+\bbeta +\lambda_{i+1} \psi)|_{Z '}=K_{Z'}+\Delta _{Z'}+\ddelta _{Z'}\] where $(Z',\Delta _{Z'}+\ddelta _{Z'})$ is a generalized pair which is gklt on the complement of $Z'\cap Z_i$. Since $\alpha |_{Z_i}$ is semiample, then so is $\alpha|_{Z'\cap Z_i}$. Therefore, by an induction on the dimension, using the non-gklt version of the transcendental base-point-free theorem (see Theorem \ref{t-nkltcontraction}), we see that $\alpha |_{Z'}$ is semiample. It remains to glue the corresponding morphisms for $Z',Z_i$ along $Z'\cap Z_i$ to obtain a morphism defined on $Z_{i+1}$ that shows that $\alpha |_{Z_{i+1}}$ is semiample.

Repeating this argument we see that $\alpha |_{\mathcal N}$ is semiample, where $\mathcal N$ denotes the formal neighborhood of $E_+(\psi)$ in $X$. Let $f:\mathcal N\to \mathcal W$ be the corresponding morphism. Using ideas of \cite{Ar70} and \cite{Fuj75}, we then extend this morphism to $F:X\to Y$ where $F|_{\mathcal N}=f$ and $F|_{X\setminus \mathcal N}={\rm id}_{X\setminus \mathcal N}$.

We also note that an important ingredient in our arguments is the canonical
bundle formula, see \S \ref{s-canbundle}. 
\begin{remark} Note that there is a close connection between running the minimal model program with scaling and the K\"ahler-Ricci flow. We refer the reader to \cite{ST17} for the details. We also refer the reader to \cite[Theorem C]{EGZ09} and \cite{BBEGZ19} for results about the existence of K\"ahler-Einstein metrics.
\end{remark}
\begin{theorem}Conjectures 4.12 and 4.13 of \cite{Tos18} hold.
\end{theorem}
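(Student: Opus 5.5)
The plan is to show that both conjectures are special cases of Theorem \ref{t-bpf}, and possibly of Theorem \ref{t-MMPscaling}, once they are rewritten in the language of gklt pairs. I am working from memory of \cite{Tos18} here, so the exact formulations need to be checked against that source. As I recall, the conjectures concern a compact K\"ahler manifold $X$ with a K\"ahler form $\omega_0$, together with the Kähler--Ricci flow, whose cohomology classes are $[\omega_t]=[\omega_0]+tK_X$. Let $T=\sup\{t>0 : [\omega_0]+tK_X \text{ is K\"ahler}\}$ be the maximal existence time. If $T<\infty$, the limiting class is $\alpha_T=[\omega_0]+TK_X$. The conjectures predict that $\alpha_T$ is semiample: it should be the pullback of a K\"ahler class under a holomorphic contraction $f:X\to Y$ onto a normal compact K\"ahler space. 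One conjecture treats the non-collapsing case $\int_X\alpha_T^n>0$, where $f$ is bimeromorphic, and the other the collapsing case. I would also check whether either conjecture concerns the case $T=\infty$ with $K_X$ nef, i.e.\ abundance-type semiampleness in the presence of a K\"ahler twist.

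The first step is the rescaling
\[
\tfrac1T\alpha_T=K_X+\tfrac1T[\omega_0].
\]
Take $B=0$ and let $\bbeta$ be the b-divisor (b-class) determined by the K\"ahler class $\tfrac1T[\omega_0]$, so that $\bbeta$ is nef and $\bbeta_X=\tfrac1T[\omega_0]$. Since $X$ is smooth and $\bbeta$ descends to $X$, the pair $(X,0+\bbeta)$ is gklt, and indeed terminal. Moreover $\bbeta_X$ is K\"ahler, hence big and in particular modified big. The class $K_X+\bbeta_X$ is nef, because it is a limit of K\"ahler classes by the definition of $T$.

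With this setup, Theorem \ref{t-bpf} applies directly. It yields a Moishezon contraction $f:X\to Y$ onto a compact K\"ahler space $Y$ and a K\"ahler class $\gamma$ on $Y$ with $\tfrac1T\alpha_T\equiv f^*\gamma$. Since $X$ is smooth it is strongly $\Q$-factorial, so $f$ is moreover projective. Comparing volumes then identifies the two regimes:
\begin{itemize}
\item if $\int_X \alpha_T^n>0$, then $\dim Y=n$ and $f$ is bimeromorphic (the non-collapsing case);
\item if $\int_X \alpha_T^n=0$, then $\dim Y<n$, which is the collapsing case, a Mori-fibre-type situation consistent with Theorem \ref{t-ou}.
\end{itemize}
If one conjecture instead predicts that the flow can be continued through the singularities, for instance by a sequence of divisorial contractions and flips ending in a minimal model or a Mori fiber space, I would invoke Theorem \ref{t-MMPscaling}. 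That theorem runs the $K_X$-MMP with scaling of the modified K\"ahler form $\omega_0$, using Corollary \ref{c-contractions} to keep each step K\"ahler. At each step the new threshold class is again of the form $K_{X_i}+\bbeta_{X_i}$ with $\bbeta_{X_i}$ modified big, so Theorem \ref{t-bpf} can be reapplied.

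The main obstacle is not mathematical depth, since everything reduces to the main theorems, but matching the precise statements. In particular, \cite{Tos18} asks for $\alpha_T$ itself to be the pullback of a K\"ahler class, not merely numerically equivalent to one. I would first verify that numerical equivalence in $H^{1,1}_{\rm BC}$ suffices. This should follow because $f$ has connected fibres, so $f_*\mathcal O_X=\mathcal O_Y$, and because a class that is trivial on the fibres of $f$ descends modulo $\partial\bar\partial$-exact forms. I would also verify that $Y$ is normal. If the conjectures are instead phrased for the singular spaces produced along the flow, I would use that those spaces are $\Q$-factorial with klt singularities, and apply the theorems to them in the same way.
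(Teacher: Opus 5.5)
Your argument for the semiampleness conjecture is the paper's argument. Conjecture 4.13 of \cite{Tos18} says that the limiting class $\alpha_T$ is semiample. After rescaling to $K_X+\frac1T[\omega_0]$ with $B=0$ and $\bbeta=\overline{\omega_0/T}$, it is an immediate instance of Theorem \ref{t-bpf}: $X$ is smooth, $\bbeta_X$ is K\"ahler and so modified big, and $\alpha_T$ is nef as a limit of K\"ahler classes. Two of your worries are moot. In this paper $\equiv$ means equality in $H^{1,1}_{\rm BC}(X)$, so Theorem \ref{t-bpf} already gives $\alpha_T=f^*(T\gamma)$ as Bott--Chern classes, with no descent argument needed. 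Also, $Y$ is normal simply because $X$ is normal and $f_*\OO_X=\OO_Y$.

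The gap is Conjecture 4.12. The paper does not obtain it by a second application of Theorem \ref{t-bpf}, and Theorem \ref{t-MMPscaling} plays no role. Instead it deduces 4.12 from Conjecture 4.13 using Tosatti's own reduction, \cite[Proposition 4.14]{Tos18}. Your proposal never makes this reduction. Your volume dichotomy only records whether $f$ is bimeromorphic. The sentence calling $\dim Y<n$ ``a Mori-fibre-type situation consistent with Theorem \ref{t-ou}'' is an assertion, not a proof of the extra structure that 4.12 requires beyond semiampleness. To close the proof, either cite \cite[Proposition 4.14]{Tos18} for the implication 4.13 $\Rightarrow$ 4.12, or carry out that deduction yourself. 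In the collapsing case, for instance, the key step would be as follows. A general fibre $F$ of $f$ is a smooth connected manifold with $K_F=K_X|_F$ and $\alpha_T|_F\equiv 0$. Hence $-K_F$ is a positive multiple of the K\"ahler class $[\omega_0|_F]$, so it is ample. Thus $f$ is a Fano fibration and $\alpha_T$ is pulled back from a K\"ahler class on the normal base $Y$.
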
 
\begin{proof} Conjecture 4.13 of \cite{Tos18} is an immediate consequence of Theorem \ref{t-bpf}. Conjecture 4.12 of \cite{Tos18} then follows by Proposition 4.14 of \cite{Tos18}.
\end{proof}
\begin{theorem}Conjectures 1.1, 1.2, and 1.3 of \cite{TZ18} holds. 
\end{theorem}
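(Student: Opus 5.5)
Conjectures 1.1--1.3 of \cite{TZ18} concern the K\"ahler--Ricci flow on a compact K\"ahler manifold (or klt variety) $X$. Roughly: (1.1) if $K_X$ is nef, then $K_X$ is semiample, i.e.\ the canonical class is the pullback of a K\"ahler class under the Iitaka-type contraction. (1.2) The flow exists until a finite time $T$ exactly when $K_X$ is not nef, and the limiting class $[\omega_0]+TK_X$ is semiample, inducing a contraction. (1.3) The flow can be continued through such singular times by a sequence of divisorial contractions, flips and Mori fiber spaces, and terminates. I do not have the exact formulations in front of me, so my plan is to reduce each conjecture to one of Theorems \ref{t-bpf}, \ref{t-MMPscaling}, and the cone/contraction results, in the same way the paper reduced the conjectures of \cite{Tos18}.

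\textbf{Conjecture 1.2.} Let $\omega$ be the initial K\"ahler form and $T$ the first time at which $[\omega]+tK_X$ ceases to be K\"ahler. The class $\alpha=[\omega]+TK_X$ is nef. Write $T^{-1}\alpha=K_X+\beta$ with $\beta=T^{-1}\omega$, viewed as a K\"ahler b-divisor $\bbeta$. Then $(X,0+\bbeta)$ is a gklt pair and $\bbeta_X$ is K\"ahler, hence big, hence modified big. Theorem \ref{t-bpf} therefore gives a Moishezon contraction $f:X\to Y$ with $\alpha\equiv f^*\gamma$ and $\gamma$ K\"ahler, which is exactly the finite-time semiampleness statement. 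The same argument, with $(X,B)$ klt singular in place of a manifold, covers the singular versions.

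\textbf{Conjecture 1.1.} When $K_X$ is nef, $\beta=0$ is not big, so Theorem \ref{t-bpf} does not apply directly. I would perturb: for small $\epsilon>0$, $K_X+\epsilon\omega$ is K\"ahler, but this only gives semiampleness of perturbed classes. The honest route is abundance, which is not available in general. I expect \cite{TZ18} formulates 1.1 only under a bigness hypothesis (or for the flow with $K_X+\beta$, $\beta$ big). If so, it is again a direct application of Theorem \ref{t-bpf}. If $K_X$ is big, apply Theorem \ref{t-bpf} to $(X,0)$ with modified bigness supplied instead by the bigness of $K_X+B$ (the variant in Theorem \ref{t-KahlerBCHM}(2)); alternatively write $K_X\equiv \epsilon A+E$ and absorb a small multiple of the big part into the boundary.

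\textbf{Conjecture 1.3.} The analytic MMP along the flow is the $K_X$-MMP with scaling of $\omega$. Here $K_X+\bbeta_X$ with $\bbeta=T^{-1}\omega$ is big in the boundary, so Theorem \ref{t-MMPscaling} applies after replacing $X$ by a strongly $\Q$-factorial model. Each step is a contraction furnished by Theorem \ref{t-bpf} and Theorem \ref{t-cone}, with flips by Theorem \ref{t-flip}; the targets remain K\"ahler by Corollary \ref{c-contractions}; and the process terminates.

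\textbf{Main obstacle.} I expect the main difficulty to be matching the scaling MMP to the flow. Each singular time $T_i$ must correspond to a scaling threshold, and the new K\"ahler class on the flipped or contracted model must be the pushforward or strict transform of $[\omega]+T_iK_X$. This needs the contraction to be precisely the one given by the nef class, i.e.\ the extremal face on which it vanishes. That identification is the step I would check most carefully.
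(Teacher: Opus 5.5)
Your argument for Conjecture 1.2 is right and matches the paper. After rescaling, the limiting class at a finite singular time is $K_X+\bbeta_X$ with $\bbeta$ the b-divisor of a K\"ahler form, so $\bbeta_X$ is modified big and Theorem \ref{t-bpf} applies.

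The gap is in 1.1 and 1.3. There you guess the statements and try to prove the guesses directly from this paper's theorems, and neither attempt goes through.

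\textbf{Conjecture 1.1.} You read it as abundance for nef $K_X$, which you concede you cannot prove. That reading cannot be correct anyway: the paper lists abundance as open (Conjecture \ref{c-ab}) while claiming to prove 1.1.

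\textbf{Conjecture 1.3.} The MMP along the flow is the $K_X$-MMP of the pair $(X,0)$ with scaling of $\omega_0$. Up to normalisation, the class at time $t$ is a positive multiple of $K_X+s\,\omega_0$ with $s$ decreasing in $t$. It is not an MMP for the pair $(X,\overline{T^{-1}\omega_0})$. Theorem \ref{t-MMPscaling} requires $B+\bbeta_X$ to be big, which fails for $(X,0)$. Termination is therefore available here only when $K_X$ is not pseudo-effective (Theorem \ref{t-runMMP}(2)) or is big. The case where $K_X$ is pseudo-effective but not big is exactly the open one. Beyond that, matching singular times of the flow with MMP steps, and continuing the flow on the new models, is an analytic statement. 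You leave it unproved, and nothing in this paper supplies it.

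\textbf{The missing idea.} The reductions you are attempting by hand are already theorems in \cite{TZ18}. The paper's proof is three lines:
\begin{itemize}
\item Conjecture 1.2 is an immediate consequence of Theorem \ref{t-bpf}.
\item Conjecture 1.1 then follows from 1.2 by \cite[Theorem 2.3]{TZ18}.
\item Conjecture 1.3 follows from \cite[Theorem 3.2]{TZ18}; the paper notes it was first proved in \cite{Zha25}.
\end{itemize}
So the only new input is the transcendental base-point-free theorem for $K_X$ plus a K\"ahler class, i.e.\ for the limiting class at a finite-time singularity. No abundance, no termination of a scaling MMP with non-big boundary, and no flow-through-singularities analysis is needed. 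As written, your proposal establishes 1.2 only.
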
 
\begin{proof} Conjecture 1.2 of \cite{TZ18} is an immediate consequence of Theorem \ref{t-bpf}. Conjecture 1.1 of \cite{TZ18} then follows by Theorem 2.3, of \cite{TZ18}. Conjecture 1.3 of \cite{TZ18}  now follows from \cite[Theorem 3.2]{TZ18}. Note that Conjecture 1.3 of \cite{TZ18} is first proven in \cite{Zha25}.
\end{proof}
\subsection{Open problems}
We expect that the main results of this paper should also hold for gklt (or even glc) pairs where $B+\bbeta_X$ is not assumed to be modified big. In particular we have the following.
\begin{conjecture}(Existence of log terminal models)\label{c-min} Let $(X,B+\bbeta )$ be a compact K\"ahler gklt pair such that $K_X+B+\bbeta _X$ is pseudo-effective, then $(X,B+\bbeta )$ has a log terminal model.
\end{conjecture}

We note that Proposition \ref{prop: MMP can be run} implies that one can run the MMP for any compact K\"ahler dlt pair (with scaling), therefore Conjecture \ref{c-min} would follows from the following termination conjecture.
 
\begin{conjecture}(Termination of flips)\label{c-term} Let $(X,B+\bbeta )$ be a compact K\"ahler gklt pair, then any sequence of $K_X+B+\bbeta _X$ flips is finite.
\end{conjecture}

Next we recall our famous Abundance conjecture in the K\"ahler setting. 

\begin{conjecture}(Abundance)\label{c-ab} Let $(X,B)$ be a compact K\"ahler klt pair such that $K_X+B$ is nef, i.e. being non-negative on $\overline{\rm NA}(X)$, or equivalently, a limit of K\"ahler classes.

Then $K_X+B$ is semiample in the following senses:
\begin{enumerate}
\item(as an $\R$-line bundle) There is a proper contraction $f: X\to Z$ such that $K_X+B=f^*A$ for some ample divisor $A$ on $Z$.
\item(as a class in $H^{1,1}_{\rm BC}(X)$) There is a proper contraction $f: X\to Z$ such that $[K_X+B]\equiv f^*\omega$ for some K\"ahler form $\omega$ on $Z$.
\end{enumerate}
\end{conjecture}

We remark that Conjecture \ref{c-ab}(1) is known in dimension $\leq 3$ (see \cite{DO24}, \cite{DO26} and references therein), however Conjecture \ref{c-ab}(2) is known to fail for generalized pairs (see \cite[Example 6.2]{LP20}).

\section{Preliminaries}
We will follow the usual conventions of the minimal model program. In
particular, we refer the reader to \cite[Chapter 2]{KM98} for the definition of
pairs and their singularities (klt, lc, plt etc.), \cite[Subsection 2.1]{DHY23} for
the definitions of generalized pairs and related singularities (glc, gklt, gdlt,
etc.), \cite[Chapters 2, 3, 11]{Fuj22} for many key concepts of the minimal model
program for analytic varieties, to \cite[Definition 2.8]{DHP24} for the definitions of
nef, minimal and log terminal models, \cite{HP16} for a discussion of Bott-Chern
cohomology $H^{1,1}_
{\rm BC}(X)$, the K\"ahler and nef cones $\mathcal K\subset \overline{\mathcal K}$, the Mori cone and
the cone of positive closed currents $ {\rm NA}(X) \subset  N^1(X)$.  We say that $\alpha \in H^{1,1}_{\rm BC}(X)$ is {\it semiample} if there is a holomorphic map $f:X\to Z$ of compact analytic varieties and a K\"ahler form $\omega $ on $Z$ such that $\alpha \equiv f^*\omega$. In this paper, when we say $\alpha\equiv\alpha'$, it means that they equal as classes in $H^{1,1}_{\rm BC}(X)$, and we say 
\[
\alpha\equiv_f\alpha' \text{ or } \alpha\equiv_Y\alpha'
\]
if they equal up to a pullback $f^*[\beta]$ for some $[\beta]\in H^{1,1}_{\rm BC}(Y)$, where $f: X\to Y$ is usually a proper morphism between compact spaces. We say $\alpha$ is nef (resp. big) over $Y$ if $\alpha\equiv_Y\alpha'$ for some nef (resp. big) class $[\alpha']\in H^{1,1}_{\rm BC}(X)$.

We refer the reader to \cite[Definition 2.11]{DHY23} for the definition of log minimal models, log terminal models, and (weak) log canonical models. Note that if $(X,B+\bbeta)$ is gklt then every log minimal model is also a log terminal model.
If $(X,B+\bbeta)$ is a log terminal model and $K_X+B+\bbeta _X$ is semiample, then we say that $(X,B+\bbeta)$ is a good log terminal model.
We say that $X$ is strongly $\Q$-factorial if every divisorial sheaf $\mathcal F$ (i.e. reflexive sheaf of rank 1) is $\Q$-Cartier (i.e. $\mathcal F^{[m]}$ is a line bundle for some integer $m>0$) see \cite[Defn.2.2(ix)]{DH23}. 
Given a divisor $D$ we will write $D=D^{\geq 0}-D^{\leq 0}$ where $D^{\geq 0},D^{\leq 0}$ are effective with no common components. If $\nu:X'\to X$ is a birational morphism we will often write ${\rm Ex}(f)$ to denote the sum of the exceptional divisors of $\nu$ (with coefficient 1). We refer the reader to \cite{Bou04} for the definition of the Boucksom-Zariski (or BZ) decomposition $\alpha=N(\alpha)+P(\alpha)$ of a pseudo-effective class $\alpha \in H^{1,1}_{\rm BC}(X)$. 

Throughout this paper we will interchangeably use the terms bimeromorphic map and birational map, meromorphic map and rational map, holomorphic map and morphism.
\subsection{Multiplier ideals} We recall here the definition of multiplier ideal sheaf for a generalized pair. Since we are in the singular setting, this is somewhat non-standard.
\begin{definition} Let $(X,B+\bbeta )$ be a generalized pair and $\nu:X'\to X$ a log resolution such that $\bbeta _{X'}$ is nef and $\nu ^{-1}_*B+{\rm Ex}(\nu)$ is a divisor with simple normal crossings. We write $K_{X'}+B_{X'}+\bbeta _{X'}=\nu ^*(K_X+B+\bbeta _X)$ and we define 
\[\mathcal J(X,B+\bbeta)=\nu _*\OO _{X'}(-\lfloor B_{X'}\rfloor).\]It is easy to see that $(X,B+\bbeta )$ is gklt iff $\mathcal J(X,B+\bbeta)=\OO _X$.

\end{definition}
One reason why multiplier ideals are especially useful is Nadel vanishing.
We first recall the following version of the relative Kawamata-Viehweg
vanishing theorem which holds for generalized pairs (see Theorem \ref{t-kvv}).
\begin{theorem}[Relative Kawamata-Viehweg vanishing]\label{t-kvv} Let 
$(X, B +\bbeta)$ be a gklt pair, $g : X \to T$ a projective morphism
of analytic varieties, and $D$ a $\Q$-Cartier $\mathbb Z$-divisor such that $D-(K_X +B+\bbeta _X )$
is nef and big over $T$. Then $R^ig_*\OO_X (D) = 0$ for all $i > 0$.\end{theorem}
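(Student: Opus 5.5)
The plan is to reduce to the classical relative Kawamata--Viehweg vanishing for klt pairs on a log resolution, absorbing the nef part $\bbeta_{X'}$ into the positivity hypothesis. Let $\nu:X'\to X$ be a log resolution on which $\bbeta$ descends, i.e. $\bbeta_{X'}$ is nef over $T$, and on which $\nu^{-1}_*B+{\rm Ex}(\nu)+\nu^{-1}_*D$ has simple normal crossings support. Since $g$ is projective, after further blow-ups we may take $\nu$ projective, so that $g\circ\nu$ is projective and there is a $\nu$-ample divisor $-E$ with $E\ge 0$ exceptional. Write
\[K_{X'}+B_{X'}+\bbeta_{X'}=\nu^*(K_X+B+\bbeta_X).\]
Because the pair is gklt, every coefficient of $B_{X'}$ is $<1$. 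Set $D'=\lceil \nu^*D - B_{X'}\rceil$ plus a suitable adjustment, i.e. write $\nu^*D=D'+\{\ldots\}$ so that
\[D'-(K_{X'}+\Delta')\equiv_T \nu^*(D-(K_X+B+\bbeta_X))+\bbeta_{X'}\]
with $\Delta'$ an snc boundary with coefficients in $[0,1)$. Concretely, $D' := \nu^*D - B_{X'} + \{B_{X'}-\nu^*D\}$ rounded appropriately, and $\Delta'=\{B_{X'}-\nu^*D\}$ (fractional part). The $\nu$-exceptional part of $D'$ is effective, because the negative coefficients of $B_{X'}$ become positive after rounding up. This gives $\nu_*\OO_{X'}(D')=\OO_X(D)$.

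Next we check positivity. The class $\nu^*(D-(K_X+B+\bbeta_X))$ is nef and big over $T$, and $\bbeta_{X'}$ is nef over $T$, so their sum is nef and big over $T$. Perturbing by $-\epsilon E$ and adjusting $\Delta'$ by $\epsilon E$ (keeping coefficients $<1$ and the support snc) makes it ample over $T$ plus a klt boundary; this handles the transcendental class $\bbeta_{X'}$. Since $(g\circ\nu)$ is projective, $\bbeta_{X'}$ is relatively nef, and nef plus ample over $T$ is ample over $T$. Hence $D'-(K_{X'}+\Delta'')$ is $\R$-linearly (numerically over $T$) equivalent to a $(g\circ\nu)$-ample $\R$-divisor class, and the analytic relative Kawamata--Viehweg vanishing for klt snc pairs (as in \cite{Fuj22}, relative to a projective morphism) gives
\[R^i(g\circ\nu)_*\OO_{X'}(D')=0\quad\text{and}\quad R^i\nu_*\OO_{X'}(D')=0\quad (i>0).\]
The second vanishing uses the same argument over $X$, where everything pulled back from $X$ is $\nu$-trivial. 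The Leray spectral sequence then yields $R^ig_*\OO_X(D)=R^ig_*\nu_*\OO_{X'}(D')=0$.

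The main obstacle is the step where $\bbeta_{X'}$ is only a nef Bott--Chern class rather than a divisor. In the relative projective setting, numerical relative nefness over $T$ must be combined with a relatively ample divisor to obtain an honest relatively ample $\R$-divisor class up to $\equiv_T$. Vanishing for $\R$-divisors requires only the numerical class over $T$, working locally over relatively compact opens of $T$. I would handle this by replacing $\bbeta_{X'}$ with a small perturbation: a nef class plus a relatively ample class is relatively ample, and relative Kähler classes over $T$ suffice for the analytic KV vanishing of \cite{Fuj22}.
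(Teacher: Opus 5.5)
The paper recalls this theorem without proof (the pointer ``see Theorem~\ref{t-kvv}'' in the text refers back to the statement itself), so there is no argument to match yours against. The closest argument is the proof of Theorem~\ref{t-nadel}, which has the same shape as yours: pass to a log resolution on which $\bbeta$ descends, round, apply vanishing on $X'$ once over $X$ and once over $T$, and conclude by Leray. Your construction $D'=\lceil \nu^*D-B_{X'}\rceil$, $\Delta'=\{B_{X'}-\nu^*D\}$ is correct. It gives $D'-(K_{X'}+\Delta')\equiv\nu^*\gamma+\bbeta_{X'}$ with $\gamma:=D-(K_X+B+\bbeta_X)$, and the Leray step is also correct.

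The perturbation step, however, is false. Subtracting $\epsilon E$ with $-E$ $\nu$-ample preserves relative ampleness of the pullback of a class that is already ample over $T$. It cannot make a merely nef and big class ample. Suppose $C\subset X$ is a $g$-vertical curve with $\gamma\cdot C=0$ whose strict transform $C'$ meets ${\rm Supp}\,E$ without lying in it. Then $(\nu^*\gamma-\epsilon E)\cdot C'=-\epsilon\,E\cdot C'<0$, and $\bbeta_{X'}\cdot C'$ can be $0$ (e.g.\ $\bbeta=0$). So the perturbed class need not even be nef over $T$, and the vanishing for $(X',\Delta'+\epsilon E)$ that you invoke is not available.

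Simply drop this step. The input you need is relative Kawamata--Viehweg vanishing for a \emph{nef and big} $\R$-divisor on a complex manifold with snc boundary of coefficients in $[0,1)$, relative to a projective morphism (e.g.\ \cite{Nak87}, \cite{Fuj22}). Apply it to $g\circ\nu$ and to $\nu$; over $X$ the class is $\equiv_X\bbeta_{X'}$, which is nef, and bigness over $X$ is automatic because $\nu$ is birational. If you really want ampleness, use Kodaira's lemma ($L\equiv_T A+N$ with $A$ relatively ample and $N\geq 0$) together with a further log resolution of ${\rm Supp}(\Delta'+N)$, not the exceptional divisor $E$.

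Relatedly, the obstacle you identify as the main one does not exist. $L:=D'-(K_{X'}+\Delta')$ is an honest $\R$-divisor on the smooth $X'$, so $\nu^*\gamma+\bbeta_{X'}=[L]$ is automatically a divisor class even though neither summand need be. No transcendental vanishing theorem is needed. You only have to observe two things:
\begin{enumerate}
\item $L$ is $(g\circ\nu)$-nef, since both summands are non-negative on vertical curves;
\item $L$ is $(g\circ\nu)$-big, since its class is big on a general fibre, which is projective, and there analytic and algebraic bigness of $\R$-divisors coincide.
\end{enumerate}

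Finally, your justification of $\nu_*\OO_{X'}(D')=\OO_X(D)$ is wrong, although the conclusion is true. The exceptional part of $D'$ need not be effective (it fails when $\nu^*D$ has very negative exceptional coefficients), and its sign is not the relevant point. The correct argument has two parts. First, $\nu_*D'=D$, since $\lceil d-b\rceil=d$ for $d\in\Z$ and $b\in[0,1)$. Second, for a local section $s$ of $\OO_X(D)$ one has ${\rm div}(\nu^*s)+D'=\lceil\nu^*({\rm div}\,s+D)-B_{X'}\rceil\geq\lceil -B_{X'}\rceil\geq 0$. This holds because $\nu^*({\rm div}\,s+D)\geq 0$ and every coefficient of $B_{X'}$ is $<1$.
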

\begin{theorem}[Nadel vanishing]\label{t-nadel} Let $(X,B+\bbeta)$ be a generalized pair, $g:X\to T$ a projective morphism  of
analytic varieties, and and $D$ a Cartier divisor such that $D-(K_X+B+\bbeta _X)$ is nef and big over $T$. Then $R^ig_*(\OO_X(D)\otimes \mathcal J(X,B+\bbeta ))=0$ for any $i>0$.
\end{theorem}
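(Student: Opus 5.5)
We reduce Nadel vanishing to the relative Kawamata--Viehweg vanishing (Theorem \ref{t-kvv}) on a log resolution, and then push forward using a Leray spectral sequence argument. Fix a log resolution $\nu:X'\to X$ as in the definition of $\mathcal J(X,B+\bbeta)$, so that $\bbeta_{X'}$ is nef and $\nu^{-1}_*B+{\rm Ex}(\nu)$ has simple normal crossings. Write $K_{X'}+B_{X'}+\bbeta_{X'}=\nu^*(K_X+B+\bbeta_X)$ and set
\[D'=\nu^*D-\lfloor B_{X'}\rfloor .\]
Then
\[D'-(K_{X'}+\{B_{X'}\}+\bbeta_{X'})=\nu^*\bigl(D-(K_X+B+\bbeta_X)\bigr).\]
Since $\nu$ is projective, we may (after possibly perturbing by a small $\nu$-antiample exceptional divisor $-\epsilon F$ with $F\ge 0$, chosen so that the coefficients of $\{B_{X'}\}+\epsilon F$ stay in $[0,1)$) arrange that this difference is nef and big over $T$ and also relatively nef and big over $X$. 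The negative coefficients of $B_{X'}$ pose no problem: they are absorbed into $\lfloor B_{X'}\rfloor$, so $\{B_{X'}\}$ has coefficients in $[0,1)$. It follows that $(X',\{B_{X'}\}+\epsilon F+\bbeta)$ is a gklt pair with snc support and $\bbeta_{X'}$ nef.

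Applying Theorem \ref{t-kvv} twice, to $\nu:X'\to X$ and to $g\circ\nu:X'\to T$ (both projective), gives
\[R^j\nu_*\OO_{X'}(D')=0 \quad\text{and}\quad R^i(g\circ\nu)_*\OO_{X'}(D')=0 \quad\text{for } i,j>0.\]
By the projection formula, since $D$ is Cartier,
\[\nu_*\OO_{X'}(D')=\OO_X(D)\otimes\nu_*\OO_{X'}(-\lfloor B_{X'}\rfloor)=\OO_X(D)\otimes\mathcal J(X,B+\bbeta).\]
The Leray spectral sequence for $g\circ\nu$ then degenerates, and we obtain
\[R^ig_*\bigl(\OO_X(D)\otimes\mathcal J\bigr)=R^i(g\circ\nu)_*\OO_{X'}(D')=0 \quad\text{for } i>0.\]

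The main technical point is making the perturbation precise. Nefness and bigness over $T$ of $\nu^*(D-(K_X+B+\bbeta_X))$ is only pullback-nef, so it is not automatically big over $X$. To fix this, we use that $\nu$ is a projective birational morphism from a smooth variety, so there is an effective exceptional divisor $F$ with $-F$ $\nu$-ample. A small multiple $-\epsilon F$ added to a nef and big class over $T$ keeps it big over $T$, and, if needed, we add a pullback of an ample divisor from $T$ (working locally over $T$, which suffices since the statement is local on $T$) so that it stays nef over $T$. Relative bigness over $X$ is automatic for a birational morphism. We then round carefully, choosing $\epsilon$ so that $\lfloor\{B_{X'}\}+\epsilon F\rfloor=0$; this does not change the round-down $\lfloor B_{X'}\rfloor$, hence leaves $\mathcal J$ unchanged. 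Finally, we note that Theorem \ref{t-kvv} requires only that $D'$ be a $\Q$-Cartier $\Z$-divisor, which holds on the smooth $X'$.
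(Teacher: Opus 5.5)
Your argument is correct and is exactly the paper's proof: set $D'=\nu^*D-\lfloor B_{X'}\rfloor$, note $D'\equiv_T K_{X'}+\{B_{X'}\}+\bbeta_{X'}+\nu^*(D-K_X-B-\bbeta_X)$, apply Theorem \ref{t-kvv} to both $\nu$ and $g\circ\nu$, and conclude by the projection formula and the Leray spectral sequence. The perturbation by $-\epsilon F$ is unnecessary and should be dropped: $\nu^*(D-K_X-B-\bbeta_X)$ is $\equiv_X 0$, hence nef and big over the birational morphism $\nu$, and it is nef and big over $T$ because it is a pullback. Your proposed repair of adding a pullback of an ample divisor from $T$ would not help in any case, since such a class is trivial relative to $T$.
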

\begin{proof} Let $\nu :X'\to X$ be a log resolution such that $\bbeta _{X'}$ is nef and $\nu ^{-1}_*B+{\rm Ex}(\nu)$ is a divisor with simple normal crossings. We write $K_{X'}+B_{X'}+\bbeta _{X'}=\nu ^*(K_X+B+\bbeta _X)$.
Note that \[\nu ^*D-\lfloor B_{X'}\rfloor \equiv_T K_{X'}+\{B_{X'}\}+\bbeta _{X'} +\nu ^*(D-K_X-B-\bbeta _X).\] By Theorem \ref{t-kvv}, $R^j\nu _* \OO _{X'}(\nu ^*D-\lfloor B_{X'}\rfloor)=0$ for $j>0$ and hence by the projection formula and Theorem \ref{t-kvv}, we have  \[R^ig_* \mathcal (\OO _X(D)\otimes \mathcal J(X,B+\bbeta))=R^i(g\circ \nu) _* \OO _{X'}(\nu ^*D-\lfloor B_{X'}\rfloor)=0\]
for all $i>0$.
\end{proof}

The following lemma will be used several times in Section \ref{s-contr-big}.

\begin{lemma}\label{l-ses} Let $g:X\to T$ be a projective morphism from a complex manifold to a compact analytic variety, $E,Z$ effective divisors with no common components, $\Delta$ an $\R$-divisor such that $\lfloor \Delta \rfloor =0$ and ${\rm Supp}(E+Z+\Delta)$ has simple normal crossings. Suppose that \[E-Z\equiv_{g} K_X+\Delta +\gamma \] where $\gamma \in H^{1,1}_{\rm BC}(X)$ is nef and big over $T$, and $g_*\OO_X(E)=\OO_T$. 

Then $g_*\OO _X(-Z)\to g_*\OO _X(E-Z)$ and $g_*\OO _Z\to g_* \OO _Z(E|_Z)$ are isomorphisms and we have a short exact sequence \[ 0\to g_*\OO _X(-Z)\to \OO _T\to g_*\OO _Z\to 0.\]

\end{lemma}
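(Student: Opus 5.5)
The plan is to use relative Kawamata--Viehweg vanishing for the projective morphism $g$ together with the two short exact sequences attached to $Z$. Since $X$ is smooth and $\mathrm{Supp}(E+Z+\Delta)$ is snc with $\lfloor\Delta\rfloor=0$, the pair $(X,\Delta)$ is klt. The class $\gamma$ is nef and big over $T$ and is not necessarily a divisor class. So we will invoke Theorem \ref{t-kvv} in its generalized/transcendental form, with $\gamma$ playing the role of the nef part $\bbeta_X$. Alternatively, we can perturb $\gamma$ over $T$ to a relatively Kähler class plus a small effective snc divisor absorbed into $\Delta$, which is allowed because $g$ is projective. Either way, from $E-Z-(K_X+\Delta)\equiv_g\gamma$ we get
\[R^ig_*\OO_X(E-Z)=0\quad\text{for all } i>0.\]

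Next consider the sequence $0\to\OO_X(-Z)\to\OO_X\to\OO_Z\to 0$ and its twist by $E$,
\[0\to\OO_X(E-Z)\to\OO_X(E)\to\OO_Z(E|_Z)\to 0.\]
Since $E$ and $Z$ have no common components, $E|_Z$ is a well-defined effective Cartier divisor on $Z$. Pushing the twisted sequence forward and using the vanishing of $R^1g_*\OO_X(E-Z)$ gives the exact sequence
\[0\to g_*\OO_X(E-Z)\to g_*\OO_X(E)=\OO_T\to g_*\OO_Z(E|_Z)\to 0.\]
We then compare this with the pushforward of the untwisted sequence, $0\to g_*\OO_X(-Z)\to g_*\OO_X\to g_*\OO_Z$. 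The natural inclusions $\OO\subset\OO(E)$, given by multiplication by the section $s_E$, define a morphism from the untwisted diagram to the twisted one.

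The key point is that $g_*\OO_X\to g_*\OO_X(E)=\OO_T$ is an isomorphism. It is injective. Its composite with the natural map $\OO_T\to g_*\OO_X$ is the natural map $\OO_T\to g_*\OO_X(E)$, which is an isomorphism by hypothesis. Hence $g_*\OO_X\to g_*\OO_X(E)$ is also surjective, and $g_*\OO_X=\OO_T$.

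Now chase the diagram. The map $g_*\OO_X(-Z)\to g_*\OO_X(E-Z)$ is injective. To see surjectivity, take a local section $\sigma$ of $g_*\OO_X(E-Z)$. Its image in $g_*\OO_X(E)$ lies in the image of $g_*\OO_X$, say it equals $\tau$. As a rational function, $\tau$ is regular and vanishes along $Z$, because $\sigma$ does and $E$ shares no component with $Z$. So $\tau$ lies in $g_*\OO_X(-Z)$, giving the first isomorphism. The five lemma then shows that the image of $g_*\OO_X\to g_*\OO_Z$ maps isomorphically onto $g_*\OO_Z(E|_Z)$. It remains to show that $g_*\OO_Z\to g_*\OO_Z(E|_Z)$ is injective; this holds because $E|_Z$ contains no associated point of $Z$, as $Z$ is an snc divisor meeting $E$ properly. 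Then $g_*\OO_Z\to g_*\OO_Z(E|_Z)$ is injective with image equal to the image of $g_*\OO_X$. Consequently $\OO_T=g_*\OO_X\to g_*\OO_Z$ is surjective and $g_*\OO_Z\cong g_*\OO_Z(E|_Z)$, which yields the stated short exact sequence.

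The main obstacle is the vanishing step, which must be justified for a transcendental nef and big class over $T$ rather than a divisor. I would first try the reduction to a relatively Kähler class plus a small snc divisor via relative projectivity. The rest is diagram chasing.
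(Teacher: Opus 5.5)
Your proposal is correct and is essentially the paper's proof: Theorem \ref{t-kvv}, applied directly with $B=\Delta$, $\bbeta=0$, $D=E-Z$ so that $\gamma$ itself is the relatively nef and big difference and no perturbation is needed, gives $R^1g_*\OO_X(E-Z)=0$, hence $\OO_T\to g_*\OO_Z(E|_Z)$ is surjective, and the injectivity of $\OO_Z\to\OO_Z(E|_Z)$ coming from $E\wedge Z=0$ finishes the argument. Your diagram chase makes explicit the steps the paper compresses into one line, namely $g_*\OO_X=g_*\OO_X(E)=\OO_T$ and $g_*\OO_X(E-Z)=g_*\OO_X(-Z)$, where the latter uses that $E$ and $Z$ have no common components.
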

\begin{proof} Since $g_*\OO _X(E-Z)\subset g_*\OO _X(E)=g_*\OO_X=\OO_T$, then $g_*\OO _X(E-Z)=g_*\OO _X(-Z)$.
Consider the short exact sequence \[0\to \OO _X(E-Z)\to \OO_X(E) \to \OO _Z(E) \to 0.\]
By Theorem \ref{t-kvv}, $R^1g_* \OO _X(E-Z)=0$ and so we have a short exact sequence 
\[0\to g_* \OO _X(E-Z)\to \OO_T \to g_* \OO _Z(E) \to 0.\] 
In particular the following composition $\OO_T\to g_*\OO _Z\to g_* \OO _Z(E) $ is surjective.
Since $E\wedge Z=0$, $\OO _Z\to \OO _Z(E)$ is injective and hence so is $g_*\OO _Z\to g_*\OO _Z(E)$.
\end{proof}

\subsection{The negativity lemma}
For the convenience of the reader, we will now recall the following useful lemmas that will be used throughout the paper.
\begin{lemma} [Negativity Lemma]\label{l-negbir} Let $f:X\to Y$ be a proper bimeromorphic morphism of normal complex varieties. Let $E$ be a $\R$-Cartier divisor on $X$ such that $-E$ is $f$-nef. Then $E\geq 0$
if and only if $f_*E\geq 0$.
\end{lemma}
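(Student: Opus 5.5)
The plan is the classical argument via resolution and the Hodge index type inequality on surfaces cut from a resolution, but it is cleaner to reduce to the case where $f$ is projective and then use a local argument. Since the statement is local over $Y$, I will first reduce to the case where $Y$ is a small Stein neighbourhood of a point $y\in f(\Supp E)$. Replacing $X$ by a resolution $\mu:X'\to X$ and $E$ by $\mu^*E$ preserves both conditions: $-\mu^*E$ is $(f\circ\mu)$-nef, $(f\circ\mu)_*\mu^*E=f_*E$, and $\mu^*E\ge 0$ iff $E\ge 0$. The last equivalence holds because $E=\mu_*\mu^*E$. By Chow/Hironaka-type results, $X'$ may also be taken projective over $Y$ after shrinking $Y$. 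So I assume $X$ is smooth and $f$ is projective.

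The direction $E\ge 0\Rightarrow f_*E\ge0$ is trivial, so I assume $f_*E\ge 0$. Write $E=E^{\ge0}-E^{\le0}$. Since $f_*E\ge0$, every component of $E^{\le 0}$ is $f$-exceptional. I argue by contradiction, assuming $E^{\le0}\neq0$. Next I cut down by general hyperplane sections. Let $A$ be an $f$-ample divisor, and take a point $y\in f(\Supp E^{\le0})$. Intersecting $X$ with $\dim X-2$ general members of $|mA|$ (relatively over the Stein base), I obtain a smooth surface $S\to T:=f(S)$ which is still bimeromorphic onto its image. On this surface, some component of $E^{\le 0}$ that dominates a component of its image through $y$ with maximal-dimensional image restricts to a nonzero exceptional curve configuration. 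Furthermore, $-E|_S$ is nef over $T$.

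Now I apply the surface negativity lemma. On $S$, the exceptional curves $C_1,\dots,C_r$ over $y$ have negative definite intersection matrix; this is Grauert/Mumford. Write $E|_S=P+F$, where $F$ is supported on the $C_i$ and $P$ has no $C_i$ components. The coefficients of $P$ along non-exceptional curves are $\ge0$ by $f_*E\ge0$; exceptional components of $E$ not over $y$ can be discarded by localizing. Then $P\cdot C_i\ge0$, and $-E\cdot C_i\ge0$ gives $F\cdot C_i\le -P\cdot C_i\le 0$ for all $i$. Negative definiteness forces $F\ge0$. The standard argument is to write $F=F^+-F^-$; then $(F^-)^2=-F\cdot F^-+F^+\cdot F^-\ge 0$, so $F^-=0$. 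This contradicts the presence of a negative coefficient from $E^{\le0}$.

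The main obstacle is the cutting-down step. I need to ensure that a negative component of $E$ actually survives on $S$ as a curve contracted over a point, with the right bookkeeping when the exceptional divisor maps to a positive-dimensional center. I would handle this by choosing $y$ general in the image $W$ of a component $G\subset E^{\le0}$ with $\dim f(G)$ maximal among negative components. I would also take the hyperplane sections to include $\dim W$ general pullbacks of divisors from $Y$, which reduces to a point in $W$, and the rest from $|mA|$, so that $G\cap S$ is a union of curves contracted to $y$. Alternatively, I could cite \cite[Lemma 3.39]{KM98}, whose proof is local over $Y$ and carries over to the analytic setting.
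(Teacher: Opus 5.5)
The paper does not prove this lemma itself; it cites \cite[Lemma 1.3]{Wan21}. Your proof is correct and self-contained: you reconstruct the classical argument of \cite[Lemma 3.39]{KM98} in the analytic category. You localize on a relatively compact Stein open of $Y$ and use Hironaka's Chow lemma plus a projective resolution to make $X$ smooth and $f$ projective. Only the implication $\mu^*E\ge 0\Rightarrow E=\mu_*\mu^*E\ge 0$ is needed there, so the other direction of your ``iff'' can be dropped. You then cut down to a smooth surface and conclude by negative definiteness. The citation buys brevity; your route shows exactly which analytic inputs are needed (the Chow lemma over relatively compact bases, Bertini over a Stein base, Grauert's criterion).

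A few points to make precise in a write-up:
\begin{itemize}
\item The cutting by $\dim W$ pullbacks from $Y$ in your last paragraph is necessary, not an alternative. With only members of $|mA|$, the curve $G\cap S$ is not $f$-contracted once $\dim W\ge 1$.
\item These pullbacks must be general, not forced through a fixed $y$, so that Bertini gives a smooth $S$. You then localize at one of the finitely many general points of $W\cap H_1\cap\cdots\cap H_{\dim W}$. There $G\cap S\neq\emptyset$, because the fibres of $G\to W$ have dimension at least $\dim X-1-\dim W\ge 1$ and $A$ is $f$-ample.
\item $P\ge 0$ uses the maximality of $\dim W$, not only $f_*E\ge 0$. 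Exceptional components with larger-dimensional image can restrict to non-contracted curves on $S$, and it is maximality that forces their coefficients to be non-negative.
\item $T=f(S)$ need not be normal, so apply negative definiteness after factoring through the normalization of $T$. There the curves over $y$ form finitely many disjoint configurations, each negative definite, so your step $(F^-)^2\ge 0\Rightarrow F^-=0$ goes through.
\end{itemize}
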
\begin{proof} See \cite[Lemma 1.3]{Wan21}.
\end{proof}
\begin{definition} Let $f :X \to Y$  be a proper surjective morphism of normal
varieties and $D$ a $\R$-divisor. Then
\begin{enumerate}\item $D$ is {\it $f$-exceptional} if ${\rm codim}(f({\rm Supp}(D)))\geq 2$.
\item $D$ is {\it $f$-degenerate} if for every prime divisor $Q\subset f({\rm Supp}(D))$ there exists a divisor $P$ not contained in ${\rm Supp}(D)$ such that $f(P)=Q$. \end{enumerate}
\end{definition}
\begin{lemma}\label{l-neg} Let $f :X \to Y$ be a proper morphism of normal analytic varieties, where $X$ is a K\"ahler space, $f_*\OO_X = \OO_Y$ and $Y$ is relatively compact. Let
$E = \sum a_iE_i \leq 0$ be an $f$-degenerate $\R$-Cartier divisor such that $-E$ is $f$-nef.
Then $E = 0$.
\end{lemma}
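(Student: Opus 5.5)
The plan is to reduce the statement to the bimeromorphic negativity lemma (Lemma \ref{l-negbir}) over a general point of each divisor $Q\subset f({\rm Supp}(E))$. Working by contradiction, we then use the $f$-degenerate hypothesis to find a curve on which $-E$ has strictly negative degree.

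Suppose $E\neq 0$ and write $E=\sum a_iE_i$ with $a_i<0$. The argument splits according to the codimension of the images.

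If some component $E_i$ has $f(E_i)$ of codimension $\geq 2$ in $Y$, we cut down. Since $Y$ is relatively compact, we may shrink $Y$ and take general hyperplane sections (or a general local complete intersection through a general point of $f(E_i)$). This reduces to the situation where $f(E_i)$ is a point and $E_i$ lies inside a fiber of positive dimension. There $-E$ is $f$-nef and $E\leq 0$, so we want a curve $C\subset E_i$, contracted by $f$, with $E\cdot C<0$; equivalently $-E\cdot C>0$, which is not immediately a contradiction. The standard remedy is the Hodge index type argument used in the proof of the negativity lemma.

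Concretely, we pass to a resolution and restrict to a general surface $S$ cut out by general members of a relatively ample class. The Kähler assumption on $X$ supplies the Kähler forms needed to cut down in place of hyperplanes; this is where $X$ being a Kähler space is used. On $S$ the exceptional curves over a point have negative definite intersection matrix, by Grauert's criterion. Then $-E|_S$ is nef over the base and $E|_S\leq 0$ is supported on contracted curves, so negative definiteness gives $(E|_S)^2<0$. On the other hand, $E|_S\cdot E|_S=\sum a_i (E_i\cdot E|_S)\geq 0$, since $a_i\leq 0$ and $E\cdot E_i\le 0$ on contracted curves. This is a contradiction.

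If instead $f(E_i)=Q$ is a prime divisor, we use $f$-degeneracy. There is a divisor $P\not\subset{\rm Supp}(E)$ with $f(P)=Q$. Localize at a general point $y\in Q$ and take a general curve $\Gamma\subset Y$ through $y$, transverse to $Q$. Then base change to (the normalization of) a small disc in $\Gamma$, or equivalently take $S$ as a general surface in $f^{-1}(\Gamma)$ cut by relative Kähler classes. The fiber over $y$ is the union of curves coming from the $E_i$ over $Q$ and from $P$, with $P$-curves not in ${\rm Supp}(E)$. By Zariski's lemma, the intersection form on the components of the fiber over $y$ is negative semidefinite, with kernel spanned by the full fiber. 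Since $E|_S$ is supported on a proper subset of the components (it misses the $P$-curve), it is not a multiple of the fiber, so $(E|_S)^2<0$. As before $(E|_S)^2=\sum a_i(E_i\cdot E)|_S\geq 0$ because $-E$ is $f$-nef and $a_i\le0$. This is a contradiction, hence $E=0$.

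The main obstacle is making the surface-slicing rigorous in the analytic Kähler setting with $f$ not necessarily projective and $E$ only $\R$-Cartier. General hyperplane sections must be replaced by general members of suitable linear systems after shrinking $Y$ to a Stein neighbourhood, where $f$ becomes projective locally over $Y$ since $X$ is Kähler and $Y$ relatively compact. Nefness must also be checked to restrict correctly to $S$. I would first try to reduce to \cite[Lemma 1.3]{Wan21}-style arguments, or to the version in \cite{Fuj22}, after this localization.
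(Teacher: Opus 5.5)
The paper gives no argument of its own here; it simply cites \cite[Lemma 2.8]{DH24}. Your proposal has a genuine gap, precisely at the step you flag as the main obstacle.

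Slicing $X$ down to a surface needs general members of an $f$-ample (or $f$-very ample) linear system, and in general this requires $f$ to be projective locally over $Y$. You claim this holds after shrinking $Y$ to a Stein set ``since $X$ is K\"ahler and $Y$ relatively compact''. That is false: take $Y$ a point and $X$ a non-projective K3 surface, or a family of such surfaces over a disc. Nor can K\"ahler forms or ``relative K\"ahler classes'' be used to cut down, since a transcendental class has no members. So neither your Grauert step nor your Zariski-lemma step is available in the generality of the statement.

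This cannot be repaired by better slicing. Your argument only ever tests $-E$ against $f$-contracted curves, and with that curve-wise reading of $f$-nef the statement is false for K\"ahler $f$. Let $S$ be a K3 surface with $\mathrm{NS}(S)=\Z[\Gamma]$, where $\Gamma$ is a smooth rational curve with $\Gamma^2=-2$, so $\Gamma$ is the only curve on $S$. Let $X$ be the blow-up of $S\times\Delta$ along $\Gamma\times\{0\}$ and $f\colon X\to\Delta$ the induced map. Then $f^*(0)=D_1+D_2$ with $D_1\cong S$ and $D_2\cong\mathbb F_2$. The intersection $D_1\cap D_2$ is $\Gamma$ on $D_1$, and on $D_2$ it is the section of self-intersection $2$, which is nef. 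Since $D_1+D_2$ is principal near the fibre, $D_1\cdot\Gamma=-\Gamma^2=2$. Hence $D_1$ is non-negative on every $f$-contracted curve, $E:=-D_1\le 0$ is $f$-degenerate (take $P=D_2$), and yet $E\neq 0$. What fails is that $-E|_{D_1}\equiv-\Gamma$ is not a nef class.

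So a correct proof must use that $-E$ is nef over $Y$ \emph{as a $(1,1)$-class}, so that it restricts to a nef class on every compact component of a fibre. It must also obtain positivity by integrating against a K\"ahler form $\omega$ on $X$; this is where the K\"ahler hypothesis genuinely enters. A route that works is the following.
\begin{enumerate}
\item Cut only $Y$, which is locally Stein, by general hypersurfaces through a general point $y$ of $f(E_i)$. Deal first with components whose image is a divisor, and only then choose $f(E_i)$ of maximal dimension; otherwise components with larger image survive on your slice and are not contracted.
\item Put $D=-E$. Let $H$ be the pull-back of the point $y$ (divisorial case, after cutting $Y$ to a curve) or of a general hypersurface through $y$ (exceptional case).
\item Let $t$ be minimal with $D\le tH$ and set $G=tH-D\ge 0$. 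It is non-zero because of $P$ in the divisorial case, and because of the components of $H$ dominating the hypersurface in the exceptional case.
\item Connectedness of $f^{-1}(y)$ gives a component $D_j$ of $D$ with $\mathrm{mult}_{D_j}G=0$ and $D_j\cap\Supp G\neq\emptyset$. Then $\int_{D_j}G|_{D_j}\wedge\omega^{\dim D_j-1}>0$, whereas $-G|_{D_j}\equiv D|_{D_j}$ is nef, a contradiction.
\end{enumerate}
Finally, your case split tacitly assumes every $E_i$ is $f$-vertical, and you should state this. It is necessary: under the literal definition, $Y$ a point, $X=\mathbb P^2$ and $E=-\ell$ with $\ell$ a line is a counterexample.
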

\begin{proof} See \cite[Lemma 2.8]{DH24}. 
\end{proof}

\subsection{Big classes}
\begin{definition}If $X$ is a compact normal K\"ahler variety, then we say that $\alpha\in H^{1,1}_{\rm BC}(X)$ is {\it big} if $\alpha\geq \omega$ where $\omega$ is a K\"ahler current. Equivalently, $\alpha$ is big if it admits a K\"ahler current $T$ such that $[T]\in \alpha$. If $f:X\to Y$ is a birational morphism of normal varieties, then we say that $f_*\alpha $ is {\it modified big}. Note that $f_*\alpha $ may not be locally exact, however if $f_*\alpha\in H^{1,1}_{\rm BC}(Y)$ and $Y$ is K\"ahler, then $f_*\alpha$ is big. 
\end{definition}
\begin{remark} It is well known that if $\alpha$ is big then it is pseudo-effective and if $\alpha $ is big and $\beta$ is pseudo-effective, then $\alpha +\beta$ is big. Note that if $\alpha\in H^{1,1}_{\rm BC}(X)$ and $\alpha\geq \omega$ where $\omega$ is a K\"ahler form, and $\nu :X'\to X$ is a resolution  then $\nu ^*\alpha$ is big. To see this, consider $\mu:X''\to X'$ a further resolution such that $f:=\nu \circ \mu$ admits an effective exceptional divisor $F$ and a K\"ahler form $\omega''$ on $X''$ such that $[\omega'']=[f^*\omega]-[F]$ (e.g. suppose that $X''\to X$ is given by a sequence of blow ups along smooth centers). Thus $f^*\alpha \geq \omega ''$, i.e. $f^*\alpha$ is big. Since $\mu _* f^*\alpha=\nu ^*\alpha\in H^{1,1}_{\rm BC}(X)$, then this class is big as observed above. 
\end{remark}
\begin{lemma}\label{l-modbig} 

Let $(X,B+\bbeta)$ be a generalized pair such that $B+\bbeta_X$ is modified big, and $\nu :X'\to X$ a birational morphism where $X'$ is strongly $\Q$-factorial. If $K_{X'}+B_{X'}+\bbeta _{X'}=\nu ^*(K_X+B+\bbeta _{X})$ and then $B_{X'}^{\geq 0}+\bbeta_{X'}+E$ is big for any divisor $E$ whose support contains all the $\nu$-exceptional divisors.
\end{lemma}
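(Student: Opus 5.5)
The plan is to use the definition of modified big to write $B+\bbeta_X=f_*\theta$, where $f:W\to X$ is birational and $\theta\in H^{1,1}_{\rm BC}(W)$ is big. Replacing $W$ by a common resolution of $W$ and $X'$, we may assume $W$ is smooth and that $f$ factors as $f=\nu\circ g$ with $g:W\to X'$. Big classes pull back to big classes under birational morphisms: a K\"ahler current pulls back to a current which dominates a K\"ahler current minus an exceptional divisor, and the divisorial part is absorbed as in the Remark above. Hence we may also assume $\theta$ lives on this common resolution. Since $X'$ is strongly $\Q$-factorial, $g_*$ of a big class on $W$ is a well-defined class in $H^{1,1}_{\rm BC}(X')$, and it is big because pushforward preserves K\"ahler currents. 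So it is enough to show that
\[B_{X'}^{\geq 0}+\bbeta_{X'}+E-g_*\theta\]
is pseudo-effective, and in fact it suffices to show it is the class of an effective divisor, for suitable choices.

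To compare the two classes, I will use that $\bbeta$ is a b-class. Writing $\bbeta_W$ for its trace on $W$, we have $\bbeta_{X'}=g_*\bbeta_W$ and $\bbeta_X=f_*\bbeta_W$. Now $\theta$ and $f^{-1}_*B+\bbeta_W$ have the same pushforward to $X$, namely $B+\bbeta_X$. Two classes on a smooth $W$ with the same $f_*$ differ by a combination of $f$-exceptional divisors; this holds whenever pushforward is injective modulo exceptional classes, which is standard for resolutions of Fujiki class $\mathcal C$ spaces and is what makes ``modified big'' well defined. So
\[\theta\equiv f^{-1}_*B+\bbeta_W+F,\]
with $F$ an $f$-exceptional $\R$-divisor, not necessarily effective. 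Pushing forward by $g$ gives
\[g_*\theta\equiv \nu^{-1}_*B+\bbeta_{X'}+g_*F,\]
where $g_*F$ is supported on $\nu$-exceptional divisors, because the components of $F$ that survive on $X'$ are $\nu$-exceptional. Moreover $B_{X'}^{\geq 0}\geq \nu^{-1}_*B$, since the non-exceptional part of $B_{X'}$ is exactly $\nu^{-1}_*B\ge 0$. Therefore
\[B_{X'}^{\geq 0}+\bbeta_{X'}+mE\equiv g_*\theta+\bigl(B_{X'}^{\geq0}-\nu^{-1}_*B\bigr)+\bigl(mE-g_*F\bigr),\]
and for $m\gg0$ both bracketed terms are effective, since $E$ contains every $\nu$-exceptional divisor. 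Hence $B_{X'}^{\geq 0}+\bbeta_{X'}+mE$ is big: it is a big class plus an effective divisor, which is pseudo-effective.

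The remaining issue is to get the statement for $E$ itself rather than for the large multiple $mE$, and this is the main obstacle. I would handle it by perturbation. By the Remark above, $g^*\nu^*$ of a K\"ahler class on $X$ is big and equals a K\"ahler class plus an exceptional divisor. So replace $\theta$ by $(1-\epsilon)\theta+\epsilon\,\eta$, where $\eta$ is big and $f_*\eta=B+\bbeta_X$. This reduces the problem to showing that $\epsilon(B+\bbeta_X)$ pulled back plus exceptional pieces is controlled. Then write
\[B_{X'}^{\geq0}+\bbeta_{X'}+E=\tfrac1m\bigl(B_{X'}^{\geq0}+\bbeta_{X'}+mE\bigr)+\tfrac{m-1}{m}\bigl(B_{X'}^{\geq0}+\bbeta_{X'}\bigr).\]
The first summand is big by the previous paragraph. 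If the second summand is pseudo-effective, the sum is big. The second summand is pseudo-effective because $\bbeta_{X'}$ is nef over $X$ modulo a b-divisorial effective correction: by the negativity lemma (Lemma \ref{l-negbir}) applied to $\nu^*\bbeta_X-\bbeta_{X'}$, we get that $\bbeta_{X'}+(\text{effective }\nu\text{-exceptional})$ is the pullback of $\bbeta_X$. Combined with the effectivity of $B_{X'}^{\geq0}$ on non-exceptional parts, and with $E$ absorbing any exceptional deficit (a fixed bounded amount, which I would arrange by choosing the decomposition so that the deficit coefficients are $\le1$), this gives the claim.
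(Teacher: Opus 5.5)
Your first two paragraphs are correct and follow the paper. The big class $\theta$ and $f^{-1}_*B+\bbeta_W$ differ by an $f$-exceptional divisor, which gives bigness of $B_{X'}^{\geq 0}+\bbeta_{X'}+mE$ for $m\gg0$. The convex combination
\[B_{X'}^{\geq0}+\bbeta_{X'}+E=\tfrac1m\bigl(B_{X'}^{\geq0}+\bbeta_{X'}+mE\bigr)+\tfrac{m-1}{m}\bigl(B_{X'}^{\geq0}+\bbeta_{X'}\bigr)\]
is exactly how the paper passes from a large multiple to an arbitrary $E$.

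\textbf{The gap.} What fails is your justification that $B_{X'}^{\geq0}+\bbeta_{X'}$ is pseudo-effective.

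\emph{The negativity-lemma argument.} It needs $\bbeta_X$ to be a class on $X$, but only $K_X+B+\bbeta_X$ is assumed to be one. It also needs $\bbeta_{X'}$ to be $\nu$-nef, which fails unless $\bbeta$ descends to $X'$. Even granting both, it goes the wrong way. Writing $\bbeta_{X'}=\nu^*\bbeta_X-G$ with $G\geq0$ only shows that $B_{X'}^{\geq0}+\bbeta_{X'}+G$ is pseudo-effective, and removing an effective divisor does not preserve pseudo-effectivity.

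\emph{Absorbing the deficit into $E$.} Letting $E$ absorb a fixed deficit $G$ cannot work. The statement is for every $E$ with ${\rm Supp}(E)\supseteq{\rm Ex}(\nu)$, so its coefficients may be arbitrarily small; the paper uses it with $E=\epsilon\,{\rm Ex}(\nu)$, $0<\epsilon\ll1$. Your convex combination would instead need $E\geq\frac{m-1}{m}G$.

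\emph{The perturbation.} Replacing $\theta$ by $(1-\epsilon)\theta+\epsilon\eta$ does not address this issue at all.

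\textbf{The fix.} The missing observation is that $\bbeta$ is b-nef. Choose $W$ high enough that $\bbeta$ descends to it with $\bbeta_W$ nef. Then $\bbeta_{X'}=g_*\bbeta_W$ is pseudo-effective as the pushforward of a nef class. Hence $B_{X'}^{\geq0}+\bbeta_{X'}$ is pseudo-effective with no deficit, and your convex combination concludes the proof.

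\textbf{Comparison with the paper.} The paper packages the same idea in two steps. It first treats the case where $X'$ is a log resolution, where $\bbeta_{X'}$ is itself nef, so pseudo-effectivity is immediate. For a general strongly $\Q$-factorial $X'$, it applies that case on a log resolution $\mu:X''\to X'$. It then pushes forward the big class $B^{\geq0}_{X''}+\bbeta_{X''}+\delta\,{\rm Ex}(\nu\circ\mu)$, valid for every $\delta>0$.
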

\begin{proof} 
Assume that $\nu :X'\to X$ is a log resolution of $(X,B+\bbeta)$ then $X'$ is smooth, $\bbeta _{X'}$ is nef, $\bbeta =\overline{\bbeta _{X'}}$, and $B_{X'}$ has simple normal crossings where $K_{X'}+B_{X'}+\bbeta _{X'}=\nu ^*(K_X+B+\bbeta _X)$.
By definition, there is a resolution $f:X''\to X$ and a big class $\gamma''\in H^{1,1}_{\rm BC}(X'')$ such that $f _*\gamma ''=B+\bbeta _X$. We may assume that $f$ factors through $\mu:X''\to X'$. It follows that $f_*(\gamma ''-\mu ^*(B'+\bbeta _{X'}))=0$. Thus $\gamma ''-\mu ^*(B'+\bbeta _{X'})\equiv F$ where $F$ is an $f$-exceptional $\R$-divisor. But then $B'+\bbeta _{X'}+\mu _*F=\mu _* \gamma'' $ is big and hence so is $(B')^{\geq 0}+\bbeta _{X'}+(\mu _*F)^{\geq 0}$.
Since $(B')^{\geq 0}+\bbeta _{X'}$ is pseudo-effective, then $(B')^{\geq 0}+\bbeta _{X'}+\epsilon (\mu _*F)^{\geq 0}$ is big for any $\epsilon >0$. Since $(\mu _*F)^{\geq 0}$ is $\nu$-exceptional, we may assume that $\epsilon (\mu _*F)^{\geq 0}\leq E$ and hence $(B')^{\geq 0}+\bbeta _{X'}+E$ is big.

We now address the general case. Let $\mu :X''\to X'$ be a log resolution. By what we have seen above, $B^{\geq 0}_{X''}+\bbeta _{X''}+\delta {\rm Ex}(\nu\circ \mu)$ is big for any $\delta >0$. Therefore
\[B^{\geq 0}_{X'}+\bbeta _{X'}+\delta {\rm Ex}(\nu)=\mu _*(B^{\geq 0}_{X''}+\bbeta _{X''}+\delta {\rm Ex}(\nu\circ \mu))\] is big for any $\delta >0$.
Since $B^{\geq 0}_{X'}+\bbeta _{X'}+E\geq B^{\geq 0}_{X'}+\bbeta _{X'}+\delta {\rm Ex}(\nu)$ for any $0<\delta \ll 1$, the claim follows.
\end{proof}

\subsection{Cone theorems}
In this paper there are two possible cone theorems in different spaces to consider, depending on whether we are running the projective or the transcendental minimal model program (see Theorems \ref{t-cone} and \ref{t-projcone}). We summarize and clarify the different definitions here for the reader's convenience.

\begin{definition}[Algebraic]
Let $X$ be a compact analytic variety, we will denote $$N^1(X)=({\rm Pic}(X)\otimes _\Z\R)/\equiv^{alg}$$ as the space of $\R$-line bundles modulo numerical equivalence by curves, i.e. $L\equiv^{alg} 0$ iff $L\cdot C=0$ for every curve $C\subset X$. The dual, denoted by $N_1(X)$, is the space of curves modulo numerical equivalence when paired with line bundles. 
We let 
\[
{\rm NE}(X)\subset\overline{\rm NE}(X)\subset N_1(X)
\]
be the cone generated by curves and its closure. When $X$ is algebraic (or even projective), then these are the usual definitions in the MMP theory. 

In general, algebraic cycles are rare and these spaces and corresponding cones might be too small to be useful. However, when there is a projective morphism $f: X\to Y$ between compact spaces, we often consider the relative cones. We let $N_1(X/Y)\subset N_1(X)$ be the subspace generated by the $f$-vertical curves, i.e. $C\subset X$ such that $f(C)={\rm pt}$, and $N^1(X/Y)$ be the dual. The relative Mori cone ${\rm NE}(X/Y)$ is the cone generated by the $f$-vertical curves and we let $\overline{\rm NE}(X/Y)$ be the closure. We define $\rho(X),\rho(X/Y)$ to be the rank of $N^1(X)$ and $N^1(X/Y)$ as $\R$-linear spaces.

If $L$ is an $\R$-line bundle on $X$, then according to the relative Kleiman projectivity criterion we have: {\it $L$ is $f$-ample iff $[L]\in N^1(X/Y)$ is positive on $\overline{\rm NE}(X/Y)\backslash\{0\}$.} We remark that $f$ is assumed to be projective in this criterion.
\end{definition}

\begin{definition}\label{d-an}[Analytic]
Let $X$ be a compact analytic variety, for simplicity, we will further assume that $X$ is normal and in Fujiki's class $\mathcal{C}$. We denote
\[
N^1_{an}(X):= H^{1,1}_{\rm BC}(X),
\]
and $N_{1,an}(X)$  the space of real closed currents of bidimension
$(1, 1)$ modulo the numerical equivalence given by pairing with $H^{1,1}_{\rm BC}(X)$, where $an$ stands for ``analytic". We denote
\[
\overline{\rm NA}(X)\subset N_{1,an}(X)
\]
the closed cone generated by the classes of positive closed currents. We let 
\[
{\rm NE}_{an}(X)\subset\overline{\rm NE}_{an}(X)\subset\overline{\rm NA}(X)
\]
be the cone (and its closure) generated by the currents $[C]$ of integration along an irreducible curve $C\subset X$. Let $V_{alg}\subset N_{1,an}(X)$ be the subspace generated by curve currents. Then the natural map $\Pic(X)\to H^{1,1}_{\rm BC}(X)$ induces natural surjective maps
\[
V_{alg}\to N_1(X),~ {\rm NE}_{an}(X)\to {\rm NE}(X),~ \overline{\rm NE}_{an}(X)\to \overline{\rm NE}(X),
\]
which are typically not injective. If $X$ has only rational singularities, then we have $N^1_{an}(X)\to H^2(X,\R)$ is injective and the natural map $N^1_{an}(X)\to N_{1,an}(X)^\vee$ is an isomorphism (see \cite[Chapter 3]{HP16} for details).  

Now assume $X$ is K\"ahler and $\alpha$ is a closed $(1,1)$-form with local potentials, then $[\alpha]$ is K\"ahler (resp. nef) if and only if $[\alpha]\in N^1_{an}(X)$ is positive (non-negative) on $\overline{\rm NA}(X)\backslash\{0\}$ (resp. $\overline{\rm NA}(X)$).
Let $f: X\to Y$ be a surjective proper morphism between compact normal K\"ahler varieties, then there are natural maps
$$f^*: N^1_{an}(Y)\to N^1_{an}(X),~f_*:N_{1,an}(X)\to N_{1,an}(Y),$$
such that $f_*(\overline{\rm NA}(X))\subseteq \overline{\rm NA}(Y)$.
By \cite[Lemma 2.38]{DHP24} we have 
\[
f^*[\beta]\text{ is nef if and only if } [\beta] \text{ is nef, } [\beta]\in N^1_{an}(Y). 
\]
Then it follows easily that $f_*(\overline{\rm NA}(X))=\overline{\rm NA}(Y)$ and $f_*$ is surjective. By duality,  $f^*$ is injective, assuming that both $X,Y$ have rational singularities. 

If $f$ is further assumed to be projective, then we define ${\rm NE}_{an}(X/Y)\subset V_{alg}(X/Y)\subset N_{1,an}(X)$ to be the cone and subspace generated by $f$-vertical curves and $\overline{\rm NE}_{an}(X/Y)$ the closure of the cone. As above, there is a natural surjective map $\overline{\rm NE}_{an}(X/Y)\to\overline{\rm NE}(X/Y)$. We define $\rho_{\rm BC}(X/Y)$ to be $\dim_\R (H^{1,1}_{\rm BC}(X)/f^*H^{1,1}_{\rm BC}(Y))$.
\end{definition}

\begin{remark}
we can also introduce $N_{1,{\rm an}}(X/Y)\subset N_{1,{\rm an}}(X)$ as the subspace generated by closed currents supported on fibers of $f$  and $N^1_{\rm an}(X/Y)=N_{1,{\rm an}}(X/Y)^{\vee}$. Similarly, the cone $\overline{\rm NA}(X/Y)$ is generated by the positive classes supported on fibers. However, we will not  use these notations in this paper in an essential way. The following lemma will help to understand our definition of $\rho_{\rm BC}(X/Y)$.
\end{remark}

\begin{lemma} Let $f: X\to Y$ be a proper contraction between compact complex spaces with rational singularities (in particular $f_*\OO _X=\OO_Y$), $X$ is in Fujiki's class $\mathcal{C}$, and $R^if_*\OO _X=0$ for $i>0$, then $H^{1,1}_{\rm BC}(X)/H^{1,1}_{\rm BC}(Y)\cong N^1_{\rm an}(X/Y)$ and $N_{1,\rm an}(X/Y)$ is spanned by $f$-vertical curves. 
\end{lemma}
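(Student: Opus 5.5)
We will show that the natural sequence
\[0\to H^{1,1}_{\rm BC}(Y)\xrightarrow{f^*} H^{1,1}_{\rm BC}(X)\to N^1_{\rm an}(X/Y)\to 0\]
is exact, where the last map restricts a class to $N_{1,\rm an}(X/Y)$. In the same argument we will show that $N_{1,\rm an}(X/Y)$ is spanned by $f$-vertical curves.

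The first step is to identify $H^{1,1}_{\rm BC}$ with a subspace of $H^2(\,\cdot\,,\R)$, using rational singularities. For a compact space $Z$ with rational singularities in class $\mathcal C$, $H^{1,1}_{\rm BC}(Z)$ injects into $H^2(Z,\R)$. Via the exponential sequence and the $\partial\bar\partial$-lemma on a resolution, its image is identified with
\[\ker\bigl(H^2(Z,\R)\to H^2(Z,\OO_Z)\bigr)\]
(compare \cite[Chapter 3]{HP16}). Next we use $R^if_*\OO_X=0$ for $i>0$ and $f_*\OO_X=\OO_Y$. The Leray spectral sequence gives $H^i(Y,\OO_Y)\cong H^i(X,\OO_X)$. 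For the constant sheaf, $f_*\R_X=\R_Y$ because the fibers are connected, so Leray gives an injection $H^2(Y,\R)\hookrightarrow H^2(X,\R)$ whose cokernel embeds into $H^0(Y,R^2f_*\R)$, modulo the $d_2$-differential from $H^0(R^1f_*\R)$. For $H^0(R^1f_*\R)$ we use the local exponential sequence over small Stein opens $U\subset Y$, together with $R^1f_*\OO_X=0$; this gives $R^1f_*\Z=0$ after we possibly use $H^1(F,\OO)$-type vanishing. Combining these facts shows that $f^*:H^{1,1}_{\rm BC}(Y)\to H^{1,1}_{\rm BC}(X)$ is injective. It also shows that a class $\alpha\in H^{1,1}_{\rm BC}(X)$ lies in $f^*H^{1,1}_{\rm BC}(Y)$ if and only if its image in $H^0(Y,R^2f_*\R)$ vanishes, that is, if and only if $\alpha|_{f^{-1}(U)}=0$ for small Stein $U$.

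The key step is to characterize vanishing over small Stein opens $U$, using $R^1f_*\OO_X=R^2f_*\OO_X=0$. On $f^{-1}(U)$ the exponential sequence gives ${\rm Pic}(f^{-1}(U))\otimes\R\cong H^2(f^{-1}(U),\R)$. So locally over $Y$ every class is a real combination of line bundles, each of which is $f$-projective-relative data. Such a class is trivial in $H^2(f^{-1}(U),\R)$ if and only if it is numerically trivial on all $f$-vertical curves. This uses the relative theory over a Stein base: a line bundle $L$ with $L\cdot C=0$ on all vertical curves, and $f$ having rational fibers in the sense above, is torsion near the fiber, by \cite[Kollár--Mori style]{KM98} arguments adapted to the analytic case. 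Consequently, $\alpha$ vanishes on all vertical curves if and only if $\alpha\in f^*H^{1,1}_{\rm BC}(Y)$. Dualizing, the annihilator of $f^*H^{1,1}_{\rm BC}(Y)$ in $N_{1,\rm an}(X)$ equals the span $V_{alg}(X/Y)$ of vertical curves. Every current supported on fibers pairs to zero with $f^*\beta$, so $N_{1,\rm an}(X/Y)$ lies in this annihilator. Vertical curves are themselves such currents, so the annihilator is also contained in $N_{1,\rm an}(X/Y)$. Hence $N_{1,\rm an}(X/Y)=V_{alg}(X/Y)$. The isomorphism $H^{1,1}_{\rm BC}(X)/H^{1,1}_{\rm BC}(Y)\cong N^1_{\rm an}(X/Y)$ then follows from finite-dimensional linear duality. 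This uses the perfect pairing $N^1_{\rm an}(X)\cong N_{1,\rm an}(X)^\vee$, valid for rational singularities as recalled in Definition~\ref{d-an}.

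The main obstacle is the local statement: a class on $f^{-1}(U)$ that is zero on vertical curves is zero. This requires the local Picard/$H^2$ comparison and a relative numerical-triviality argument without global projectivity. I would first try to deduce it from the vanishing $R^1f_*\OO_X=R^2f_*\OO_X=0$ via the exponential sequence and proper base change, reducing to the statement on the fiber germ.
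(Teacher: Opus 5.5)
Your architecture is the paper's. Once $f^*\colon H^{1,1}_{\rm BC}(Y)\to H^{1,1}_{\rm BC}(X)$ is known to be injective with image exactly the classes vanishing on every $f$-vertical curve, dualizing gives $\ker f_*=V_{alg}(X/Y)$. The squeeze $V_{alg}(X/Y)\subseteq N_{1,\rm an}(X/Y)\subseteq\ker f_*$ then finishes, and your last paragraph does this correctly. The difference is that the paper takes the characterization of ${\rm Im}\,f^*$ from Lemma~\ref{l-rccfibres}. You try to derive it by hand, and the step that carries the content is left unproved, as you acknowledge.

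Your reductions are fine:
\begin{itemize}
\item over a small Stein $U$, the exponential sequence and $R^1f_*\OO_X=0$ give $H^1(f^{-1}(U),\Z)=0$, hence $R^1f_*\R=0$, so $H^2(Y,\R)\hookrightarrow H^2(X,\R)$ with cokernel in $H^0(Y,R^2f_*\R)$;
\item $H^2(Y,\OO_Y)\cong H^2(X,\OO_X)$ transfers this to $H^{1,1}_{\rm BC}$;
\item ${\rm Pic}(f^{-1}(U))\otimes\R\cong H^2(f^{-1}(U),\R)\cong H^2(F,\R)$ with $F=f^{-1}(y)$.
\end{itemize}
But this only reduces the lemma to the claim that a class in $H^2(F,\R)$ which is zero on every curve $C\subset F$ is zero. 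That is a numerical-versus-homological equivalence statement on the fiber, and the exponential sequence and proper base change do not give it. Whether pairing with curves detects a class depends on $F$ having enough curves, i.e.\ on algebraicity of the fibers. Your sketch never uses the hypothesis $X\in\mathcal C$, which is exactly where that comes from.

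To close the gap you need three things.
\begin{enumerate}
\item $f$ must be (strongly) Moishezon. This follows from $X\in\mathcal C$ and the vanishing hypotheses via \cite[Theorem 3.1]{CH24}, as in the first line of the proof of Lemma~\ref{l-rccfibres}. One may then pass to a resolution on which the morphism is projective, as in \cite[Lemma 2.6(1)]{DH24}.
\item You need a torsion/descent result for line bundles numerically trivial on the fibers of such a morphism with $R^1f_*\OO_X=0$, e.g.\ \cite[Proposition 12.1.4]{KM92}, which the paper uses in Proposition~\ref{prop: MMP can be run}.
\item You need the observation, also skipped, that the curve pairing on $H^2(f^{-1}(U),\Q)$ is $\Q$-valued. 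Hence the real subspace of classes vanishing on vertical curves is spanned by rational ones. Without this, $\alpha=\sum r_ic_1(L_i)$ with $\alpha\cdot C=0$ says nothing about the individual $L_i$, so the line-bundle statement cannot be applied.
\end{enumerate}
The shortest fix is to invoke Lemma~\ref{l-rccfibres}, whose final assertion is exactly what you call the main obstacle. With it, your proof and the paper's coincide.
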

\begin{proof} 
By Lemma \ref{l-rccfibres} below, the dual of the short exact sequence
\[
0\to H^{1,1}_{\rm BC}(Y)\stackrel{f^*}{\longrightarrow} H^{1,1}_{\rm BC}(X)\to H^{1,1}_{\rm BC}(X)/H^{1,1}_{\rm BC}(Y)\to 0
\]
is exactly
\[
0\to V_{alg}(X/Y)\to N_{1,an}(X)\stackrel{f_*}{\longrightarrow} N_{1,an}(Y)\to 0
\]
Since $V_{alg}(X/Y)\subseteq N_{1,an}(X/Y)$ and $N_{1,an}(X/Y)\subset\Ker f_*$ from the definitions, we must have $V_{alg}(X/Y)=N_{1,an}(X/Y)$ and the isomorphism
\[
H^{1,1}_{\rm BC}(X)/H^{1,1}_{\rm BC}(Y)\cong N^1_{\rm an}(X/Y)=(N_{1,\rm an}(X/Y))^{\vee}.
\]
\end{proof}

\subsection{Generalities of the transcendental MMP}
We recall the following definition from \cite[Definition 2.11]{DHY23}.
\begin{definition}\label{d-models}
If $(X/S,B+\boldsymbol{\beta})$ is a generalized dlt pair over $S$, then we say that a bimeromorphic map $\phi:X\dasharrow X^{\rm m}$ (proper over $S$) is a {\it log minimal model over $S$} (resp. a {\it log terminal model} over $S$) if (1-3) below hold (resp. (1-4) below hold).
\begin{enumerate}
    \item $(X^{\rm m},B^{\rm m}+\boldsymbol{\beta} )$ is a strongly $\Q$-factorial generalized dlt pair, where $B^{\rm m}=\phi _*B+E$, and $E$ is the reduced sum of all $\phi ^{-1}$-exceptional divisors,
    \item $K_{X^{\rm m}}+B^{\rm m}+\boldsymbol{\beta}_{X^{\rm m}}$ is nef over $S$,
    \item $a(P,X,B+\boldsymbol{\beta} )<a(P,X^{\rm m},B^{\rm m}+ \boldsymbol{\beta})$ for every $\phi$-exceptional divisor $P$, and
    \item $\phi$ is a birational contraction (i.e. there are no $\phi^{-1}$-exceptional divisors, so that $E=0$).
\end{enumerate}
We say that $\phi:X\dasharrow X^{\rm m}$ (proper over $S$) is a {\it good log minimal model over $S$} (resp. a {\it good log terminal model} over $S$) if (1-3) above hold (resp. (1-4) above hold) and there exists a morphism $g:X^{\rm m}\to Z$ over $S$, and a K\"ahler form $\omega _Z$ on $Z$ such that $K_{X^{\rm m}}+B^{\rm m}+\boldsymbol{\beta}_{X^{\rm m}}\equiv g^*\omega _Z$.

If $(X/S,B+\boldsymbol{\beta})$ is a generalized dlt pair over $S$, then we say that a meromorphic map $\phi:X\dasharrow Z$  (proper over $S$) is a {\it K\"ahler model over $S$} if there is a K\"ahler class  $\omega _Z$ on $Z$ and a resolution $\nu :X'\to X$ such that the induced meromorphic map is a morphism $f:X'\to Z$, $\nu ^*(K_X+B+\bbeta_X)\equiv f^*\omega _Z+E$ where $0\leq E\leq N(\nu^*(K_X+B+\bbeta_X))$ (here $N(...)$ denotes the negative part of the Boucksom-Zariski decomposition see \cite[Definition 3.7]{Bou04}). We will often refer to the K\"ahler model as the log canonical model.

If $(X/S,B+\boldsymbol{\beta})$ is a generalized dlt pair over $S$, then we say that a bimeromorphic map $\phi:X\dasharrow X^{\rm m}$  (proper over $S$) is a {\it weak log canonical model over $S$} 
if (1-3) below hold. (Note that if (1-4) below hold, then $\phi$ is a birational log canonical  model over $S$).
\begin{enumerate}
    \item $(X^{\rm m},B^{\rm m}+\boldsymbol{\beta})$ is generalized lc pair, where $B^{\rm m}:=\phi _*B+E$, and $E$ is the reduced sum of all $\phi ^{-1}$-exceptional divisors,
   
   \item $K_{X^{\rm m}}+B^{\rm m}+\boldsymbol{\beta}_{X^{\rm m}}$ is nef over $S$, 
    \item $a(P,X,B+\boldsymbol{\beta} )\leq a(P,X^{\rm m},B^{\rm m}+\boldsymbol{\beta})$ for every $\phi$-exceptional divisor $P$, and
    \item $[K_{X^{\rm m}}+B^{\rm m}+\boldsymbol{\beta}_{X^{\rm m}}]\in H^{1,1}_{\rm BC}(X^{\rm m})$ is K\"ahler over $S$.
\end{enumerate}
If $X$ is proper and $S$ is a point, then we drop ``over $S$'' and simply say that we have a log minimal model, log terminal model etc.
\end{definition}

\begin{definition}Let $\phi:X\dasharrow X'$ be a proper bimeromorphic contraction of normal analytic varieties, $\alpha \in H^{1,1}_{\rm BC}(X)$ and $\alpha ' \in H^{1,1}_{\rm BC}(X')$ such that $\alpha '=\phi _*\alpha$ (if $p:W\to X$ and $q:W\to X'$ is a common resolution, then $\phi _*\alpha:=q_*(p^*\alpha)$). Then $\phi $ is $\alpha$-non-positive (resp. $\alpha$-negative) if for some common resolution $p:W\to X$, $q:W\to X'$ we have $p^*\alpha =q^*\alpha '+E$ where $E\geq 0$ is $q$-exceptional (resp. $E\geq 0$ is $q$-exceptional and its support contains all $\phi$-exceptional divisors). \end{definition}

\begin{definition}Let $R\in \overline{\rm NA}(X)$ be an extremal ray on a compact normal variety $X$ and $\alpha\in H^{1,1}_{\rm BC}(X)$ a nef supporting class. Then we say that a contraction morphism $f:X\to Z$ is the contraction associated to $R$ if there is a K\"ahler class $\omega \in H^{1,1}_{\rm BC}(Z)$ on $Y$ such that $f^*\omega =\alpha$. 

If $(X,B+\bbeta )$ is a generalized pair such that $(K_X+B+\bbeta _X)\cdot R<0$, then we say that \begin{enumerate} \item if $\dim Z<\dim X$, then $f$ is a $K_X+B+\bbeta _X$ Mori fiber space,
\item if $\dim Z=\dim X$ and $\dim({\rm Ex}(f))=\dim X-1$, then $f$ is a $K_X+B+\bbeta _X$ divisorial contraction, and
\item if $\dim Z=\dim X$ and $\dim({\rm Ex}(f))\leq \dim X-2$, then $f$ is a $K_X+B+\bbeta _X$ flipping contraction.
\end{enumerate}

If $f:X\to Z$ is a flipping contraction and $f^+:X^+\to Z$ is a small proper birational contraction such that $\rho_{\rm BC} (X^+/Z)=1$, $(X^+,B^++\bbeta)$ is a generalized pair and $K_{X^+}+B^++\bbeta _{X^+}$ is $f^+$-K\"ahler, then we say that $f^+$ is the flip of $f$. (See Definition \ref{d-an} for $\rho _{\rm BC}$.)
\end{definition}
\begin{remark}\label{r-div} If $f$ is a divisorial contraction and $X$ is $\Q$-factorial, then there is a unique $f$-exceptional divisor $E$. 
\end{remark}

\begin{proof} Suppose $E_1,E_2$ are two different $f$-exceptional irreducible divisors, then by \cite[Lemma A.3]{DHY23} we know $E_i$ is covered by an analytic family of $f$-vertical curves $\{C_{t_i}\}_{t_i\in T_i}$ such that $E_i\cdot C_{t_i}<0$. Then we see $[C_{t_i}]$ is a non-zero element contained in the extremal ray $R$. However, $E_1\cdot C_{t_2}\ge 0$ at least for some $t_2\in T_2$, we get a contradiction.

\end{proof}

\begin{lemma}\label{l-np} Let $\phi:X\dasharrow X'$ be a birational contraction of compact K\"ahler varieties. If $(X,B+\bbeta )$ and $(X',B'+\bbeta )$ are generalized dlt pairs where $B'=\phi _* B$, $K_{X'}+B'+\bbeta _{X'}$ is nef, and $a(P,X,B+\boldsymbol{\beta} )\leq a(P,X',B'+\boldsymbol{\beta})$ for every $\phi$-exceptional divisor $P$, then $\phi$ is $K_X+B+\bbeta_X$ non-positive.
\end{lemma}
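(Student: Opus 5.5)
We take a common resolution $p:W\to X$, $q:W\to X'$, chosen so that $\bbeta$ descends to $W$, i.e. $\bbeta_W$ is nef and $\bbeta_X=p_*\bbeta_W$, $\bbeta_{X'}=q_*\bbeta_W$. On $W$ we write
\[
p^*(K_X+B+\bbeta_X)=q^*(K_{X'}+B'+\bbeta_{X'})+E,
\]
so $E$ is an $\R$-divisor on $W$ (the $\bbeta_W$ parts cancel). The goal is to show that $E\geq 0$ and that $E$ is $q$-exceptional. Non-positivity only asks for $E\geq 0$ and $q$-exceptional, not strict positivity along $\phi$-exceptional divisors.

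First we identify $E$ via discrepancies. For every prime divisor $P$ on $W$, the coefficient of $E$ along $P$ equals $a(P,X',B'+\bbeta)-a(P,X,B+\bbeta)$, because both pullbacks are written as $K_W+(\text{boundary})+\bbeta_W$ with the same $K_W$ and $\bbeta_W$.

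Next we check $q_*E\geq 0$ by examining the divisors that are not $q$-exceptional, i.e. the strict transforms of prime divisors $D'$ on $X'$. Since $\phi$ is a birational contraction, $\phi^{-1}$ extracts no divisors, so each such $D'$ is the strict transform of a prime divisor $D$ on $X$ that is not $\phi$-exceptional. Then $\operatorname{mult}_{D'}B'=\operatorname{mult}_D B$ because $B'=\phi_*B$, so the discrepancies agree and the coefficient is $0$. Hence $q_*E=0$, i.e. $E$ is $q$-exceptional. (For $q$-exceptional $P$ that are $\phi$-exceptional, the hypothesis gives nonnegative coefficients, but we do not need this directly.)

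Finally we apply negativity. Since $q^*(K_{X'}+B'+\bbeta_{X'})$ is $q$-trivial, $E\equiv_{q}p^*(K_X+B+\bbeta_X)$. We want $-E$ to be $q$-nef. This is the main obstacle, since $p^*(K_X+B+\bbeta_X)$ need not be nef. The standard fix is to reverse roles and argue over $X$ instead: $-E\equiv_p q^*(K_{X'}+B'+\bbeta_{X'})$ is nef, hence $p$-nef, because $K_{X'}+B'+\bbeta_{X'}$ is nef. We then apply Lemma \ref{l-negbir} to $p$, which requires $p_*E\geq0$. Its coefficients along divisors $D$ on $X$ are $0$ when $D$ is not $\phi$-exceptional, by the computation above, and are $a(D,X',B'+\bbeta)-a(D,X,B+\bbeta)\geq 0$ when $D$ is $\phi$-exceptional, by hypothesis. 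Hence $p_*E\geq 0$, and Lemma \ref{l-negbir} gives $E\geq 0$. Combined with $q$-exceptionality, this shows $\phi$ is $K_X+B+\bbeta_X$ non-positive. The one technical point to verify is that $E$ is $\R$-Cartier on the smooth variety $W$ (automatic) and that the negativity lemma applies with nefness understood via Bott–Chern classes; this holds because $-E$ is numerically equivalent to a pullback of a nef class.
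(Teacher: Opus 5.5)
Your argument is correct, but it finishes by a different mechanism than the paper.

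The discrepancy bookkeeping is common to both proofs. On a common resolution $W$, the difference of the two crepant pullbacks is a divisor whose coefficients are differences of discrepancies. The hypotheses (birational contraction, $B'=\phi_*B$, and the inequality on $\phi$-exceptional divisors) then force its possibly negative part to be $p$-exceptional and the remainder to be $q$-exceptional. Your checks $q_*E=0$ and $p_*E\geq 0$ are exactly this content, which the paper simply asserts.

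You then conclude with the negativity lemma (Lemma~\ref{l-negbir}) applied to $p:W\to X$, after observing that $-E\equiv_p q^*(K_{X'}+B'+\bbeta_{X'})$ is nonnegative on $p$-contracted curves. The paper instead writes $p^*(K_X+B+\bbeta_X)+F=q^*(K_{X'}+B'+\bbeta_{X'})+E$ with $E,F\geq 0$ and $E\wedge F=0$. It then compares the Boucksom--Zariski negative part of this single class computed in two ways:
\begin{enumerate}
\item nefness of $K_{X'}+B'+\bbeta_{X'}$ gives $N(q^*(K_{X'}+B'+\bbeta_{X'})+E)=E$;
\item $p$-exceptionality of $F$ gives $F\leq N(p^*(K_X+B+\bbeta_X)+F)$.
\end{enumerate}
Hence $F\leq E$, and $E\wedge F=0$ forces $F=0$.

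Your route is the classical one and uses only curve intersections plus a lemma already recorded in the paper. The paper's route avoids relative nefness and intersection numbers entirely and stays inside the transcendental BZ formalism it uses later (e.g.\ Lemma~\ref{l-contdiv}). The price is reliance on the pullback formula for $N(\cdot)$ from \cite{DHY23}, which is itself a negativity-type statement.
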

\begin{proof} Let $p:W\to X$ and $q:W\to X'$ be a common resolution and write
\[p^*(K_X+B+\bbeta_X)+F=q^*(K_{X'}+B'+\bbeta_{X'})+E\] where $E,F\geq 0$ and $E\wedge F=0$.
Note that $F$ is $p$-exceptional and $E$ is $q$-exceptional.
Since $K_{X'}+B'+\bbeta _{X'}$ is nef, $ N(q^*(K_{X'}+B'+\bbeta_{X'})+E)=E$. 
Since $F$ is $p-$exceptional, $F\leq N(p^*(K_X+B+\bbeta_X)+F)$. Since $F\wedge E=0$, $F=0$ and so $\phi$ is $K_X+B+\bbeta_X$ non-positive.
\end{proof}
\begin{lemma}\label{l-wmm} Let $\phi:X\dasharrow X'$ be a birational contraction, of $d$-dimensional compact K\"ahler varieties, $(X,B+\bbeta )$ and $(X',B'+\bbeta )$  gklt pairs where $B'=\phi _* B$, $\phi$ is $K_X+B+\bbeta_X$ non-positive, and $K_{X'}+B'+\bbeta _{X'}$ is nef.
Then $(X,B+\bbeta )$ has a log terminal model.
\end{lemma}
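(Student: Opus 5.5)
The idea is to produce a log terminal model by extracting, on top of $X'$, exactly the divisors that should survive, and then checking the conditions of Definition \ref{d-models}. Take a common resolution $p:W\to X$, $q:W\to X'$. Since $\phi$ is $K_X+B+\bbeta_X$ non-positive, we have
\[p^*(K_X+B+\bbeta_X)=q^*(K_{X'}+B'+\bbeta_{X'})+E\]
with $E\geq 0$ and $q$-exceptional. Comparing discrepancies gives $a(P,X,B+\bbeta)\leq a(P,X',B'+\bbeta)$ for every divisor $P$ over $X$. Strict inequality is needed only for $\phi$-exceptional divisors. The $\phi$-exceptional divisors $P$ with $\mult_P E=0$ are the ones where equality holds, and these must not be contracted.

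The plan is as follows.

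First, since $(X',B'+\bbeta)$ is gklt, there is a $\Q$-factorial modification extracting exactly a prescribed finite set $\mathcal S$ of divisors with discrepancy in $(-1,0]$ (or, more generally, with $a<1$ after a standard adjustment). Concretely, $\mathcal S$ consists of the $\phi$-exceptional $P$ with $\mult_P E=0$. I will construct $\psi:Y\to X'$ with $Y$ strongly $\Q$-factorial, extracting exactly $\mathcal S$. Then
\[K_Y+B_Y+\bbeta_Y=\psi^*(K_{X'}+B'+\bbeta_{X'}),\]
where $B_Y$ is the strict transform of $B'$ plus $\sum_{P\in\mathcal S}(1-a(P,X,B+\bbeta))P$. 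These coefficients are less than $1$ because $(X,B+\bbeta)$ is gklt.

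The construction of $\psi$ is the main obstacle. In the projective setting this is \cite[Cor.\ 1.4.3]{BCHM10}. In the present K\"ahler setting I would obtain it by running a relative MMP over $X'$ from a log resolution $W\to X'$. The pair on $W$ is $K_W+B_W^{\geq0}+\bbeta_W+\epsilon F$, where $F$ is the sum of the exceptional divisors not in $\mathcal S$. The morphism $W\to X'$ is projective, and $B+\bbeta$ is relatively big since it is bimeromorphic over $X'$. So the relative projective MMP for generalized pairs over $X'$ applies: \cite{BCHM10}-type results in the relative analytic setting, as in \cite{DHY23}, \cite{Fuj22}. The negativity lemma (Lemma \ref{l-negbir}) then shows that this MMP contracts exactly the divisors in $F$, giving $Y$. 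Kählerness of $Y$ follows since $Y\to X'$ is projective and $X'$ is K\"ahler.

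Next I check that $X\dasharrow Y$ is a birational contraction and that $(Y,B_Y+\bbeta)$ is a log terminal model of $(X,B+\bbeta)$. Condition (2) holds because $K_Y+B_Y+\bbeta_Y$ is a pullback of a nef class. For condition (4), no divisor of $Y$ is exceptional over $X$: the extracted divisors are either divisors on $X$ or are $\phi$-exceptional... Here some care is needed, since extracted divisors must be divisors on $X$. So I restrict $\mathcal S$ to the $\phi$-exceptional divisors of $X$ (divisors on $X$ contracted by $\phi$) with $\mult_P E=0$. Then $Y\dasharrow X$ has no exceptional divisors, and $B_Y$ is the strict transform of $B$, with coefficients matching by the equality of discrepancies.

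For condition (3), the divisors of $X$ contracted by $X\dasharrow Y$ are exactly the $\phi$-exceptional divisors with $\mult_P E>0$. For these, $a(P,X,B+\bbeta)<a(P,Y,B_Y+\bbeta)$ because $p^*(K_X+B+\bbeta_X)-q^*(\cdots)=E$ has positive coefficient there. Strong $\Q$-factoriality and the gklt property of $Y$ come from the construction, which gives condition (1).
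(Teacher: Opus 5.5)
Your proposal is correct and is essentially the paper's argument. The paper also takes the set $\mathcal F$ of $\phi$-exceptional divisors on $X$ not contained in ${\rm Supp}(E)$, for which $0\geq a_F(X,B+\bbeta)=a_F(X',B'+\bbeta)$, extracts exactly these over $X'$ via Corollary \ref{c-ext} (proved by the same relative MMP from a log resolution plus the negativity lemma that you sketch), and observes that the resulting crepant model is a log terminal model; the only slip is notational, since in the discrepancy convention the coefficient of $P\in\mathcal S$ in $B_Y$ is $-a(P,X,B+\bbeta)=\mult_P B$ rather than $1-a(P,X,B+\bbeta)$, which agrees with your later correct identification of $B_Y$ with the strict transform of $B$.
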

\begin{proof} With the above notation, write
\[p^*(K_X+B+\bbeta_X)=q^*(K_{X'}+B'+\bbeta_{X'})+E\] where $E\geq 0$. 
Let $\mathcal F$ be the set of $\phi$-exceptional divisors not contained in the support of $E$. 
For any $F\in \mathcal F$ we have $0\geq a_F(X,B+\bbeta )=a_F(X',B'+\bbeta )$ where $a_F(\ldots)$ denotes the discrepancy along $F$.
It follows that there is a birational morphism $\mu:X''\to X'$, where $X'' $ is strongly $\Q$-factorial, whose exceptional divisors coincide with the set $\mathcal F$ (see Corollary \ref{c-ext}). We write $K_{X''}+B''+\bbeta_{X''}=\mu ^*(K_{X'}+B'+\bbeta _{X'})$ and $\psi:X\dasharrow X''$ be the induced birational map. Then $B''\geq 0$, $\psi $ is $K_X+B+\bbeta_X$ negative, and $K_{X''}+B''+\bbeta _{X''}$ is nef. Thus $\psi$ is a log terminimal model for $(X,B+\bbeta )$.
\end{proof}
\begin{lemma}\label{l-mms} Let $(X,B+\bbeta )$ be a compact gdlt pair and $\nu:X'\to X$ a proper birational morphism. Suppose that $(X',B'+\bbeta)$ is a gdlt pair such that \[K_{X'}+B'+\bbeta _{X'}=\nu ^*(K_X+B+\bbeta_X)+F\] where $F\geq 0$ and ${\rm Supp}(F)={\rm Ex}(\nu)$, then $(X,B+\bbeta)$ has a log terminal model (resp. a Mori-fiber space) iff so does $(X',B'+\bbeta )$. If $(X,B+\bbeta )$ is gklt, then we may assume that $B+\bbeta _X$ is modified big, then we may assume that $B'+\bbeta _{X'}$ is big.
\end{lemma}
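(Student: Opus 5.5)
The idea is that running the MMP on $X'$ contracts precisely the divisor $F$, which reduces the statement to comparing log terminal models of the two pairs. Concretely, $(X',B'+\bbeta)$ and $(X,B+\bbeta)$ have the same log terminal models (resp. Mori fiber spaces) up to contracting ${\rm Ex}(\nu)$.

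First, suppose $\phi':X'\dasharrow X^{\rm m}$ is a log terminal model of $(X',B'+\bbeta)$. Take a common resolution $p:W\to X'$, $q:W\to X^{\rm m}$. Then
\[p^*(K_{X'}+B'+\bbeta_{X'})=q^*(K_{X^{\rm m}}+B^{\rm m}+\bbeta_{X^{\rm m}})+E\]
with $E\geq 0$ being $q$-exceptional, by Lemma \ref{l-np}. Since $K_{X^{\rm m}}+B^{\rm m}+\bbeta_{X^{\rm m}}$ is nef, $E=N(p^*(K_{X'}+B'+\bbeta_{X'}))$. On the other hand, $p^*\nu^*(K_X+B+\bbeta_X)+p^*F$ is also a decomposition. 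Since $p^*F$ is exceptional over $X$, negativity (Lemma \ref{l-negbir}, or the properties of the BZ decomposition) gives $p^*F\leq N(\ldots)=E$. Hence every component of $F$, i.e. every $\nu$-exceptional divisor, is contracted by $\phi'$. It follows that the induced map $\phi=\phi'\circ\nu^{-1}:X\dasharrow X^{\rm m}$ is a birational contraction, and $B^{\rm m}=\phi_*B$ since $\nu_*B'=B$.

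The discrepancy inequalities are then checked as follows. For a $\phi$-exceptional divisor $P$ that is not $\nu$-exceptional, we have $a(P,X,B+\bbeta)=a(P,X',B'+\bbeta)<a(P,X^{\rm m},\ldots)$. The $\nu$-exceptional divisors are not divisors on $X$, so no condition is needed for them. Strong $\Q$-factoriality and gdlt hold on $X^{\rm m}$ already. So $\phi$ is a log terminal model of $(X,B+\bbeta)$.

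Conversely, given a log terminal model $\phi:X\dasharrow X^{\rm m}$ of $(X,B+\bbeta)$, consider $\phi\circ\nu$. The $\nu$-exceptional divisors $P$ satisfy
\[a(P,X',B'+\bbeta)=-{\rm mult}_PB'<a(P,X,B+\bbeta)\leq a(P,X^{\rm m},\ldots),\]
because $F>0$ along ${\rm Ex}(\nu)$ and $\phi$ is non-positive. Therefore $\phi\circ\nu$ is a log terminal model of $(X',B'+\bbeta)$.

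For Mori fiber spaces, the same comparison applies. A Mori fiber space is produced after a sequence of steps which is $(K+B+\bbeta)$-negative. Pulling back, $K_{X'}+B'+\bbeta_{X'}$ is not pseudo-effective iff $K_X+B+\bbeta_X$ is not, since $F$ is exceptional and $\nu_*$ preserves pseudo-effectivity, while $\nu^*$ of a pseudo-effective class plus an effective divisor is pseudo-effective. I would state the equivalence at the level of existence as in Definition \ref{d-models}, taking a Mori fiber space of one pair and transporting it via $\nu$ by the same discrepancy comparison.

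For the final assertion, if $(X,B+\bbeta)$ is gklt we replace $B'$ by $B'+\epsilon{\rm Ex}(\nu)$ (or shrink coefficients). Lemma \ref{l-modbig} then gives that $B'^{\geq 0}+\bbeta_{X'}+E$ is big for any $E$ supported on ${\rm Ex}(\nu)$, so $B'+\bbeta_{X'}$ can be taken big while keeping ${\rm Supp}F={\rm Ex}(\nu)$ and $F\geq 0$. I expect the main obstacle to be the step $p^*F\leq N(\cdot)$ in the transcendental setting. There I would use that $\nu^*(K_X+B+\bbeta_X)+F$ has $F$ exceptional, so $F\leq N$ by Boucksom's characterisation of exceptional divisors in the negative part.
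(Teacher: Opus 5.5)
Your argument for log terminal models is correct and is the standard one the paper invokes by citing \cite[Lemma 3.6.10]{BCHM10}. Nefness of $K_{X^{\rm m}}+B^{\rm m}+\bbeta_{X^{\rm m}}$ gives $E=N(p^*(K_{X'}+B'+\bbeta_{X'}))$. Then $N((\nu\circ p)^*(K_X+B+\bbeta_X)+p^*F)\geq p^*F$ forces $\phi'$ to contract ${\rm Ex}(\nu)$; this follows from Lemma \ref{l-N+birational}(1), or from Lemma \ref{l-negbir} applied over $X$ to $E-p^*F$, which is $\equiv_X -q^*(K_{X^{\rm m}}+B^{\rm m}+\bbeta_{X^{\rm m}})$. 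The converse via $\phi\circ\nu$ and Lemma \ref{l-np} is also fine. The bigness clause is the paper's route through Lemma \ref{l-modbig}, with one correction. That lemma concerns $B_{X'}^{\geq 0}$, where $K_{X'}+B_{X'}+\bbeta_{X'}=\nu^*(K_X+B+\bbeta_X)$ is the crepant pullback and $X'$ is strongly $\Q$-factorial. So the right choice is $B'=B_{X'}^{\geq 0}+\epsilon{\rm Ex}(\nu)$, which gives $F=B_{X'}^{\leq 0}+\epsilon{\rm Ex}(\nu)$ with ${\rm Supp}(F)={\rm Ex}(\nu)$, as in Proposition \ref{p-notbigmm}.

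The genuine gap is the Mori fibre space clause in the direction from $X'$ to $X$. What forced ${\rm Ex}(\nu)$ to be contracted was exactly the nefness of the output model. For a Mori fibre space $X'\dasharrow Y\to Z$ of $(X',B'+\bbeta)$, the class $K_Y+B_Y+\bbeta_Y$ is not nef, $E$ need not dominate $p^*F$, and the transported map $X\dasharrow Y$ need not be a birational contraction. Concretely, take $X=\mathbb{P}^2$, $X'=\mathbb{F}_1$ its blow-up at a point, $B=B'=0$ and $\bbeta=0$, so that $F$ is the $(-1)$-curve. The ruling $\mathbb{F}_1\to\mathbb{P}^1$ is a Mori fibre space of $(X',0)$ that does not contract $F$, and $\mathbb{P}^2\dasharrow\mathbb{F}_1$ extracts a divisor. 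The direction $X\Rightarrow X'$ does go through via $\phi\circ\nu$, provided the birational part of a Mori fibre space is required to be $(K+B+\bbeta)$-negative.

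Your remark that $K_{X'}+B'+\bbeta_{X'}$ and $K_X+B+\bbeta_X$ are pseudo-effective simultaneously is correct, but it does not by itself produce a Mori fibre space of $(X,B+\bbeta)$; you have to construct one. For example, first run the $(K_{X'}+B'+\bbeta_{X'})$-MMP over $X$. Since $K_{X'}+B'+\bbeta_{X'}\equiv_X F$, Lemma \ref{l-negbir} shows it contracts exactly ${\rm Supp}(F)={\rm Ex}(\nu)$ and ends on a small, crepant, strongly $\Q$-factorial model $X''\to X$. Then run an MMP with scaling on $X''$ (e.g.\ Theorem \ref{t-runMMP}, after perturbing to the gklt case) and compose with the small map $X\dasharrow X''$. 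So this clause needs MMP input, not just a discrepancy comparison. Note that the paper only ever uses the log-terminal-model part of the lemma.
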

\begin{proof} The first part is standard (see e.g. \cite[Lemma 3.6.10]{BCHM10}). For the second part see Lemma \ref{l-modbig}.
\end{proof}
\begin{lemma}\label{l-dltmultiples} Let $(X,B+\bbeta )$ be a compact gdlt pair. Suppose that $(X,C+\ggamma)$ is a gdlt pair such that \[K_{X}+C+\ggamma _{X}\equiv \lambda (K_X+B+\bbeta_X)\] where $\lambda > 0$. Then $(X,B+\bbeta)$ has a log terminal model iff so does $(X,C+\ggamma )$.
\end{lemma}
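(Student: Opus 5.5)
The key observation is that the hypothesis determines the two log canonical classes as proportional in $H^{1,1}_{\rm BC}(X)$. So every condition entering Definition \ref{d-models} (nefness, the discrepancy comparison, negativity of the map) can be checked on $K_X+C+\ggamma_X$ exactly as on $K_X+B+\bbeta_X$, up to the positive factor $\lambda$. By symmetry (replace $\lambda$ by $\lambda^{-1}$) it suffices to show that if $(X,B+\bbeta)$ has a log terminal model then so does $(X,C+\ggamma)$. The plan is to use the same birational contraction $\phi:X\dasharrow X^{\rm m}$ for both pairs. The one subtlety is that the gdlt condition on $(X^{\rm m},C^{\rm m}+\ggamma)$ is not automatic; this is where Lemma \ref{l-wmm} / Lemma \ref{l-mms} enter.

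Let $\phi:X\dasharrow X^{\rm m}$ be a log terminal model of $(X,B+\bbeta)$. Take a common resolution $p:W\to X$, $q:W\to X^{\rm m}$ on which both b-divisors $\bbeta,\ggamma$ descend. Write
\[p^*(K_X+B+\bbeta_X)=q^*(K_{X^{\rm m}}+B^{\rm m}+\bbeta_{X^{\rm m}})+E,\]
where $E\geq 0$ is $q$-exceptional and its support contains all $\phi$-exceptional divisors. Pushing forward the relation $K_X+C+\ggamma_X\equiv\lambda(K_X+B+\bbeta_X)$ gives
\[K_{X^{\rm m}}+C^{\rm m}+\ggamma_{X^{\rm m}}\equiv \lambda(K_{X^{\rm m}}+B^{\rm m}+\bbeta_{X^{\rm m}}),\qquad C^{\rm m}=\phi_*C.\]
This holds because pushforward of Bott–Chern classes under $q_*p^*$ is linear and $\phi$ extracts no divisors. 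Hence $K_{X^{\rm m}}+C^{\rm m}+\ggamma_{X^{\rm m}}$ is nef. Next, the class
\[p^*(K_X+C+\ggamma_X)-q^*(K_{X^{\rm m}}+C^{\rm m}+\ggamma_{X^{\rm m}})-\lambda E\]
is numerically trivial over $X^{\rm m}$ and its $q$-pushforward is zero. Both the $\ggamma$ and $\bbeta$ parts cancel since the b-divisors descend to $W$, so this class is represented by a $q$-exceptional $\R$-divisor, and the negativity lemma (Lemma \ref{l-negbir}), applied with both signs, shows it is $0$. Thus $p^*(K_X+C+\ggamma_X)=q^*(K_{X^{\rm m}}+C^{\rm m}+\ggamma_{X^{\rm m}})+\lambda E$. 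Comparing coefficients of $\phi$-exceptional divisors gives the strict inequality $a(P,X,C+\ggamma)<a(P,X^{\rm m},C^{\rm m}+\ggamma)$, so $\phi$ is $(K_X+C+\ggamma_X)$-negative.

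The main obstacle is verifying that $(X^{\rm m},C^{\rm m}+\ggamma)$ is gdlt, since $C$ may have components whose gdlt behaviour is not controlled by that of $B$. I would first try to handle this by discrepancy comparison. For any divisor $P$ over $X^{\rm m}$, the identity above gives $a(P,X^{\rm m},C^{\rm m}+\ggamma)\geq a(P,X,C+\ggamma)$, with equality exactly when $P$ is not over ${\rm Supp}(E)$. On the open set where $\phi^{-1}$ is an isomorphism the pair is isomorphic to $(X,C+\ggamma)$, hence gdlt. Over $q({\rm Supp}E)$ the discrepancies strictly improve, which is exactly the mechanism by which the log terminal model in \cite[Lemma 3.6.10]{BCHM10} stays dlt. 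Concretely, the non-gklt locus of $(X^{\rm m},C^{\rm m}+\ggamma)$ is contained in the image of the non-gklt locus of $(X,C+\ggamma)$ away from $q({\rm Supp}E)$, and this gives gdlt-ness. Strong $\Q$-factoriality of $X^{\rm m}$ is inherited from the log terminal model of $(X,B+\bbeta)$.

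If this direct check proves delicate near $q({\rm Supp}E)$, I would instead pass to a gdlt modification and argue as in Lemma \ref{l-wmm}, extracting only the divisors of discrepancy $\leq 0$. A fallback is to reduce to the case $X$ strongly $\Q$-factorial and apply Lemma \ref{l-mms} on a common model. Either way, $\phi$ is a log terminal model for $(X,C+\ggamma)$, and the reverse direction follows by symmetry.
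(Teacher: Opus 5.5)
Your checks of nefness, of the identity $p^*(K_X+C+\ggamma_X)=q^*(K_{X^{\rm m}}+C^{\rm m}+\ggamma_{X^{\rm m}})+\lambda E$, and of the strict discrepancy inequalities for $\phi$-exceptional divisors are fine. This is presumably the argument the paper means by ``follows easily from the definitions''. The gap is the gdlt step, and the trouble is not just a missing proof: the claim that $\phi$ itself is a log terminal model of $(X,C+\ggamma)$ is false in general.

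Your identity only shows that every lc place of $(X^{\rm m},C^{\rm m}+\ggamma)$ is an lc place of $(X,C+\ggamma)$ with $E$-coefficient $0$. It does not show that $\phi^{-1}$ is an isomorphism near the centre of that place, because $\phi$ can be crepant there without being an isomorphism (a flop). Being gdlt is not invariant under crepant small maps.

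For instance, let $X$ be a smooth projective threefold containing a $(-1,-1)$-curve $\ell$. Let $S_1,S_2$ be smooth surfaces meeting transversally along $\ell$, so $S_i\cdot\ell=-1$. Let $S_3$ be smooth, meeting $\ell$ transversally in two points $x,x'$, with $C=S_1+S_2+S_3$ snc. Then $(K_X+C)\cdot\ell=0$. Take $\alpha$ the pullback of a sufficiently ample class from the contraction of $\ell$, and set $B=0$, $\bbeta=\overline{\alpha-K_X}$, $\ggamma=\overline{\lambda\alpha-K_X-C}$ with $\lambda\gg0$.
\begin{itemize}
\item The Atiyah flop $\phi\colon X\dasharrow X^+$ is a log terminal model of $(X,\bbeta)$: it is crepant and $X^+$ is smooth.
\item On the blow-up of $\ell$, the strict transform of $S_3$ meets the exceptional divisor $\ell\times\ell^+$ in the two fibres $\{x\}\times\ell^+$ and $\{x'\}\times\ell^+$. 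Both map isomorphically onto $\ell^+$, so $S_3^+$ is non-normal along $\ell^+$.
\item $\ell^+$ is an lc centre of $(X^+,C^+)$ (its lc place is the exceptional divisor of the blow-up of $\ell$). Hence $(X^+,C^++\ggamma)$ is glc but not gdlt.
\end{itemize}
In this example $(X,C+\ggamma)$ does have a log terminal model, namely ${\rm id}_X$, but it is not $\phi$.

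So your sentence beginning ``concretely, the non-gklt locus \ldots'' does not give gdlt-ness, and the fallbacks do not repair it. Lemma \ref{l-wmm} is stated only for gklt pairs. Lemma \ref{l-mms} compares a pair with a crepant-plus-exceptional pair on a higher model, not proportional boundaries on the same $X$. In the genuinely gdlt case you would have to build a possibly different model, for example starting from the $(K_X+C+\ggamma_X)$-negative weak log canonical model $\phi$ and running an MMP. That is real work, not a consequence of the definitions.

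Your argument does close in the gklt case. If $(X,C+\ggamma)$ is gklt, your identity gives $a(P,X^{\rm m},C^{\rm m}+\ggamma)\geq a(P,X,C+\ggamma)>-1$ for every $P$. So $(X^{\rm m},C^{\rm m}+\ggamma)$ is gklt, hence gdlt, and $\phi$ is a log terminal model. This is the only case in which the paper uses the lemma: in Proposition \ref{p-bigmm} both pairs are gklt. You should either restrict the statement to that case or give a separate argument for gdlt pairs.
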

\begin{proof} This is a standard result that follows easily from the definitions.
\end{proof}

\begin{lemma}\label{lem: g-kltperturb} 

Let $X$ be a compact Fujiki class $\mathcal C$ variety, and $(X,B+\bbeta)$ a generalized pair such that $B+\bbeta _X$ is modified big. Then there is a generalized pair $(X,G+\ggamma)$, a resolution $X'\to X$, and a K\"ahler form $\omega'$ on $X'$ such that $K_{X}+B+\bbeta _{X}\equiv K_{X}+G+\ggamma _{X}$, $\ggamma$ descends to $X'$, $\ggamma _{X'}-\omega'$ is nef, and
\[
{\rm NKLT}(X,B+\bbeta)\supset {\rm NKLT}(X, G+\ggamma ) \text{ i.e. }\mathcal J(X,B+\bbeta)\subset \mathcal J(X, G+\ggamma ).
\]
In particular, if we set $\ddelta=\ggamma -\bar \omega'$, then $(X,G+\ddelta )$ is a generalized pair. Note that if $(X,B+\bbeta)$ is gklt, then so is $(X, G+\ggamma )$.
\end{lemma}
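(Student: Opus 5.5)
Fix a log resolution $\nu:X'\to X$ of $(X,B+\bbeta)$ on which $\bbeta$ descends, so that $\bbeta_{X'}$ is nef, $\bbeta=\overline{\bbeta_{X'}}$, and $\nu^{-1}_*B+{\rm Ex}(\nu)$ is simple normal crossings. Write
\[K_{X'}+B_{X'}+\bbeta_{X'}=\nu^*(K_X+B+\bbeta_X).\]
The plan is to trade part of the divisorial component $B_{X'}^{\geq 0}$, together with a small exceptional divisor, for a K\"ahler class. Lemma \ref{l-modbig} gives that $B^{\geq 0}_{X'}+\bbeta_{X'}+\epsilon{\rm Ex}(\nu)$ is big for every $\epsilon>0$. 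Since $X$ is in Fujiki's class $\mathcal C$, we may replace $X'$ by a higher blow-up, which is K\"ahler. Then Demailly regularization (or Boucksom's characterization of big classes) gives a K\"ahler current in this class with analytic singularities. After a further log resolution it splits as an effective snc divisor $D'$ plus a K\"ahler form $\omega'$:
\[B^{\geq 0}_{X'}+\bbeta_{X'}+\epsilon{\rm Ex}(\nu)\equiv D'+\omega'.\]
We may assume ${\rm Supp}(D')\cup{\rm Supp}(B_{X'})\cup{\rm Ex}(\nu)$ is snc.

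Next fix $0<t\ll1$ and set
\[G':=B_{X'}-tB^{\geq 0}_{X'}-t\epsilon{\rm Ex}(\nu)+tD',\qquad \ggamma_{X'}:=(1-t)\bbeta_{X'}+t\omega',\]
so that $K_{X'}+G'+\ggamma_{X'}\equiv \nu^*(K_X+B+\bbeta_X)$. Let $\ggamma$ be the b-divisor descending to $X'$ from $\ggamma_{X'}$, and set $G:=\nu_*G'$. Then $\ggamma_{X'}-t\omega'=(1-t)\bbeta_{X'}$ is nef, so after renaming $t\omega'$ as $\omega'$ the nefness requirement in the statement holds. Pushing forward gives $K_X+G+\ggamma_X\equiv K_X+B+\bbeta_X$.

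Two checks remain. The first is $G\geq 0$ on $X$. Non-exceptional components of $G'$ have coefficient $(1-t)b_i+td_i\geq 0$, where $b_i\geq0$ because $B\geq0$, so $G=\nu_*G'\geq 0$. The second is that $K_{X'}+G'+\ggamma_{X'}=\nu^*(K_X+G+\ggamma_X)$, i.e. that $G'$ is the correct trace. This needs $\nu^*(K_X+G+\ggamma_X)$ to make sense: $K_X+G+\ggamma_X$ is numerically $K_X+B+\bbeta_X$, and the difference between the two classes is pulled back compatibly. To make this rigorous I would define the pair directly on $X'$ and use that $\nu_*$ of the Bott–Chern relation gives the relation on $X$, with the pullback determined by the fixed class $\nu^*(K_X+B+\bbeta_X)$. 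This is the bookkeeping point I expect to need care: the data of a generalized pair is exactly $(G,\ggamma)$ with the trace on $X'$ given by $G'$.

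Finally, compare multiplier ideals, which is where the main obstacle lies. Coefficients of $G'$ are $(1-t)b+td-t\epsilon e$ when $b\geq0$, and $b+td-t\epsilon e$ when $b<0$. Wherever $b<1$ the coefficient stays $<1$ for $t\ll1$, since $D'$ has finitely many components and $t$ can be chosen depending on them. Where $b\geq 1$, the contribution $-tb$ and the contribution $t\epsilon e$ make the coefficient at most $b+t(d-b)$; we need the round-down not to exceed $\lfloor b\rfloor$. This is arranged by taking $t$ small relative to the fractional gaps of $b$, and $\epsilon$ small, so that $\lfloor G'\rfloor\leq \lfloor B_{X'}\rfloor$ componentwise. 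That yields $\mathcal J(X,B+\bbeta)\subset\mathcal J(X,G+\ggamma)$, and gklt is preserved when all $b<1$. Setting $\ddelta=\ggamma-\overline{\omega'}$ gives a nef trace $(1-t)\bbeta_{X'}$ on $X'$, so $(X,G+\ddelta)$ is a generalized pair. One potential subtlety is components with $b$ exactly an integer and $d>b$. For these I would first shrink $\epsilon$ and choose $D'$ so that its support does not include the components of $\lfloor B_{X'}\rfloor$ beyond what is forced, or else absorb them using the factor $-tB^{\geq0}$; in either case the coefficient $(1-t)b+td<b+1$ for small $t$, which suffices.
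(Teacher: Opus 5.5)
Your proposal is correct and is essentially the standard perturbation argument the paper relies on: it proves the lemma only by citing \cite[Lemma 2.14]{HLX26} and \cite[Lemma 3.1]{DH24}, and your route (write $B^{\geq 0}_{X'}+\bbeta_{X'}+\epsilon{\rm Ex}(\nu)\equiv D'+\omega'$ via Lemma \ref{l-modbig}, then trade a $t$-multiple of it) is that argument. Two small corrections: $D'$ and $\omega'$ actually live on a higher model $\mu\colon X''\to X'$, so you should run the construction there with $\mu^*(B^{\geq 0}_{X'}+\epsilon{\rm Ex}(\nu))$ in place of $B^{\geq 0}_{X'}+\epsilon{\rm Ex}(\nu)$ (your coefficient checks go through unchanged); and the multiplier ideal comparison is not really an obstacle, since for any fixed $\R$-divisor $P$ one has $\lfloor B_{X''}+tP\rfloor\le\lfloor B_{X''}\rfloor$ componentwise for $0<t\ll 1$, including on components with negative or integral coefficients.
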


\begin{proof}
This follows easily from the proofs of \cite[Lemma 2.14]{HLX26} and  \cite[Lemma 3.1]{DH24}. 
\end{proof}

\subsection{The projective MMP}
Let $f:X\to Y$ be a projective morphism of analytic varieties such that either $Y$ is compact or $f$ satisfies property P (see \cite[Section 1]{Fuj22}). If $(X,B)$ is a klt pair, then the projective MMP over $Y$ is established in \cite{Nak87}, \cite{Fuj22}, and \cite{DHP24}. With some extra effort the theory can actually be generalized to klt g-pairs (cf. \cite[Theorem 2.10]{CH24} for the pair version):


\begin{theorem}\label{t-projcone} Let $f:X\to Y$ be a projective morphism of compact analytic varieties, $(X,B+\bbeta)$ a strongly $\Q$-factorial gklt pair such that $Y$ a K\"ahler space and $\bbeta \equiv _Y \mathbf N$, where $\mathbf N$ is an $\R$-Cartier b-divisor.
\begin{enumerate}\item If $K_X+B+\bbeta_X$ is not $f$-nef, then there exists a countable collection of $f$-vertical rational curves $C_i$ such that 
\[\overline{\rm NE}(X/Y)=\overline{\rm NE}(X/Y)_{(K_X+B+\mathbf N_X)\geq 0}+\sum _{i\in I}\R ^+[C_i]\] where $0<-(K_X+B+\mathbf N_X)\cdot C_i\leq 2\dim X$, for every extremal ray $\R ^+[C_i]$ there exists a line bundle $L_i$ such that $\R ^+[C_i]=\overline{\rm NE}(X/Y)\cap L_i^\perp$, and if $B+\bbeta$ is big then $I$ is finite.
\item For every extremal ray $\R ^+[C_i]$ there is a contraction morphism $\varphi :X\to Z$ onto a normal compact K\"ahler space $Z$ and a projective morphism $g:Z\to Y$ such that $-(K_X + B+\mathbf N_X)$ is $\varphi$-ample
and for any curve $C \subset  X$ such that $f (C)$ is a point we have that
\[\varphi (C)={\rm pt.}\qquad {\rm iff}\qquad [C]\in \R^+[C_i].\]  
\item We can run the projective $(K_X+B+\mathbf N_X)$-MMP over $Y$ with scaling of an $f$-ample divisor.
\end{enumerate}
\end{theorem}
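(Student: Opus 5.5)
The plan is to reduce to the case of usual generalized klt pairs with an $\R$-Cartier nef part, for which the relative projective MMP is already available. The relevant results are \cite{Nak87}, \cite{Fuj22}, \cite{DHP24}, and for pairs \cite[Theorem 2.10]{CH24}.

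\emph{Step 1: replace $\bbeta$ by $\mathbf N$.} The hypothesis $\bbeta\equiv_Y\mathbf N$ means that on a high resolution $\nu:X'\to X$ (on which $\bbeta$ descends and is nef) we have $\bbeta_{X'}\equiv \mathbf N_{X'}+g^*\theta$, where $g=f\circ\nu$ and $\theta\in H^{1,1}_{\rm BC}(Y)$. Consequently $K_X+B+\bbeta_X\equiv_Y K_X+B+\mathbf N_X$. Relative numerical questions only see $f$-vertical curves, and pullbacks from $Y$ are trivial on these. So the cone, the relative nefness, and the contractions over $Y$ are the same for both divisors, and it suffices to work with $K_X+B+\mathbf N_X$. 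Here $\mathbf N_{X'}$ is an $\R$-line bundle that is $g$-nef. Discrepancies are computed from the b-divisors, and these differ by a pullback from $Y$, so $(X,B+\mathbf N)$ is a gklt pair relative to $Y$. Bigness over $Y$ of $B+\mathbf N_X$ follows from bigness of $B+\bbeta_X$, because a big class restricts to a relatively big class.

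\emph{Step 2: perturb to a klt pair.} Using the $g$-nefness of $\mathbf N_{X'}$ and the projectivity of $g$, I fix a $g$-ample divisor $A'$ on $X'$. For small $\epsilon>0$, $\mathbf N_{X'}+\epsilon A'$ is $g$-ample and is therefore $\R$-linearly equivalent over a neighborhood of any compact subset of $Y$ to a general effective divisor $H'$ with small coefficients. In the spirit of Lemma \ref{lem: g-kltperturb}, I push forward and set $\Delta=B+\nu_*H'$, so that $(X,\Delta)$ is klt. The only cost is the term $\epsilon\nu_*A'$, which I handle by the usual limiting argument in the cone theorem: one proves the statement for $K_X+\Delta$ minus an $\epsilon$-ample correction, then lets $\epsilon\to 0$. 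The rays are discrete and their lengths are bounded by $2\dim X$, so the rays do not depend on $\epsilon$ for $\epsilon$ small in any region bounded away from $(K_X+B+\mathbf N_X)^{\perp}$. I now apply the klt projective cone and contraction theorem over $Y$ \cite{Fuj22, DHP24}. When $Y$ is compact, $Y$ is covered by finitely many relatively compact Stein opens, so property P holds locally and the result globalizes. This yields the countable set of rational curves $C_i$, the bound $2\dim X$, and the supporting line bundles $L_i$.

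\emph{Step 3: finiteness, contractions, MMP.} If $B+\bbeta$ is big, I write $B+\mathbf N_X\sim_{\R,Y}A+E$ with $A$ ample over $Y$ and $E\ge 0$. Then $K_X+B+\mathbf N_X\equiv_Y K_X+\Delta'+\epsilon A$ with $(X,\Delta')$ klt, and the cone theorem for $K_X+\Delta'+\epsilon A$ gives only finitely many rays. For (2), the base-point-free theorem over $Y$ gives the contraction $\varphi:X\to Z$ with $g:Z\to Y$ projective. Since $Y$ is K\"ahler and $g$ is projective, $Z$ is K\"ahler. For (3), I combine (1) and (2) with the existence of flips for projective klt contractions \cite{Fuj22} and check that strong $\Q$-factoriality and the gklt condition are preserved. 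Scaling by an $f$-ample divisor is then standard.

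The main obstacle I expect is Step 2. The nef part is only numerically equivalent over $Y$ to $\mathbf N$, and turning a relatively nef, rather than ample, $\mathbf N_{X'}$ into an effective boundary without destroying the klt condition or the extremal rays needs the $\epsilon$-limit argument. On top of that, global statements over a compact, non-Stein $Y$ have to be glued from local ones.
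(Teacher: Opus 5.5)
Your outline is sound, but for part (1) it takes a genuinely different route from the paper.

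\textbf{How the paper proves (1).} The paper neither perturbs to a klt pair nor glues local cones. It applies the global transcendental cone theorem (Theorem \ref{t-cone}) on the K\"ahler space $X$ to the pair $(X,B+\bbeta+\overline{f^*\omega_Y})$, where $\omega_Y$ is a K\"ahler form with $\omega_Y\cdot C>2\dim X$ for every curve $C\subset Y$. This choice of $\omega_Y$ forces the relevant negative extremal rays to be spanned by $f$-vertical rational curves of length at most $2\dim X$. The relative statement is then read off by comparing $\overline{\rm NA}(X)$ with $\overline{\rm NE}(X/Y)$, using $V_{alg}(X/Y)\cap\overline{\rm NA}(X)=\overline{\rm NE}(X/Y)$ and the surjectivity of $\overline{\rm NE}_{an}(X/Y)\to\overline{\rm NE}(X/Y)$. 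This works directly with the transcendental class $\bbeta$ and gets countability, the length bound and finiteness in the big case in one stroke. The cost is that it relies on the K\"ahler cone theorem.

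\textbf{What your route needs.} You use only the relative projective klt theory over Stein pieces of $Y$, which is more classical. The price is that three steps must be made explicit.
\begin{enumerate}
\item The gluing: $\overline{\rm NE}(X/Y)$ is the finite sum of the images of the local cones $\overline{\rm NE}(X/Y;W_j)$ for Stein compacts $W_j$ covering $Y$. This sum is closed because a global $f$-ample divisor is positive on all of them. Hence every $(K_X+B+\mathbf N_X)$-negative extremal ray of the global cone is spanned by one of the local rational curves.
\item The supporting line bundles $L_i$ must be constructed globally, from the rationality of $R^\perp\subset N^1(X/Y)$. You cannot take them from the local theorems.
\item The bound $2\dim X$ for $K_X+B+\mathbf N_X$ itself needs the usual argument along a fixed ray as $\epsilon\to 0$.
\end{enumerate}

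\textbf{Two small inaccuracies.}
\begin{itemize}
\item $\nu_*A'$ need not be $f$-ample, so it is not an ``ample correction''. The fixed-ray limiting argument does not need ampleness. Alternatively, use $\nu^*A-\delta F$, with $A$ $f$-ample on $X$ and $F\geq 0$ exceptional with $-F$ $\nu$-ample, so that the correction on $X$ is exactly $\epsilon A$.
\item A decomposition $B+\mathbf N_X\sim_{\R,Y}A+E$ is in general only available over Stein open subsets of a compact K\"ahler $Y$, not globally. This is harmless, since you argue locally anyway.
\end{itemize}

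\textbf{Parts (2) and (3).} Here your argument is essentially the paper's: local base-point-freeness and flips via Fujino's theory, glued through the relative Proj of a global line bundle. One difference: the paper handles the scaling by putting $\lambda\,\overline{H+af^*\omega_Y}$ into the nef part, so that (1) applies directly to each scaled pair.
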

\begin{proof}
Let $\omega_Y$ be a K\"ahler form on $Y$ such that $\omega_Y\cdot C>2\dim X$ for any curve $C\subset Y$, then (1) follows from applying  the global cone theorem (Theorem \ref{t-cone}) to $(X,B+\bbeta+\overline{f^*\omega_Y})$ and the fact that
\begin{enumerate}
\item[(i)] $V_{alg}(X/Y)\cap \overline{\rm NA}(X)=\overline{\rm NE}(X/Y)$,
\item[(ii)] the natural map $\overline{\rm NE}_{an}(X/Y)\to\overline{\rm NE}(X/Y)$ is surjective.
\end{enumerate}

Now for every extremal ray $\R ^+[C_i]$, there exists an $f$-ample divisor $A$ on $X$ such that $K_X+B+\mathbf N_X+A$ is $f$-nef and supports the extremal ray $\R ^+[C_i]\subset\overline{\rm NE}(X/Y)$, then (2) follows from the usual (relative) base-point-free theorem for generalized pairs since the existence of $\varphi$ can be checked locally over the base using the results of \cite{Fuj22}. We refer the reader to \cite{HLX26} for the details.

For (3), Let $H$ be an $f$-ample divisor such that $K_X+B+\mathbf N_X+H$ is $f$-nef, and $[H]+af^*\omega_Y+(\mathbf N_X-\bbeta_X)$ is a K\"ahler class for some $a\gg 0$. Then $(X,B+\lambda (\overline{H+af^*\omega_Y})+\bbeta)$ is a gklt pair for any $\lambda\ge 0$ and we can consider the relative cone theorem for $(X,B+\mathbf N+\lambda\overline{H})$ as in (1). Since we have the contraction theorem (2) and  flips are given by relative log canonical models whose  existence can be checked locally over $Y$ using the results of \cite{Fuj22}, by standard arguments we can run the MMP with scaling of an $f$-ample divisor.
\end{proof}

\subsection{Generalized dlt and $\Q$-factorial models}
\begin{lemma}\label{L-gklt-rational} Let $(X,B+\bbeta )$ be a gklt pair, then $X$ has rational singularities.
\end{lemma}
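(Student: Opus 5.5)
The plan is to reduce to the classical statement that klt pairs (in the analytic category) have rational singularities, by absorbing the nef part $\bbeta$ into an honest boundary after perturbation. This can only be done locally on $X$, so the whole argument is local.

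First I will reduce to the local problem. Rationality of singularities is a local analytic property. So it suffices to fix a point $x\in X$ and a relatively compact Stein neighbourhood $U\ni x$, and show that $U$ has rational singularities.

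Next I will replace $\bbeta$ over $U$ by an effective boundary. Choose a log resolution $\nu:X'\to X$ on which $\bbeta$ descends, so that $\bbeta_{X'}$ is nef and $\nu^{-1}_*B+{\rm Ex}(\nu)$ is snc. Over the Stein set $U$, after shrinking, the morphism $\nu$ is projective (a composition of blow-ups over a relatively compact base). Hence there is an effective $\nu$-exceptional divisor $F$ with $-F$ $\nu$-ample over $U$. Since $\bbeta_{X'}$ is nef, the class $\bbeta_{X'}-\epsilon F$ is $\nu$-ample over $U$ for $0<\epsilon\ll1$. On $U$ every $(1,1)$-class is locally $\partial\bar\partial$-exact, so $\bbeta_X|_U\equiv 0$ up to local potentials. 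A general member of a relatively ample $\R$-linear system then gives an effective $\R$-divisor $H'$ on $\nu^{-1}U$ with the following properties:
\begin{itemize}
\item $H'\sim_{\R,U}\bbeta_{X'}-\epsilon F$;
\item $H'$ is in general position with respect to the snc divisor;
\item $\lfloor H'\rfloor=0$.
\end{itemize}
Setting $\Delta:=B+\nu_*(H')$ on $U$, one checks that $K_{X'}+B_{X'}+\epsilon F+H'\equiv_U\nu^*(K_U+\Delta)$, using the negativity lemma (Lemma \ref{l-negbir}) to identify the exceptional parts. The coefficients of $B_{X'}+\epsilon F+H'$ are $<1$ for $\epsilon$ small, because the original pair is gklt. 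So $(U,\Delta)$ is a klt pair, with $K_U+\Delta$ being $\R$-Cartier since it equals $K_U+B+\bbeta_U$ up to an exact class.

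Finally, klt pairs on complex analytic spaces have rational singularities. This is the analytic version of \cite[Theorem 5.22]{KM98}, valid by the relative Kawamata–Viehweg vanishing (Theorem \ref{t-kvv} with $\bbeta=0$) together with Grauert–Riemenschneider vanishing. Explicitly, take the log resolution $\nu$ over $U$ and write $K_{X'}=\nu^*(K_U+\Delta)+\sum a_iE_i-\nu^{-1}_*\Delta$ with $a_i>-1$. Then $\lceil\sum a_iE_i\rceil\ge 0$ is exceptional, and applying Theorem \ref{t-kvv} to $\lceil\sum a_iE_i\rceil-\ldots$ gives $R^i\nu_*\OO_{X'}=0$ for $i>0$, exactly as in the algebraic case. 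Normality of $X$ then gives $\nu_*\OO_{X'}=\OO_X$.

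The main obstacle is the second step: producing the effective divisor $H'$ representing $\bbeta_{X'}-\epsilon F$. This needs $\nu$ to be projective over $U$ and the class to be relatively ample, so that a relatively ample $\R$-class on a Stein base is represented by a general effective divisor. This is where shrinking $U$ and the choice of $F$ matter. An alternative, which avoids divisors altogether, is to apply Theorem \ref{t-kvv} directly to the gklt pair: $0-(K_{X'}+B_{X'}+\bbeta_{X'})$ is $\nu$-trivial, and one perturbs it by $-\epsilon F$ to make it $\nu$-nef and big. I would try that route first.
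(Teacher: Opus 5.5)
Your overall plan is the standard one. It is the same local reduction to an honest klt pair that the paper carries out in the proof of Lemma \ref{l-birgklt}; for the statement itself the paper only cites \cite[Theorem 1.5]{DH24}. However, the central step is justified incorrectly, and as written it does not go through.

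First, ``$\bbeta_X|_U\equiv 0$ because every $(1,1)$-class is locally $\partial\bar\partial$-exact'' applies local triviality to the wrong object. $\bbeta_X=\nu_*\bbeta_{X'}$ is only a pushed-forward current, which in general has no local potentials. What you actually need is information about $\bbeta_{X'}$ on $\nu^{-1}(U)$, and there it need not be trivial (it can be relatively ample).

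Second, it is not true that a relatively K\"ahler class on $\nu^{-1}(U)$ over a Stein base is represented by an effective $\R$-divisor. Such a class need not be the class of an $\R$-line bundle at all. The discrepancy between relatively K\"ahler classes and $\R$-divisor classes is controlled by $H^2(\nu^{-1}(U),\OO_{X'})=H^0(U,R^2\nu_*\OO_{X'})$, and the vanishing of that group is part of what you are trying to prove. So relative ampleness of $\bbeta_{X'}-\epsilon F$ does not by itself produce $H'$.

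The missing input is the generalized-pair condition itself, which your argument never really uses.

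\begin{itemize}
\item We have $K_{X'}+B_{X'}+\bbeta_{X'}=\nu^*(K_X+B+\bbeta_X)$, and $K_X+B+\bbeta_X$ has local potentials. So this class is trivial in $H^{1,1}_{\rm BC}(\nu^{-1}(U))$ for $U$ small.
\item Hence on $\nu^{-1}(U)$ the nef class $\bbeta_{X'}$ is the class of the $\R$-divisor $-(K_{X'}+B_{X'})$. This is exactly the identity $-(K_{X'}+B_{X'})\equiv\bbeta_{X'}$ used in Lemma \ref{l-birgklt}.
\item It follows that $L:=-(K_{X'}+B_{X'}+\epsilon F)$ is a $\nu$-ample $\R$-Cartier divisor over $U$, by Kleiman's criterion.
\item After shrinking $U$ you may take $H'\sim_{\R}L$ general. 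Then $K_{X'}+B_{X'}+\epsilon F+H'\sim_{\R}0$ on $\nu^{-1}(U)$.
\item Pushing forward gives $K_U+\Delta\sim_{\R}0$. So $K_U+\Delta$ is $\R$-Cartier and $\nu^*(K_U+\Delta)=K_{X'}+B_{X'}+\epsilon F+H'$ exactly, not just up to $\equiv_U$. This should also replace your ``$\R$-Cartier up to an exact class'' justification.
\end{itemize}

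Two smaller points. In your final step, Theorem \ref{t-kvv} gives $R^i\nu_*\OO_{X'}(\lceil\sum a_iE_i\rceil)=0$ and $\nu_*\OO_{X'}(\lceil\sum a_iE_i\rceil)=\OO_U$. It does not give $R^i\nu_*\OO_{X'}=0$ directly. You still need the splitting argument: the composite $\OO_U\to R\nu_*\OO_{X'}\to R\nu_*\OO_{X'}(\lceil\sum a_iE_i\rceil)\cong\OO_U$ is an isomorphism, and Grauert--Riemenschneider vanishing plus duality then give rationality.

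Your alternative route works once stated correctly, and it avoids constructing $H'$ altogether. Apply Theorem \ref{t-kvv} on $X'$ to the gklt pair $(X',\{B_{X'}\}+\bbeta)$ with $D=-\lfloor B_{X'}\rfloor$. No perturbation by $F$ is needed, since $D-(K_{X'}+\{B_{X'}\}+\bbeta_{X'})\equiv-\nu^*(K_X+B+\bbeta_X)$ is already nef and big over $X$. This gives $R\nu_*\OO_{X'}(-\lfloor B_{X'}\rfloor)\cong\OO_X$, as in the proof of Theorem \ref{t-nadel}, and you conclude by the same splitting criterion.
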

\begin{proof} See e.g. \cite[Theorem 1.5]{DH24}.
\end{proof}
\begin{theorem}[DLT models {\cite[Theorem 3.1]{HLX26}}]
\label{t-dltmodel} Let $(X,B+\bbeta )$ be a generalized pair, where $X$ is compact K\"ahler. Then there exists a projective birational morphism $f^{\rm m}:X^{\rm m}\to X$, such that all $f^{\rm m}$-exceptional divisors $P$ have discrepancy $a(X,B+\bbeta ,P)\leq -1$, and
 $(X^{\rm m}, B^{\rm m})$ is dlt where $B^{\rm m}={(f^{\rm m})}^{-1}_*(B\wedge {\rm Supp}(B))+{\rm Ex}(f^{\rm m})$. 
\end{theorem}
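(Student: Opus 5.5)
We reduce to the projective case: since the dlt model is produced by a relative construction over $X$, it is enough to build it locally over $X$ and glue. Concretely, we first take a log resolution $\nu:X'\to X$ of $(X,B+\bbeta)$ which is projective over $X$ (obtained by a finite sequence of blow-ups along smooth centres, so it is projective and functorial), such that $\bbeta$ descends to $X'$, $\bbeta_{X'}$ is nef over $X$, and $\nu^{-1}_*B+{\rm Ex}(\nu)$ has simple normal crossing support. Write $K_{X'}+B_{X'}+\bbeta_{X'}=\nu^*(K_X+B+\bbeta_X)$. We then define the boundary
\[
\Gamma:=\nu^{-1}_*(B\wedge{\rm Supp}(B))+{\rm Ex}(\nu),
\]
so that $(X',\Gamma)$ is dlt (it is log smooth with coefficients $\le 1$). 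We also have
\[
K_{X'}+\Gamma+\bbeta_{X'}\equiv_X F:=\Gamma-B_{X'},
\]
where the coefficient of $F$ along an exceptional divisor $P$ is $1+a(P,X,B+\bbeta)$. Along non-exceptional components the coefficient of $F$ is non-positive.

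Next we run a $(K_{X'}+\Gamma+\bbeta_{X'})$-MMP over $X$ with scaling of a $\nu$-ample divisor, using Theorem \ref{t-projcone}(3). Since $\nu$ is projective and $\bbeta_{X'}$ is nef over $X$, Theorem \ref{t-projcone} applies after perturbing. We replace $\Gamma$ by $\Gamma-\epsilon{\rm Ex}(\nu)$ plus a small ample, keeping the pair gklt, and then take a limit as $\epsilon\to 0$ in the usual way, as in the standard proof of dlt models (Koll\'ar–Kov\'acs / \cite{BCHM10}). Each step is a projective MMP over a base which is locally Stein, so existence of contractions and flips follows from the results of \cite{Fuj22} and \cite{DHP24} for klt (generalized) pairs, applied locally over $X$.

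For termination, the standard trick is used. Write $F=F^{+}-F^{-}$, where $F^{+}$ is exceptional and supported on divisors with discrepancy $>-1$. The MMP is then an $F$-MMP over $X$, and by the general negativity argument it contracts exactly the components of $F^{+}$, at which point $F$ becomes $\le 0$ and nef over $X$; by Lemma \ref{l-negbir} it then equals $0$ on exceptional divisors. Termination follows from \cite{BCHM10}-type arguments, since $F^{+}$ is very exceptional and the MMP with scaling terminates for very exceptional divisors; the relative version over a relatively compact open subset of $X$ suffices by compactness. The output $f^{\rm m}:X^{\rm m}\to X$ is projective. Its exceptional divisors are exactly those with $a\le -1$, the transformed boundary is $B^{\rm m}=(f^{\rm m})^{-1}_*(B\wedge{\rm Supp}(B))+{\rm Ex}(f^{\rm m})$, and dlt-ness is preserved by MMP steps.

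The main obstacle is making the MMP run and terminate in the analytic setting with the b-divisor $\bbeta$: we must check that Theorem \ref{t-projcone} (stated for compact $Y$) applies with $Y=X$, and that the very-exceptional termination argument transfers. For this I would first try to use $X$ compact K\"ahler directly, so that Theorem \ref{t-projcone} applies verbatim with $Y=X$, with $\bbeta\equiv_X\mathbf N$ for $\mathbf N$ a b-divisor relatively trivializing $\bbeta$ over $X$. Projectivity of $f^{\rm m}$ is then automatic, since each step is projective over $X$.
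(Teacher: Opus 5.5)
The paper gives no argument for this statement; it is quoted from \cite[Theorem 3.1]{HLX26}. Your overall plan is the standard one: a log resolution, the boundary $\Gamma=\nu^{-1}_*(B\wedge{\rm Supp}(B))+{\rm Ex}(\nu)$, and a relative MMP over $X$ for $K_{X'}+\Gamma+\bbeta_{X'}\equiv_X F$.

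What you single out as the main obstacle is actually immediate. Since $\bbeta_{X'}=\nu^*(K_X+B+\bbeta_X)-K_{X'}-B_{X'}\equiv_X-(K_{X'}+B_{X'})$, the b-divisor $\mathbf N:=\overline{-(K_{X'}+B_{X'})}$ works in Theorem \ref{t-projcone}. However, two steps fail as written. Both failures come from the fact that the statement allows $(X,B+\bbeta)$ to be non-glc and $B$ to have coefficients $\geq 1$. This case is genuinely used in Section \ref{s-contr-big}, at jumping numbers beyond the log canonical threshold.

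\emph{Running the MMP.} The boundary $\Gamma-\epsilon{\rm Ex}(\nu)$ plus a small ample divisor is not gklt as soon as $B$ has a component with coefficient $\geq 1$: its strict transform has coefficient $1$ in $\Gamma$ and is not exceptional. Moreover, an MMP for $F-\epsilon({\rm Ex}(\nu)-A)$ is not an $F$-MMP, and there is no sense in which such MMPs converge as $\epsilon\to 0$.

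Instead, run the MMP with scaling of a $\nu$-ample $H$ for the gdlt pair itself, and perturb one step at a time. A fixed $(K_{X_i}+\Gamma_i+\bbeta_{X_i})$-negative extremal ray stays negative for the gklt boundary $\Gamma_i-\epsilon\lfloor\Gamma_i\rfloor$ with $0<\epsilon\ll 1$. So the contraction and the flip come from Theorem \ref{t-projcone}. If the scaling numbers satisfy $\lambda_i\to\lambda>0$, then $\epsilon\lfloor\Gamma\rfloor$ can be absorbed into the $\nu$-ample divisor $\lambda H$. Termination in the gklt case (\cite[Theorem E]{Fuj22}, as in the proof of Proposition \ref{p-relMMP}) then rules this out.

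\emph{Termination and the conclusion.} Termination results for MMPs whose log canonical class is numerically a very exceptional divisor require that divisor to be effective. Here $F^-$ contains the exceptional divisors with $a<-1$ and the strict transforms of components of $B$ with coefficient $>1$. So nothing you cite produces a model on which $F$ is nef over $X$. Your final claim is also false: surviving exceptional divisors with $a<-1$ keep a negative coefficient in $F$, so $F$ does not vanish on them.

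The point is that the theorem needs neither termination nor nefness, only a model on which $F_i^+=0$. If the MMP terminates, Lemma \ref{l-negbir} gives $F_m\leq 0$. Otherwise $\lambda_i\to 0$ by the previous paragraph. After the finitely many divisorial contractions, the $X_i$ are isomorphic in codimension one. Since $F_j+\lambda_jH_j$ is nef over $X$ on $X_j$, $F_{i_0}$ is a limit of divisors movable over $X$. As $F_{i_0}^+$ is exceptional over $X$, the general negativity lemma (as in \cite{Bir12}) gives $F_{i_0}^+=0$.

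Then $X^{\rm m}:=X_{i_0}$ works. It is projective over $X$ and $\Q$-factorial, $(X^{\rm m},B^{\rm m})$ is dlt, and every $f^{\rm m}$-exceptional divisor has $1+a\leq 0$. Only this implication is needed. Divisors on $X'$ with $a\leq -1$ may also be contracted, so the exceptional divisors are not ``exactly'' those with $a\leq -1$.
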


\begin{theorem}\label{t-small Q factorial model}  Let $(X,B+\bbeta)$ be a compact gklt pair (or a gdlt pair where $X$ is K\"ahler). Then there exists a small birational map $\nu :X'\to X$ such that $X'$ is strongly $\Q$-factorial.
\end{theorem}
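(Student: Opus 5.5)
The plan is to construct $X'$ as a relative minimal model over $X$ of a log resolution, running the projective MMP of Theorem \ref{t-projcone} over $X$ (or, in the gdlt K\"ahler case, over a small neighbourhood of the relevant locus). We reduce first to the gklt case. If $(X,B+\bbeta)$ is gdlt and $X$ is K\"ahler, then by the standard perturbation argument (e.g.\ replacing $B$ by $B-\epsilon\lfloor B\rfloor$ and using that $X$ is K\"ahler to absorb a small multiple of a K\"ahler class into the nef part) we may assume that $(X,B+\bbeta)$ is gklt locally. Smallness and $\Q$-factoriality only depend on $X$, so this changes nothing.

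Let $\nu:W\to X$ be a log resolution, given by a sequence of blow ups so that it is projective and there is a $\nu$-ample divisor $-F$ with $F\ge 0$ exceptional. Write $K_W+B_W+\bbeta_W=\nu^*(K_X+B+\bbeta_X)$. Set
\[\Delta_W:=\nu^{-1}_*B+(1-\epsilon)\,{\rm Ex}(\nu)\]
with $0<\epsilon\ll 1$. Because all discrepancies exceed $-1$, we have
\[K_W+\Delta_W+\bbeta_W\equiv_X E:=\sum_i e_iE_i,\qquad e_i=a(E_i,X,B+\bbeta)+1-\epsilon>0.\]
Here $E$ is effective and its support is exactly ${\rm Ex}(\nu)$. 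The pair $(W,\Delta_W+\bbeta)$ is gklt, $W$ is smooth and hence strongly $\Q$-factorial, and $\bbeta\equiv_X$ an $\R$-Cartier b-divisor, since over $X$ the nef part is numerically trivial on fibres modulo exceptional data. So Theorem \ref{t-projcone}(3) applies to $W\to X$ with $Y=X$; if $X$ is not K\"ahler, we work locally over $X$ using property P and \cite{Fuj22}. We run the $(K_W+\Delta_W+\bbeta_W)$-MMP over $X$ with scaling of an ample divisor. Each step is $E$-negative.

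The key steps are as follows. First, this MMP only contracts components of $E$, since every extremal ray is $E$-negative and therefore lies in ${\rm Supp}(E)$. Second, every step preserves strong $\Q$-factoriality, by the usual $\rho$ arguments in the relative projective setting. Third, the MMP terminates. This is the main obstacle. We would handle it as in \cite[Lemma 3.6.x]{BCHM10}: the relevant pair is big over $X$ because $\nu$ is birational, so every divisor is big over $X$. Hence the relative BCHM finiteness/termination with scaling for klt (generalized) pairs applies locally over the compact base. To get it globally we use that $X$ is compact and cover it by finitely many Stein opens with property P, as in \cite{Fuj22}, \cite{DHP24}.

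At the end we obtain $W\dashrightarrow X'$ over $X$ with $E'$ nef over $X$, where $E'$ is the pushforward of $E$ and is exceptional over $X$. The negativity Lemma \ref{l-negbir}, applied to $-E'$, gives $E'\le 0$. Since $E'$ is effective, $E'=0$. Thus every $\nu$-exceptional divisor has been contracted, so $X'\to X$ is small. Strong $\Q$-factoriality of $X'$ was preserved throughout the MMP, which gives the theorem.
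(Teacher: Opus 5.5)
Your argument works when $X$ is K\"ahler, but it takes a different route from the paper's proof of this statement. The paper handles the gklt case by citing \cite[Theorem 3.3]{HLX26}. Its only actual argument is the reduction of the gdlt K\"ahler case to the gklt case. Take a log resolution $\nu:X'\to X$ that is an isomorphism over the snc locus, with $F\geq 0$ exceptional and $-F$ $\nu$-ample. Set $\gamma=\nu^*\omega-\epsilon F+\delta B_{X'}^{=1}$, which is K\"ahler for $0<\delta\ll\epsilon\ll 1$, and $\Delta'=B_{X'}-\delta B_{X'}^{=1}+\epsilon F$, which is sub-klt. Then $(X,\nu_*\Delta'+\bbeta+\bar\gamma)$ is a global gklt structure on the same $X$.

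You instead prove the gklt case directly, by an $E$-MMP over $X$ from a log resolution followed by the negativity lemma. This is exactly the argument the paper gives later for Corollary \ref{c-ext}, taking $\mathcal E=\emptyset$. Your discussion of termination and preservation of strong $\Q$-factoriality can be replaced by an appeal to Proposition \ref{c-relMMP}, since $X$ has rational singularities by Lemma \ref{L-gklt-rational}. Your route is self-contained given the relative MMP; the paper's is shorter and inherits whatever generality \cite{HLX26} has.

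Three points need tightening.

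First, $\bbeta\equiv_X\mathbf N$ holds because $\nu^*(K_X+B+\bbeta_X)\equiv_X 0$, so $\bbeta_W\equiv_X-(K_W+B_W)$. With $\mathbf N=\overline{-(K_W+B_W)}$ you get $K_W+\Delta_W+\mathbf N_W=E$ exactly. The reason you give is not the right one.

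Second, your gdlt reduction does not work as literally stated. $\lfloor B\rfloor$ need not be $\R$-Cartier on $X$, and its strict transform is not nef. The correction must be made on a resolution using the relatively anti-ample $F$, as above. That construction gives a global gklt pair, which is what your global MMP needs, not a ``local'' one.

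Third, if $X$ is only compact and not K\"ahler, Theorem \ref{t-projcone} does not apply. ``Working locally with property P'' is then legitimate only if you run one global MMP over $X$ and check each step locally. That requires a relative cone theorem over the compact base $X$. Contractions and flips are canonical, so they glue, and termination is local. Running independent local MMPs over Stein opens instead produces local small $\Q$-factorializations, which are unique only up to flops and need not glue. In every use of this theorem in the paper $X$ is K\"ahler, so this does not affect the applications.
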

\begin{proof} The gklt case follows from {\cite[Theorem 3.3]{HLX26}}. If $X$ is K\"ahler and $(X,B+\bbeta)$ is gdlt, then let $\nu :X'\to X$ be a log resolution which is an isomorphism over the simple normal crossings locus. 
We may assume that there is an effective and exceptional $\R$-divisor $F\geq 0$  such that $-F$ is $\nu$-ample.
We write $\nu ^*(K_X+B+\bbeta _X)=K_{X'}+B_{X'}+\bbeta _{X'}$. Let $\omega$ be a K\"ahler form on $X$. 
For $0<\delta \ll \epsilon \ll 1$, 
$\gamma:=\nu ^*\omega -\epsilon F+\delta B_{X'}^{=1}$ is K\"ahler and 
$(X',B_{X'}-\delta B_{X'}^{=1}+\epsilon F)$ is sub-klt. Let $\Delta '=B_{X'}-\delta B_{X'}^{=1}+\epsilon F$, $\Delta=\nu _*\Delta'$, and $\ddelta=\bbeta +\bar \gamma$, then $(X,\Delta +\ddelta )$ is gklt and the claim follows.
\end{proof}
\begin{proposition}[{\cite[Proposition 4.7]{HLX26}}] \label{c-relMMP} Let $f:X\to Y$ be a bimeromorphic morphism of normal compact K\"ahler varieties with rational singularities. Assume that $(X,B+\bbeta )$ is a strongly $\Q$-factorial gklt pair. Then we can run the relative minimal model program which ends with a good minimal model over $Y$.\end{proposition}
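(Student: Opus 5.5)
The statement in question is Proposition~\ref{c-relMMP}: for a bimeromorphic morphism $f:X\to Y$ of compact K\"ahler varieties with rational singularities and a strongly $\Q$-factorial gklt pair $(X,B+\bbeta)$, we can run the relative MMP over $Y$ and it ends with a good minimal model over $Y$. The plan is to reduce to the projective relative MMP of Theorem~\ref{t-projcone} and then use termination for relative birational MMPs.

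\textbf{Step 1: reduce to a projective morphism.} $f$ need not be projective, so first replace $X$.
\begin{itemize}
\item Take a log resolution $\nu:X'\to X$ such that $g=f\circ\nu:X'\to Y$ is projective. This is possible since $Y$ is K\"ahler and bimeromorphic morphisms are dominated by sequences of blow-ups of $Y$ (Hironaka's Chow lemma).
\item Write $K_{X'}+B'+\bbeta_{X'}=\nu^*(K_X+B+\bbeta_X)+F$, where $B'=\nu^{-1}_*B+(1-\epsilon){\rm Ex}(\nu)$ for small $\epsilon>0$. Then $F\geq 0$ has ${\rm Supp}(F)={\rm Ex}(\nu)$, because $(X,B+\bbeta)$ is gklt.
\item By the relative version of Lemma~\ref{l-mms}, it suffices to produce a good minimal model over $Y$ for $(X',B'+\bbeta)$. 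This model then descends to one for $(X,B+\bbeta)$, after running the relative MMP for $X$ over the resulting model; that MMP contracts exactly $F$ by the negativity lemma~\ref{l-negbir}.
\end{itemize}

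\textbf{Step 2: reduce to a generalized pair with nef b-divisor.} Since $g$ is bimeromorphic, every class is big over $Y$. Concretely, there is a $g$-exceptional $F'\geq0$ with $-F'$ $g$-ample, and $\bbeta_{X'}$ is nef. So $\bbeta_{X'}\equiv_Y \bbeta_{X'}-\delta F'+\delta F'$, where $\bbeta_{X'}-\delta F'$ is $g$-ample.

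For the projective MMP we need $\bbeta\equiv_Y\mathbf N$ with $\mathbf N$ an $\R$-Cartier b-divisor. Because $g$ is bimeromorphic and $Y$ has rational singularities, $H^{1,1}_{\rm BC}(X')/g^*H^{1,1}_{\rm BC}(Y)$ is spanned by $g$-exceptional divisors. So $\bbeta_{X'}\equiv_Y$ an $\R$-divisor, which is $g$-nef. Absorbing a small perturbation into $B'$ (using the ample part), we may replace $\bbeta$ by an effective $\R$-divisor and keep the pair klt.

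\textbf{Step 3: run the MMP and prove termination.}
\begin{itemize}
\item Now Theorem~\ref{t-projcone}(3) applies: we can run the $(K_{X'}+B')$-MMP over $Y$ with scaling of a $g$-ample divisor.
\item Termination is the main obstacle. My first attempt is to argue as in \cite[Theorem 1.2]{BCHM10} in its analytic form (\cite{Fuj22}, \cite{DHP24}). Since $g$ is bimeromorphic, the boundary is big over $Y$, so termination with scaling holds for projective morphisms satisfying property P, or over a compact base by covering $Y$ by finitely many Stein opens.
\item Alternatively, termination is forced directly. Over $Y$, $K_{X'}+B'\equiv_Y E$ for a $g$-exceptional divisor $E$. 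The MMP over $Y$ contracts the negative part of $E$ step by step, and by Lemma~\ref{l-neg} it terminates once the pushforward of $E$ is $\leq0$ and nef, i.e.\ zero. What remains is to check that there are no infinite flip sequences, using finiteness of models from \cite{BCHM10} applied locally.
\end{itemize}

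\textbf{Step 4: goodness.} The output satisfies $K\equiv_Y 0$ up to a pullback from $Y$, since the exceptional divisor becomes $0$. Hence it is trivially semiample over $Y$, and the minimal model is good.
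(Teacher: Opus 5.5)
The main error is the claim in Step~2 that $H^{1,1}_{\rm BC}(X')/g^*H^{1,1}_{\rm BC}(Y)$ is spanned by $g$-exceptional divisors. You reuse it in the second half of Step~3 ($K_{X'}+B'\equiv_Y E$ with $E$ exceptional) and in Step~4 (the output is $\equiv_Y 0$, hence trivially good). But $Y$ is only assumed to have rational singularities, not to be $\Q$-factorial, and $f$ may be small. Take $f$ to be the flipped side $X^+\to Y$ of a flip, so that $K_X+B+\bbeta_X$ is K\"ahler over $Y$. Then $(X,B+\bbeta)$ is already its own minimal model over $Y$. Yet pushing forward an equivalence $K_{X'}+B'+\bbeta_{X'}\equiv_Y E$ with $E$ $\nu$-exceptional would force $K_X+B+\bbeta_X\equiv_Y 0$. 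So Step~4 does not establish goodness. What you need is the relative base-point-free theorem for the projective morphism from the output to $Y$ (every class is big over $Y$). Apply it over Stein open subsets of $Y$ via \cite{Fuj22} and glue by uniqueness of the relative ample model, as in Theorem~\ref{t-projcone}(2). Likewise, what Step~2 actually needs is not exceptional spanning. It is the statement that, since $R^ig_*\OO_{X'}=0$, every class is $\equiv_Y c_1(L)$ for some $\R$-line bundle $L$ (the argument of Lemmas~\ref{l-rel} and \ref{l-rccfibres}). With that, Theorem~\ref{t-projcone} applies directly to $\bbeta\equiv_Y\mathbf N$. You should not replace $\bbeta$ by an effective divisor: that is only possible locally over $Y$, since $Y$ is not projective.

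Second, Step~1 proves a weaker statement than the one asked. The proposition says the MMP of $(X,B+\bbeta)$ \emph{itself} can be run over $Y$. Passing to $X'$ and invoking Lemma~\ref{l-mms} only produces a log terminal model. Your descent sentence is also confused. $F$ lives on $X'$, the output of the MMP on $X'$ is already a log terminal model of $(X,B+\bbeta)$ over $Y$, and ``running the relative MMP for $X$ over the resulting model'' is the very statement being proved. The missing observation is that $f$ is already projective. By the Chow lemma, $f$ is strongly Moishezon; and $X$ is strongly $\Q$-factorial with klt singularities. So Lemma~\ref{lem: sm Kahler+Moishezon=proj mor} applies, and you can run the MMP with scaling on $X$ directly via Theorem~\ref{t-projcone}(3). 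Termination is local over $Y$ and follows from \cite[Theorem E]{Fuj22}, since the boundary is big over $Y$. This is the pattern of the paper's proof of Proposition~\ref{p-relMMP}(3); the present proposition is only quoted from \cite{HLX26}. Your first termination argument is in this spirit and is fine; discard the ``alternative'' one.
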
 
\begin{corollary}\label{c-ext} Let $(X,B+\bbeta)$ be a compact K\"ahler gklt pair and $\mathcal E$ a finite set of divisors over $X$ such that $a_P(X,B+\bbeta )\leq 0$ for every $P\in \mathcal E$, then there exists a proper birational map $\mu :\hat X\to X$ such that $\hat X$ is strongly $\Q$-factorial and the set of $\nu$-exceptional divisors coincides with $\mathcal E$.
\end{corollary}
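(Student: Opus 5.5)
The plan is to extract the required exceptional divisors with a relative MMP over $X$, starting from a resolution on which every divisor of $\mathcal E$ appears.

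First, choose a log resolution $g:W\to X$ of $(X,B+\bbeta)$ on which $\bbeta$ descends and every $P\in\mathcal E$ is a divisor. Composing with blow-ups of smooth centers, we may take $g$ projective and assume there is an effective $g$-exceptional divisor $F$ with $-F$ $g$-ample. Write
\[
K_W+B_W+\bbeta_W=g^*(K_X+B+\bbeta_X).
\]
The coefficients of $B_W$ are $<1$ because the pair is gklt. On $W$ define the boundary
\[
\Delta_W:=g^{-1}_*B+\sum_{P\in\mathcal E}{\rm mult}_P(B_W)\,P+\sum_{Q\notin\mathcal E}(1-\epsilon)Q ,
\]
where $Q$ ranges over the $g$-exceptional divisors not in $\mathcal E$ and $0<\epsilon\ll1$. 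The coefficients ${\rm mult}_P(B_W)=-a_P(X,B+\bbeta)$ lie in $[0,1)$ by hypothesis. Hence $\Delta_W\ge 0$, and $(W,\Delta_W+\bbeta)$ is gklt since $W$ is smooth, $\Delta_W$ has snc support and coefficients $<1$, and $\bbeta_W$ is nef. Also
\[
K_W+\Delta_W+\bbeta_W\equiv_X E:=\sum_{Q\notin\mathcal E}\bigl(1-\epsilon+a_Q(X,B+\bbeta)\bigr)Q .
\]
Every coefficient of $E$ is $>0$ because $a_Q>-1$ and $\epsilon$ is small. So $E$ is an effective $g$-exceptional divisor whose support is exactly the set of exceptional divisors we want to contract.

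Next, run the relative $(K_W+\Delta_W+\bbeta_W)$-MMP over $X$ using Proposition \ref{c-relMMP}. $W$ is smooth, hence strongly $\Q$-factorial; $X$ has rational singularities by Lemma \ref{L-gklt-rational}; and $W$ and $X$ are compact Kähler. The MMP ends with a model $\mu:\hat X\to X$ on which $K_{\hat X}+\hat\Delta+\bbeta_{\hat X}\equiv_X \hat E$ is $\mu$-nef, where $\hat E$ is the pushforward of $E$. Strong $\Q$-factoriality is preserved along the steps, as the paper's MMP framework asserts. Each step is $(K+\Delta+\bbeta)$-negative, so it only contracts divisors and never extracts any.

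Now $\hat E$ is effective, $\mu$-exceptional, and $\mu$-nef. By the negativity lemma (Lemma \ref{l-negbir}), $-\hat E\ge0$, so $\hat E=0$. Thus every $Q\notin\mathcal E$ has been contracted. The divisors $P\in\mathcal E$ are not components of $E$. On a model where $P$ survives with $E$ pushed forward, contracting $P$ would require a step on which $(K+\Delta+\bbeta)\cdot C<0$ for curves covering $P$. Because of the relation $\equiv_X\hat E$, such curves would have to be negative against the pushforward of $E$, which does not contain $P$; a covering family of $P$ meets that divisor non-negatively. So no $P\in\mathcal E$ is contracted. This is the step needing care; I would make it precise via the standard argument that a divisorial contraction contracts a component of the negative part, using Remark \ref{r-div} and the covering family of \cite[Lemma A.3]{DHY23}. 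Therefore the $\mu$-exceptional divisors are exactly $\mathcal E$.

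The main obstacle is to confirm that Proposition \ref{c-relMMP} applies as stated, in particular that it yields a $\Q$-factorial output and allows the boundary $\Delta_W$. If Kählerness of $W$ is an issue, it holds because $g$ is projective over a Kähler $X$.
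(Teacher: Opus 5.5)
Your proposal is correct and follows the paper's proof essentially verbatim. Both take a log resolution on which the divisors of $\mathcal E$ appear, and choose a klt boundary so that $K+B'+\bbeta$ equals the pullback plus an effective divisor supported exactly on the exceptional divisors not in $\mathcal E$. Both then run the relative MMP over $X$ from Proposition \ref{c-relMMP} and use the negativity lemma to kill that divisor. Your explicit coefficients ($-a_P$ on $\mathcal E$, $1-\epsilon$ elsewhere) and your covering-family argument that no $P\in\mathcal E$ is contracted fill in what the paper calls ``easy to see''.
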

\begin{proof} Let $\nu:X'\to X$ be a log resolution such that each divisor in $\mathcal E$ is a divisor on $X'$. We write $K_{X'}+B_{X'}+\bbeta _{X'}=\nu ^*(K_X+B+\bbeta _X)$ where $\bbeta $ descends to $X'$ and in particular $\bbeta _{X'}$ is nef. Let $B'$ be an effective $\R$-divisor such that \[K_{X'}+B'+\bbeta _{X'}=\nu ^*(K_X+B+\bbeta _X)+F\]
where $(X',B')$ is klt and the support of $F$ equals the set of $\nu$-exceptional divisors not in $\mathcal E$. By Proposition \ref{c-relMMP}, we may run the $(K_{X'}+B'+\bbeta _{X'})$-MMP over $X$. Let $\phi :X'\dasharrow \hat X$ be the output of this MMP, and $\mu :\hat X\to X$ the induced morphism. Then $\hat X$ is strongly $\Q$-factorial and $K_{\hat X}+\hat B+\bbeta _{\hat X}=\phi _*(K_{X'}+B'+\bbeta _{X'})$ is nef over $X$. By the negativity lemma (see Lemma \ref{l-negbir}), $\phi _*F=0$. It is easy to see that $\phi$ does not contract divisors in $\mathcal E$.
\end{proof}
We also recall the following result.
\begin{proposition}\label{p-relMMP} Let $f:X\to Y$ be a contraction morphism of normal compact K\"ahler varieties. Assume that $(X,B+\bbeta )$ is a gklt pair such that $B+\bbeta_X$ is modified big over $Y$ and $\bbeta \equiv _Y \mathbf N$ where $\mathbf N$ is an $\R$-Cartier b-divisor. Then
\begin{enumerate}\item If $K_X+B+\bbeta_X$ is pseudo-effective over $Y$, then $(X,B+\bbeta)$ has a good minimal model over $Y$.
\item  If $K_X+B+\bbeta_X$ is not pseudo-effective over $Y$, then $(X,B+\bbeta)$ is birational to a Fano fibration  over $Y$.
\item If $X$ is strongly $\Q$-factorial, then the minimal model or Fano fibration can be obtained as the output of an MMP with scaling.
\end{enumerate}
\end{proposition}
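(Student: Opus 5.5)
The idea is to reduce Proposition \ref{p-relMMP} to the projective relative MMP of Theorem \ref{t-projcone}, after first making the morphism projective and the boundary relatively big in the algebraic sense. The statement is local over $Y$ in spirit. We use the relative projective machinery of \cite{Fuj22}, \cite{DHP24} and the generalized-pair extension of \cite{BCHM10}; note that the MMP over $Y$ only sees the structure over $Y$.

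\textbf{Step 1 (reduction to $f$ projective and $X$ strongly $\Q$-factorial).} Take a log resolution $\nu:X'\to X$ that is projective over $X$ (a sequence of blow ups), so $X'\to Y$ is projective over a neighbourhood of each fibre. Replace $(X,B+\bbeta)$ by $(X',B'+\bbeta)$ with
\[K_{X'}+B'+\bbeta_{X'}=\nu^*(K_X+B+\bbeta_X)+F,\qquad F\ge0,\quad {\rm Supp}(F)={\rm Ex}(\nu).\]
By Lemma \ref{l-mms} and Lemma \ref{l-modbig} this is harmless, and $B'+\bbeta_{X'}$ becomes big over $Y$. Since $\bbeta\equiv_Y\mathbf N$, we may replace $\bbeta$ by the $\R$-Cartier b-divisor $\mathbf N$. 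Bigness over $Y$ then becomes the statement that $B'+\mathbf N_{X'}$ is $f$-big.

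To make $f':X'\to Y$ globally projective, I would instead use that $f'$ is Moishezon and K\"ahler. Choosing $X'$ as a projective modification of $X\times_Y$ (a relatively projective model) is the subtle point. I expect that in the intended application $f$ is projective, or that this follows from \cite{HLX26}, and I would argue that the existence of relative models can be checked locally over $Y$ by \cite{Fuj22} and then glued by uniqueness of log canonical models.

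\textbf{Step 2 (perturbation).} Following Lemma \ref{lem: g-kltperturb} in a relative form, write $B+\mathbf N_X\equiv_Y A+\Delta$ with $A$ an $f$-ample $\R$-divisor and $(X,\Delta+A)$ klt. This uses $f$-bigness: on a resolution, a nef-and-big-over-$Y$ class equals ample plus effective, and we absorb a small multiple. The pair is now an honest klt pair with big boundary over $Y$, up to $\equiv_Y$.

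\textbf{Step 3 (the MMP).} Run the MMP with scaling of an $f$-ample divisor via Theorem \ref{t-projcone}(3). Termination follows from the relative version of \cite{BCHM10} (finiteness of models, \cite[Theorem E]{BCHM10} in the analytic setting of \cite{Fuj22}, \cite{DHP24}), since the boundary is big over $Y$. If $K_X+B+\bbeta_X$ is not pseudo-effective over $Y$, we get a Mori fibre space over $Y$, which gives (2). Otherwise we get a minimal model, and the relative base-point-free theorem (the klt $\R$-divisor version over $Y$, with $A$ $f$-ample) shows it is good, which gives (1). Item (3) is built into the construction, noting that for $X$ strongly $\Q$-factorial Step 1 is unnecessary.

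\textbf{Main obstacle.} The main obstacle is Step 3's termination and abundance over a compact base $Y$ that is only K\"ahler. The analytic relative MMP of \cite{Fuj22} works over relatively compact Stein-type neighbourhoods. I would cover $Y$ by finitely many such opens and use that the scaling MMP with a fixed $f$-ample divisor is determined by numerical data over $Y$, so the local runs agree and terminate uniformly.
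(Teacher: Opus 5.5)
There is a genuine gap: you never show that $f$, or any model of $X$ over $Y$, is projective, yet every later step needs it. Theorem \ref{t-projcone} is stated for a \emph{projective} morphism $f:X\to Y$. Your Step 2 needs an $f$-ample divisor and a Kodaira-type decomposition of an $f$-big class. The scaling MMP in Step 3 uses an $f$-ample divisor.

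Your Step 1 inference does not follow. That $\nu:X'\to X$ is projective (a sequence of blow ups) says nothing about projectivity of $f\circ\nu$ over $Y$, even locally, unless $f$ is already projective there. Gluing local outputs ``by uniqueness of log canonical models'' does not repair this. Local projectivity over Stein opens is itself unproved, and the outputs of an MMP with scaling are minimal models, which are not unique. Finally, for (3) you say Step 1 is unnecessary when $X$ is strongly $\Q$-factorial. But then nothing makes $f:X\to Y$ projective, and (3) asks for an MMP on $X$ itself, not on a resolution.

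The missing idea is the one the paper uses, and it is where the relative bigness hypothesis actually enters.
\begin{enumerate}
\item Since $B+\mathbf N_X$ is big over $Y$, the fibres are Moishezon and $f$ is strongly Moishezon. So there is a resolution $\nu:X'\to X$ with both $\nu$ and $f\circ\nu$ projective (cf.\ \cite[Lemma 2.5]{CH24} and Lemma \ref{lem: Moishezon to strong Moishezon}).
\item Lemma \ref{lem: sm Kahler+Moishezon=proj mor} then upgrades this to projectivity of $f$ itself when $X$ is strongly $\Q$-factorial klt. It runs a $K_{X'}+(1-\epsilon){\rm Ex}(\nu)$-MMP over $X$ back down to $X$. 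At each step it checks that the extremal ray over $X$ stays extremal over $Y$, because the intermediate bases are K\"ahler, so \cite[Theorem 4.12]{Nak87} applies.
\item With $f$ projective, Theorem \ref{t-projcone}(3) runs the MMP with scaling of an $f$-ample divisor.
\item Termination is local over the compact base $Y$, so it reduces to \cite[Theorem E]{Fuj22}. This part matches your ``main obstacle'' paragraph.
\end{enumerate}

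For (1) and (2) the paper simply cites \cite[Proposition 2.24]{HLX26}. Your route through a smooth model could also work, but only after you produce a resolution that is projective over $Y$ — which is exactly the step you left open.
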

\begin{proof} (1) and (2) follow from \cite[Proposition 2.24]{HLX26}. For (3), since $B+\mathbf N_X$ is big over $Y$, there is a resolution $\nu :X'\to X$ such that $\nu$ and $f'=f\circ \nu$ are projective morphisms. By Lemma \ref{lem: sm Kahler+Moishezon=proj mor}, $f$ is projective. Hence by Theorem \ref{t-projcone} we may run a (projective) minimal model program over $Y$ with scaling of an $f$-ample divisor on $X$. Since termination is local over $Y$ this process terminates as a consequence of \cite[Theorem E]{Fuj22}.
\end{proof}






\begin{lemma}\label{l-birgklt}
Let $f:X\to Y$ be a birational map of normal varieties in Fujiki's class $\mathcal C$, $(X,B+\bbeta)$ a gklt pair such that $K_X+B+\bbeta _X\equiv _Y 0$, then $(Y,B_Y+\bbeta )$ is gklt where $K_Y+B_Y+\bbeta _Y=f_*(K_X+B+\bbeta _X)$.
\end{lemma}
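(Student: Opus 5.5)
The statement is essentially the negativity lemma, applied on a common resolution, together with the definition of gklt via discrepancies. Note first that $f$ is presumably a birational morphism or birational contraction over which $K_X+B+\bbeta_X$ is numerically trivial. The condition $K_X+B+\bbeta_X\equiv_Y 0$ only makes sense relative to a morphism, so I would read $f$ as a proper bimeromorphic morphism $X\to Y$. Then $K_X+B+\bbeta_X\equiv f^*\gamma$ for some class $\gamma\in H^{1,1}_{\rm BC}(Y)$.

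The plan has three steps.

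\textbf{Step 1 (identify the pullback).} Take a common log resolution $p:W\to X$, $q=f\circ p:W\to Y$, on which $\bbeta$ descends, so that $\bbeta_W$ is nef. Write
\[
K_W+B_W+\bbeta_W=p^*(K_X+B+\bbeta_X)\equiv q^*\gamma .
\]
Pushing forward by $q$ gives $\gamma\equiv q_*(K_W+B_W+\bbeta_W)=K_Y+B_Y+\bbeta_Y$, where $B_Y=f_*B$. So $K_Y+B_Y+\bbeta_Y$ is represented by the locally $\partial\bar\partial$-exact class $\gamma$, i.e.\ it is ``$\R$-Cartier'' in the generalized-pair sense. This makes $(Y,B_Y+\bbeta)$ a generalized pair, and moreover
\[
q^*(K_Y+B_Y+\bbeta_Y)\equiv K_W+B_W+\bbeta_W .
\]

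\textbf{Step 2 (equality of divisors, not just classes).} The discrepancy divisor of $(Y,B_Y+\bbeta)$ on $W$ is defined by $K_W+B'_W+\bbeta_W=q^*(K_Y+B_Y+\bbeta_Y)$. We need $B'_W=B_W$ as divisors. The difference $D=B'_W-B_W$ satisfies two things:
\begin{itemize}
\item $D\equiv 0$ over $Y$;
\item $D$ is $q$-exceptional, since both $B'_W$ and $B_W$ push forward to $B_Y$.
\end{itemize}
Apply the negativity lemma (Lemma \ref{l-negbir}) to $q$ with $E=\pm D$: both $D$ and $-D$ are $q$-nef and $q_*D=0$, so $D\ge 0$ and $-D\ge 0$, hence $D=0$.

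\textbf{Step 3 (conclude gklt).} By Step 2, every divisor over $Y$ has the same discrepancy with respect to $(Y,B_Y+\bbeta)$ as with respect to $(X,B+\bbeta)$. Since $(X,B+\bbeta)$ is gklt, all coefficients of $B_W$ are $<1$, so $(Y,B_Y+\bbeta)$ is gklt. Equivalently, $\mathcal J(Y,B_Y+\bbeta)=q_*\OO_W(-\lfloor B_W\rfloor)=\OO_Y$.

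\textbf{Main obstacle.} The hard step is the last part of Step 1 together with Step 2. The relation $\equiv$ is an equality of Bott--Chern classes, not of divisors, so the negativity lemma must be applied to a divisor that is numerically trivial over $Y$ but only known up to the class of $q^*\gamma$. To handle this, I would use that $Y$ is in class $\mathcal C$ with the relevant classes locally $\partial\bar\partial$-exact. A $q$-exceptional $\R$-divisor whose class is pulled back from $Y$ restricts trivially to the fibres of $q$, so it is $q$-numerically trivial, and Lemma \ref{l-negbir} still applies.

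If $f$ is only a birational map rather than a morphism, I would also have to check that $f^{-1}$ contracts no divisor. Otherwise $B_Y$ must be taken to include those divisors with the coefficients dictated by the pullback. Under the hypothesis $K_X+B+\bbeta_X\equiv_Y 0$ I would assume $f$ is a morphism, as in its intended applications.
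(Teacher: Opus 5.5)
Your proof is correct, and it takes a genuinely different route from the paper.

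\textbf{What your argument uses.} You rely on the paper's stated convention that $\equiv_Y$ means equality up to $f^*$ of a class in $H^{1,1}_{\rm BC}(Y)$, and you read $f$ as a morphism, as the paper also implicitly does. Under that convention, $f_*(K_X+B+\bbeta_X)$ is automatically the class $\gamma$. All that remains is the crepant comparison of discrepancies on a common resolution, which you carry out via the negativity lemma. The paper leaves this step implicit in ``and so $(Y,B_Y+\bbeta)$ is a gklt pair''.

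\textbf{What the paper's proof does.} It spends essentially all its effort producing the descent $K_X+B+\bbeta_X=f^*\alpha$, which you take as given.
\begin{enumerate}
\item It first shows that $Y$ has rational singularities. This is done locally over a Stein base: since $\bbeta_{X'}$ is nef and big over $Y$, $\bbeta$ can be replaced by an $\R$-divisor $\Delta$ with $K_X+B+\Delta\sim_\R 0$ and $(X,B+\Delta)$ klt. The base-point-free theorem then shows $K_Y+B_Y+\Delta_Y$ is $\R$-Cartier, so $(Y,B_Y+\Delta_Y)$ is klt.
\item It then invokes Lemma \ref{l-rccfibres}, which needs rational singularities on $Y$ and $X$ in class $\mathcal C$, to descend the class to $Y$.
\end{enumerate}

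\textbf{Comparison.} The paper's route proves the lemma under the weaker reading in which $K_X+B+\bbeta_X$ is only numerically trivial on $f$-vertical curves, and it yields rational singularities of $Y$ along the way. Your route is shorter and needs neither class $\mathcal C$ nor rational singularities; in your approach the latter follow afterwards from Lemma \ref{L-gklt-rational}. But it works only because the descent is built into the hypothesis. Under the numerical reading, your Step 1 (``$K_X+B+\bbeta_X\equiv f^*\gamma$'') is exactly the non-formal step the paper has to prove.
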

\begin{proof} We begin by showing that $Y$ has rational singularities. Since the question is local over $Y$, we may assume that $Y$ is Stein and $f$ satisfies property  P. Let $\nu :X'\to X$
 be a resolution and write $K_{X'}+B_{X'}+\bbeta _{X'}=\nu ^*(K_X+B+\bbeta _X)$, then $-(K_{X'}+B_{X'})\equiv \bbeta _{X'}$ is nef and big over $Y$ and so there exists $\Delta '\sim_\R -(K_{X'}+B_{X'})$ such that $K_{X'}+B_{X'}+\Delta '\sim_\R 0$ is sub-klt and hence $K_X+B+\Delta =\nu _*(K_{X'}+B_{X'}+\Delta ')$ is klt and $K_X+B+\Delta\sim _{\R}0$. Let $K_Y+B_Y+\Delta _Y=f_*(K_X+B+\Delta)$. By the base-point-free theorem, $K_Y+B_Y+\Delta _Y$ is $\R$-Cartier. But then $K_X+B+\Delta=f^*(K_Y+B_Y+\Delta _Y)$ and it follows easily that $(Y,B_Y+\Delta _Y)$ is klt and in particular $Y$ has rational singularities. 
 
 We now drop the assumption that $Y$ is Stein. By Lemma \ref{l-rccfibres}, $K_X+B+\bbeta _X=f^*\alpha$ for some $\alpha \in H^{1,1}_{\rm BC}(Y)$ and in particular \[K_{Y}+B_Y+\bbeta _Y:=f_*(K_X+B+\bbeta _X)\in H^{1,1}_{\rm BC}(Y),\] and so $(Y,B_Y+\bbeta)$ is a gklt pair.
 \end{proof}
\subsection{Spaces endowed with a Moishezon contraction}

Unless otherwise stated, we will assume that all the complex spaces are separated.

\begin{definition} We first give the definition of Moishezon morphisms between (not necessarily reduced or irreducible) compact complex spaces. We say a proper morphism $f: X\to Z$ is {\it Moishezon} if there exists a surjective projective morphism $g: W\to X$ such that 
$$f\circ g: W\to X\to Z$$ 
is also projective (we note that this is a non-standard definition).

When $X$ is integral (reduced and irreducible), say that $f$ is {\it strongly Moishezon} if $g$ is required to be birational/bimeromorphic. 

We say a proper morphism $f: X\to Z$ is a {\it (resp. strongly) Moishezon contraction} if
\begin{enumerate}
\item $f$ is (resp. strongly) Moishezon,
\item $f$ is a contraction, i.e. $f_*\OO_X=\OO_Z$.
\end{enumerate}

We say a proper morphism $f: X\to Z$ between complex spaces is \textit{curve-connected} if:
\[
\text{Any two points in a fiber $f^{-1}(z)$ are connected by curves in $f^{-1}(z), z\in Z$.}
\]
\end{definition}
\begin{lemma}\label{lem: Moi contr is curve-connected}
A Moishezon contraction $f: X\to Z$ is curve-connected.
\end{lemma}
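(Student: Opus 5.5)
Fix $z\in Z$ and write $F=f^{-1}(z)$, taken with its reduced structure. We must show that any two points $x,x'\in F$ are joined by a chain of curves lying in $F$. By definition there is a surjective projective morphism $g:W\to X$ with $h:=f\circ g:W\to Z$ projective.

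\emph{Step 1: $F$ is connected.} Since $f$ is proper and $f_*\OO_X=\OO_Z$, Zariski's main theorem (Stein factorization for proper maps of complex spaces) shows that every fiber of $f$ is connected.

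\emph{Step 2: the fibre of $h$ is curve-connected componentwise.} Set $G=h^{-1}(z)=g^{-1}(F)$. As $h$ is projective, $G$ is a projective scheme over the point $z$, i.e.\ a projective variety, possibly reducible and non-reduced; we work with $G_{\rm red}$. Each irreducible component $G_j$ of $G_{\rm red}$ is an irreducible projective variety. Any two points of such a variety lie on an irreducible curve in it: cut $G_j$ by general hypersurface sections through the two points, or use that an irreducible projective variety is chain-connected by curves. Hence each component $G_j$ is curve-connected.

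\emph{Step 3: push down.} Since $g$ is surjective, $g(G)=F$, and $F=\bigcup_j g(G_j)$ is a finite union of closed subsets. Each $g(G_j)$ is irreducible, and for any $y,y'\in g(G_j)$ we can choose preimages in $G_j$ and join them by a curve $C\subset G_j$. The image $g(C)$ is either a point or a curve in $F$ through $y$ and $y'$. Discarding constant images, each $g(G_j)$ is curve-connected.

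\emph{Step 4: glue.} Because $F$ is connected and is a finite union of the closed sets $g(G_j)$, the incidence graph on these sets is connected: otherwise $F$ would split as a disjoint union of two nonempty closed sets. Two consecutive sets in this graph share a point. Concatenating the chains of curves from Step 3 along a path in this graph therefore joins any $x$ to any $x'$ inside $F$.

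\emph{Main obstacle.} The subtle point is Step 2, which needs the non-reduced, possibly reducible fibre structure to be handled. We must check that projectivity of $h$ makes $G_{\rm red}$ a projective algebraic variety (via GAGA over a point) and that curves pass through any two points of an irreducible component. This uses the standard hyperplane-section argument: a general complete intersection of ample divisors through the two points is a curve connecting them, with connectedness supplied by Zariski/Bertini. One may need to pass to a chain of curves rather than a single irreducible curve, which is harmless for the claim.

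Note that the non-standard definition of Moishezon (here $g$ need not be bimeromorphic) causes no trouble, since only the surjectivity of $g$ is used.
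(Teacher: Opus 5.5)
Your proof is correct and follows the same route as the paper. Both use connectedness of the fibre from $f_*\OO_X=\OO_Z$, then projectivity of the fibre of $f\circ g$ to join points inside irreducible pieces by curves, and push these curves down by $g$. The only cosmetic difference is the choice of pieces: the paper takes the irreducible components $X_{z,i}$ of $f^{-1}(z)$ and picks a component of $g^{-1}(X_{z,i})$ dominating each, while you cover $f^{-1}(z)$ by the images $g(G_j)$ and write out the gluing step that the paper leaves implicit.
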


\begin{proof}
Since $f^{-1}(z)$ is connected for any $z\in Z$, it suffices to show that for any two points in the same irreducible component $X_{z,i}$ of $f^{-1}(z)$ are connected by curves.   Let $W_i$ be an irreducible component of $g^{-1}(X_{z,i})$ which dominates $X_{z,i}$. By definition $W_i$ is projective, and is connected as it is irreducible, any two points in $W_i$ are connected by curves. Therefore, any two points on $X_{z,i}$ are also connected by curves.
\end{proof}

\begin{definition}
Let $X$ be a (compact) complex space, and $[\alpha]\in H^2(X,\R)$ a geometrically nef class so that $C\cdot [\alpha]\geq 0 $ for every curve $C\subset X$ . We say that $f$ is the {\it (Moishezon) contraction defined by} $[\alpha]$ if
\begin{itemize}
\item[(i)] $f$ is a contraction, i.e. $f_*\OO_X=\OO_Z$, 
\item[(ii)] $f$ is Moishezon, and
\item[(iii)] if $C\subset X$ is a curve, then $f(C)=\mathrm{pt}$ if and only if $C\cdot[\alpha]=0$.
\end{itemize}
We will also say $[\alpha]$ is {\it endowed with a Moishezon contraction} (EMC for short).
\end{definition}

We will frequently use the following properties of Moishezon contractions in this paper. Notice that these properties arise naturally for a space endowed with a semiample line bundle.

\begin{lemma}\label{l-contraction} If $f:X\to Z$ is the Moishezon contraction defined by a nef class $[\alpha]\in H^2(X,\R )$, then the following properties hold.
\begin{enumerate}
\item $f$ is unique (up to unique isomorphism).
\item $f$ is compatible with pullback, i.e. if $\pi:W\to X$ is a Moishezon contraction, then $f\circ\pi$ is the Moishezon contraction defined by $\pi^*[\alpha]$.
\item $f$ is universal, i.e. if $\hat{f}: X\to \hat Z$ is a Moishezon contraction such that any curve contracted is $[\alpha]$-trivial, then there exists a unique Moishezon contraction $\phi: \hat Z\to Z$ such that $f=\phi\circ \hat f$. 
\item $f$ descends if $[\alpha]$ descends, i.e. if $g: X\to Y$ is a Moishezon contraction and $g^*[\beta]=[\alpha]$ for some nef class $[\beta]$ in $H^2( Y, \R)$, then the contraction defined by $[\beta]$ exists, and its composition with $g$ equals to $f$.
\end{enumerate}
\end{lemma}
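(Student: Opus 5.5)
The four properties all follow from a single fibre description: the fibres of $f$ are exactly the equivalence classes of the relation ``connected by an $[\alpha]$-trivial chain of curves''. I will first prove this description and then derive (1)--(4) from it, treating $f$ as a proper surjective map with connected fibres and $f_*\OO_X=\OO_Z$.

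\emph{Fibre description.} Let $F$ be a fibre. By Lemma \ref{lem: Moi contr is curve-connected}, any two points of $F$ are joined by a chain of curves contained in $F$. Each such curve is contracted by $f$, so by property (iii) it is $[\alpha]$-trivial. Conversely, every $[\alpha]$-trivial curve is contracted, so it lies in a single fibre. Hence the set-theoretic fibres of $f$ are exactly the classes of the equivalence relation $x\sim y$ iff $x,y$ are joined by a chain of $[\alpha]$-trivial curves.

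\emph{Properties (1) and (3).} Take $\hat f:X\to\hat Z$ as in (3). Each fibre of $\hat f$ is curve-connected by Lemma \ref{lem: Moi contr is curve-connected}. By hypothesis all its curves are $[\alpha]$-trivial, so by the fibre description each fibre of $\hat f$ lies inside a fibre of $f$. A standard rigidity argument, valid because $\hat f_*\OO_X=\OO_{\hat Z}$ and $\hat f$ is proper and surjective, then gives a unique holomorphic $\phi:\hat Z\to Z$ with $f=\phi\circ\hat f$. Concretely, $\phi$ is determined set-theoretically, and on functions it is given by $\OO_Z\to f_*\OO_X=\phi_*\hat f_*\OO_X=\phi_*\OO_{\hat Z}$; one can also take the graph of $\phi$ as the image of $X$ in $\hat Z\times Z$ and check that it is finite and bijective over $\hat Z$, with normalization-type identification given by $\hat f_*\OO_X=\OO_{\hat Z}$. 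Next, $\phi_*\OO_{\hat Z}=\phi_*\hat f_*\OO_X=\OO_Z$. For Moishezonness of $\phi$, take the projective $g:W\to X$ witnessing that $f$ is Moishezon; then $\hat f\circ g$ is a surjection onto $\hat Z$ and $\phi\circ(\hat f\circ g)=f\circ g$ is projective. Here I would need to adjust the witness, e.g.\ by replacing $W$ by a suitable space projective over $\hat Z$, using that $\hat f$ is itself Moishezon and that fibre products of projective morphisms are projective. Uniqueness in (1) follows by applying (3) in both directions, which gives mutually inverse maps.

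\emph{Properties (2) and (4).} For (2), a curve $C\subset W$ satisfies $C\cdot\pi^*[\alpha]=\pi_*C\cdot[\alpha]$. So $C$ is contracted by $f\circ\pi$ iff $\pi(C)$ is a point or an $[\alpha]$-trivial curve, which is property (iii). Since $\pi_*\OO_W=\OO_X$, we also get $(f\circ\pi)_*\OO_W=\OO_Z$. The composite is Moishezon by composing the witnesses, again using fibre products. For (4), every $g$-contracted curve is $g^*[\beta]=[\alpha]$-trivial, so (3) gives $\phi:Y\to Z$ with $f=\phi\circ g$. Then $\phi$ is a Moishezon contraction, and a curve $D\subset Y$ lifts to a curve $C\subset X$ with $g_*C=mD$ (lift via the projective witness). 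Hence $\phi(D)=\mathrm{pt}$ iff $f(C)=\mathrm{pt}$ iff $C\cdot[\alpha]=0$ iff $D\cdot[\beta]=0$, so $\phi$ is the contraction defined by $[\beta]$.

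The main obstacle is the factorization step in (3): producing $\phi$ as an honest holomorphic map, and checking that the Moishezon property passes to $\phi$ under our non-standard definition. For the first point I will use Stein factorization of the proper map $(\hat f,f):X\to\hat Z\times Z$. Its image $\Gamma$ is finite and bijective over $\hat Z$, and the condition $\hat f_*\OO_X=\OO_{\hat Z}$ forces $\Gamma\cong\hat Z$.
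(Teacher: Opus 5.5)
Your proposal is correct and follows the paper's argument. Both use curve-connectedness of Moishezon contractions together with condition (iii) to compare fibres, then $f_*\OO_X=\OO_Z$ and $\hat f_*\OO_X=\OO_{\hat Z}$ to upgrade the set-theoretic factorization to a morphism of spaces, and the projection formula for (2) and (4). The only reorganization is that you deduce (1) from (3) rather than proving it directly.

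You are also more careful than the paper on two points it passes over. First, you check that $\phi$ and $f\circ\pi$ are Moishezon; your fibre-product idea works, since in (3) the space $W\times_X\hat W$ of the two projective witnesses is projective and surjective over $\hat Z$ and projective over $Z$, and similarly in (2). Second, in (4) you lift curves on $Y$ to curves on $X$ via the witness, which is exactly where the Moishezon hypothesis is needed.
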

\begin{proof}
(1) Let $f':X\to Z'$ be another contraction satisfying the above conditions. Since $\OO_Z\to f_*\OO_X$ and $\OO_{Z'}\to f'_*\OO_X$ are both isomorphisms, it suffices to show $f$ and $f'$ are the same as maps of sets. Since $f$ is curve-connected by Lemma \ref{lem: Moi contr is curve-connected}, we know that for any $z\in Z$, any two points in $f^{-1}(z)$ can be connected by a chain of curves $\cup_i C_i\subset f^{-1}(z)$. From the definition we have $C_i\cdot[\alpha]=0,~\forall i$, so these curves must be also contracted by $f'$. By symmetry we deduce that the fibers of $f'$ and $f$ coincide, and there is a unique isomorphism $Z'\to Z$ that identifies $f$ and $f'$.

(2) This is a direct consequence of the projection formula 
$$C\cdot\pi^*[\alpha]=\pi_*(C)\cdot [\alpha],$$
where $C\subset W$ and $\pi_*: H_2(W,\R)\to H_2(X,\R)$ is the natural map.

(3) Since $f_*\OO_X=\OO_Z$ and $\hat f_*\OO_X=\OO_{\hat Z}$, we only need to define $\phi$ set/topology theoretically. This is clear from the construction of the quotient spaces: any two points $x_1,x_2\in X$ such that $\hat f(x_1)=\hat f(x_2)$ map to the same point on $Z$ through $f$.

(4) By (3) there exists a proper contraction $\phi: Y\to Z$ such that $\phi\circ g=f$ and $\phi$ is curve-connected since $f$ is. The rest follows by the projection formula and the fact that $C\subset Y$ is always dominated by some curve $C'\subset X$.
\end{proof}

\subsection{Relatively projective morphisms}
We first specify the definition for projective morphisms of complex spaces in our paper:

\begin{definition}
Let $f: X\to Y$ be a proper holomorphic morphism between complex analytic spaces. We say the morphism $f$ is {\it projective} if there exists a line bundle $L$ on $X$ that
is relatively ample over $Y$. 

\end{definition}
\begin{remark}
We note that this is weaker than requiring that $f$ is {\it H-projective} i.e. $f$ factors through a closed embedding $X\hookrightarrow\mathbb{P}^N_Y$ (if $Y$ is quasi-projective then the two notions are equivalent). In general the composition of two projective morphisms is not necessarily projective, however if $f:X\to Y$ and $g:Y\to Z$ are projective morphisms of compact analytic spaces, then so is $g\circ f$.
To see this, let $L$ be an $f$-ample line bundle and $M$ a $g$-ample line bundle, then it suffices to show that $L\otimes g^*M^{\otimes k}$ is $g\circ f$ ample for $k\gg 0$ which follows easily by the Serre criterion for ampleness and an easy spectral sequence argument. 
\end{remark}

\begin{lemma}\label{l-rel} Let $f:X\to Y$ be a surjective morphism with connected fibers between compact K\"ahler manifolds such that $f^*: H^0(Y, \Omega^2_Y)\to H^0(X, \Omega^2_X)$ is an isomorphism. Then for any  $\alpha\in H^{1,1}(X)$ there is an $\R$-line bundle $L$ on $X$ and $\gamma \in H^{1,1}(Y)$ such that $\alpha \equiv c_1(L) +f^*\gamma $ in $H^{1,1}(X)$.
If $\alpha$ is $f$-K\"ahler  then $L$ is $f$-ample. In particular $f$ is projective.
\end{lemma}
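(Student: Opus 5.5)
The idea is to use the Leray spectral sequence for $\R_X$ (or $\C_X$) along $f$ to split $H^2(X,\R)$ into a part pulled back from $Y$ and a part detected on the fibres. The $(2,0)$-hypothesis will then force the complementary piece to be algebraic.

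First I would set up the comparison in Hodge-theoretic terms. Since $X$ is compact K\"ahler, $H^{1,1}(X)\subset H^2(X,\R)$, and there is an exact sequence coming from the exponential sequence:
\[H^1(X,\OO_X^*)\to H^2(X,\Z)\to H^2(X,\OO_X).\]
Hence a real class $\alpha\in H^{1,1}(X,\R)$ lies in the $\R$-span of $c_1$ of line bundles precisely when it lies in the $\R$-span of integral $(1,1)$ classes. By Lefschetz $(1,1)$ this is the real part of $NS(X)\otimes\R$. The statement therefore reduces to showing
\[H^{1,1}(X,\R)=NS(X)_\R+f^*H^{1,1}(Y,\R).\]
Consider the quotient $V:=H^2(X,\R)/f^*H^2(Y,\R)$. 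Because $f^*$ is injective on cohomology (use $f_*$ of $f^*\eta\wedge\omega_X^{\dim X-\dim Y}$), $f^*$ is a morphism of pure Hodge structures of weight $2$ with semisimple image. Thus $V$ inherits a pure Hodge structure and
\[H^2(X)=f^*H^2(Y)\oplus V\]
as polarizable Hodge structures. The hypothesis $H^0(\Omega^2_Y)\cong H^0(\Omega^2_X)$ says $h^{2,0}(X)=h^{2,0}(Y)$, so $V^{2,0}=0=V^{0,2}$. A rational Hodge structure of weight $2$ with $V^{2,0}=0$ is entirely of type $(1,1)$. Its rational classes are dense and all of type $(1,1)$, so they come from $NS(X)_\Q$ (lifting $V_\Q$ to $H^2(X,\Q)$ via the splitting over $\Q$, which exists by semisimplicity of polarizable $\Q$-Hodge structures). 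Consequently any $\alpha\in H^{1,1}(X,\R)$ decomposes as $f^*\gamma'+v$, with $v$ a real combination of classes in $NS(X)_\Q$ and $\gamma'\in f^*H^2(Y,\R)$. Since $\alpha$ and $v$ are $(1,1)$ and $f^*$ is a strict morphism of Hodge structures, $\gamma'$ is $(1,1)$ on $Y$. This gives $\alpha\equiv c_1(L)+f^*\gamma$.

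For the second assertion, suppose $\alpha$ is $f$-K\"ahler, i.e. $\alpha+f^*\eta$ is K\"ahler for some class $\eta$ on $Y$ (or K\"ahler locally over $Y$). Then $c_1(L)=\alpha-f^*\gamma$ restricts on each fibre $F$ to the K\"ahler class $\alpha|_F$, so $L|_F$ is ample by Kodaira. Relative ampleness then follows from openness of ampleness in proper families together with compactness of $Y$; equivalently, $L\otimes f^*M$ carries a positive metric over neighbourhoods of fibres by Grauert's criterion. One subtlety is that $L$ is only an $\R$-line bundle. I would handle this by perturbing within the open $f$-K\"ahler cone: the admissible $L$ form an affine translate of a space containing rational points of $NS(X)_\Q$, so choosing $L$ with rational coefficients gives, after scaling, an honest line bundle with $L|_F$ ample. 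This shows $f$ is projective.

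The main obstacle I expect is the Hodge-theoretic splitting step. The key difficulty is justifying that $V$ is a Hodge structure whose rational points lift to rational $(1,1)$-classes on $X$. This needs the polarization (semisimplicity) argument on $H^2(X,\Q)$, so that one can choose a $\Q$-sub-Hodge structure complementary to $f^*H^2(Y,\Q)$.
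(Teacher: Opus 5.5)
The reduction to $H^{1,1}(X,\R)=NS(X)_\R+f^*H^{1,1}(Y,\R)$ is fine, and so is the observation that $V=H^2(X)/f^*H^2(Y)$ is purely of type $(1,1)$ (by strictness and $h^{2,0}(X)=h^{2,0}(Y)$). The next step, however, has a genuine gap, and it is exactly the nontrivial content of the lemma.

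Knowing that $v\in V_\Q$ has type $(1,1)$ in $V$ does not give a lift of $v$ in $H^2(X,\Q)$ of type $(1,1)$. For that you need the extension $0\to f^*H^2(Y,\Q)\to H^2(X,\Q)\to V_\Q\to 0$ to split over $\Q$ as Hodge structures. You justify this by semisimplicity of polarizable $\Q$-Hodge structures. But for a compact K\"ahler manifold, $H^2(X,\Q)$ is in general \emph{not} polarizable, since the K\"ahler class polarizes only over $\R$.

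This already happens under the lemma's hypotheses. Take $Y$ a very general non-projective K3 surface and $X=Y\times\mathbb P^1$. The Hodge endomorphisms of $H^2(Y,\Q)$ are just $\Q$, so every morphism of Hodge structures $H^2(Y,\Q)^{\otimes 2}\to\Q(-2)$ is a multiple of the cup product. The cup product has signature $(3,19)$, whereas a polarization would need $(2,20)$ up to sign. Hence $H^2(Y,\Q)$ is not polarizable, and neither is $H^2(X,\Q)\supset f^*H^2(Y,\Q)$.

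Over $\R$ you do get a complement: the orthogonal of $f^*H^2(Y,\R)$ for $\int a\wedge b\wedge\omega^{n-2}$, which lies in $H^{1,1}(X,\R)$. But it need not be defined over $\Q$ or contain any rational classes, so it says nothing about $NS(X)$. There is also no formal reason for a splitting. Extensions of $\Q(-1)$ by a weight-two $\Q$-Hodge structure $H$ are classified by $H_\R/(H_\Q+H^{1,1}_\R)$, which is nonzero as soon as $h^{2,0}(H)\neq 0$.

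Concretely, for a rational lift $\eta$ of $v$, you have $\eta^{2,0}+\eta^{0,2}=f^*\theta$ with $\theta$ real. What you must prove is $\theta\in H^2(Y,\Q)+H^{1,1}(Y,\R)$, and the proposal gives no argument for this.

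The paper imports exactly this statement from Step 1 of the proof of \cite[Theorem 3.1]{CH24}: every $\eta\in H^2(X,\Q)$ differs by some $f^*\gamma$, with $\gamma\in H^2(Y,\Q)$, from a rational class of type $(1,1)$. It then writes $\alpha=\sum r_i\eta_i$ with $\eta_i$ rational and close to $\alpha$, corrects each $\eta_i$ in this way, and sets $L=\sum r_iL_i$.

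To repair your argument you need genuinely geometric input. One sufficient form is a $\Q$-linear retraction of $f^*$ on $H^2$ that is a morphism of Hodge structures. An example is $\eta\mapsto \frac1d f_*(\eta\cup\omega)$, where $\omega$ is a rational Hodge class of type $(r,r)$ with $r=\dim X-\dim Y$ and $d=\int_F\omega\neq 0$. Producing $\omega$ from an $f$-ample line bundle would of course be circular.

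A secondary point concerns the ampleness part. Since $\alpha$ is fixed and may be irrational modulo $f^*H^{1,1}(Y)$, you cannot choose $L$ itself with rational coefficients. To show that the $\R$-line bundle $L$ is $f$-ample, write $\alpha$ as a positive combination of nearby rational $f$-K\"ahler classes, each corrected to an $f$-ample line bundle, as the paper does.
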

\begin{proof} This follows easily along the lines of the proof of Step 1 of \cite[Theorem 3.1]{CH24}. For the convenience of the reader, we outline an argument here.

Pick $\eta_i\in H^2(X, \Q)$ sufficiently close to $\alpha$ and write $\alpha\equiv \sum_{i=1}^kr_i\eta_i$, where $k:=b_{2n-2}(X)+1$, $n$ is the complex dimension of $X$, and $r_i\in\R^+$ for all $i$. By the arguments in the proof of Step 1 in \cite[Theorem 3.1]{CH24}, there are $\widetilde\eta_i\in H^2(X, \Q)$ of type $(1,1)$ such that $\widetilde\eta_i-\eta_i=f^*\gamma_i$ for some $\gamma_i\in H^2(Y, \Q)$ for all $i=1,\ldots, k$. Then up to rescaling $\widetilde \eta_i$, by the Lefschetz $(1,1)$ theorem there is a line bundle $L_i\in\Pic(X)$ such that $\widetilde\eta_i=c_1(L_i)$ for all $i=1,\ldots, k$. 
Since $\eta_i$ is sufficiently close to $\alpha$, it follows from the arguments of the proof of \cite[Theorem 3.1]{CH24} that if $\omega$ is $f$-K\"ahler, then $L_i$  is an $f$-ample line bundle for all $i=1,\ldots, k$. Let $L:=\sum_{i=1}^k r_iL_i$, then we have $\omega\equiv_f \sum_{i=1}^kr_iL_i=c_1(L)$.  Note that $\alpha-c_1(L)\equiv f^*\gamma $ where $\gamma =-\sum r_i\gamma _i$ and so $\gamma \in H^{1,1}(Y)$.
   
\end{proof}


\begin{lemma}\label{l-rccfibres} Let $f : X \to  Y$ be a proper morphism with connected fibers
between normal compact analytic varieties with rational singularities. Assume $X$ is in Fujiki's class $\mathcal C$, and 
that the general fiber is rationally connected or $R^if_*\OO _X=0$ for $i>0$. Then $f$ is strongly Moishezon, $H^0(Y,\Omega ^{[2]}_Y)\to H^0(X,\Omega ^{[2]}_X)$ is an isomorphism and both
\[f^*:H^{1,1}_{\rm BC}(Y)\to H^{1,1}_{\rm BC}(X)\qquad {\rm and}\qquad f^*:H^2(Y,\R)\to H^2(X,\R)\] are injective and the image of $f^*$ is given by those classes such that $\alpha \cdot C=0$ for every vertical curve $C$. 
\end{lemma}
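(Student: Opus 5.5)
The plan is to reduce everything to the vanishing $R^if_*\OO_X=0$ for $i>0$ and then use the Leray spectral sequence together with Hodge theory on resolutions.

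\emph{Step 1: reduction to the vanishing hypothesis.} If the general fiber is rationally connected, pass to a resolution $\nu:X'\to X$. Since $X$ has rational singularities, $R^i(f\circ\nu)_*\OO_{X'}=R^if_*\OO_X$. The general fiber of $f\circ\nu$ is smooth and rationally connected, so $h^i(F,\OO_F)=0$ for $i>0$. Rational connectedness specializes to every fiber, giving rationally chain connected fibers, and Kollár's torsion-freeness then forces the direct images to vanish. In either case I will assume $R^if_*\OO_X=0$ for $i>0$.

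\emph{Step 2: strong Moishezonness.} Take a resolution $X'\to X$ which is projective and bimeromorphic; this is possible since $X$ is in class $\mathcal C$ and Hironaka's resolution is a sequence of blow-ups. It then suffices to show that $X'\to Y$ is projective. Here I would invoke Lemma \ref{l-rel}. The vanishing of $R^2f_*\OO$ and $R^1f_*\OO$ gives $H^2(X',\OO)=H^2(Y,\OO)$ via Leray, hence equality of holomorphic $2$-forms by Hodge symmetry. This is the identity $H^0(X',\Omega^2)\cong H^0(Y',\Omega^2)$ needed to apply Lemma \ref{l-rel} to a K\"ahler class on $X'$ over a resolution $Y'$ of $Y$. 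This yields an $f$-ample line bundle, after possibly replacing $X'$ by a further resolution dominating $Y'$. The same Leray argument, combined with the fact that reflexive $2$-forms extend on rational singularities (Kebekus–Schnell), gives $H^0(Y,\Omega^{[2]}_Y)\cong H^0(X,\Omega^{[2]}_X)$.

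\emph{Step 3: injectivity.} From Leray with $f_*\OO_X=\OO_Y$ and $R^1f_*\OO_X=0$ we get $H^1(X,\OO_X^*)\supset$ the image of $H^1(Y,\OO_Y^*)$, and topologically $f_*\R=\R$. Injectivity of $H^2(Y,\R)\to H^2(X,\R)$ follows from the Leray spectral sequence for $\R_X$, using $R^1f_*\R_X=0$. To see this vanishing, note that the fibers have $H^1(F,\R)=0$, since $H^1(F,\OO)=0$ and the fibers are dominated by projective varieties with rational-type singularities. For Bott–Chern classes, I would use the injection $H^{1,1}_{\rm BC}\hookrightarrow H^2(\cdot,\R)$ for rational singularities, recalled in Definition \ref{d-an}, and the commutativity of pullback with this map.

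\emph{Step 4: the image.} Let $\alpha\in H^{1,1}_{\rm BC}(X)$ with $\alpha\cdot C=0$ for all vertical curves $C$. On the resolution $X'$ with $X'\to Y$ projective, write $\nu^*\alpha=c_1(L)+f'^*\gamma$ by Lemma \ref{l-rel}. Then $L$ is numerically trivial over $Y$. The relative vanishing $R^1f'_*\OO=0$ makes $\mathrm{Pic}$ of the fibers discrete, so a multiple of $L$ becomes trivial on fibers. Hence $L\equiv f'^*M$ locally, and globally up to $\R$-coefficients, via $f'_*L$ being a line bundle on $Y$. Thus $\alpha=f^*\beta$ with $\beta\in H^{1,1}_{\rm BC}(Y)$. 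The $H^2$ version is analogous, using the Leray filtration: a class with zero restriction to $H^0(Y,R^2f_*\R)$ comes from $H^2(Y,\R)$, and vanishing on curves in the fibers detects this restriction because $H^2$ of the fibers is spanned by algebraic classes (as $H^2(F,\OO)=0$). The main obstacle I expect is making this step rigorous over singular fibers with $\R$-coefficients. Specifically, I must show that numerically $f$-trivial classes descend, which needs $R^2f_*\R$ to be detected by vertical curves; I would handle this through the projective resolution and the relative Kleiman criterion.
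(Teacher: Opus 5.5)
Your Steps 1--2 are essentially the paper's argument, in a different order. The paper gets strong Moishezonness from \cite[Theorem 3.1]{CH24} and then the vanishing from Koll\'ar's Theorem 7.1 and Takegoshi. You instead get the vanishing first; before projectivity is known this needs Takegoshi's version on a K\"ahler resolution, not Koll\'ar's algebraic statement. You then get strong Moishezonness from the isomorphism on holomorphic $2$-forms (Leray, rational singularities, Hodge symmetry on K\"ahler resolutions $X''\to Y'$, Kebekus--Schnell), together with Lemma \ref{l-rel} (which the paper itself extracts from \cite{CH24}), composed with the projective map $Y'\to Y$. The remark that rational connectedness specializes to every fibre is not needed.

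For injectivity and the image of $f^*$ the paper just defers to \cite[Lemma 2.6]{DH24}, and your reconstruction of that part has a genuine gap. First, your justifications rest on fibrewise vanishing: $H^1(F,\OO_F)=0$ in Step 3, and discreteness of $\operatorname{Pic}(F)$ and $H^2(F,\OO_F)=0$ in Step 4. None of this follows from $R^if_*\OO_X=0$ by base change when $f$ is not flat (e.g.\ birational). Second, the tool you propose for the crux, the relative Kleiman criterion, is an ampleness criterion; it says nothing about numerically trivial classes in $H^2$ of a singular fibre. Third, Lemma \ref{l-rel} requires a smooth K\"ahler base, so it only applies to $f''\colon X''\to Y'$ and gives $\nu^*\alpha=c_1(L)+{f''}^*\gamma$ with $\gamma\in H^{1,1}(Y')$. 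Then $L$ is numerically trivial only over $Y'$, not over $Y$: a curve contracted to a point of $Y$ but not of $Y'$ sees $\gamma$. Even after descending $L$ you are left with a class on $Y'$ that still has to be descended to $Y$, which you do not address.

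A route that avoids fibrewise vanishing altogether goes as follows.
\begin{enumerate}
\item Push forward the exponential sequence. Since $f_*\OO_X^*=\OO_Y^*$ and $\exp\colon\OO_Y\to\OO_Y^*$ is surjective, you get $R^1f_*\Z_X\hookrightarrow R^1f_*\OO_X=0$. Hence $R^1f_*\R_X=0$, and Leray gives injectivity of $f^*$ on $H^2(\cdot,\R)$.
\item The same argument for a resolution $\nu$ shows $\nu^*$ is injective on $H^2(\cdot,\R)$. So you may work on a resolution that is projective over $Y$.
\item Since also $R^2f_*\OO_X=0$, the exponential sequence gives $R^1f_*\OO_X^*\cong R^2f_*\Z_X$. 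Thus every class in $H^2(X_y,\Z)$ is $c_1$ of a line bundle on some $f^{-1}(U)$.
\item By Kleiman's theorem, a line bundle numerically trivial on the projective scheme $X_y$ has a power in the connected group $\operatorname{Pic}^0(X_y)$. So its Chern class in $H^2(X_y,\Z)$ is torsion.
\item Pairing with curves is defined over $\Q$. Hence a real class that is zero on all vertical curves restricts to $0$ in every $H^2(X_y,\R)$, i.e.\ maps to $0$ in $H^0(Y,R^2f_*\R_X)$. Leray, with $R^1f_*\R_X=0$, then puts it in $f^*H^2(Y,\R)$.
\item The Bott--Chern statement follows from three facts: the exact sequence $H^1(\cdot,\mathcal H)\to H^2(\cdot,\R)\to H^2(\cdot,\OO)$ induced by $0\to i\R\to\OO\to\mathcal H\to 0$ (with $\mathcal H$ the pluriharmonic functions, so $H^{1,1}_{\rm BC}=H^1(\cdot,\mathcal H)$); the isomorphism $f^*\colon H^2(Y,\OO_Y)\to H^2(X,\OO_X)$; and injectivity of $H^{1,1}_{\rm BC}\to H^2(\cdot,\R)$ for rational singularities.
\end{enumerate}
No detour through Lemma \ref{l-rel} or $Y'$ is needed. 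The line-bundle part of this is the kind of descent provided by \cite[Proposition 12.1.4]{KM92}, which the paper invokes later.
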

\begin{proof}
It follows easily from \cite[Theorem 3.1]{CH24}, that $f$ is strongly Moishezon. By \cite[Lemma 2.6.(1)]{DH24} we may replace $X$ by a resolution and so we may assume that $f$ is projective. Since general fibers $F$ of $f$ are rationally connected, then $h^i(\OO _F)=0$ for $i>0$. By \cite[Theorem 7.1]{Kol86} and \cite[Thm. II and IV]{Tak95}, $R^if_*\OO _X=0$ for $i>0$ after an application of Leray spectral sequence. 

Arguing as in the proof of Step 3 of \cite[Theorem 3.1]{CH24}, we consider projective resolutions $\nu :X'\to X$ and $\mu :Y'\to Y$ such that $X',Y'$ are K\"ahler and $f':X'\to Y'$ is a morphism. 
Then $H^0(X', \Omega ^{2}_{X'})\cong H^0(X, \Omega ^{[2]}_X)$ and $H^0(Y', \Omega ^{2}_{Y'})\cong H^0(Y, \Omega ^{[2]}_Y)$ (see \cite[Corollary 1.8]{KS21}). Since $X'$ and $Y'$ are  K\"ahler manifolds and since $R^if'_*\OO _{X'}=0$ for $i>0$, then \[H^0(X', \Omega ^{2}_{X'})\cong\overline {H^2(X',\OO _{X'})}\cong\overline {H^2(Y',\OO _{Y'})}\cong H^0(Y', \Omega ^{2}_{Y'}).\] Therefore $H^0(Y, \Omega ^{[2]}_Y)\to H^0(X, \Omega ^{[2]}_X)$ is an isomorphism.

The rest of the proof now follows as in \cite[Lemma 2.6]{DH24}.
\end{proof}

\begin{lemma}\label{lem: sm Kahler+Moishezon=proj mor}
Let $f: X\to Z$ be a proper morphism between compact complex spaces, where $X$ is a strongly $\Q$-factorial klt K\"ahler variety. If there is a projective birational morphism $h: W\to X$ such that
\begin{enumerate}
\item $W$ is a K\"ahler manifold, and
\item $W\to X\to Z$ is projective,
\end{enumerate}
then $f$ is projective. In particular, if $f$ is strongly Moishezon in the above setting, then $f$ is projective.
\end{lemma}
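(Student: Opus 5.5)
Let $L_W$ be an ample line bundle on $W$ relative to $Z$; it exists since $f\circ h$ is projective. The plan is to push $L_W$ down to $X$ and use strong $\Q$-factoriality to get a $\Q$-Cartier divisor on $X$. Then we show this divisor is $f$-ample after correcting by an $h$-exceptional divisor.

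\emph{Step 1.} Since $h$ is projective and birational with $W$ smooth, we may choose an effective $h$-exceptional divisor $F$ on $W$ with $-F$ $h$-ample. Replacing $h$ by a further blow-up if needed, we arrange that $F$ is supported on the whole exceptional locus. For $m\gg 0$, $mL_W-F$ is still $(f\circ h)$-ample. The divisor $D:=h_*L_W$ is a Weil divisor on $X$. By strong $\Q$-factoriality the divisorial sheaf $\OO_X(D)$ is $\Q$-Cartier, so some multiple $kD$ is Cartier.

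\emph{Step 2.} Write $h^*D = L_W + G$, where $G$ is $h$-exceptional. Then $-G \equiv_X L_W$ is $h$-ample. By the negativity lemma (Lemma \ref{l-negbir}), since $h_*(-G)=0$, we get $G\geq 0$. Hence $h^*D = L_W + G$ with $G\ge 0$ exceptional.

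\emph{Step 3: $D$ is $f$-ample.} Since $f$ is proper and $D$ is a $\Q$-Cartier divisor, the relative Kleiman criterion in the projective setting is not directly available. Instead we argue with Seshadri/Nakai-type positivity on curves and subvarieties contained in the fibers of $f$. Every subvariety $V\subset X$ in a fiber of $f$ is dominated by its strict transform $V'\subset W$, which is projective over a point. On $V'$, the restriction $h^*D|_{V'}=L_W|_{V'}+G|_{V'}$. The term $G|_{V'}$ need not be effective when $V'\subset \mathrm{Supp}\,G$. To handle this, I would use the cleaner route of checking ampleness fiberwise: $\OO_X(kD)$ is $f$-ample iff its restriction to each fiber $X_z$ is ample. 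Each $X_z$ is the image of the projective scheme $W_z$ under $h$, so $X_z$ is Moishezon. For Moishezon spaces, a line bundle is ample iff it has positive degree on every closed subvariety (Nakai–Moishezon holds for proper algebraic spaces). So it suffices to show $(D|_V)^{\dim V}>0$ for $V\subset X_z$. We compute this on $V'$ using $h^*D|_{V'}$. By a standard argument, which is where the K\"ahler condition on $X$ and the $h$-ampleness of $-G$ enter, $h^*D|_{V'}$ is nef and big on $V'$: it is a pullback, so it is nef exactly when $D$ is nonnegative on curves. For nefness we check $D\cdot C>0$ for every curve $C\subset X_z$. Take $C'$ to be a curve in $W$ mapping onto $C$ that is not contained in $\mathrm{Supp}\,G$, where possible. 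If $C\not\subset h(\mathrm{Ex})$ this works and $D\cdot C\ge L_W\cdot C'>0$. If $C\subset h(\mathrm{Ex})$, we choose $C'$ in a fiber-general position of $h^{-1}(C)\to C$ and use the freedom to replace $L_W$ by $L_W+mh^*A_X$ where possible, or more simply rerun with $L_W-\epsilon F$.

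\emph{Main obstacle.} The hard step is Step 3 for curves inside the image of the exceptional locus. This is where I expect a genuine input is needed: algebraicity of $X_z$ via $W_z$, together with Kleiman's criterion for proper algebraic spaces (an argument in the spirit of \cite{CH24}). Once $D$ is shown $f$-ample, $\OO_X(kD)$ is an $f$-ample line bundle and $f$ is projective. The final sentence of the lemma follows because strongly Moishezon gives such a $W$ after resolving, which makes $W$ K\"ahler since $X$ is.
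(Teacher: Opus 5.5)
Your Step 3 is false as set up. For an arbitrary $(f\circ h)$-ample $L_W$, the $\Q$-Cartier divisor $D=h_*L_W$ need not be $f$-ample, even when $Z$ is a point and $X$ is smooth projective.

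Here is a counterexample. Let $X$ be one small resolution of the nodal quadric $\{xy=zw\}\subset\mathbb P^4$, with exceptional $(-1,-1)$-curve $C$. Let $X^+$ be the other small resolution, which is also projective. Take $h\colon W=\mathrm{Bl}_CX\to X$, whose exceptional divisor is $E\cong\mathbb P^1\times\mathbb P^1$, and let $g\colon W\to X^+$ contract the other ruling onto $C^+$. For $A^+$ ample on $X^+$ and $0<\epsilon\ll 1$, the divisor $L_W:=g^*A^+-\epsilon E$ is ample. But $D=h_*L_W=h_*g^*A^+$ satisfies $h^*D=g^*A^++(A^+\cdot C^+)E$, so $D\cdot C=-A^+\cdot C^+<0$.

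Your Step 2 ($G\ge 0$) is correct, and it does give $D\cdot C>0$ for curves not contained in $h(\mathrm{Ex}(h))$. Over $h(\mathrm{Ex}(h))$, however, every lift $C'$ lies in $\mathrm{Supp}\,G$, and $G\cdot C'$ can be very negative, exactly as in the example.

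The proposed repairs do not help:
\begin{itemize}
\item $h_*(L_W-\epsilon F)=h_*L_W$, because $F$ is exceptional.
\item Adding $m h^*A_X$ presupposes the $f$-ample class you are trying to construct.
\item Fiberwise Nakai--Moishezon on the Moishezon fibers is a valid criterion, but it cannot save a divisor that is negative on a contracted curve.
\end{itemize}
Your argument also never uses that $X$ is klt, which signals that the essential input is missing.

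The missing idea, which is the paper's proof, is to descend projectivity along an MMP rather than push forward a fixed ample class.
\begin{itemize}
\item Write $K_W+(1-\epsilon)\mathrm{Ex}(h)=h^*K_X+F$ with $F\ge 0$ and $\mathrm{Supp}\,F=\mathrm{Ex}(h)$. Run the projective $(K_W+(1-\epsilon)\mathrm{Ex}(h))$-MMP over $X$. Because $X$ is klt and strongly $\Q$-factorial, it ends with $X$ itself: $W=X_0\dashrightarrow X_1\dashrightarrow\cdots\dashrightarrow X_n=X$.
\item Every $X_i$, and every extremal contraction target $\varphi_i\colon X_i\to Y_i$, is projective over $X$ and hence K\"ahler.
\item Therefore the negative ray $R_i\subset\overline{\rm NE}(X_i/X)$ is also extremal in $\overline{\rm NE}(X_i/Z)$. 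Indeed, if $l_1+l_2\in R_i$ with $l_j\in\overline{\rm NE}(X_i/Z)$, then $\varphi_{i*}l_1+\varphi_{i*}l_2=0$ in $\overline{\rm NE}(Y_i)$. Since $Y_i$ is K\"ahler, this forces $\varphi_{i*}l_j=0$.
\item If $X_i\to Z$ is projective, Nakayama's relative contraction theorem then shows that $\varphi_i$, and then $X_{i+1}$, are projective over $Z$.
\item Induction starting from $X_0=W$ shows that $f$ is projective.
\end{itemize}
In effect, the MMP selects a correct relatively ample divisor at each step, which an arbitrary $h_*L_W$ does not.
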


\begin{proof}
This follows from the proof of the Step 2 of \cite[Theorem 3.1]{CH24}, without requiring the base $Z$ to be K\"ahler. We briefly outline the proof.
Let $\mu _0:X_0\to X$ be a resolution such that $\mu _0$ and $f\circ \mu_0$ are projective. For $0<\epsilon \ll 1$, we may assume that $K_{X_0}+(1-\epsilon){\rm Ex}(\mu _0)=\mu _0^*K_X+F$ where $F\geq 0$ is exceptional and ${\rm Supp}(F)={\rm Ex}(\mu _0)$.
By \cite[Lemma 2.12, Theorem 2.10]{CH24} (see also \cite[Proposition 2.17]{HLX26}), there is a sequence of steps of the $K_{X_0}+(1-\epsilon){\rm Ex}(\mu _0)$ projective MMP over $X$ ending with $X$: 
\[
W=:X_0\dasharrow X_1\dasharrow\cdots\dasharrow X_n=X
\]
where each $X_i$ is projective over $X$ and hence K\"ahler. Moreover, if $\varphi :X_i\to Y_i$ is the corresponding extremal contraction, then $Y_i$ is also a K\"ahler variety. Using this fact we can deduce that the negative extremal ray $\R^+[C_i]\subset\overline{\rm{NE}}(X_i/X)$ is also a negative extremal ray in $\overline{\rm{NE}}(X_i/Z)$. If this were not the case, then there are linearly independent classes $l _1,l_2\in \overline{\rm{NE}}(X_i/Z)$ such that $[C]=l _1+l_2$ and hence $0=\varphi_*l_1+\varphi_*l_2\in \overline{\rm{NE}}(Y_i/Z)\subset \overline{\rm{NE}}(Y_i)$, and since $Y_i$ is K\"ahler, $\varphi_*l_1=\varphi_*l_2=0$. 

If $X_i\to Z$ is projective, then by \cite[Theorem 4.12]{Nak87} the extremal contraction over $Z$ is projective and coincides with $f_i$, similarly we have $X_{i+1}\to Z$ is projective. Starting from the fact that $W=X_0\to Z$ is projective, we finally get $X=X_n\to Z$ is projective as well.

\end{proof}

\begin{lemma}\label{lem: L rel num eq to kahler => f-ample}
Let $f: X\to Y$ be a projective morphism between normal compact complex spaces. If there is an $\mathbb{R}$-Cartier divisor $L$ and a K\"ahler form $\omega$ on $X$ such that $L\equiv^{alg}_f \omega$, i.e. $L\cdot C=\omega\cdot C$ for any curve $C\subset X$ and $f(C)=pt$, then $L$ is $f$-ample.
\end{lemma}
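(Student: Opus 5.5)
The natural route is the relative Kleiman criterion for the projective morphism $f$. By that criterion it suffices to show that the class of $L$ in $N^1(X/Y)$ is positive on $\overline{\rm NE}(X/Y)\setminus\{0\}$. The hypothesis $L\equiv^{alg}_f\omega$ says exactly that $L$ and $\omega$ define the same linear functional on $N_1(X/Y)$, since $N_1(X/Y)$ is spanned by $f$-vertical curves. So it is enough to prove that the functional $\ell\mapsto \omega\cdot\ell$, which is well defined on $N_1(X/Y)$ because it agrees with $L$ there, is strictly positive on $\overline{\rm NE}(X/Y)\setminus\{0\}$.

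First, use projectivity of $f$ to fix an $f$-ample line bundle $H$ on $X$. Because $H$ is $f$-ample, Kleiman gives $H\cdot \ell>0$ for all $\ell\in\overline{\rm NE}(X/Y)\setminus\{0\}$. By compactness of a slice, the set $K:=\{\ell\in\overline{\rm NE}(X/Y): H\cdot\ell=1\}$ is then a compact base of the cone. It therefore suffices to find $\epsilon>0$ with $\omega\cdot C\geq \epsilon\, H\cdot C$ for every $f$-vertical curve $C$. This inequality passes to limits and to positive combinations via $L$, giving $L\cdot\ell\ge \epsilon H\cdot\ell>0$ on $K$. Equivalently, we want $L-\epsilon H$ to be $f$-nef, in which case $L=(L-\epsilon H)+\epsilon H$ is $f$-ample.

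To get $\epsilon$, compare the two classes cohomologically. The class $c_1(H)$ is represented by a smooth closed real $(1,1)$-form $h$ with local potentials (a hermitian metric on $H$). Since $X$ is compact and $\omega$ is K\"ahler (locally the restriction of a strictly psh potential under local embeddings), there is $\epsilon>0$ with $\omega-\epsilon h$ still a K\"ahler form. This is a standard compactness argument using finitely many local embeddings into $\mathbb C^N$ and the fact that K\"ahler forms are an open condition on smooth forms there. Then for any curve $C$ we have $(\omega-\epsilon h)\cdot C=\int_{C}(\omega-\epsilon h)\geq 0$, so $\omega\cdot C\ge\epsilon\, H\cdot C$ for every vertical curve $C$. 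Hence $L-\epsilon H\equiv^{alg}_f \omega-\epsilon h$ is nonnegative on all vertical curves, so it is $f$-nef, and we conclude as above.

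The main obstacle is the openness step on a singular normal space: making precise that $\omega-\epsilon h$ remains K\"ahler for small $\epsilon$ when $X$ is singular. I would handle this by covering $X$ with finitely many relatively compact charts embedded in $\mathbb C^N$, on which $\omega$ is the restriction of a form $i\partial\bar\partial\phi$ with $\phi$ smooth strictly psh and $h=i\partial\bar\partial\psi$ with $\psi$ smooth. Then $\phi-\epsilon\psi$ is strictly psh for small $\epsilon$ on compact subsets, uniformly across the finite cover. A minor point to check is that integration of a smooth form over a (possibly singular) curve computes the intersection number with line bundles, which is standard.
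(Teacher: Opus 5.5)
Your argument is correct, but it reaches Kleiman's criterion by a slightly different path than the paper. The paper argues in one line through the analytic cone. Since $L\cdot C=\omega\cdot C$ on $f$-vertical curves, $L$ and $\omega$ agree on $\overline{\rm NE}_{an}(X/Y)$. A K\"ahler class is positive on $\overline{\rm NA}(X)\setminus\{0\}\supset\overline{\rm NE}_{an}(X/Y)\setminus\{0\}$. Pushing this through the natural surjection $\overline{\rm NE}_{an}(X/Y)\to\overline{\rm NE}(X/Y)$ gives positivity of $L$ on $\overline{\rm NE}(X/Y)\setminus\{0\}$, and Kleiman concludes.

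You instead fix an auxiliary $f$-ample $H$ with curvature form $h$ and use openness of the K\"ahler condition to make $\omega-\epsilon h$ K\"ahler. From this you deduce that the $\R$-Cartier divisor $L-\epsilon H$ is $f$-nef, so $L=(L-\epsilon H)+\epsilon H$ is $f$-ample.

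The paper's version is shorter given its cone formalism. It uses two facts: K\"ahler $\Leftrightarrow$ positive on $\overline{\rm NA}(X)\setminus\{0\}$, and the surjectivity of the map between the closed analytic and closed algebraic relative curve cones. Your route needs only an inequality on honest curves. After that everything is a statement about a divisor, so passing to the closure $\overline{\rm NE}(X/Y)$ is automatic and no comparison of cones is required. The price is the local-embedding openness argument on a singular space, which is essentially the same compactness that underlies ``K\"ahler implies positive on $\overline{\rm NA}(X)\setminus\{0\}$''.

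Incidentally, once you switch to the nef-plus-ample formulation, two parts of your write-up can be dropped: the compact slice $K$ and the discussion of $\omega$ as a functional on $N_1(X/Y)$.
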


\begin{proof}
$L$ is positive on $\overline {\rm NE}_{an}(X/Y)\setminus\{0\}$ and hence also on $\overline {\rm NE}(X/Y)\setminus\{0\}$. By Kleiman's criterion we see that $L$ is $f$-ample.
\end{proof}

\begin{lemma}\label{lem: Moishezon to strong Moishezon}
Let $f: X\to Z$ be a Moishezon morphism and $X$ a K\"ahler variety with rational singularities. If there exists an $\R$-line bundle $L$ such that $L\equiv^{alg}_Z \omega$ for a K\"ahler form $\omega$ on $X$, that is
\[
L\cdot C=\omega \cdot C,\text{ for all } C\subset X \text{ such that } f(C)=\text{pt}
\]
Then $f$ is strongly Moishezon. Moreover, if $X$ is smooth or strongly $\Q$-factorial klt, then $f$ is actually projective.
\end{lemma}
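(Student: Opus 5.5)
The plan is to pass to a resolution of $X$ that is projective over $Z$, show that $f$ is projective on that resolution, and then descend along the resolution.

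By definition of a Moishezon morphism there is a surjective projective $g:W\to X$ with $f\circ g$ projective. The first goal is to replace $g$ by a bimeromorphic map. Take an irreducible component $W_0$ of $W$ dominating $X$. Cut $W_0$ by general relatively ample hypersurfaces to get a generically finite multisection $W_1\to X$, which is still projective over $X$ and over $Z$. Then form the Stein factorization and normalize; alternatively, take a resolution $X'\to X$ that is projective and factors, up to bimeromorphism, through $W_1$. A cleaner way to get strong Moishezonness is to show directly that a suitable resolution $\nu:X'\to X$ is projective over $Z$. For this:
\begin{itemize}
\item let $\nu:X'\to X$ be a projective resolution obtained by blowups, so that $X'$ is K\"ahler;
\item let $-F$ be $\nu$-ample with $F\ge 0$ exceptional;
\item then $\nu^*\omega-\epsilon F$ is a K\"ahler form $\omega'$ on $X'$ for $0<\epsilon\ll1$, and $L':=\nu^*L-\epsilon F$ satisfies $L'\cdot C=\omega'\cdot C$ for every curve $C$ contracted by $f\circ\nu$.
\end{itemize}

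It remains to see that $f':=f\circ\nu$ is projective. The map $f'$ is Moishezon: pull $W$ back and resolve, then use that the composition of projective morphisms between compact spaces is projective (remark above). Hence $f'$ is projective if it is projective after some bimeromorphic projective modification. Replace $W$ by a resolution of a dominating component of $W\times_X X'$. Then $p:W\to X'$ is generically finite, surjective and projective, and $f'\circ p$ is projective. On $W$ the class $p^*L'$ is numerically positive on $\overline{\rm NE}(W/Z)$ modulo the $p$-contracted curves, and adding a small $p$-ample line bundle makes it $f'\circ p$-ample by Kleiman and Lemma \ref{lem: L rel num eq to kahler => f-ample}. The main obstacle is pushing ampleness from $W$ down to $X'$ when $p$ is generically finite rather than birational. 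I would handle this by taking the Stein factorization $W\to W'\to X'$, where $W'\to X'$ is finite. Ampleness descends along finite surjective maps, via norms of line bundles or via Kleiman applied to $L'$, whose pullback to $W'$ is relatively ample. To get $L'$ itself ample rather than some other class, I would instead check Kleiman's criterion directly on $X'$. The map $f'$ is Moishezon, so $\overline{\rm NE}(X'/Z)$ is the image of $\overline{\rm NE}(W'/Z)$, and $L'$ is positive on the images of curves because it agrees with $\omega'$ there. A limit argument then uses $\omega'$ to bound away from zero on the compact slice, which works in $\overline{\rm NE}_{an}$. The point needing care is that Kleiman's criterion requires projectivity of $f'$ already. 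To get around this, I would use that $W'\to Z$ is projective and $W'\to X'$ is finite, so by a norm construction some power of a relatively ample bundle on $W'$ yields an $f'$-ample bundle on $X'$. Therefore $f'$ is projective, hence $f$ is strongly Moishezon.

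For the last assertion: if $X$ is smooth, take $\nu=\mathrm{id}$ and the above shows $f$ is projective, since $L$ itself is then $f$-ample by Lemma \ref{lem: L rel num eq to kahler => f-ample}. If $X$ is strongly $\Q$-factorial klt, then $X'$ is a K\"ahler manifold with $X'\to X$ and $X'\to Z$ projective, so Lemma \ref{lem: sm Kahler+Moishezon=proj mor} gives that $f$ is projective.
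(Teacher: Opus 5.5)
There is a genuine gap in the descent step. Everything up to producing the $(f'\circ p)$-ample bundle $p^*L'+\delta H$ on $W$ is fine. The problem is your claim that $W'\to Z$ is projective, where $W\to W'\xrightarrow{\pi}X'$ is the Stein factorization: you never justify it, and it does not follow from what you have. You know $W\to Z$ is projective and $W\to W'$ is bimeromorphic, but a bimeromorphic image of a space projective over $Z$ need not be projective over $Z$.

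The bundle $H$ does not descend to $W'$. The class $p^*L'$ does descend, being pulled back from $\pi^*L'$, but on $W$ it is only nef over $Z$: it is trivial exactly on the curves contracted by $W\to W'$. To conclude that $\pi^*L'$ is ample over $Z$ you would need either Kleiman on $W'$ (which presupposes projectivity of $W'\to Z$) or a base-point-free theorem realizing $W\to W'$ as the contraction of $p^*L'$. Deciding when something bimeromorphic to a space projective over $Z$ is itself projective over $Z$ is essentially the content of the lemma, so the norm argument is circular. You already noted that Kleiman cannot be applied on $X'$ before projectivity is known; the same obstruction reappears on $W'$.

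A smaller point: cutting $W_0$ by ``general relatively ample hypersurfaces'' needs global sections, which need not exist when $Z$ is not projective. Without that step, $p$ need not be generically finite.

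The missing idea is an input that converts K\"ahler plus Moishezon into projective, applied fiberwise rather than globally. The paper argues as follows:
\begin{enumerate}
\item Perturb so that $L$ is a $\Q$-line bundle, and Stein-factorize so that $f$ is a contraction.
\item Choose a Zariski open $U\subset Z$ over which the fibers $X_z$ are normal with rational singularities.
\item Each such $X_z$ is Moishezon (it is dominated by the projective $W_z$) and K\"ahler with rational singularities, hence projective by Namikawa \cite{Nam02}.
\item Then $L|_{X_z}\equiv^{alg}\omega|_{X_z}$ is ample by Lemma \ref{lem: L rel num eq to kahler => f-ample}, so $L$ is relatively ample over $U$ and therefore $f$-big.
\item Finally, \cite[Lemma 2.5]{CH24} turns relative bigness into strong Moishezonness, and the projectivity statement follows from Lemma \ref{lem: sm Kahler+Moishezon=proj mor}, exactly as in your last paragraph.
\end{enumerate}
Your reduction to a smooth K\"ahler resolution $X'$ can be kept: there, Moishezon's classical theorem on the smooth general fibers plays the role of Namikawa's. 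But some such K\"ahler-to-projective theorem on fibers is indispensable, and your argument never invokes one.
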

\begin{proof}
Perturbing $L$ and $\omega$ a little bit we may assume $L$ is a $\Q$-line bundle.
It suffices to show $L$ is relatively big over $Z$, then the claim follows from \cite[Lemma 2.5]{CH24}. By taking a Stein factorization we may also assume $f$ is a contraction. Let $U\subseteq Z$ be a Zariski open set such that the fibers $X_z=f^{-1}(z)$ are normal with rational singularities for all points $z\in U$. 

Notice that $X_z$ is a compact Moishezon space, and is also a K\"ahler variety with rational singularities for any $z\in U$, hence by \cite{Nam02} is actually a projective variety for any $z\in U$. Then $L|_{X_z}$ is ample by Lemma \ref{lem: L rel num eq to kahler => f-ample} since we have $L|_{X_z}\equiv^{alg} \omega|_{X_z}$. Therefore $L|_U$ is relatively ample over $U$, which implies that $L$ is relatively big over $Z$.

The moreover part, follows from Lemma \ref{lem: sm Kahler+Moishezon=proj mor} since it is strongly Moishezon.

\end{proof}

\subsection{The NQC property and the $\alpha$-trivial MMP}
The concept of NQC (abbreviation for nef $\Q$-Cartier combination) is first introduced in \cite{HL22} for nef line bundles when studying projective generalized pairs with $\R$ coefficients, which is very important in showing the existence and terminations for some special MMPs. 

The philosophy turns out to be particularly useful in the K\"ahler category, where we often consider closed $(n,n)$-forms which are just $\R$-classes. However, we must enlarge the spaces where we are doing the decomposition since $H^{1,1}$ is usually not defined over $\Q$.

\begin{definition}
Let $X$ be a normal compact complex space in the Fujiki's class $\mathcal{C}$ with rational singularities. Let $\alpha$ be a nef class in $H^{1,1}_{\rm BC}(X)$, then we say $\alpha$ is NQC if there exist $a_i\in \R^{>0}$ and $\alpha_i\in H^{2}(X,\Q)$ such that 
\[
\sum_i a_i\alpha_i=\alpha \text{ under the canonical inclusion } H^{1,1}_{\rm BC}(X)\to H^2(X,\R)
\]
and 
\begin{itemize}
\item (strong sense) $\alpha_i$ is non-negative on $\overline{\rm NA}(X)$.
\item (weak sense) $\alpha_i\cdot C\ge 0$ for any curve $C$ in $X$.
\end{itemize}
We may assume that these $\alpha_i$ generate the minimal $\Q$-space $V\subset H^{2}(X,\Q)$ 
such that
\[\alpha\in V\otimes_\Q \R.\]
We note that we will use the strong version in this paper, however the weak version would, also fit our needs.
\end{definition}
\begin{lemma}\label{l-NQCpullback} Let $f:X\to Y$ be a surjective morphism of normal compact  K\"ahler varieties and $f_*\OO _X=\OO _Y$. If $\alpha \in H^{1,1}_{\rm BC}(Y)$ then $\alpha$ is a nef NQC class iff $f^*\alpha$ is a nef NQC class.
\end{lemma}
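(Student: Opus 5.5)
The forward direction is formal, so I will dispose of it first and then spend the effort on the converse.

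\emph{Forward direction.} Suppose $\alpha=\sum a_i\alpha_i$ with $a_i>0$, $\alpha_i\in H^2(Y,\Q)$, and each $\alpha_i$ non-negative on $\overline{\rm NA}(Y)$. Pulling back gives $f^*\alpha=\sum a_if^*\alpha_i$, and $f^*\alpha_i\in H^2(X,\Q)$ because pullback on singular cohomology respects the rational structure. Non-negativity also passes to $X$: the paper records $f_*(\overline{\rm NA}(X))\subseteq\overline{\rm NA}(Y)$, so for any class $T\in\overline{\rm NA}(X)$ we get $f^*\alpha_i\cdot T=\alpha_i\cdot f_*T\ge 0$. Finally, $f^*\alpha$ is nef by \cite[Lemma 2.38]{DHP24}.

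\emph{Converse: strategy.} Assume $f^*\alpha=\sum a_i\gamma_i$ with $\gamma_i\in H^2(X,\Q)$ non-negative on $\overline{\rm NA}(X)$. The plan is to show that the $\gamma_i$ can be chosen as pullbacks of rational classes on $Y$, and then to descend positivity.

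Let $V\subset H^2(X,\Q)$ be the minimal $\Q$-subspace with $f^*\alpha\in V\otimes\R$, and let $W=f^*H^2(Y,\Q)\subset H^2(X,\Q)$. The subspace $W$ is defined over $\Q$, and $f^*\alpha$ lies in $W\otimes\R$. Since $V\otimes\R\cap W\otimes\R=(V\cap W)\otimes\R$, minimality forces $V\subseteq W$. Choosing the $\gamma_i$ to generate $V$, as the NQC definition allows, each $\gamma_i$ can be written $\gamma_i=f^*\alpha_i$ with $\alpha_i\in H^2(Y,\Q)$.

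There is a gap here. Replacing an arbitrary decomposition by one generating exactly the minimal $V$ has to preserve non-negativity. I would handle this by projecting: take a $\Q$-linear projection $\pi:\mathrm{span}(\gamma_i)\to V$ that restricts to the identity on $V$. Then $\sum a_i\pi(\gamma_i)=f^*\alpha$. However, $\pi(\gamma_i)$ need not be non-negative.

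The standard fix, as in \cite{HL22}, is to perturb the rational classes. Let $P$ be the closed cone of classes non-negative on $\overline{\rm NA}(X)$. The set of $(a_i)$ satisfying the linear constraints with $\gamma_i\in P$ defines a rational polyhedral-type condition, and $f^*\alpha$ sits in its relative interior within $V\otimes\R$. Because $P$ is not polyhedral, I would instead argue through $W$ directly. Consider the set of $(c_i)$ with $\sum c_i\gamma_i\in W\otimes\R$. This is a linear subspace of $\R^k$ defined over $\Q$ and containing $(a_i)$, which has positive entries. Rational points are dense in it, so approximating $(a_i)$ by rational points $(c^{(j)})$ and taking suitable positive combinations, I can rewrite $f^*\alpha=\sum b_j\beta_j$ where each $\beta_j=\sum_i c^{(j)}_i\gamma_i$ is a positive rational combination of the $\gamma_i$ lying in $W$. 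Each $\beta_j$ then lies in $P\cap W_\Q$. I expect this linear-algebra step to be the main obstacle.

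\emph{Descending positivity.} Write $\beta_j=f^*\alpha_j$ with $\alpha_j\in H^2(Y,\Q)$. Since $f_*$ maps $\overline{\rm NA}(X)$ onto $\overline{\rm NA}(Y)$, each class in $\overline{\rm NA}(Y)$ has the form $S=f_*T$ with $T\in\overline{\rm NA}(X)$. The projection formula then gives $\alpha_j\cdot S=\beta_j\cdot T\ge 0$. Injectivity of $f^*$ on $H^2$ (from rational singularities; $Y$ should be treated as having rational singularities in the intended setting, as the definition of NQC presupposes) ensures $\alpha=\sum b_j\alpha_j$. Nefness of $\alpha$ follows again from \cite[Lemma 2.38]{DHP24}.
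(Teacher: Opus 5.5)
Your proposal is correct and follows essentially the same route as the paper: both use the rational structure of $f^*H^2(Y,\Q)$ to force the rational classes in the decomposition to be pullbacks, and both descend non-negativity through $\overline{\rm NA}(Y)=f_*\overline{\rm NA}(X)$ and the projection formula. The paper takes the reduction from the NQC definition's clause that the $\alpha'_i$ may be assumed to lie in the minimal $\Q$-space $V$, and then observes $V\subset f^*H^2(Y,\Q)$. Your density argument on the rational subspace $\{c:\sum c_i\gamma_i\in f^*H^2(Y,\Q)\otimes\R\}$ actually proves that reduction, with $W$ in place of $V$. You also make explicit a point the paper uses tacitly: $f^*$ must be injective on $H^2(Y,\R)$, which requires $Y$ to have rational singularities.
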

\begin{proof} Recall that $\alpha$ is nef iff $f^*\alpha$ is nef (see \cite[Theorem 6.1]{HP24}). 

Suppose that $\alpha$ is NQC class so that $\alpha =\sum a_i\alpha _i$  where $a_i>0$ and $\alpha _i\in H^2(Y,\Q)$ are non-negative on $\overline{\rm NA}(Y)=f_*\overline{\rm NA}(X)$, then $f^*\alpha =\sum a_if^*\alpha _i$ where $f^*\alpha _i\in H^2(X,\Q)$ are non-negative on $\overline{\rm NA}(X)$.

Conversely, suppose $f^*\alpha =\sum a_i\alpha' _i$ where $\alpha' _i\in H^2(X,\Q)$ are non-negative on $\overline{\rm NA}(X)$. We may assume that $\alpha '_i$ belong to the minimal $\Q$-space $V\subset H^2(X,\Q)$ such that $\alpha \in V\otimes _\Q \R$. 
Since $\alpha \in f^*H^2(Y,\R)$, then $V\subset f^*H^2(Y,\Q)$ and hence we may assume that $\alpha '=f^*\alpha _i$ where $\alpha _i\in H^2(Y,\Q)$ are non-negative on $\overline{\rm NA}(Y)=f_*\overline{\rm NA}(X)$.
\end{proof}
\begin{lemma}\label{lem: gklt pair+Kahler is NQC}
Let $(X,B+\bbeta)$ be a compact K\"ahler gklt pair and $\omega$ a K\"ahler form on $X$. Assume $\alpha:=[K_X+B+\bbeta_X+\omega]$ is nef, then $\alpha$ is also NQC. 
\end{lemma}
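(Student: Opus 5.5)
The plan is to use the cone theorem to reduce nefness of $\alpha$ on $\overline{\rm NA}(X)$ to finitely many rational linear inequalities together with one strict positivity condition, and then to perturb $\alpha$ inside a rational subspace. Consider the generalized pair $(X,B+\bbeta+\tfrac12\bar\omega)$. It is gklt because $(X,B+\bbeta)$ is gklt and $\omega$ is K\"ahler, and its boundary $B+\bbeta_X+\tfrac12\omega$ is big. Its log canonical class is $K_X+B+\bbeta_X+\tfrac12\omega=\alpha-\tfrac12\omega$.

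By Theorem \ref{t-cone}, there are finitely many rational curves $\Gamma_1,\dots,\Gamma_k$ such that
\[\overline{\rm NA}(X)=\overline{\rm NA}(X)_{(\alpha-\frac12\omega)\geq 0}+\sum_{i=1}^k\R^+[\Gamma_i].\]
On the first summand we have $\alpha\geq \tfrac12\omega$. Fix a norm on $N_{1,an}(X)$, which is finite dimensional and dual to $H^{1,1}_{\rm BC}(X)$ since $X$ has rational singularities (Lemma \ref{L-gklt-rational}). Then $\alpha\cdot z\geq c\|z\|$ for some $c>0$ and all $z$ in this first summand, because $\omega$ is strictly positive on the compact base of the closed cone $\overline{\rm NA}(X)$. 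On the rays we have $\alpha\cdot\Gamma_i\geq 0$. Let $I_0=\{i:\alpha\cdot\Gamma_i=0\}$.

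Next, let $V\subset H^2(X,\Q)$ be the minimal $\Q$-subspace with $\alpha\in V\otimes_\Q\R$. Define
\[W:=\{\gamma\in V\otimes\R:\ \gamma\cdot\Gamma_i=0\ \text{for all } i\in I_0\}.\]
Each $\Gamma_i$ defines a $\Q$-linear functional on $H^2(X,\Q)$, so $W$ is defined over $\Q$. By minimality of $V$, the rational points of $W$ span $W$ and $\alpha\in W$. Choose rational classes $\alpha_1,\dots,\alpha_m\in W\cap H^2(X,\Q)$, all arbitrarily close to $\alpha$, such that $\alpha$ lies in the interior of their convex cone, so that $\alpha=\sum a_j\alpha_j$ with $a_j>0$. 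Then we check that each $\alpha_j$ is non-negative on $\overline{\rm NA}(X)$, term by term. For $i\in I_0$ we have $\alpha_j\cdot\Gamma_i=0$. For $i\notin I_0$ we have $\alpha_j\cdot\Gamma_i>0$ by closeness, since there are finitely many strict inequalities. On $\overline{\rm NA}(X)_{(\alpha-\frac12\omega)\geq0}$ we have $\alpha_j\cdot z\geq (c-\|\alpha_j-\alpha\|)\|z\|>0$. Summing over the decomposition of $\overline{\rm NA}(X)$ then gives $\alpha_j\geq 0$ on the whole cone.

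The main obstacle is making sense of the pairing of $\alpha_j\in H^2(X,\Q)$, which need not be of type $(1,1)$, with elements of $\overline{\rm NA}(X)$, and making the uniform estimate $\|\alpha_j-\alpha\|$ small meaningful for that pairing. I would handle this as follows. Under $H^{1,1}_{\rm BC}(X)\hookrightarrow H^2(X,\R)$, a positive closed $(1,1)$-current pairs only with the $(1,1)$-component of a class, since the $(2,0)+(0,2)$ parts integrate to zero against it; this can be verified on a resolution using the Hodge decomposition there. So the pairing factors through the continuous projection $H^2(X,\R)\to H^{1,1}$-part, and closeness in $H^2(X,\R)$ implies closeness of the functionals on $N_{1,an}(X)$. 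This gives the required uniform bound and completes the argument that $\alpha$ is NQC in the strong sense.
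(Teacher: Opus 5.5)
Your proposal is correct and follows essentially the same route as the paper: apply the cone theorem to $K_X+B+\bbeta_X+\frac12\omega$ to get finitely many extremal rays, approximate $\alpha$ by nearby rational classes in the minimal $\Q$-space $V$, and check non-negativity separately on $\overline{\rm NA}(X)_{(K_X+B+\bbeta_X+\frac12\omega)\geq 0}$ and on the rays. Your $W$ coincides with $V\otimes_\Q\R$ by minimality, which is exactly how the paper gets vanishing on the $\alpha$-trivial rays. Your extra remark on how classes in $H^2(X,\Q)$ pair with $\overline{\rm NA}(X)$ makes explicit a point the paper leaves implicit in its definition of NQC; once that pairing is defined and bilinear on these finite-dimensional spaces, the uniform closeness estimate is automatic.
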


\begin{proof}
By the cone theorem we have 
\[
\overline {\rm NA}(X)=\overline {\rm NA}(X)_{K_X+B+\bbeta _X+\frac{1}{2}\omega\geq 0}+\sum _{j\in J}\R^+[C_j].
\]
for a finite set $J$. Let $V$ be the minimal $\Q$-space in $H^2(X,\Q)$ such that $\alpha\in V\otimes_\Q\R$.

There exists an $\epsilon>0$ such that for any $\alpha'\in H^2(X,\Q)$, if $||\alpha'-\alpha||<\epsilon$, then
\begin{itemize}
\item[(1)] $\alpha'\cdot C_j>0$ if $\alpha\cdot C_j>0$, and
\item[(2)] $\frac{1}{2}[\omega]+\alpha'-\alpha$ is positive on $\overline{\rm NA}(X)\backslash\{0\}$.
\end{itemize}

We now claim that $\langle\alpha_i, \overline{\rm NA}(X)\rangle\ge 0$ for any $\alpha_i\in V$ such that $||\alpha_i-\alpha||<\epsilon$, which implies that $\alpha$ is NQC.

Indeed we just need to check $\alpha_i$ is non-negative on $\overline {\rm NA}(X)_{K_X+B+\bbeta _X+\frac{1}{2}\omega\ge 0}$ and every $C_j$ in the above decomposition. The minimality of $V$ implies $C_j\cdot\alpha_i=0$ for any $C_j\cdot\alpha=0$. (1) and (2) above imply that $\alpha_i$ is nef on $\overline {\rm NA}(X)_{K_X+B+\bbeta _X+\frac{1}{2}\omega\ge 0}$ and $C_j$ for any $C_j\cdot\alpha>0$. 

\end{proof}

\begin{lemma}\label{lem: NQC can run alpha-trivial MMP}
Let $(X,B+\bbeta)$ be a compact strongly $\Q$-factorial K\"ahler gklt pair, $\alpha$ a closed $(1,1)$-form/current with local potentials such that $[\alpha]$ is NQC, then there exists a real number $t_0>0$ such that for any $t>t_0$,
if $\phi: X\dasharrow X'$ is a sequence of steps of the $(K_X+B+\bbeta_X+t\alpha)$-MMP (including a Mori fiber space), then $\phi$ is $\alpha$-trivial.
\end{lemma}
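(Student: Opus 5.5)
We follow the standard argument for NQC divisors in the projective setting (cf. \cite{HL22}), using the cone theorem (Theorem \ref{t-cone}) for the length bound. Write $[\alpha]=\sum_{i=1}^k a_i\alpha_i$ with $a_i>0$ and $\alpha_i\in H^2(X,\Q)$ non-negative on $\overline{\rm NA}(X)$. Fix an integer $m>0$ with $m\alpha_i$ integral for all $i$. Then for every curve $C$ and every $i$, $\alpha_i\cdot C\in \frac1m\Z_{\geq 0}$. So whenever $\alpha\cdot C>0$ we get
\[\alpha\cdot C\geq \delta:=\min_i a_i/m>0.\]
We take $t_0:=2\dim X/\delta$.

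\textbf{First step.} Let $t>t_0$ and let $R=\R^+[\Gamma]$ be a $(K_X+B+\bbeta_X+t\alpha)$-negative extremal ray contracted in the first step. Since $\alpha$ is nef, $R$ is also $(K_X+B+\bbeta_X)$-negative. By Theorem \ref{t-cone}, $R$ is spanned by a rational curve $\Gamma$ with
\[0<-(K_X+B+\bbeta_X)\cdot\Gamma\leq 2\dim X.\]
If $\alpha\cdot\Gamma>0$, then $\alpha\cdot\Gamma\geq\delta$, and hence
\[(K_X+B+\bbeta_X+t\alpha)\cdot\Gamma\geq -2\dim X+t\delta>0,\]
a contradiction. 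Thus $\alpha\cdot\Gamma=0$, and so $\alpha\cdot R=0$. Since $X$ has rational singularities (Lemma \ref{L-gklt-rational}) and the contraction $f:X\to Z$ of $R$ has $\rho_{\rm BC}(X/Z)=1$ with relatively rational fibres, Lemma \ref{l-rccfibres} gives $[\alpha]=f^*[\alpha_Z]$ for some class $\alpha_Z$ on $Z$. In the Mori fibre space case this already means the step is $\alpha$-trivial. In the divisorial or flip case we set $\alpha'=(f^+)^*\alpha_Z$, so $\alpha'$ is the pushforward of $\alpha$ and the step is $\alpha$-trivial.

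\textbf{Induction.} We need the same constant $t_0$ to work on the new model $X'$, and this is the main point to check. By Lemma \ref{l-rccfibres}, applied also to $\alpha_i$, each class $\alpha_i$ satisfies $\alpha_i\cdot\Gamma=0$. This uses that the $\alpha_i$ lie in the minimal $\Q$-space containing $\alpha$; or directly, $\sum a_i\alpha_i\cdot\Gamma=0$ with all terms $\geq0$. Hence each $\alpha_i$ descends to $Z$ as a rational class $\alpha_{i,Z}$, non-negative on $\overline{\rm NA}(Z)=f_*\overline{\rm NA}(X)$. Its pullback $\alpha_i'$ to $X'$ is non-negative on $\overline{\rm NA}(X')$ by Lemma \ref{l-NQCpullback}-type reasoning, since nefness is preserved under pullback \cite[Theorem 6.1]{HP24}.

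So $[\alpha']=\sum a_i\alpha_i'$ is again NQC with the same coefficients $a_i$ and the same denominator $m$, hence the same $\delta$. Moreover $(X',B'+\bbeta)$ is again a strongly $\Q$-factorial K\"ahler gklt pair (Corollary \ref{c-contractions} and the flip theorem). Therefore the argument of the first step applies verbatim at every subsequent step with the same $t_0$, and the whole sequence $\phi$ is $\alpha$-trivial.

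The main obstacle is the descent of the rational classes $\alpha_i$ through flips while preserving both rationality and non-negativity on $\overline{\rm NA}$. If we only knew the weak sense, we would argue with curves on $X'$ instead. A curve on $X^+$ not contained in the flipped locus is the strict transform of a curve on $X$. For curves in the exceptional locus of $f^+$, the classes vanish since they are pulled back from $Z$. So $\alpha_i'\cdot C\in\frac1m\Z_{\geq0}$ in either case, which is all the length argument needs.
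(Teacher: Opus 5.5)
Your first step and overall strategy agree with the paper: the length bound from Theorem \ref{t-cone}, a uniform $\delta$ coming from the denominators of the NQC components, descent of each $\alpha_i$ through the contraction, then repeat. Your observation that the descended classes stay non-negative on $\overline{\rm NA}$ is also fine.

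The gap is the claim that $[\alpha']=\sum a_i\alpha_i'$ has ``the same denominator $m$''. From $f^*\alpha_{i,Z}=\alpha_i$ with $m\alpha_i$ integral you only get that $\alpha_{i,Z}$ is rational; this does not by itself make $m\alpha_{i,Z}$ integral. What the next step actually needs is $\alpha_i'\cdot C'\in\frac1m\Z_{\ge0}$ for every curve $C'\subset X'$. Your fallback via strict transforms proves this for flips. It does not cover divisorial contractions, where $X'=Z$. A curve $C'\subset f({\rm Ex}(f))$ is not the strict transform of any curve on $X$. If you lift it by a curve $\tilde C\subset{\rm Ex}(f)$ mapping onto $C'$ with degree $d$, you only get $\alpha_{i,Z}\cdot C'=\frac1d\,\alpha_i\cdot\tilde C\in\frac1{md}\Z$. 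As written, nothing prevents the denominators from growing from one step to the next, and then neither $\delta$ nor $t_0$ is uniform along the MMP.

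The missing idea is a degree-one lift of curves, which is exactly what the paper uses. Take a common resolution $p:W\to X$, $q:W\to X'$. Since $X'$ is gklt, the fibres of $q$ are rationally connected \cite{HM07}. So by \cite{GHS03} any curve $C'\subset X'$ is the birational image of a curve $C_W\subset W$, and then $\alpha_i'\cdot C'=q^*\alpha_i'\cdot C_W=p^*\alpha_i\cdot C_W=\alpha_i\cdot p_*C_W\in\frac1m\Z_{\ge 0}$. This treats flips and divisorial contractions uniformly and keeps the same $t_0$ at every step.

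Note also that only this curve-wise integrality is needed for the length argument. The point you flagged as the main obstacle, preserving rationality and non-negativity on $\overline{\rm NA}$, is the easy part; integrality on curves of the new model is the real issue.
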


\begin{proof}
Let $f:X_0:=X\to Z$ be the projective contraction of a $(K_X+B+\bbeta_X+t\alpha)$-negative extremal ray $R$ generated by a rational curve $C$, and $[\alpha]=\sum_i a_i\alpha_i$ be the NQC decomposition. Fix $l_i\in \mathbb N^*$ such that $l_i\alpha_i\in H^2(X,\Z)$. Define 
\[
t_0:=\max_i\{\frac{l_i(2\dim X+1)}{a_i}\}.
\]
Notice that $R$ is also a $(K_X+B+\bbeta_X)$-negative extremal ray and so we may assume that $(K_X+B+\bbeta_X)\cdot C\ge -2\dim X$. If $\alpha_i\cdot C>0$, then we have 
\[
0>(K_X+B+\bbeta_X+t\alpha)\cdot C\ge -2\dim X+t_0a_i(\alpha_i\cdot C)\ge 1,
\]
a contradiction. Therefore $C$ is $\alpha_i$-trivial for all $i$ and hence also $\alpha$-trivial. 

If $f$ is a Mori fiber space, we are already done. If $f$ is a birational contraction we let $X_1:=Z$ when $f$ is divisorial and $X_1:=X^+$ when $f$ is small. In either case we let $g: X_1\to Z$ be the induced morphism. 

Notice that $X$ and $Z$ are in the Fujiki's class $\mathcal C$ (actually K\"ahler) and have rational singularities (for example, this can be checked by the Kawamata-Viehweg vanishing theorem),  
hence by Lemma \ref{l-rccfibres} we have injections $f^*: H^2(Z,\R)\to H^2(X,\R)$ and 
\[
f^*: H^{1,1}_{\rm BC}(Z)=H^1(Z, \mathcal{H}_Z)\hookrightarrow H^1(X,\mathcal{H}_X)=H^{1,1}_{\rm BC}(X).
\]
The image is given by
\[
{\rm Im} f^*=\{\beta\in H^2
(X, \R)~|~\beta\cdot C=0\}.
\]
Since $\alpha \in V\otimes \R\cap f^*H^2(Z,\R)$, then $\alpha \in (V\cap f^*H^2(Z,\Q ))\otimes_\Q \R$, and therefore $f^*H^2(Z,\Q)\supset V$ as $V\subset H^2(X,\Q)$ is the minimal $\Q$-space such that $\alpha \in V\otimes _\Q\R$.    
It follows that there exist $\tilde\alpha_i\in H^2(Z,\Q)$ such that $f^*\tilde\alpha_i=\alpha_i$.

We claim that \[
l_ig^*\tilde\alpha_i\cdot C_1\in \mathbb{N}~\text{ for any curve } C_1\subset X_1.
\] To see this, consider a common resolution of $p:W\to X_0$ and $q:W\to X_1$. Since $X_1$ has gklt type singularities, then the fibers of $q$ are rationally connected by \cite{HM07}. By \cite{GHS03}, there is a section $C_W\to C_1$. Let $C=p_*C_W$, then by the projection formula we have \[l_ig^*\tilde\alpha_i\cdot C_1=l_iq^*g^*\tilde\alpha_i\cdot C_W=l_i\alpha _i\cdot C.\]

Let $K_{X_1}+B_1+\bbeta_{X_1}+t\alpha_{X_1}$ be the push-forward of $K_X+B+\bbeta_X+t\alpha$ and $\alpha_{i,X_1}:=g^*\tilde\alpha_i$. Then we have $\sum_ia_i\alpha_{i,X_1}=[\alpha_1]$ and any $(K_{X_1}+B_1+\bbeta_{X_1}+t\alpha_{X_1})$-negative extremal ray is also $\alpha_{i,X_1}$-trivial for all $i$. Repeating the same argument we obtain $\alpha_{i,X_j}\in H^2(X_j,\Q)$ such that the extremal contraction on $X_j$ is $\alpha_{i,X_j}$-trivial for all $i,j$.

\end{proof}

\subsection{Semipositivity of the relative canonical bundle}
We recall the following result from \cite{HP24}.
\begin{theorem}\label{t-gue} Let $f:X\to Z$ be a surjective projective map with connected fibers between normal compact  K\"ahler varieties, $Z$ is smooth, $(X,B+\bbeta)$ a generalized pair which is log canonical over an open subset of $Z$, $\gamma$ a real (1,1)-class on $Z$, and $L$ an $\R$-line bundle which is pseudo-effective over $Z$.
If  $K_X+B+\bbeta_X =f^*\gamma +L$, then $ K_{X/Z}+B+\bbeta_X$  is pseudo-effective.
\end{theorem}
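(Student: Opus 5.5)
The plan is to reduce to a relatively ample setting and then apply the standard semipositivity of direct images of relative pluricanonical forms, in the form used in \cite{HP24}. Fix a resolution and replace $X$ by a log resolution $\nu:X'\to X$ of $(X,B+\bbeta)$ on which $\bbeta$ descends, so that $\bbeta_{X'}$ is nef. Write $K_{X'}+B'+\bbeta_{X'}=\nu^*(K_X+B+\bbeta_X)$. Since $\nu$ can be chosen projective, $f'=f\circ\nu$ is projective. Over the open set $U\subset Z$ where the pair is lc, $B'$ has coefficients $\le 1$. We have $K_{X'/Z}+B'+\bbeta_{X'}=\nu^*(K_{X/Z}+B+\bbeta_X)$, and pseudo-effectivity is preserved under $\nu_*$. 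Hence it suffices to prove that $K_{X'/Z}+B'+\bbeta_{X'}$ is pseudo-effective. Negative components of $B'$ are $\nu$-exceptional, so we may add them and work with $B'^{\geq 0}$. Components of $B'$ that lie over $Z\setminus U$ are vertical, and after pushing forward they only change the class by something harmless. More precisely, we truncate the coefficients to $\le 1$, and what we subtract is vertical and effective. Pseudo-effectivity of the truncated class then implies that of the original.

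Next we handle $\bbeta_{X'}$ together with $L$. A nef class is a limit of K\"ahler classes $\bbeta_{X'}+\epsilon\omega$. Because $f'$ is projective, Lemma \ref{l-rel}-type arguments and perturbation let us write $\bbeta_{X'}+\epsilon\omega\equiv c_1(A_\epsilon)+f'^*\gamma_\epsilon$ with $A_\epsilon$ an $f'$-ample $\Q$-line bundle. The $f'^*\gamma_\epsilon$ term can be absorbed into $\gamma$. Similarly, $L$ is $f'$-pseudo-effective, so $L+\epsilon A$ is $f'$-big, and we may assume it is represented by an effective $\R$-divisor plus a relatively ample part. The goal is then to show that $K_{X'/Z}+\Delta_\epsilon+H_\epsilon$ is pseudo-effective, where $H_\epsilon$ is relatively ample and $\Delta_\epsilon$ is an effective boundary that is sub-lc over $U$. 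We will only use that the relevant adjoint class is $f'$-pseudo-effective generically over $Z$. The final answer is obtained by letting $\epsilon\to 0$ and using that the pseudo-effective cone is closed.

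The key step is the positivity statement itself. This is Păun–Takayama / Berndtsson–Păun: for $f$ projective from a K\"ahler manifold, if $K_{X/Z}+\Delta+H$ (with $\Delta$ klt/lc generically and $H$ relatively ample) has a nonzero twisted relative section on the general fiber, then the relative Bergman kernel metric is semipositive. That metric extends over all of $X$ with positive curvature current, so $K_{X/Z}+\Delta+H$ is pseudo-effective. We apply it to $m(K_{X'/Z}+\Delta_\epsilon+H_\epsilon)$ after rationalizing coefficients by small perturbation. The hypothesis $K_X+B+\bbeta_X=f^*\gamma+L$ with $L$ pseudo-effective over $Z$ ensures that on a general fiber $F$ the class $(K_{X'}+B'+\bbeta_{X'})|_F$ is pseudo-effective. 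Adding $\epsilon$ of an ample makes it big, so it has sections, possibly after the standard Kawamata-type covering or Nakayama's theorem to pass from bigness to effectivity with $\R$-coefficients. Since $Z$ is only K\"ahler and the base need not be projective, the Bergman-kernel method is the right tool, as it is local over $Z$.

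I expect the main obstacle to be the lc (rather than klt) boundary, together with the vertical components over $Z\setminus U$. Lc coefficients equal to $1$ break the $L^2$ extension estimates. I would first perturb the boundary to $(1-\delta)B'$ and compensate using the ample $\epsilon$-term, keeping the klt condition on general fibers. I would then let $\delta,\epsilon\to 0$, using closedness of the pseudo-effective cone to conclude.
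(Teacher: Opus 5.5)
\textbf{There is a genuine gap in the second paragraph of your proposal.} Your overall plan (resolve, perturb, then run a Berndtsson--Paun/Paun--Takayama Bergman kernel argument) is the mechanism behind \cite[Theorem 2.2]{HP24}. The paper uses only that result: it approximates $L$ by $\Q$-line bundles $L_i$ with $A_i:=L_i-L$ small and $f$-ample, applies \cite[Theorem 2.2]{HP24} to $K_X+B+\bbeta_X+A_i=f^*\gamma+L_i$, and lets $i\to\infty$. Your reduction breaks this in two ways.

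First, the positivity statement you rely on is false: ``if $H$ is relatively ample and $K_{X/Z}+\Delta+H$ has sections on the general fibre, then the relative Bergman kernel metric is semipositive.'' The Bergman kernel method needs the twisting bundle to carry a semipositively curved metric on all of $X$. Relative ampleness gives this only after adding $f^*$ of a positive class on $Z$, and that class stays in the conclusion. For example, take $X=Z\times\mathbb P^1$ over a curve, $\Delta=0$, and $H=p_2^*\OO(3)\otimes f^*N$ with $\deg N<0$. Then $H$ is $f$-ample and $(K_{X/Z}+H)|_F\cong\OO(1)$. But $K_{X/Z}+H$ has degree $\deg N<0$ on the covering family $Z\times\{t\}$, so it is not pseudo-effective.

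Second, the step ``$\bbeta_{X'}+\epsilon\omega\equiv c_1(A_\epsilon)+f'^*\gamma_\epsilon$, absorb $f'^*\gamma_\epsilon$ into $\gamma$'' fails for the same reason. The class you must show is pseudo-effective, $K_{X'/Z}+B'+\bbeta_{X'}$, does not involve $\gamma$. Trading $\bbeta_{X'}+\epsilon\omega$ for $A_\epsilon$ therefore changes the target by $-f'^*\gamma_\epsilon$, which is neither small nor pseudo-effective. The decomposition itself is also unavailable. Lemma \ref{l-rel} needs $H^0(Z,\Omega^2_Z)\cong H^0(X,\Omega^2_X)$, which is not assumed here. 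If the fibres are K3 surfaces of Picard number one, a K\"ahler class is not an $\R$-line bundle modulo pullbacks. The hypothesis does give $\bbeta_{X'}\equiv\nu^*L-K_{X'}-B'+f'^*\gamma$, but this does not extend to $\bbeta_{X'}+\epsilon\omega$.

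The fix is to reverse the roles. The global positivity of the twist must come from $B+\bbeta_X$, with $\bbeta_X$ kept transcendental and represented by a smooth semipositive form. The algebraicity needed for fibrewise sections belongs on $L$.
\begin{itemize}
\item Approximate $L$ by $\Q$-line bundles $L_i=L+A_i$ with $A_i\to 0$ and $f$-ample.
\item Add $\epsilon_if^*\omega_Z$, with $\epsilon_i\to0$, so that $A_i$ becomes globally semipositive.
\item On a resolution, the line bundle $m(L_i-K_{X/Z})$ has class $m(B+\bbeta_X+A_i)+mf^*(K_Z-\gamma)$. It therefore carries a metric whose curvature is a positive current plus $f^*$ of a smooth form on $Z$.
\item That form is $dd^c$-exact locally over $Z$. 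The relative Bergman kernel metric on $mL_i$ is built locally over $Z$, so the correction glues.
\item The result is that $K_{X/Z}+B+\bbeta_X+A_i+\epsilon_if^*\omega_Z$ is pseudo-effective. Now let $i\to\infty$.
\end{itemize}
This is the content of \cite[Theorem 2.2]{HP24}. Your handling of the lc boundary (perturbing to $(1-\delta)B'$ and keeping fibrewise bigness with the ample term) and of fibrewise sections is fine, but the difficulty lies elsewhere.
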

\begin{proof} We may pick $L_i$ a sequence of $\Q$-line bundles such that $\lim L_i=L$ and $A_i:=L_i-L$ is $f$-ample. Therefore we may write $K_X+B+\bbeta _X+A_i=f^*\gamma +L_i$.  
By \cite[Theorem 2.2]{HP24}, $K_{X/Z}+B+\bbeta _X+A_i$ is pseudo-effective, and taking the limit, one sees that $K_{X/Z}+B+\bbeta _X$ is pseudo-effective. 
\end{proof}
\begin{theorem}\label{t-psef} Let $f:X\to Y$ be a morphism of compact K\"ahler varieties with rationally connected general fibers. Let $(X,B+\bbeta )$ be a generalized pair which is gklt over an open subset of $Y$, where $Y$ smooth, and $B+\bbeta_X$ is modified big. 
\begin{enumerate}
\item If $K_X+B+\bbeta _X$ is pseudo-effective over $Y$ then $K_{X/Y}+B+\bbeta _X$ is pseudo-effective,
\item if $K_X+B+\bbeta _X$ is big  over $Y$, then $K_{X/Y}+B+\bbeta _X$ is big.\end{enumerate} 
\end{theorem}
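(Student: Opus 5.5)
Because $f$ has rationally connected general fibers (so it is strongly Moishezon by Lemma \ref{l-rccfibres}), the plan is to pass to a projective model over $Y$ and reduce to Theorem \ref{t-gue}. The input for Theorem \ref{t-gue} is a decomposition of the form $K_X+B+\bbeta_X=f^*\gamma+L$, with $L$ an $\R$-line bundle that is pseudo-effective (resp. big) over $Y$.

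First replace $X$ by a log resolution $\nu:X'\to X$ such that $X'\to Y$ is projective and $\bbeta$ descends to $X'$. Write $K_{X'}+B_{X'}+\bbeta_{X'}=\nu^*(K_X+B+\bbeta_X)$, and set $B'=B_{X'}^{\geq 0}+\epsilon{\rm Ex}(\nu)$ with $0<\epsilon\ll 1$. Then $K_{X'}+B'+\bbeta_{X'}=\nu^*(K_X+B+\bbeta_X)+F$ with $F\geq 0$ exceptional, so $\nu_*$ of the class is unchanged. Since pseudo-effectivity and bigness are preserved under $\nu_*$, it suffices to prove the statement on $X'$. By Lemma \ref{l-modbig}, $B'+\bbeta_{X'}$ is big. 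The pair stays glc over an open subset of $Y$; it is not gklt there, because the coefficients of $B_{X'}^{\geq 0}$ over that open set are $<1$, so harmless. The relative pseudo-effectivity and bigness hypotheses also survive, up to adding the effective exceptional $F$.

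Next, use Lemma \ref{l-rccfibres} and Lemma \ref{l-rel}. Since the general fibers are rationally connected, $H^0(\Omega^2)$ of the base and of the total space agree (after resolving $Y$ if needed, which is allowed since $Y$ is smooth and we may pull back). Lemma \ref{l-rel} then writes any class as an $\R$-line bundle plus a pullback. Applying this to $\bbeta_{X'}$ gives $\bbeta_{X'}\equiv N+f^*\gamma_0$ with $N$ an $\R$-line bundle. We have $N\equiv_Y\bbeta_{X'}$, so $N$ is nef over $Y$, and big over $Y$ when needed. Hence $K_{X'}+B'+\bbeta_{X'}=f'^*\gamma_0+L$ with $L=K_{X'}+B'+N$ an $\R$-line bundle. $L$ is pseudo-effective over $Y$ in case (1), since pseudo-effectivity over $Y$ is a property of the restriction to the general fiber, which is projective. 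Theorem \ref{t-gue} then gives that $K_{X'/Y}+B'+\bbeta_{X'}$ is pseudo-effective, proving (1).

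For (2), use a perturbation. Since $B'+\bbeta_{X'}$ is big, fix a Kähler current, or more concretely write $B'+\bbeta_{X'}\equiv \delta\omega+G$ with $\omega$ Kähler and $G$ pseudo-effective (modulo a small effective divisor). If $K+B'+\bbeta$ is big over $Y$, then for small $\delta$ we have $K+B'+\bbeta-\delta\omega$ still pseudo-effective over $Y$. The key trick is to absorb $\delta\omega$ into a new pair: since $\bbeta_{X'}$ is nef and $B'+\bbeta_{X'}$ is big, write $(1-\delta)(B'+\bbeta_{X'})+\delta(\text{effective}+\text{Kähler})$. This gives a pair $(X',B''+\bbeta'')$, glc over an open set of $Y$, with $K+B''+\bbeta''\equiv K+B'+\bbeta-\eta$ for some Kähler class $\eta$, and which is still pseudo-effective over $Y$. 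Applying (1) to it gives $K_{X'/Y}+B'+\bbeta_{X'}\equiv(\text{psef})+\eta$, which is big.

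The main obstacle is this perturbation step: the new pair must stay lc over an open set of $Y$, the Kähler part $\eta$ must be a genuine Kähler class on $X'$ (not merely relatively), and the hypothesis of (1) that the pair has rationally connected fibers and modified big boundary must be kept, or at least the proof of (1) must be reused directly. A secondary point is checking that relative pseudo-effectivity (resp. bigness) over $Y$ of an $\R$-line bundle is exactly what Theorem \ref{t-gue} requires, and that replacing $Y$ by a resolution is harmless since $Y$ is assumed smooth.
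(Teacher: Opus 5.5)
Your proposal is essentially the paper's proof: you resolve, use Lemmas \ref{l-rccfibres} and \ref{l-rel} to write the class as an $\R$-line bundle plus a pullback from $Y$, and apply Theorem \ref{t-gue} for (1); for (2) you split off a K\"ahler class and apply (1) to the perturbed pair. Applying Lemma \ref{l-rel} to $\bbeta_{X'}$ rather than to $K_X+B+\bbeta_X+\epsilon\omega$, as the paper does, is an inessential variation. The perturbation you flag as the main obstacle is exactly Lemma \ref{lem: g-kltperturb}. The paper uses it to assume $\bbeta_{X'}$ is K\"ahler, and then applies (1) to $(X',B'+(1-\epsilon)\bbeta)$ with $\eta=\epsilon\bbeta_{X'}$. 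Concretely, write $\mu^*(B'+\bbeta_{X'})\equiv E''+\omega''$ with $E''\geq 0$ and $\omega''$ K\"ahler on a higher model $\mu:X''\to X'$. For a K\"ahler form $\omega'$ on $X'$, the class $\omega''-c\mu^*\omega'$ stays K\"ahler for $0<c\ll 1$. Then set $B''=(1-\delta)B'+\delta\mu_*E''$, $\bbeta''=(1-\delta)\bbeta+\delta\overline{\omega''-c\mu^*\omega'}$ and $\eta=\delta c\,\omega'$. For $0<\delta\ll 1$ this pair is still gklt over an open subset of $Y$, with $K_{X'}+B''+\bbeta''_{X'}$ big over $Y$.
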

\begin{proof}
(1) Let $\nu :X'\to X$ be a resolution and write $K_{X'}+B'+\bbeta _{X'}=\nu ^*(K_X+B+\bbeta _X)+E$ where $B'=\nu ^{-1}_*B\vee{\rm Ex}(\nu)$. Note that $B'+\bbeta _{X'}$ is big (Lemma \ref{l-modbig}), $K_{X'}+B'+\bbeta _{X'}$ is pseudo-effective over $Y$, the general fibers of $X'\to Y$ are rationally connected (see \S \ref{s-rc}), and if $K_{X'/Y}+B'+\bbeta _{X'}$ is pseudo-effective then so is $K_{X/Y}+B+\bbeta _X=\nu _*(K_{X'/Y}+B'+\bbeta _{X'})$. It follows that we may replace $(X,B+\bbeta )$ by $(X',B'+\bbeta)$ and hence we may assume that $X$ is smooth and $\bbeta  $ descends to $X$ so that $\bbeta _X$ is nef. 

Let $\omega$ be a K\"ahler class on $X$.
Since the pseudo-effective condition is closed, it suffices to show that $K_{X/Y}+B+\bbeta _X+\epsilon \omega$ is big  for any $\epsilon >0$. Note that since $K_X+B+\bbeta _X$ is pseudo-effective over $Y$, then $K_{X}+B+\bbeta _X+\epsilon \omega$ is big over $Y$ for any $\epsilon >0$. By Lemma \ref{l-rccfibres} $f^*:H^0(Y,\Omega ^2_Y)\to H^0(X,\Omega ^2_X)$ is an isomorphism.
By Lemma \ref{l-rel}, we may write $K_X+B+\bbeta _X+\epsilon \omega \equiv L+f^*\gamma $ where $L$ is an $\R$-line bundle big over $Y$ and $\gamma \in H^{1,1}(Y)$.  By Theorem \ref{t-gue}, $K_{X/Y}+B+\bbeta _X+\epsilon \omega $ is pseudo-effective. 

(2) Suppose now that $K_{X}+B+\bbeta _X$ is big over $Y$. Arguing as above, we may replace $X$ by a resolution and in particular we may assume that $X$ is smooth, and $\bbeta$ descends to $X$.
By Lemma \ref{lem: g-kltperturb}, we may assume that $\bbeta _X$ is K\"ahler.
Since $K_{X}+B+\bbeta _X$ is big over $Y$, then so is $K_{X}+B+(1-\epsilon)\bbeta _X$ for any $0<\epsilon \ll 1$.
By (1), $K_{X/Y}+B+(1-\epsilon)\bbeta _X$ is pseudo-effective and so 
$K_{X/Y}+B+\bbeta _X$ is big.

\end{proof}
\subsection{The canonical bundle formula}\label{s-canbundle}
Another important result that will be used frequently throughout this paper is the canonical bundle formula below. Recall the following.
\begin{definition} Let $f:X\to Z$ be a surjective morphism of normal compact K\"ahler varieties with connected fibers and $\pi :Z\to S$ a proper morphism where $S$ is relatively compact. Let $(X,B+\bbeta)$ be a generalized pair which is glc over an open subset of $Z$, and $Q$ be a divisor on $Z$, then we let
\[a_Q=a_Q(X,B+\bbeta)={\rm sup}\{t|(X,B+\bbeta+tf^*Q) \text{ is sub-gklt over general points of }Q \}.\] 
Note that if $Z^0\subset Z$ is the smooth locus of $Z$, then $Q|_{Z^0}$ is Cartier and hence $f^*(Q|_{Z^0})$ is defined.
We define the boundary (or discriminant) divisor $\Delta =\Delta (X/Z,B+\bbeta)=\sum (1-a_Q)Q$. This defines a b-divisor $\DDelta$ as follows. For any birational map $Z'\to Z$, we pick a birational map $\nu:X'\to X$ such that $X'\to Z'$ is a morphism. We let $K_{X'}+B'+\bbeta _{X'}=\nu ^*(K_X+B+\bbeta _X)$ and we set  $\DDelta_{Z'}:=\Delta (X'/Z',B'+\bbeta)$. It is expected that $\mathbf K+\DDelta$ descends to some model $Z'$ where $\mathbf K _{Z'}=K_{Z'}$. When $f$ is H-projective, this follows from \cite[Proposition 1.16]{HP24}. 
If $K_X+B+\bbeta _X\equiv f^*\gamma$ for some $\gamma \in H^{1,1}_{\rm BC}(Z)$, then we set $\ddelta :=\bar \gamma-(\mathbf K+\DDelta )$.
\end{definition}
In the projective case, we can show that $\mathbf K+\ddelta$ descends to any model satisfying Kawamata's condition.
\begin{definition}\label{d-KC} Let $f:X\to Z$ be a surjective morphism of  compact complex manifolds with connected fibers, $B$ an $\R$-divisor, and $\bbeta \in H^{1,1}(X)$, then we say that $f:(X,B+\beta )\to Y$ satisfies Kawamata's condition if
\begin{enumerate}
\item there is an snc divisor $G$ on $Y$ such that ${\rm Supp}(B+f^*G)$ is snc,
\item $f$ is a submersion over $Y\setminus G$, $B^h$ is relative snc over $Y\setminus G$,  and ${\rm Supp}(B^v)\subset f^*G$,
\item $\lfloor B^h\rfloor \leq 0$, and ${\rm rank}f_*\OO _X(-\lfloor B^h\rfloor )=1$,
\item $K_X+B+\beta\equiv f^*\gamma $ where $\beta \in H^{1,1}(X)$ is a K\"ahler form and $\gamma \in H^{1,1}(Y)$.
\end{enumerate}
Here $B=B^h+B^v$ where $B^h$ is the horizontal part and $B^v$ is the vertical part.
\end{definition}
\begin{proposition}\label{cbf} Let $f:(X,B+\beta )\to Y$ be a surjective projective morphism with
connected fibers between compact K\"ahler manifolds satisfying Kawamata’s condition (Definition \ref{d-KC}). If $K_X+B+\bbeta _X\equiv f^*(K_Y+\DDelta _Y +\ddelta _Y )$ where $\DDelta$ is the  discriminant  b-divisor, then the class $\ddelta $ descends to $Y$. 
\end{proposition}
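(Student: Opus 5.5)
We want to show: under Kawamata's condition, the moduli part $\ddelta$ (defined as $\bar\gamma-(\mathbf K+\DDelta)$) descends to $Y$. That is, for every birational morphism $\mu:Y'\to Y$ from a compact K\"ahler manifold, we need $\ddelta_{Y'}=\mu^*\ddelta_Y$.

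The plan follows Ambro's and Kawamata's argument in the projective case, reducing to it via Lemma \ref{l-rel}. Since $\gamma$ descends by definition, $\bar\gamma_{Y'}=\mu^*\gamma$. So it suffices to prove the following equality, where $K_{Y'}+\DDelta_{Y'}$ is computed as the discriminant of a model $f':X'\to Y'$:
\[K_{Y'}+\DDelta_{Y'}=\mu^*(K_Y+\DDelta_Y).\]
Equivalently, the discriminant b-divisor satisfies $\DDelta_{Y'}=\mu^*(K_Y+\Delta_Y)-K_{Y'}$. This is a statement purely about divisors and log canonical thresholds over codimension-one points of $Y'$. It is local over $Y$ in the analytic topology, and the transcendental class $\beta$ only enters through the linear equivalence $K_X+B+\beta\equiv f^*\gamma$.

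\textbf{Step 1: replace $\beta$ by an $\R$-divisor locally over $Y$.} Work over a small Stein (polydisc) neighborhood $U$ of a point of $Y$. Over $U$, $f$ is projective and $\gamma|_U\equiv 0$, so $\beta|_{f^{-1}U}$ is a K\"ahler class numerically equivalent to $-(K_X+B)$ over $U$. I would apply Lemma \ref{l-rel}, or directly the fact that relatively K\"ahler classes over a Stein base are limits of relatively ample $\Q$-line bundles. This writes $\beta\equiv_U L$ with $L$ an $f$-ample $\R$-line bundle. Then, by Bertini on a general member, replace $L$ by an effective $\R$-divisor $H\sim_{\R,U} L$ with small coefficients. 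Choose $H$ so that it is horizontal, shares no component with $B$, makes ${\rm Supp}(B+H+f^*G)$ snc, and keeps $B^h+H$ relatively snc over $Y\setminus G$.

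Then $K_X+B+H\sim_{\R,U}0$, and Kawamata's condition holds for the usual projective pair $(X,B+H)\to U$. Since $H$ is horizontal and general, adding it does not change the log canonical thresholds over codimension-one points of any birational model $Y'\to Y$ (restricted to $U$). Hence the discriminant b-divisors of $(X,B+\beta)$ and $(X,B+H)$ agree over $U$.

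\textbf{Step 2: invoke the known projective/local result.} For the pair $(X,B+H)$ with $K_X+B+H\sim_{\R}0/U$ satisfying Kawamata's condition, Ambro's theorem (see also \cite[Proposition 1.16]{HP24}, which handles H-projective $f$) shows the following. The moduli b-divisor $\mathbf M$ descends to $Y|_U$, i.e. $\mathbf K+\DDelta$ is $\mathbf{b}$-$\R$-Cartier and descends to $Y|_U$. This rests on the semistable reduction / unipotent monodromy argument along $G$ using the snc hypotheses (1)--(3). Since descent is a codimension-one equality of divisors on each $Y'$, the local statements glue over a finite cover of $Y$ by such $U$'s. This gives $K_{Y'}+\DDelta_{Y'}=\mu^*(K_Y+\DDelta_Y)$ globally, and therefore $\ddelta_{Y'}=\mu^*\ddelta_Y$.

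\textbf{Main obstacle.} The hardest point is Step 1. The perturbation $H$ must be chosen uniformly enough, or the comparison of discriminants must be shown independent of the choice, so that the local b-divisors glue. Also, $f$ is only assumed projective, not H-projective, when quoting the local descent result. I would first try to argue directly that the discriminant is defined intrinsically via lct's over codimension-one points. It is therefore independent of $H$ whenever $H$ is general and horizontal. Gluing is then automatic. For the H-projectivity issue, shrinking $U$ to a Stein polydisc makes projective and H-projective equivalent.
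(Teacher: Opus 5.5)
Your proposal is essentially the paper's proof: localize over small Stein opens of $Y$ where $\beta\equiv-(K_X+B)$ is relatively ample, replace $\beta$ by a general horizontal divisor $A\sim_{\R}-(K_X+B)$ (which does not change the discriminant), invoke the base-change/descent property of the moduli part under Kawamata's condition (the paper uses Koll\'ar's Proposition 8.4.9(3), transported to the analytic setting via Fujino--Fujisawa and Fujino), and glue, since $K_{Y'}+\DDelta_{Y'}=\mu^*(K_Y+\DDelta_Y)$ is a divisor identity local over $Y$. The paper adds three steps you should not skip: it first reduces to rational $B$ (replace $B$ by $\frac1i\lfloor iB\rfloor$, absorb the difference into $\beta$, which stays K\"ahler for $i\gg0$, and pass to the limit of discriminants), because the cited descent result is for $\Q$-divisors; it takes $A$ in the $\R$-linear system of $-(K_X+B)$ itself, whereas a bundle from Lemma \ref{l-rel} is only numerically equivalent to $\beta$ and would not in general give $K_X+B+A\sim_{\R}0$ over $U$ when $R^1f_*\OO_X\neq0$; and it gets the divisor equality from $K_{Y'}+\DDelta_{Y'}\equiv_Y\mu^*(K_Y+\DDelta_Y)$ via the negativity lemma.
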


\begin{proof} 
Let $\mu :Y'\to Y$ be a resolution and $X'$ a resolution of $X\times _Y Y'$. We denote by $\nu:X'\to X$ and $f':X'\to Y'$ the induced maps, $K_{X'}+B'=\nu ^*(K_X+B)$ and $\beta '=\nu ^*\beta$. 
Let $\Delta$ and $\Delta '$ be the discriminant divisors for $f:(X,B)\to Y$ and $f:(X',B')\to Y'$ and write $K_{X}+B+\bbeta =f^*(K_Y+\Delta +\delta)$ and $K_{X'}+B'+\bbeta' =f^*(K_{Y'}+\Delta' +\delta')$.
We must show that $\mu ^*\delta =\delta'$ or equivalently that $K_{Y'}+\Delta '=\mu ^*(K_Y+\Delta)$. Note that this latest condition is local over the base $Y$.

We begin by showing that we may assume that $K_X+B$ is $\Q$-Cartier. To this end, let $B_i=\frac 1 i \lfloor iB \rfloor$ and $\bbeta _i=\beta +B-B_i$. Note that $K_X+B+\bbeta \equiv K_X+B_i+\bbeta _i$, and for $i\gg 0$ we have that $\bbeta _i$ is K\"ahler, $\lfloor B_i\rfloor=\lfloor B\rfloor$. It follows easily that $f:(X,B_i)\to Y$, $\beta _i$ satisfies properties (1-4).
Let $\Delta _i$ be the discriminant divisor for $f:(X,B_i)\to Y$. 
Since we are assuming the $\Q$-factorial case, then $K_{Y}+\Delta _i$ descends to $Y$. Since $\Delta =\lim \Delta _i$, then $K_{Y}+\Delta $ also descends to $Y$.

Assume now that $K_X+B$ is $\Q$-Cartier.
Let $Y=\cup Y_i$ be a cover by sufficiently small Stein open subsets, $X_i=f^{-1}(Y_i)$, $K_{X_i}+B_i+\beta _i=(K_{X}+B+\beta )|_{X_i}$.
Since $Y_i$ is Stein and $-(K_{X_i}+B_i)\equiv \beta _i$ where $\beta _i$ is K\"ahler, then $-(K_{X_i}+B_i)$ is ample and hence, after possible further refining the cover $Y=\cup Y_i$, we may also assume that  $-(K_{X_i}+B_i)\sim _\R A_i$ where $({X_i},B_i+A_i)\to Y_i$ satisfies properties (1-4) above. 

Let $Y'_i=\mu _i^{-1}(Y_i)$, $X'_i=\nu ^{-1}(X_i)$ and $\mu _i:Y'_i\to Y_i$, $\nu _i:X'_i\to X_i$ be the induced morphisms. 
If we write $K_{X_i'}+B_i'=\nu _i^*(K_{X_i}+B_i)$ and $A_i'=\nu ^*_iA_i$, then by the canonical bundle formula (eg \cite{Kol07}) we have $K_{X_i}+B_i+A_i=(f_i)^*(K_{Y_i}+\Delta _i+J_i)$ and  $K_{X'_i}+B'_i+A_i'=(f_i')^*(K_{Y'_i}+\Delta '_i+J'_i)$ where $\Delta _i$ and $\Delta '_i$ are the corresponding boundary parts and $J_i,J'_i$ are the corresponding moduli parts. Since $A_i$ is a general relatively ample $\Q$-divisor, then the boundary parts for $f_i:(X_i,B_i)\to Y_i$ and $f_i:(X_i,B_i+A_i)\to Y_i$ coincide and the same is true for the boundary parts for $f'_i:(X'_i,B'_i)\to Y'_i$ and $f'_i:(X'_i,B'_i+A'_i)\to Y'_i$. By \cite[Proposition 8.4.9 (3)]{Kol07}, $J'_i=\mu _i^*J_i$ 
and hence $K_{Y_i'}+\Delta_i '\equiv _{Y_i}\mu _i^*(K_{Y_i}+\Delta _i)$.
Since $(\mu _i)_*(K_{Y'_i}+\Delta' _i)=K_{Y_i}+\Delta _i$, it follows from the negativity lemma that $K_{Y_i'}+\Delta_i '=\mu _i^*(K_{Y_i}+\Delta _i)$.

We remark that the result of \cite[Proposition 8.4.9]{Kol07} is stated in the algebraic category, however it also holds the analytic category as a consequence of \cite{FF14} and \cite[\S 21]{Fuj22}.
\end{proof}
We will need the following result of \cite{Pau26}.
\begin{theorem}[M. Paun]\label{cbf} Let $f:(X,B)\to Y$ be a surjective projective morphism with
connected fibers between compact K\"ahler manifolds satisfying Kawamata’s condition (Definition \ref{d-KC}).
Let $K_X+B+\bbeta _X\equiv f^*(K_Y+\Delta +\delta )$ where $\Delta$ is the  discriminant  divisor. Then the class $\delta $ contains a closed positive current whose Lelong numbers are zero at each point of $Y$. In particular, $\delta$ is nef.
\end{theorem}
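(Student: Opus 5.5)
The plan is to reduce to the projective, algebraic setting locally over $Y$ and then use the Hodge-theoretic description of the moduli part. We work on a small Stein open cover $Y=\bigcup Y_i$. Over each $Y_i$, since $\beta$ is K\"ahler and $Y_i$ is Stein, $-(K_{X_i}+B)$ is relatively ample. Exactly as in the proof of Proposition \ref{cbf}, we first perturb $B$ to a $\Q$-divisor $B_k=\frac1k\lfloor kB\rfloor$, absorbing the difference into the K\"ahler class $\beta_k$. We then choose a general relatively ample $\Q$-divisor $A_i\sim_\Q -(K_{X_i}+B_k)$. Kawamata's condition (Definition \ref{d-KC}) is preserved, and the boundary part is unchanged. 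Locally the moduli part $\delta|_{Y_i}$ is thus identified with the moduli part $J_i$ of an lc-trivial fibration $(X_i,B_k+A_i)\to Y_i$.

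The key step is to produce a global positive current in $\delta$. Under Kawamata's condition, the moduli part is computed by the Hodge bundle $f_*\omega_{X/Y}(\ldots)$ of a cyclic cover. Concretely, take $m$ with $m(K_X+B_k+A)\sim 0$ locally, form the index-one cover $\tilde X\to X$ branched along $B_k+A$, and consider the relevant eigen-summand of $f_*\omega_{\tilde X/Y}$. Over $Y\setminus G$ this is a variation of Hodge structure, and its Hodge metric gives a smooth semipositive representative of $J$. Kawamata's theorem says the bundle extends as the canonical (Deligne) extension across the snc divisor $G$, with unipotent-after-base-change local monodromy. The Hodge metric then has at most logarithmic growth near $G$ (Cattani--Kaplan--Schmid). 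Its curvature current $T_i$ is therefore closed and positive, with zero Lelong numbers, because $\log$-$\log$ type potentials have vanishing Lelong numbers.

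To globalize, note that the eigensheaf metrics are canonical: they are defined by fiber integrals $\int_{X_y}(u\wedge\bar u)^{1/m}$ twisted by the K\"ahler form $\beta$. Rather than using local $A_i$, I would define the metric intrinsically via the fiberwise Monge--Amp\`ere or $L^2$ metric of $-(K_{X/Y}+B)$ with weight given by $\beta$. This is the analytic incarnation of Paun's approach: the relative Bergman-type metric on $K_{X/Y}+B+\beta$ restricted to fibers. Differences between charts are then pluriharmonic, and we obtain a global current $T\in\delta$. Finally, nefness follows, since a class containing a positive current with zero Lelong numbers everywhere is nef by Demailly regularization.

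The main obstacle is the global gluing step: the local algebraic models depend on the choice of $A_i$, and the K\"ahler class $\beta$ is transcendental. I would try first to build the metric directly from the family of fiberwise solutions $\omega_y$ with $\mathrm{Ric}(\omega_y)=-\beta|_{X_y}+[B|_{X_y}]$, using the fibre-integral formula for the curvature of $f_*$. The remaining hard point is proving the uniform log-growth estimate near $G$ in this transcendental setting, which I expect to require the semicontinuity/extension arguments of Paun rather than pure Hodge theory.
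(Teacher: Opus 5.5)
The paper gives no proof of this statement. It is quoted from \cite{Pau26} and used as a black box in the proof of Theorem~\ref{t-gcbf}. Judged on its own, your proposal has a genuine gap, exactly at the step you flag, and that step is the whole theorem.

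Your local part says nothing about $\delta$. On a small ball $Y_i\subset Y$ every class in $H^{1,1}_{\rm BC}(Y_i)$ is zero, so ``$\delta|_{Y_i}$ contains a positive current with zero Lelong numbers'' is an empty statement.

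The specific currents $T_i$ (curvatures of Hodge metrics on eigen-summands of cyclic covers) are not canonical, contrary to what you assert:
\begin{itemize}
\item they depend on the choice of $A_i\sim_\Q-(K_{X_i}+B_k)$, on $m$, and on the section defining the cover;
\item replacing $A_i$ by another choice multiplies the fibrewise densities $|\sigma_y|^{2/m}$ by factors that are not constant along the fibres;
\item so on $Y_i\cap Y_j$ the local potentials differ by functions that are in general not pluriharmonic, and there is nothing to glue.
\end{itemize}
More basically, the K\"ahler form $\beta$, which determines the global class $\delta$, enters your local construction only through $\beta|_{X_i}$, the class of a relatively ample $\Q$-line bundle. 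There is in general no global variation of Hodge structure computing $\delta$, which need not even be a rational class. So Kawamata's semipositivity and the Cattani--Kaplan--Schmid estimates have nothing global to act on. The Stein-cover trick works in the preceding Proposition (descent of $\ddelta$) because $K_{Y'}+\Delta'=\mu^*(K_Y+\Delta)$ is an identity of divisors, hence local over $Y$. Positivity of a transcendental class is not local.

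Your suggested replacement is the right kind of object: a canonically normalised family of fibrewise volume forms built from $\beta$, whose fibre integrals define a singular metric in $\delta$. But it is only named. You do not construct it. You do not prove that its curvature is positive; this is a positivity statement for direct images twisted by a transcendental class, not a formal consequence of anything in your sketch. You also do not prove the estimate near $G$ that yields zero Lelong numbers.

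Several details are also off.
\begin{itemize}
\item ``Twisting $\int_{X_y}(u\wedge\bar u)^{1/m}$ by $\beta$'' has no meaning as written, since $\beta|_{X_y}$ is a K\"ahler class on $X_y$, not $dd^c$ of a weight.
\item Since $K_{X_y}+B_y+\beta_y\equiv 0$, the consistent fibrewise equation is $\mathrm{Ric}(\omega_y)=\beta|_{X_y}+[B|_{X_y}]$. Yours lies in the class $[B_y]-[\beta_y]\neq c_1(X_y)$ and has no solution. Also, $B^h$ may have negative coefficients, so $[B_y]$ need not be positive.
\item You never say where the rank-one hypothesis of Definition~\ref{d-KC} is used. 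In your Hodge-theoretic paragraph it is what makes the eigen-summand a line bundle.
\item Passing to $B_k=\frac1k\lfloor kB\rfloor$ replaces $\Delta$ by $\Delta_k\le\Delta$ and $\delta$ by $\delta_k=\delta+(\Delta-\Delta_k)$. Nefness survives $k\to\infty$, but vanishing of Lelong numbers does not, since Lelong numbers are only upper semicontinuous under weak limits. So even a complete argument for rational $B$ would not give the full statement without uniform estimates.
\end{itemize}
Your final step, that zero Lelong numbers everywhere implies nef by Demailly regularisation, is fine.
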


\begin{theorem}\label{t-gcbf} Let $f:X\to Z$ be a surjective projective morphism of normal compact K\"ahler varieties with connected fibers, $(X,B+\bbeta )$  a generalized  pair which is gklt over an open subset $U$ of $Z$, such that $B+\bbeta _X$ is modified big, $K_X+B+\bbeta _X\equiv f^*\gamma $ for some  $\gamma\in H^{1,1}_{\rm BC}(Z)$. Then $(Z,\DDelta _Z+\ddelta )$ is a generalized pair such that $(U,\DDelta |_U+\ddelta)$ is gklt, i.e. 
\begin{enumerate}
\item $\DDelta_Z\geq 0$,
\item $\ddelta$ is b-nef, that is $\ddelta $ descends to some model $Z'$ so that $\ddelta_{Z'}$ is nef,
\item the coefficients of components of $\DDelta _{Z'}$ whose images intersect $U$ are $< 1$ for any birational model $Z'\to Z$. 
\end{enumerate}
Moreover, possibly after replacing $(X,B+\bbeta)$ by a generalized pair $(X,B'+\bbeta')$ where $K_X+B+\bbeta _X\equiv K_{X}+B'+\bbeta '_X$ and $(X,B'+\bbeta ')$ satisfies the hypothesis of Theorem \ref{t-gcbf}, we may assume that $\ddelta _Z$ is modified big.
\end{theorem}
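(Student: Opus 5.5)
Proof proposal. The plan is to reduce to the situation of Proposition \ref{cbf} (Kawamata's condition) and Paun's theorem, and then descend the conclusions to $Z$. Pick a resolution $\mu:Z'\to Z$ and a resolution $X'$ of the main component of $X\times_Z Z'$, with $\nu:X'\to X$ and $f':X'\to Z'$. Write $K_{X'}+B'+\bbeta_{X'}=\nu^*(K_X+B+\bbeta_X)$. After further blow-ups we may assume the following: $\bbeta$ descends to $X'$ with $\bbeta_{X'}$ nef, $f'$ is projective, and there is an snc divisor $G$ on $Z'$ such that $f'$ is a submersion over $Z'\setminus G$ and ${\rm Supp}(B'+f'^*G)$ is snc. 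Since the discriminant b-divisor is defined birationally, $\DDelta$ is unchanged, and $\ddelta=\bar\gamma-(\mathbf K+\DDelta)$ is intrinsic.

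To obtain the Kálher form required in Definition \ref{d-KC}(4), apply Lemma \ref{lem: g-kltperturb} (using that $B+\bbeta_X$ is modified big). This replaces $(X,B+\bbeta)$ by $(X,G_0+\ggamma)$ with the same class and with $\ggamma_{X'}-\omega'$ nef. It shrinks the non-klt locus, so the pair stays gklt over $U$; this is the ``moreover'' modification. Absorbing $\ggamma_{X'}-\omega'$ and small multiples of exceptional divisors, we write $\bbeta_{X'}=\omega'+(\text{nef})$ and perturb so that the nef part is a limit of Kähler forms. Then $f':(X',B'+\beta')\to Z'$ satisfies Kawamata's condition, with one caveat. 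Condition (3), that ${\rm rank}\, f'_*\OO(-\lfloor B'^h\rfloor)=1$, follows from gklt over $U$: the horizontal part of $B'$ has coefficients $<1$ generically, because a horizontal component with coefficient $\ge 1$ would be a non-klt place dominating $U$. The vertical part is handled by enlarging $G$.

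Apply Proposition \ref{cbf} to obtain that $\mathbf K+\DDelta$, and hence $\ddelta$, descends to $Z'$. This gives (2) once we know $\ddelta_{Z'}$ is nef, which is Paun's theorem (Theorem \ref{cbf}). Applying this on a sequence of perturbations with $\beta'_\epsilon$ Kähler and letting $\epsilon\to0$, nefness is preserved as a closed condition.

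For (1), and for any component $Q$ of $\DDelta_{Z'}$, we have $a_Q\le 1$. The reason is that $B'\ge 0$ generically over $Q$ after pulling back: sub-gklt is violated at $t=1$, since $f'^*Q$ has some reduced component, as fibers of $f'$ over $Q$ are not multiple everywhere along a component dominating $Q$. Hence the coefficient $1-a_Q\ge 0$ on $Z$, using that $B\ge0$ and that the pushforward of $\DDelta_{Z'}$ is $\DDelta_Z$. For (3), if $\mu(Q)\cap U\neq\emptyset$ then $(X',B'+\bbeta)$ is sub-gklt over the generic point of $Q$. So $a_Q>0$, i.e. the coefficient is $<1$.

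For the moreover part, Lemma \ref{lem: g-kltperturb} gives $\ggamma_{X'}\ge\omega'$. Since $f'$ is projective, we may write $\omega'\equiv \epsilon f'^*\omega_{Z'}+\omega''$ with $\omega''$ still relatively Kähler, after shrinking $\epsilon$. The summand $\epsilon f'^*\omega_{Z'}$ passes directly into $\ddelta_{Z'}$, making $\ddelta_{Z'}=\epsilon\omega_{Z'}+(\text{nef})$ big. Pushing forward, $\ddelta_Z$ is modified big.

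The main obstacle is the reduction to Kawamata's condition with a genuinely Kähler (not merely nef) $\beta$. It requires simultaneously keeping gklt over $U$, keeping condition (3) of Definition \ref{d-KC}, and making the limiting argument for nefness of $\ddelta$ legitimate. I would handle this first by the perturbation in Lemma \ref{lem: g-kltperturb} combined with the limit argument used in the proof of Proposition \ref{cbf}.
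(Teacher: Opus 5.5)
Your inner argument matches the paper's proof of the special case where $\bbeta_{X'}-\nu^*\omega$ is nef: Kawamata's condition on a log resolution, subtracting a small exceptional divisor $\delta F$ to make $\beta$ K\"ahler, Paun's theorem plus descent, then $\delta\to0$. Your ``moreover'' paragraph is also fine.

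The gap is in how you reach that case. You replace $(X,B+\bbeta)$ once and for all by the pair $(X,G_0+\ggamma)$ of Lemma~\ref{lem: g-kltperturb}, and everything afterwards is proved for the new pair. But $\DDelta$, and hence $\ddelta=\bar\gamma-(\mathbf K+\DDelta)$, depend on the boundary and on $\bbeta$, not only on the class $K_X+B+\bbeta_X$. So you obtain (1)--(3) for $(X,G_0+\ggamma)$. That is exactly the replacement the statement allows only for its final clause. For the given pair, (2) (that $\ddelta$ descends and is nef) is never proved.

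Your fallback, perturbing the nef part of $\bbeta_{X'}$ to a K\"ahler class, fails as stated. Adding $\epsilon\omega'$ breaks condition (4) of Definition~\ref{d-KC}, because a K\"ahler class is not numerically trivial on the fibres. Also, the approximation $B_i=\frac{1}{i}\lfloor iB\rfloor$ in the proof of the descent proposition already assumes $\beta$ is K\"ahler, so it cannot supply that.

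The missing idea is to perturb within the numerical class and take a second limit. Take $(X,\Gamma+\ggamma)$ as in Lemma~\ref{lem: g-kltperturb}, and consider the pairs $(X,(1-\epsilon)B+\epsilon\Gamma+(1-\epsilon)\bbeta+\epsilon\ggamma)$ for $0<\epsilon\ll1$.
\begin{itemize}
\item They still satisfy all hypotheses: they are gklt over $U$ and lie in the class $f^*\gamma$.
\item $((1-\epsilon)\bbeta+\epsilon\ggamma)_{X'}-\epsilon\nu^*\omega$ is nef, so your special-case argument applies to them.
\item That argument gives that $\ddelta_\epsilon$ descends to a model $Z'$ which can be fixed independently of $\epsilon$, with $(\ddelta_\epsilon)_{Z'}$ nef.
\item On a fixed log resolution $X'\to Z'$, each coefficient of $\DDelta_\epsilon$ is a maximum of finitely many affine functions of $\epsilon$. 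Hence $\DDelta_\epsilon\to\DDelta$ and $(\ddelta_\epsilon)_{Z'}\to\ddelta_{Z'}$.
\end{itemize}
Thus $\ddelta$ descends to $Z'$ and $\ddelta_{Z'}$ is nef. This outer limit $\epsilon\to0$ is the second half of the paper's proof.

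A minor point on (1): your justification through a reduced component of $f'^*Q$ is false in general, since multiple fibres in codimension one do occur. It is also unnecessary. Because $B\ge0$, any component $P$ of $f^*Q$ on $X$ gives $a_Q\le 1/\mult_P(f^*Q)\le1$.
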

\begin{proof}  
We begin by proving the claim under the additional assumption that if $\nu:X'\to X$ is a log resolution of $(X,B+\bbeta )$, then $\bbeta =\overline{\bbeta _{X'}}$ where $\bbeta _{X'}-\nu ^*\omega$ is nef for some K\"ahler form $\omega$ on $X$.  
Let $\mu:Z'\to Z$ be a log resolution and $\nu :X'\to X$ be a log resolution of the main component of $X\times _Z Z'$, $f':X'\to Z'$ the induced morphism. 
We write $\nu ^*(K_X+B+\bbeta _X)=K_{X'}+B'+\bbeta _{X'}$.
We may assume that 
\begin{enumerate}\item there is an snc divisor $G$ on $Z'$ such that ${\rm Supp}(B'+{f'}^*G+{\rm Ex}(\nu))$ is snc,
\item $f'$ is a submersion over $Z'\setminus G$, $({B'}+{\rm Ex}(\nu))^h$ is relatively snc over $Z'\setminus G$,  and ${\rm Supp}(({B'}+{\rm Ex}(\nu))^v)\subset {\rm Supp}({f'}^*G)$,
\item $\lfloor {B'}^h\rfloor \leq 0$, and ${\rm rank}f_*\OO _X(-\lfloor ({B'}+{\rm Ex}(\nu))^h\rfloor )=1$,
\item $K_{X'}+{B'}+\bbeta_{X'}\equiv {f'}^*\gamma '$ where $\bbeta _{X'} \in H^{1,1}(X')$ is nef and $\gamma'=\mu ^*\gamma  \in H^{1,1}(Y)$.
\end{enumerate}
Let $\omega _Z$ be a K\"ahler form on $Z$ such that $\omega -f^*\omega _Z$ is K\"ahler.
Let $F\geq 0$ be a $\nu$-exceptional $\R$-divisor such that $ \nu ^*(\omega-f^*\omega _Z)-\delta F$ is K\"ahler for any $0<\delta \ll 1$.
We define $B'_\delta =B'+\delta F$ and $\beta '_\delta:=\bbeta _{X'}+\nu ^*(\omega -f^*\omega _Z)-\delta F$. Note that $\beta '_\delta$ is K\"ahler for $0<\delta \ll 1$. One sees that $K_{X'}+B'_\delta +\beta '_\delta \equiv {f'}^*(\gamma '-\mu ^*\omega_Z)$ and  $(X' , B'_\delta)$, $\beta '_\delta$ satisfy the hypotesis of 
 Theorem \ref{cbf}, and therefore \[K_{X'}+B'_\delta +\beta '_\delta  \equiv {f'}^*(K_{Z'}+\Gamma _\delta+\gamma _\delta)\]
where $\Gamma _\delta$ is the usual discriminant divisor for $f':(X', B'_\delta)\to Z'$ and $\gamma _\delta $ is nef. 
It is easy to see that $\Gamma _0=\lim _{\delta \to 0}\Gamma _\delta$ is the usual discriminant divisor for $f':(X', B')\to Z'$ and  $\ddelta _{Z'}-\mu ^*\omega _Z=\lim _{\delta \to 0}\gamma _\delta$ is nef. In particular $\ddelta _Z$ is modified big.

We now turn to the general case. By Lemma \ref{lem: g-kltperturb}, there is a generalized pair $(X,\Gamma +\ggamma)$ such that $K_X+B+\bbeta _X\equiv K_X+\Gamma +\ggamma _X$  such that if $\nu:X'\to X$ is a log resolution of $(X,\Gamma +\ggamma )$, then $\ggamma =\overline{\ggamma _{X'}}$ where $\ggamma  _{X'}-\nu ^*\omega$ is nef for some K\"ahler form $\omega$ on $X$.  Possibly replacing $X'$ by a higher model, we have $\bbeta =\overline{\bbeta _{X'}}$ where $\bbeta _{X'}$ is nef. 
It then follows that for any $0<\epsilon \ll 1$, the generalized pair  $(X,(1-\epsilon )B+\epsilon \Gamma+(1-\epsilon )\bbeta +\epsilon  \ggamma)$ that satisfies the hypothesis of the theorem and $((1-\epsilon )\bbeta +\epsilon  \ggamma)_{X'}-\epsilon  \nu ^*\omega$ is nef. Note that we may assume that $X'$ and the corresponding model $\mu :Z'\to Z$ are independent of $\epsilon$. By what we have shown above 
\[K_{X}+(1-\epsilon )B+\epsilon  \Gamma+(1-\epsilon )\bbeta_X +\epsilon  \ggamma_X=f^*(K_Z+(\DDelta_\epsilon)_Z +(\ddelta_\epsilon)_Z),\]
where $(\DDelta_\epsilon)_Z$ is the discriminant b-divisor for $f:(X,(1-\epsilon )B+\epsilon \Gamma+(1-\epsilon )\bbeta +\epsilon  \ggamma)\to Z$ and $\ddelta_\epsilon$ descends to the model $\mu:Z'\to Z$ (independently of $\epsilon$) so that $(\ddelta_\epsilon)_{Z'}$ is nef. But then $\DDelta=\lim _{\epsilon \to 0}\DDelta _\epsilon$ is the discriminant divisor for $f:(X,B+\bbeta)\to Z$ and $\ddelta =\overline{\ddelta _{Z'}}$ where $\ddelta _{Z'}=\lim _{\epsilon \to 0}(\ddelta_\epsilon) _{Z'}$ is nef.
\end{proof}
\begin{remark}We conjecture that Theorem \ref{t-gcbf} holds even without the assumption that $f$ is projective and that $B+\bbeta _X$ is big.
\end{remark}

\subsection{The universal push-out diagram}

We will now discuss the existence of push-out diagrams (see \cite[Chapter 8]{Kol97} for a brief introduction) in the analytic category.
\begin{lemma}[{cf. \cite[Theorem 9.30]{Kol13}}]\label{lem: push-out diagram for analytic}
Let $X$ be an analytic space with $Z\subset X$ a closed subspace and $g: Z\to V$ a finite morphism. Then there is a universal push-out diagram of analytic spaces
\[\begin{tikzcd}
Z \arrow[r,hook]{}{i}\arrow[d]{}{g} & X \arrow{d}{\pi} \\
V\arrow[r,hook]{}{j} & Y
\end{tikzcd}
\]
Furthermore, $\pi$ is finite, $j:V\to Y$ is a closed embedding, $Z=\pi^{-1}(V)$ and the natural map between the ideal sheaves $\mathcal{I}_{V\subset Y}\to \pi_*\mathcal{I}_{Z\subset X}$ is an isomorphism. Recall that the push-out diagram is universal means the sequence
\[
0\to \OO_{Y}\xrightarrow{\pi^*\oplus j^*} \pi_*\OO_{X}\oplus\OO_{V}\xrightarrow{i^*-g^*} g_* \OO_{Z}\to 0
\]
is exact.
\end{lemma}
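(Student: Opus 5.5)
The plan is to reduce to the algebraic construction of \cite[Theorem 9.30]{Kol13} by working locally on $X$ and then gluing, using the universal property for uniqueness. First, define $Y$ as a ringed space. Topologically, $Y$ is the quotient of $X\sqcup V$ by the equivalence relation generated by $z\sim g(z)$ for $z\in Z$. Equivalently, $Y=(X\setminus Z)\sqcup V$ as a set, with the quotient topology. Let $\pi:X\to Y$ and $j:V\to Y$ be the natural continuous maps. Then set
\[\OO_Y:=\ker\bigl(\pi_*\OO_X\oplus j_*\OO_V\xrightarrow{i^*-g^*}(\pi\circ i)_*\OO_Z\bigr),\]
which is the fibre product of sheaves of rings $\pi_*\OO_X\times_{\pi_*i_*\OO_Z}j_*\OO_V$. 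The universal property in the category of ringed spaces is then formal. Since $g$ is finite, hence proper with finite fibres, $\pi$ is proper with finite fibres and $Y$ is Hausdorff. Separatedness reduces to the fact that $g$ is closed.

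The key step is to show that $(Y,\OO_Y)$ is a complex analytic space. This is a local question near a point $y=j(v)$. Since $g$ is finite, we can choose a Stein neighbourhood $V_0\ni v$ such that $Z_0=g^{-1}(V_0)$ is a disjoint union of small pieces around the finitely many points of $g^{-1}(v)$. We can then choose a Stein neighbourhood $X_0$ of $g^{-1}(v)$ in $X$ with $X_0\cap Z=Z_0$; this uses that $Z$ is closed and that Stein neighbourhoods of compact Stein subsets exist (Siu). In this Stein situation we mimic Koll\'ar's algebraic argument. The rings $A=\OO(X_0)$, $B=\OO(V_0)$ and $C=\OO(Z_0)=A/I$ satisfy that $B\to C$ is finite, and $R=A\times_C B$ contains $I$ as an ideal with $R/I=B$.

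To show the fibre product is an analytic space, one route is to embed: choose finitely many generators of $C$ as a $B$-module together with functions in $I$. We then need an embedding of $X_0$ into some $\mathbb C^N$ whose first coordinates come from functions in $R$. Alternatively, one can realize the push-out locally as a closed subspace of $V_0\times\mathbb C^N$ by using functions in $I$ to separate points of $X_0\setminus Z_0$ and to give local coordinates there. Then $Y_0$ is the image of $X_0\sqcup V_0$ under a finite map defined by elements of $R$. By Grauert's direct image theorem, the image of a finite map is analytic and the pushforward of the structure sheaf is coherent. So $Y_0$ is defined as the analytic space $\operatorname{Specan}$ of a coherent algebra, and one checks that its structure sheaf is exactly the fibre product $\OO_Y$. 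This uses coherence of $\pi_*\OO_X$ over it, since $\pi$ is finite. I expect this local identification to be the main obstacle, namely checking that the fibre product sheaf is coherent and that its analytic spectrum recovers the topology. I would handle it by viewing $\OO_Y$ as the kernel of a morphism of coherent $\OO_{Y'}$-modules on an auxiliary analytic space $Y'$ obtained via the finite map, and using that kernels of coherent sheaves are coherent.

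Finally, the remaining properties are read off from the construction. $\pi$ is finite, since it is proper with finite fibres. $j$ is a closed embedding, since $\OO_Y\to j_*\OO_V$ is surjective: given $b$, lift $g^*b\in C$ to $A$ locally. The identity $Z=\pi^{-1}(V)$ holds set-theoretically, and scheme-theoretically because the ideal of $V$ is $\ker(R\to B)=I\times 0$. Hence $\mathcal I_{V\subset Y}\to\pi_*\mathcal I_{Z\subset X}$ is an isomorphism, and its image in $\pi_*\OO_X$ generates $\mathcal I_Z$. Exactness of the displayed sequence follows because the middle map's kernel is $\OO_Y$ by definition, and surjectivity onto $g_*\OO_Z$ follows from $A\to C$ being surjective on Stein opens. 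The universality of the diagram among analytic spaces follows from the universality among locally ringed spaces.
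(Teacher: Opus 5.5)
Your proposal is correct and is essentially the paper's approach. The paper only cites Artin's algebraic-space construction and asserts that it carries over because everything is local and the maps are finite. Your local fibre-product construction on the topological push-out (Stein neighbourhoods, Grauert's finite mapping theorem, and $\operatorname{Specan}$ of the coherent kernel $\ker(\pi_*\OO_X\oplus j_*\OO_V\to(\pi\circ i)_*\OO_Z)$) is exactly that argument written out analytically.

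One slip needs fixing in the local step. The push-out cannot in general be realized inside $V_0\times\mathbb C^N$, because there is usually no holomorphic map $X_0\to V_0$ extending $g$ (e.g.\ $g=\mathrm{id}_Z$ with $Z$ singular in a smooth $X$). Instead, build the auxiliary finite map from two kinds of elements of $R$:
\begin{enumerate}
\item the pairs $(\tilde b_k,b_k)$, where the $b_k$ are coordinates of an embedding $V_0\hookrightarrow\mathbb C^m$ and the $\tilde b_k$ are lifts of $g^*b_k$ to $X_0$;
\item the pairs $(f_l,0)$, where the $f_l$ are local generators of $\mathcal I_Z$ near $g^{-1}(v)$.
\end{enumerate}
This map has $y$ isolated in its fibre, hence is finite near $y$. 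Injectivity is not needed, because the stalks of $\OO_Y$ are local.
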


\begin{proof}
\cite[Theorem 6.1]{Ar70} shows this in the algebraic space setting, but the same proof applies since the constructions and statements are local and the morphisms are finite.
\end{proof}

\subsection{Rational connectedness}
\label{s-rc}

Let $X$ be a normal compact analytic variety then we say that $X$ is rationally connected if any two general points can be connected by a rational curve and it is rationally chain connected if any two points can be connected by a chain of rational curves. It is known that smooth varieties are rationally chain connected iff they are rationally connected. Notice that being rationally chain connected is not a birational invariant property (as shown by cones over elliptic curves), however being rationally connected is a birationally invariant property. 
We also remark that a compact K\"ahler
variety with gklt singularities is rationally chain connected if and only if it
is rationally connected (see \cite[Corollary 1.5]{HM07} and \cite[Lemma 2.10]{HLX26}). 

Let $X$ be a normal compact analytic variety then we say that $X$ is uniruled if there exists a covering family of rational curves.
By Theorem \ref{t-ou}, we know that a K\"ahler manifold is uniruled iff $K_X$ is not pseudo-effective. 
The maximal rationally connected fibration of a compact K\"ahler manifold $X$ is a quasi-holomorphic map $f:X\dasharrow Y$ such that, general fibers of $f$ are rationally connected and there are no rational curves passing through a general point $y\in Y$ (see \cite[Remark 6.10]{CH20}).

\section{The contraction theorem: the  big case}\label{s-contr-big}

In this section, assuming Theorems (\ref{t-bpf})$_{n-1}$, (\ref{t-MMPscaling})$_{n-1}$, and (\ref{t-nkltcontraction})$_{n-1}$, we will prove Theorem \ref{t-nkltcontraction}$_{n,{\rm big}}$, i.e. Theorem \ref{t-nkltcontraction} in the case that $\alpha =K_X+B+\bbeta_X$ is big.

\begin{theorem}\label{t-ind1} Theorem \ref{t-nkltcontraction}$_{n-1}$ implies Theorem \ref{t-nkltcontraction}$_{n,{\rm big}}$ :
{\it Let $(X,B+\bbeta)$ be a $n$-dimensional compact K\"ahler generalized pair such that $B+\bbeta_X$ is modified big and $\alpha=K_X+B+\bbeta _X$ is nef, big, and NQC. Assume that the contraction defined by $[\alpha]|_V$ exists, where $\mathcal I_V=\mathcal J(X,B+\bbeta)$. 
Then the contraction defined by $[\alpha]$ exists and is birational.}\end{theorem}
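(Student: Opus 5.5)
The plan follows the sketch in the introduction. We use bigness and Theorem \ref{thm:null-non-kahler} to reduce to extending the contraction from a neighbourhood of a non-klt locus, obtain the contraction there inductively through jumping numbers, and then glue with the identity away from the null locus.

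\textbf{Reduction via a K\"ahler current.} Since $\alpha$ is nef and big, Theorem \ref{thm:null-non-kahler} gives $E_{nK}(\alpha)={\rm Null}(\alpha)$, and this set is analytic. Choose a K\"ahler current $\psi\in\alpha$ with weak analytic singularities and $E_+(\psi)\supset{\rm Null}(\alpha)$. Pass to a log resolution $\nu:X'\to X$ with $\nu^*\psi\equiv D'+\eta'$, where $D'\ge 0$ and $\eta'$ is K\"ahler. We may arrange that ${\rm Supp}\,D'$ covers $\nu^{-1}(E_+(\psi))$; this is possible by perturbing $\psi$ with an exceptional divisor.

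Using Lemma \ref{lem: g-kltperturb}, we may also take $\bbeta_{X'}$ to be K\"ahler minus nef. This keeps $\mathcal J$ unchanged or larger, which is harmless for the induction.

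If $E_+(\psi)\subset V$, then $\alpha$ is K\"ahler off $V$, and gluing is all that remains (last step). Otherwise we introduce the jumping numbers $0=\lambda_0<\lambda_1<\lambda_2<\dots$ of $\mathcal J(X,B+\bbeta+t\psi)$ and the corresponding subspaces $V=Z_0\subset Z_1\subset\cdots$. Only finitely many jumps are needed before $Z_k$ contains a full thickening, i.e.\ an infinitesimal neighbourhood, of $E_+(\psi)\cup V$ of any prescribed order.

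\textbf{Inductive step along the jumps.} Suppose the contraction $f_i:Z_i\to W_i$ defined by $\alpha|_{Z_i}$ exists. Let $Z'$ denote the new component(s) of $Z_{i+1}$. Work on $X'$ with the divisor $\lfloor B_{X'}+\lambda_{i+1}D'\rfloor$, choosing a tie-breaking perturbation so that exactly one new component $S$ acquires coefficient $1$. Generalized adjunction gives
\[(1+\lambda_{i+1})\alpha|_{S}\equiv K_S+B_S+\bbeta_S.\]
Here the non-gklt locus of $(S,B_S+\bbeta)$ is contained in the preimage of $Z_i$, and $B_S+\bbeta_S$ is modified big because $\eta'|_S$ is K\"ahler. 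The class remains nef and NQC, since NQC is preserved under pullback and restriction. The contraction on the non-klt locus of $S$ exists because it is pulled back from $f_i$, using Lemma \ref{l-contraction}(2). Theorem \ref{t-nkltcontraction}$_{n-1}$ therefore yields the contraction for $\alpha|_S$, and hence for its image $Z'$ by Lemma \ref{l-contraction}(4).

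Next we glue the contractions on $Z'$ and $Z_i$ along $Z'\cap Z_i$. Uniqueness and universality (Lemma \ref{l-contraction}(1),(3)) identify the two induced contractions of $Z'\cap Z_i$. We then take the universal push-out of Lemma \ref{lem: push-out diagram for analytic}.

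To see that the glued space $W_{i+1}$ satisfies $f_*\mathcal O=\mathcal O$, we need surjectivity of
\[\OO_{X}\to \OO_{Z_{i+1}}\quad\text{and}\quad \OO_{Z_{i+1}}\to\OO_{Z_i}\]
after pushforward. This comes from Nadel vanishing (Theorem \ref{t-nadel}) and Lemma \ref{l-ses}, applied relative to the already constructed contraction. The required positivity holds because $\eta'$ is K\"ahler, so $\alpha-(K+\cdots)$ is big over the base.

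\textbf{Gluing with the identity.} We now have a compatible system of contractions of the thickenings, i.e.\ a formal contraction of the completion $\mathcal N$ of $X$ along $E:=E_+(\psi)\cup V$. Following Artin and Fujiki, we build $Y$ by gluing the formal contraction of $\mathcal N$ with $X\setminus E$. This requires the contracted locus to be exceptional in the appropriate sense, which holds because $-D'$ is positive along the contracted fibres. The resulting $F:X\to Y$ is bimeromorphic, since it is an isomorphism off ${\rm Null}(\alpha)$. We then check the contraction properties:
\begin{itemize}
\item it contracts exactly the $\alpha$-trivial curves, since curves off $E$ have positive degree because $\psi$ is K\"ahler there;
\item it is Moishezon, since any resolution of the bimeromorphic $F$ gives the required factorisation.
\end{itemize}

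\textbf{Main obstacle.} I expect the hardest step to be the algebraization/gluing: passing from contractions of all thickenings to an actual analytic morphism, i.e.\ verifying Artin's criterion (a Grauert-type negativity condition) in this transcendental setting. A secondary difficulty is keeping the adjunction on non-normal, non-reduced $Z_i$ compatible across the jumps.
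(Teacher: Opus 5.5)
\textbf{There is a genuine gap at the adjunction step.} Your outline follows the paper's architecture: the Null locus, a K\"ahler current with $\nu^*\alpha=D'+\eta'$ on a log resolution, tie-broken jumping numbers, induction via Theorem \ref{t-nkltcontraction}$_{n-1}$, push-out gluing, and a Fujiki-type extension. But the inductive step fails as you state it.

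Index jumps as in the paper, so the new component $S=S_i$ appears at $\lambda_i$. Adjunction to $S$ \emph{on the log resolution} $X'$ only produces a sub-pair. Indeed, $B_{X'}+\lambda_iD'$ has negative coefficients along the $\nu$-exceptional divisors of positive discrepancy (the divisor $E_i$ in $\lfloor B_{X'}+\lambda_iD'\rfloor=Z_i-E_i$). These generally meet $S$, so $B_S\not\geq 0$. This has three consequences:
\begin{itemize}
\item $(S,B_S+\bbeta_S)$ is not a generalized pair.
\item Its ``multiplier ideal'' $\OO_S(-\lfloor B_S\rfloor)$ is not an ideal sheaf, so there is no non-klt subspace for the hypothesis of Theorem \ref{t-nkltcontraction}$_{n-1}$ to refer to.
\item $B_S+\bbeta_S$ need not be modified big, since $\lambda_i\eta'|_S$ cannot absorb an arbitrary negative part.
\end{itemize}
Dropping the negative part does not help, because $K_S+B_S^{\ge 0}+\bbeta_S\equiv(1+\lambda_i)\nu^*\alpha|_S+B_S^{\le 0}$ is no longer nef.

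The missing idea is to do adjunction on a model where the boundary is effective. The paper takes a $\Q$-factorial dlt model $\hat X\to X$ of $(X,B+\bbeta+\lambda_i(D+\overline{\eta'}))$ (Theorem \ref{t-dltmodel}) on which $S$ is a divisor $\hat S$. Since only divisors of discrepancy $\le -1$ are extracted, the crepant boundary is effective. Then $(\hat S,\hat\Delta_i+\ggamma)$ is an honest generalized pair on a normal compact K\"ahler variety with $\ggamma_{\hat S}$ modified big, and the induction is applied there. You then have to compare non-klt subschemes on $S$, on a common resolution $S''$ (with $q:S''\to S$, $p:S''\to\hat S$), and on $\hat S$. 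The paper does this via Kawamata--Viehweg vanishing and Lemma \ref{l-ses}; for example $p_*\OO_{Z_{S''}}=\OO_{\hat V}$, where $\hat V\subset\hat S$ is cut out by the multiplier ideal of $(\hat S,\hat\Delta_i+\ggamma)$.

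\textbf{Two further steps are glossed over in ways that matter for non-reduced structures.}

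First, to verify the hypothesis on $\hat S$ you need a \emph{morphism} from the non-klt subscheme into $Z_{i-1}$; set-theoretic containment is not enough. The paper proves $Z_{S''}\le q^*(Z_{i-1}\cap S_i)$ and then Stein-factorizes. Lemma \ref{l-contraction}(2) does not apply directly, since this map is not a contraction.

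Second, the gluing should not be done downstairs. $V_{i}\subset X$ is not evidently the push-out of the image of $S$ and $V_{i-1}$, and your use of Lemma \ref{l-contraction}(4) would need $\nu_*\OO_S=\OO_{\nu(S)}$. That fails in general for a single component, since surjectivity of $\OO_X\to\nu_*\OO_{(\cdot)}$ is only obtained for the whole non-klt divisor. The paper glues on a common resolution $X''$ of $X'$ and $\hat X$. There the non-klt divisor is $Z''_i=Z^-_i+S''_i$, a sum of Cartier divisors without common components, hence automatically the push-out along $Z^-_i\cap S''_i$. Note that $Z^-_i$ may contain new divisors of $X''$ lying over $V_{i-1}$; these are first handled by the thickening argument (the case $\nu(S_i)\subset V_{i-1}$). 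One must also show that the image of the non-klt subscheme of $S''_i$ in the inductive contraction $\tilde T_i$ of $\hat S_i$ is a closed subspace (a vanishing statement) and is finite over $T^-_i$. Only then does one push forward via $\nu''_*\OO_{Z''_i}=\OO_{V_i}$.

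\textbf{The step you flag as the main obstacle is routine in the paper's setup.} You need to carry through the induction that $-D'|_{Z_i}$ is $f_i$-ample; this is also what makes each $f_i$ projective, via Lemma \ref{lem: Moishezon to strong Moishezon}. Then:
\begin{itemize}
\item choose an integral $G$ on $X'$ with ${\rm Supp}\,G={\rm Supp}\,D'$ and $-G|_G$ relatively ample;
\item pass to a multiple so that $R^1f_{G*}\OO_G(-kG)=0$ for all $k>0$, by Serre vanishing;
\item apply Fujiki's blow-down theorem to get $f:X'\to Y$, an isomorphism off $G$, which factors through $X$.
\end{itemize}
No formal completion of $X$ along the non-divisorial set $E_+(\psi)\cup V$ is needed.
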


The proof is somewhat involved and divided into several parts. We begin by setting up some notation.
\vspace{.5em}

\begin{notation}
Since $\alpha$ is nef and big, $\mathcal N={\rm Null}(\alpha)$ is a proper analytic subvariety (see Theorem \ref{thm:null-non-kahler}), moreover there is a K\"ahler current $\eta\in \alpha$ with singularities along ${\rm Null}(\alpha)$. We can choose a log resolution $\nu :X'\to X$ such that 

\begin{enumerate}
   \item $K_{X'}+B'+\bbeta_{X'}=\nu ^*(K_X+B+\bbeta_X)$ where $\bbeta =\overline {\bbeta_{X'}}$,
   \item $\nu ^*\alpha =D'+\eta'$ where $\eta'$ is K\"ahler, $D'$ has snc support and contains all the exceptional divisors,

   \item $\lambda _i,~i\ge 0$ are the jumping numbers for $(X,B+\bbeta;D+\overline{\eta'} )$ where $D=\nu _*D'$, so that $\lfloor B'+\lambda _i D'\rfloor=\lfloor B'+\lambda  D'\rfloor$ for $\lambda _{i+1}>\lambda \geq \lambda _i\ge \lambda_0=0$, 
   \item we may assume that $S _i=\lfloor B'+\lambda _{i} D'\rfloor-\lfloor B'+\lambda _{i-1} D'\rfloor,~i\ge 1$ is prime (by perturbing $D'$ if necessary),
   \item if we write $\lfloor B'+\lambda _i D'\rfloor=Z_i-E_i$ where $E_i,Z_i\geq 0$ and $E_i\wedge Z_i=0$, then $E_i$ is $\nu$-exceptional and $Z _i$ is the union of the non-klt places of $(X,B+\lambda _iD+\bbeta+\lambda _i\overline{\eta'} )$ on $X'$.
\end{enumerate}
\end{notation}

We will show that

\begin{proposition}\label{prop: construct f_i for Z_i}
For each $i\ge 0$, $V_i:={\rm Specan}_X(\nu_*\OO _{Z _i})$ is a closed subspace of $X$ and there exists a projective contraction of analytic/complex spaces $f_i:Z_i\to T_i$ such that
\begin{itemize}
\item $-D'|_{Z_i}$ is $f_i$-ample,
\item a curve $C$ is contracted if and only if $C\cdot (\nu^*\alpha)|_{Z_i}=0$,
\item $f_{i,*}\OO_{Z_i}=\OO_{T_i}$, i.e. the Stein factorization of $f_{i}$ is itself, and
\item $f_i$ factors through $h_i:V_i\to T_i$.
\end{itemize}
In particular, the contraction defined by $[\alpha]|_{V_i}$ exists for $V_i$.
\end{proposition}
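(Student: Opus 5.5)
We argue by induction on $i\ge 0$. For $i=0$, $Z_0-E_0=\lfloor B'\rfloor$, so $Z_0$ is the union of the non-klt places of $(X,B+\bbeta)$ and $V_0$ is the subspace $V$ with $\mathcal I_V=\mathcal J(X,B+\bbeta)$. The existence of the contraction for $[\alpha]|_{V_0}$ is then the hypothesis of Theorem \ref{t-ind1}, and we compose it with $Z_0\to V_0$.

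\textbf{Closedness of $V_i$.} We apply Lemma \ref{l-ses} with $g=\nu$, $E=E_i$, $Z=Z_i$ and $\Delta=\{B'+\lambda_iD'\}$. The required relation is
\[E_i-Z_i=-\lfloor B'+\lambda_iD'\rfloor\equiv_\nu K_{X'}+\{B'+\lambda_iD'\}+\bbeta_{X'}+\lambda_i\eta'-(1+\lambda_i)\nu^*\alpha.\]
Here $\bbeta_{X'}+\lambda_i\eta'$ is nef and big over $X$ (when $i=0$, add a small multiple of $\eta'$ by perturbing). Since $E_i$ is exceptional, $\nu_*\OO_{X'}(E_i)=\OO_X$. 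Lemma \ref{l-ses} then gives the exact sequence $0\to\nu_*\OO_{X'}(-Z_i)\to\OO_X\to\nu_*\OO_{Z_i}\to0$. Hence $\nu_*\OO_{Z_i}$ is a quotient of $\OO_X$, so $V_i$ is a closed subspace, namely the one cut out by the multiplier ideal $\mathcal J(X,B+\bbeta+\lambda_i(D+\overline{\eta'}))$, and $Z_i\to V_i$ has connected fibres. Once $f_i$ is constructed, this gives $\OO_{V_i}=\nu_*\OO_{Z_i}$, so the factorization through $h_i$ and the contraction for $[\alpha]|_{V_i}$ follow from Lemma \ref{l-contraction}(4).

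\textbf{Inductive step.} We have $Z_i=Z_{i-1}+S_i$ with $S_i$ a prime divisor. By adjunction,
\[(1+\lambda_i)\nu^*\alpha|_{S_i}\equiv K_{S_i}+\Delta_{S_i}+\bbeta_{S_i}+\lambda_i\eta'|_{S_i}-(E_i)|_{S_i},\]
where $\Delta_{S_i}$ contains $(Z_{i-1}-E_i)|_{S_i}$ plus fractional parts. Pushing forward to the normalization of $\nu(S_i)$, or working on $S_i$ directly, gives an $(n-1)$-dimensional generalized pair. Its boundary is modified big because of $\eta'$, and $\alpha|_{S_i}$ is nef and NQC by Lemma \ref{l-NQCpullback}. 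Its non-klt locus is contained in $S_i\cap Z_{i-1}$, together with exceptional parts contracted anyway. By induction the contraction for $\alpha|_{Z_{i-1}}$ exists, so it exists on $S_i\cap Z_{i-1}$ by Lemma \ref{l-contraction}(2),(4). Theorem \ref{t-nkltcontraction}$_{n-1}$ then produces a Moishezon contraction $g_i:S_i\to W_i$ for $\alpha|_{S_i}$.

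\textbf{Gluing.} The next task is to glue $f_{i-1}:Z_{i-1}\to T_{i-1}$ with $g_i$ along $S_i\cap Z_{i-1}$. Both restrict to the contraction for $\alpha|_{S_i\cap Z_{i-1}}$, which is unique by Lemma \ref{l-contraction}(1), with finite maps from its image to $T_{i-1}$ and to $W_i$. We form $T_i$ by the universal push-out of Lemma \ref{lem: push-out diagram for analytic}, gluing $T_{i-1}$ and $W_i$ along this common image. This yields $f_i:Z_i\to T_i$, and its curves contracted are exactly the $\alpha$-trivial ones.

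To get $f_{i,*}\OO_{Z_i}=\OO_{T_i}$, we use the exact sequence $0\to\OO_{Z_i}\to\OO_{Z_{i-1}}\oplus\OO_{S_i}\to\OO_{Z_{i-1}\cap S_i}\to0$ and compare it with the push-out sequence. The needed surjectivity comes from vanishing of $R^1$ of $\OO_{S_i}(-Z_{i-1})$ relative to $T_i$. Since $-D'$ restricted to $S_i$ is relatively ample and $\nu^*\alpha$ is relatively trivial, this vanishing follows from Theorem \ref{t-kvv} and Nadel vanishing (Theorem \ref{t-nadel}).

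\textbf{Projectivity and $f_i$-ampleness of $-D'|_{Z_i}$.} On fibres we have $-D'\equiv\eta'-\nu^*\alpha\equiv_{f_i}\eta'$, which is Kähler. Since $D'$ is a genuine $\R$-divisor, $-D'$ restricted to each piece is relatively ample by Lemma \ref{lem: L rel num eq to kahler => f-ample}, once the pieces are known to be projective over $T_i$. That projectivity comes from Lemma \ref{lem: Moishezon to strong Moishezon} applied to $S_i$ (smooth), together with the induction hypothesis on $Z_{i-1}$. Ampleness on the union then follows because ampleness can be checked on each component.

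The main obstacle is the gluing step: showing that the push-out $T_i$ is compatible with $f_{i,*}\OO_{Z_i}=\OO_{T_i}$ and that the intersection contractions match up to a finite map. We handle it with the vanishing argument above and with the uniqueness in Lemma \ref{l-contraction}.
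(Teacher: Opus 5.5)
Your proposal has a genuine gap in the inductive step. You treat every jump as adding a new component $S_i$ glued to $Z_{i-1}$ along $S_i\cap Z_{i-1}$. But by construction $Z_i-E_i=Z_{i-1}-E_{i-1}+S_i$. Since $D'$ has positive coefficient along every component of $\Supp(D')$, each such component reappears as $S_i$ for infinitely many $i$.

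Apart from the trivial case where $S_i$ is a component of $E_{i-1}$ (so $Z_i=Z_{i-1}$), the typical situation is that $S_i$ is already a component of $Z_{i-1}$. Then $Z_i$ is a non-reduced thickening of $Z_{i-1}$ along $S_i$, and three things in your argument fail:
\begin{itemize}
\item Your Mayer--Vietoris sequence $0\to\OO_{Z_i}\to\OO_{Z_{i-1}}\oplus\OO_{S_i}\to\OO_{Z_{i-1}\cap S_i}\to0$ is false, since the ideal of $Z_i$ is $\mathcal I_{Z_{i-1}}\cdot\mathcal I_{S_i}$, not $\mathcal I_{Z_{i-1}}\cap\mathcal I_{S_i}$.
\item Adjunction to $S_i$ with $Z_{i-1}|_{S_i}$ in the boundary is meaningless.
\item The push-out of $T_{i-1}$ and $W_i$ along the image of $S_i$ just returns $T_{i-1}$, so it cannot produce the nilpotent structure required for $f_{i,*}\OO_{Z_i}=\OO_{T_i}$.
\end{itemize}
This case cannot be skipped: it builds the infinitesimal neighbourhoods of $\Supp(D')$ used afterwards in the Fujiki extension.

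The missing argument is the paper's Case 2, which covers every $S_i$ with $\nu(S_i)\subset V_{i-1}$ and needs no induction on dimension:
\begin{itemize}
\item Define $f_i$ set-theoretically as $h_{i-1}\circ\nu|_{Z_i}$; then $f_i|_{S_i}$ is projective by Lemma \ref{lem: Moishezon to strong Moishezon}.
\item Use the sequence $0\to\OO_{S_i}(E_i-Z_{i-1})\to\OO_{Z_i}(E_i)\to\OO_{Z_{i-1}}(E_i)\to0$ together with relative Kawamata--Viehweg vanishing on $S_i$ to get $f_{i,*}\OO_{Z_i}\twoheadrightarrow\OO_{T_{i-1}}$ with nilpotent kernel.
\item Let $T_i$ be the thickening of $T_{i-1}$ with structure sheaf $f_{i,*}\OO_{Z_i}$.
\end{itemize}
The twist by $E_i$ is essential here. $\OO_{S_i}(-Z_{i-1})$ alone does not satisfy the vanishing hypotheses, which is why the isomorphism $\nu_*\OO_{Z_i}\cong\nu_*\OO_{Z_i}(E_i)$ from Lemma \ref{l-ses} is used.

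Your treatment of the genuinely new case, $\nu(S_i)\not\subset V_{i-1}$, is also incomplete.
\begin{itemize}
\item On $S_i\subset X'$ the adjoint boundary contains $-E_i|_{S_i}$, which is not exceptional on $S_i$. You therefore have a sub-pair whose \emph{multiplier ideal} is not an ideal, and Theorem \ref{t-nkltcontraction}$_{n-1}$ does not apply.
\item The normalization of $\nu(S_i)$ need not be a divisor, so pushing forward there does not help.
\end{itemize}
The paper instead extracts $S_i$ on a $\Q$-factorial dlt model $\hat X$ of $(X,B+\bbeta+\lambda_i(D+\overline{\eta'}))$, so that $\hat S_i$ is normal with effective boundary. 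It then proves $Z_{S''_i}\le q^*(Z_{i-1}\cap S_i)$ on a common resolution. This scheme-theoretic comparison, not just set-theoretic containment, is what gives the contraction on the non-klt subspace $\hat V_i$ through $f_{i-1}$. Finally, the gluing is done on $X''$ with $Z^-_i=Z''_i-S''_i$, which includes the new exceptional divisors of $X''\to X'$. The contraction $h^-_i$ on $Z^-_i$ is itself produced by the Case 2 thickening argument. Your remaining outline (push-out along the common Stein factorization $Z_{\tilde T_i}$, and vanishing to identify $\OO_{T_i}$) does match the paper.
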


\begin{proof}
For the first part of the statement, we have
\[E_i-Z_i\equiv K_{X'}+\{B'+\lambda_i D'\}+\bbeta_{X'}+\lambda_i\eta'-(1+\lambda_i)\nu^*\alpha,\]
$(X',\{B'+\lambda_i D'\})$ is klt and $\bbeta_{X'}+\lambda_i\eta'$ is nef, hence by relative Kawamata-Viehweg
vanishing theorem (see Theorem \ref{t-kvv}) we get $R^1\nu_*\OO_{X'}(E_i-Z_i)=0$ and a short exact sequence:
\[
0\to \nu_*\OO_{X'}(E_i-Z_i)\to\nu_*\OO_{X'}(E_i)\to\nu_*\OO_{Z_i}(E_i)\to 0.
\]
Since $\nu_*\OO_{X'}(E_i-Z_i)=\nu_*\OO_{X'}(-Z_i)$ and $\nu_*\OO_{X'}(E_i)=\nu_*\OO_{X'}=\OO_{X}$,  we have the following.
\begin{claim}\label{claim: surj from O_X}
$\OO_{V_i}=\nu_*\OO_{Z_i}\to\nu_*\OO_{Z_i}(E_i)$ is an isomorphism, and $\OO_X\to \OO_{V_i}$ is a surjection.
\end{claim}
\begin{proof}
This follows directly from Lemma \ref{l-ses}: $\OO_{Z_i}\to \OO_{Z_i}(E_i)$ is injective as $E_i\wedge Z_i=0$, hence $\nu_*\OO_{Z_i}\to\nu_*\OO_{Z_i}(E_i)$ is also injective. From the short exact sequence above we know that the composition $$\nu_*\OO_{X'}(E_i)=\nu_*\OO_{X'}\to \nu_*\OO_{Z_i}\to\nu_*\OO_{Z_i}(E_i)$$ is surjective, hence $\nu_*\OO_{Z_i}\to\nu_*\OO_{Z_i}(E_i)$ is also surjective.
\end{proof}
Therefore $V_i$ is a closed subspace of $X$ and the kernel of the surjection $\OO_X\to \OO_{V_i}$ is the multiplier ideal of $(X,B+\bbeta+\lambda_i(D+\overline{\eta'}))$, given by $\mathcal J(X,B+\bbeta+\lambda_i(D+\overline{\eta'}))=\nu _*\OO _{X'}(E_i-Z_i)$.
\vspace{.5em}

We will next prove the existence of $f_i$ by induction on $i\in\N$. We first need to show that $f_0$ exists.

By assumption, $[\alpha]|_{V}$ is endowed with a Moishezon contraction, say $h_0: V=V_0\to T_0$. From what we have seen above, we know that $\nu|_{Z_0}: Z_0\to V_0$ is a projective contraction, therefore $f_0:=h_0\circ\nu|_{Z_0}: Z_0\to T_0$ is the contraction defined by $[\alpha]|_{Z_0}$. Now, $f_0$ is Moishezon and it remains to show that $-D'|_{Z_0}$ is $f_0$-ample hence $f_0$ is projective. Since ampleness can be checked on each irreducible component (with the reduced structure) of $Z_0$, which is smooth, this will follow from Lemma \ref{lem: Moishezon to strong Moishezon} as we have $-D'|_{Z_0}\equiv^{alg}_{f_0}\eta'|_{Z_0}$.

The above argument also shows that if $f_i$ is the contraction defined by $[\alpha]|_{Z_i}$, then $f_i$ is projective.

\vspace{.5em}

 Suppose now that $f_{i-1}$ has been constructed and the following diagram commutes 

\[\begin{tikzcd}
& Z_{i-1}\arrow[dl, swap, "f_{i-1}"]\arrow[r,hook]{}{}\arrow[d]{}{} & X' \arrow{d}{\nu} \\
T_{i-1} & V_{i-1}\arrow[l]{}{h_{i-1}}\arrow[r,hook]{}{} & X
\end{tikzcd}
\]

There are three cases to consider:

\noindent{\bf Case 1.} $Z_{i}=Z_{i-1}$, then $f_{i}=f_{i-1}$, $V_{i}=V_{i-1}$, and $T_i:=T_{i-1}$.

\vspace{.5em}

\noindent{\bf Case 2.} $Z_i\neq Z_{i-1}$ and $\nu(S_i)$ is contained in $V_{i-1}$. Notice that in this case $E_i=E_{i-1}$, $S_i=Z_{i}-Z_{i-1}$, and $V_i,V_{i-1}$ have the same support.


We want to construct $f_i$ and $T_i$. Since $\nu(S_i)\subset V_{i-1}$, we can first set-theoretically define $f_i$ to be $h_{i-1}\circ\nu|_{Z_i}$, and we see $f_i|_{S_{i}}$ can be regarded as the composition of the induced morphisms $S_i\to V_{i,red}=V_{i-1,red}$ and $V_{i,red}\to T_{i-1,red}$, where the subscript $red$ means the reduction of the corresponding space. Since $f_{i-1}$ is projective, we get $h_{i-1}$ is Moishezon, and $f_{i}|_{S_{i}}=h_{i-1}\circ\nu|_{Z_i}$ is also Moishezon as $\nu$ is projective. Since $-D'|_{S_i}\equiv^{alg}_{f_i|_{S_i}}\eta'|_{S_i}$, then $f_i|_{S_{i}}$ is projective, by Lemma \ref{lem: Moishezon to strong Moishezon} again. Considering the short exact sequence of sheaves of abelian groups
\[
0\to\OO_{S_i}(E_i-Z_{i-1})\to\OO_{Z_{i}}(E_i)\to\OO_{Z_{i-1}}(E_i)\to 0.
\]
\begin{claim}
$R^1f_{i,*}\OO_{S_i}(E_i-Z_{i-1})=0$ and $f_{i,*}\OO_{Z_i}\to\OO_{T_{i-1}}$ is surjective as sheaves of rings on $T_{i-1,red}$.
\end{claim}

\begin{proof}
Since $V_{i-1}\to T_{i-1}$ is the contraction defined by $[\alpha]|_{V_{i-1}}$, we easily see that $C\subset S_i$ is contracted by $f_i|_{S_i}$ if and only if $C\cdot\nu^*\alpha=0$. Recall that
\[
(E_i-Z_{i-1})-(K_{X'}+\{B'+\lambda_i D'\}+S_i)\equiv \bbeta_{X'}+\lambda_i\eta'-(1+\lambda_i)\nu^*\alpha,
\]
so we have 
\[
(E_i-Z_{i-1})|_{S_i}-(K_{S_i}+\{B'+\lambda_i D'\}|_{S_i})\equiv_{f_i|_{S_i}}(\bbeta_{X'}+\lambda_i\eta')|_{S_i}. 
\]
 Lemma \ref{lem: L rel num eq to kahler => f-ample} implies that the left hand side is $f_i|_{S_i}$-ample, therefore by relative Kawamata-Viehweg 
vanishing (see Theorem \ref{t-kvv}) we have $R^1f_{i,*}\OO_{S_i}(E_i-Z_{i-1})=0$, which gives us the surjection 
$f_{i,*}\OO_{Z_{i}}(E_i)\to f_{i,*}\OO_{Z_{i-1}}(E_i)$ between sheaves of abelian groups. Since $f_{i}|_{Z_{i-1}}=f_{i-1}=h_{i-1}\circ\nu|_{Z_{i-1}}$, we have that 
\[
f_{i,*}\OO_{Z_i}=h_{i-1,*}\OO_{V_i}=h_{i-1,*}\nu_*\OO_{Z_i}(E_i)\to h_{i-1,*}\nu_*\OO_{Z_{i-1}}(E_i)=h_{i-1,*}\OO_{V_{i-1}}=\OO_{T_{i-1}}
\]
is surjective, where the middle equalities are given by Claim \ref{claim: surj from O_X} and $E_i=E_{i-1}$.
\end{proof}
Now we see $f_{i,*}\OO_{S_i}(E_i-Z_i)=h_{i-1,*}\mathcal{I}_{V_{i-1}\subset V_i}$ is the kernel of $f_{i,*}\OO_{Z_i}\to \OO_{T_{i-1}}$, and it can also be regarded as a coherent $\OO_{T_{i-1,red}}$ module. Since $\mathcal{I}_{V_{i-1}\subset V_i}$ is nilpotent, then $f_{i,*}\OO_{S_i}(E_i-Z_i)$ is a nilpotent ideal sheaf of the sheaf of rings $f_{i,*}\OO_{Z_i}$ on the space $T_{i-1,red}$. We can therefore define $T_i$ to be the thickening of $T_{i-1}$ (sharing the same underlying topology space $T_{i-1,red}$) with structure sheaf $\OO_{T_i}:= f_{i,*}\OO_{Z_i}$ and we get a contraction $f_i: Z_i\to T_i$. It is straightforward to verify that $f_i$ contracts a curve iff it is $[\alpha]$-trivial, and $f_i$ is Moishezon since both $f_i|_{Z_{i-1}}$ and $f_i|_{S_i}$ are. Therefore $f_i$ is the contraction defined by $[\alpha]$. As mentioned above, $-D'$ is $f_i$-ample and $f_i$ is projective.

\vspace{.5em}

\noindent{\bf Case 3.} $Z_i\neq Z_{i-1}$ and $\nu(S_i)$ is not contained in $V_{i-1}$, then by induction and adjunction we will show that there is a projective contraction $g_i: S_i\to \tilde{T_i}$ which is defined by $[\alpha]|_{S_i}$.

Let $\hat X$ be a $\Q$-factorial dlt model of $(X,B+\bbeta+\lambda_i(D+\overline{\eta'}))$ (see Theorems \ref{t-dltmodel} and \ref{t-small Q factorial model}) such that $S_i$ is a divisor on $\hat X$, denoted by $\hat S_i$ and $(\hat X,\hat B_i+\bbeta+\lambda_i\overline{\eta'})$ be the crepant pullback. Let $(X'',B''_i+\bbeta+\lambda_i\overline{\eta'})$ be a (crepant) common log resolution of $(\hat X,\hat B_i+\bbeta+\lambda_i\overline{\eta'})$ and $(X',B'+\bbeta+\lambda_i(D'+\overline{\eta'}))$, and $S''_i$ be the birational transform of $S_i$ on $X''$. 

\[\begin{tikzcd}
S_i \arrow[d,hook] & S''_i\arrow[l, swap, "q"]\arrow[r,"p"]\arrow[d,hook] & \hat S_i \arrow[d,hook] \arrow[r,"\hat g_{i}"]& \tilde T_i\\
X'\arrow[dr, swap, "\nu"] & X''\arrow[d, "\nu''"]\arrow[l, swap, "q'"]\arrow[r,"\hat p"] & \hat X\arrow[dl,"\hat\nu"] & \\
& X  & &
\end{tikzcd}
\]
By adjunction we obtain a generalized pair $(\hat S_i,\hat\Delta_i+\ggamma)$, where
\[
K_{\hat S_i}+\hat\Delta_i+\ggamma_{\hat S_i}=(K_{\hat X}+\hat B_i+\bbeta_{\hat X}+\lambda_i\overline{\eta'}_{\hat X})|_{\hat S_i}
\]
and crepant birational pairs $(S_i,\Delta_i+\ggamma)$ and $(S''_i,\Delta''_i+\ggamma)$, and we have $p:S''_i\to \hat S_i$ and $q:S''_i\to S_i$. Then, by relative Kawamata Viehweg vanishing (see Theorem \ref{t-kvv}), we have a short exact sequence
\[
0\to p_*\OO_{S''_i}(E_{S''_i}-Z_{S''_i})\to p_*\OO_{S''_i}(E_{S''_i})\to p_*\OO_{Z_{S''_i}}(E_{S''_i})\to 0,
\]
where $Z_{S''_i}=\lfloor\Delta''_i\rfloor^{\ge 0}$ and $E_{S''_i}=\lfloor \Delta''_i\rfloor^{\le 0}$ so that $E_{S''_i}-Z_{S''_i}\equiv _X K_{S_i''}+\{\Delta _i''\}+\ggamma _{S_i''}$. Since $\hat S_i$ is normal and $\hat B_i\geq 0$, it follows that $\hat \Delta _i\geq 0$ and hence $E_{S''_i}$ is $p$-exceptional so that $p_*\OO _{S_i''}(E_{S''_i})=p_*\OO _{S_i''}=\OO _{\hat S_i}$. 
It then follows, by Lemma \ref{l-ses}, that $p_*\OO_{S''_i}(E_{S''_i}-Z_{S''_i})=p_*\OO_{S''_i}(-Z_{S''_i})$ is the multiplier ideal of $(\hat S_i,\hat\Delta_i+\ggamma)$, $R^1p_*\OO_{S''_i}(E_{S''_i}-Z_{S''_i})=0$, and so $p_*\OO_{Z_{S''_i}}(E_{S''_i})=p_*\OO_{Z_{S''_i}}=\OO_{\hat V_i}$, where $\hat V_i$ is the closed subspace of $\hat S _i$ defined by this multiplier ideal. 

Let $Z_{S_i}:=Z_{i-1}\cap S_i$, then we can see that
\begin{claim}
$Z_{S''_i}\le q^* Z_{S_i}$, in particular we have a morphism $q|_{Z_{S''_i}}: Z_{S''_i}\to Z_{S_i}$.
\end{claim}
\begin{proof}
Since $\ggamma$ descends on $S_i$ and $(S_i,\{\Delta_i\})$ is snc, we have $q^*(K_{S_i}+\Delta_i)=K_{S''_i}+\Delta''_i$ and $\lfloor q^*(K_{S_i}+\{\Delta_i\})\rfloor-K_{S''_i}\le 0$. Therefore we get
\[
q^*Z_{S_i}\ge \lfloor q^*(K_{S_i}+Z_{S_i}+\{\Delta_i\})-K_{S''_i}\rfloor\ge \lfloor q^*(K_{S_i}+\Delta_i)-K_{S''_i}\rfloor =\lfloor \Delta''_i\rfloor,
\]
taking the positive parts we have $q^*Z_{S_i}\ge \lfloor\Delta''\rfloor^{\ge 0}=Z_{S''_i}$.
\end{proof}
By our inductive hypothesis we have $f_{i-1}:Z_{i-1}\to T_{i-1}$. Thus, we get a projective morphism $f_{i-1}|_{Z_{S_i}}\circ q|_{Z_{S''_i}}: Z_{S''_i}\to T_{i-1}$, and let $Z_{S''_i}\to Z_{\tilde T_i}\to T_{i-1}$ be the Stein factorization. Then one can verify, by the projection formula, that $Z_{S''_i}\to Z_{\tilde T_i}$ is the contraction defined by $[\alpha]|_{Z_{S''_i}}$. Since $p_*\OO_{Z_{S''_i}}=\OO_{\hat V_i}$ and $[\alpha]|_{Z_{S''_i}}=p^*[\alpha]|_{\hat V_i}$, the contraction factors through $\hat V_i$ and the induced $\hat V_i\to Z_{\tilde T_i}$ is the contraction defined by $[\alpha]|_{\hat V_i}$. We also note that $[\alpha]|_{\hat S_i}$ is NQC since so is $[\alpha]$.

By Theorem \ref{t-nkltcontraction}$_{n-1}$, we know there is a Moishezon contraction $\hat g_i: \hat S_i\to \tilde T_i$ with $\hat g_{i,*}\OO_{\hat S_i}=\OO_{\tilde T_i}$, which is defined by $[\alpha]|_{\hat S_i}$ . Composing with $S''_i\to \hat S_i$ we get a projective contraction $g''_i:S''_i\to \tilde T_i$ (by Lemma \ref{lem: Moishezon to strong Moishezon}). We have the following commutative diagram (where the inclusion $Z_{\tilde T_i}\hookrightarrow \tilde T_i$ follows from the claim below)
\[\begin{tikzcd}
 Z_{S''_i}\arrow[r]\arrow[d,hook] & \hat V_i \arrow[d,hook] \arrow[r]& Z_{\tilde T_i}\arrow[d,hook] \\
 S''_i\arrow[r,"p"] & \hat S_i\arrow[r,"\hat g_i"] & \tilde T_i
\end{tikzcd}
\]

\begin{claim}
$\OO_{\tilde{T_i}}= g''_{i,*}\OO_{S''_i}\to g''_{i,*}\OO_{Z_{S''_i}}$ is surjective, and the corresponding closed subspace of $\tilde{T_i}$ is exactly $Z_{\tilde T_i}$, with $\mathcal{I}_{Z_{\tilde T_i}\subset \tilde T_i}=g''_{i,*}\OO_{S''_i}(-Z_{S''_i})=g''_{i,*}\OO_{S_i}(E_{S''_i}-Z_{S''_i})$.
\end{claim}
\begin{proof}
By  relative Kawamata-Viehweg vanishing (see Theorem \ref{t-kvv}) we have the short exact sequence
\[
0\to g''_{i,*}\OO_{S''_i}(E_{S''_i}-Z_{S''_i})\to g''_{i,*}\OO_{S_i}(E_{S''_i})\to g''_{i,*}\OO_{Z_{S''_i}}(E_{S''_i})\to 0.
\]
Using $g''_{i}=\hat g_{i}\circ p$ and applying Lemma \ref{l-ses} to $p$ we can rewrite the above short exact sequence as
\[
0\to \hat g_{i,*}\mathcal{I}_{\hat V_i}\to \hat g_{i,*}\OO_{\hat S_i}\to \hat g_{i,*}\OO_{\hat V_i}\to 0,
\]
which gives us 
\[
0\to g''_{i,*}\OO_{S''_i}(-Z_{S''_i})\to g''_{i,*}\OO_{S''_i}\to g''_{i,*}\OO_{Z_{S''_i}}\to 0.
\]

\end{proof}

Let $\lfloor B''_i\rfloor =Z''_i-E''_i$ and $Z''_{i-1}=\lfloor B''_i-\delta q'^* D' \rfloor^{\ge 0}$ be the non-klt locus of the crepant pullback of $(X',B'+(\lambda_i-\delta)(D'+\bar\eta')+\bbeta)$ for some $0<\delta\ll 1$, then by our inductive assumption and Claim \ref{claim: surj from O_X} we have $\nu''_*\OO_{Z''_{i-1}}=\OO_{V_{i-1}}$, and the morphism 
$$f''_{i-1}=h_{i-1}\circ\nu''|_{Z''_{i-1}}:Z''_{i-1}\to T_{i-1}.$$ 

Let $Z^-_{i}:=Z''_{i}-S''_i$. We will show the following.

\begin{claim}
There exists $h^-_{i}: V^-_{i}\to T^-_{i}$ such that $h^-_{i,*}\OO_{V^-_i}=\OO_{T^-_i}$, where $V^-_i$ is a closed subspace on $X$ defined by $\mathcal{I}_{V^-_i}:=\nu''_*\OO_{X''}(-Z^-_i)$ and $\nu''_*\OO_{Z^-_i}(E''_i)=\nu''_*\OO_{Z^-_i}=\OO_{V^-_i}$.
\end{claim}
\begin{proof}

By perturbing $q'^*(D'+\eta')$ and rewriting it as $D''+\eta''$ so that $D''-q'^*(D')$ is a very small effective $q'$-exceptional divisor and $\eta''$ is K\"ahler, we may assume that there exist numbers 
$$0<\delta_1<\cdots<\delta_n<\delta$$
corresponding to the jumping thresholds for $\lfloor B''_i-q'^*\delta D'+t D'' \rfloor$, such that each jump corresponds to a prime divisor. It is easy to see that $q'(Z''_i-S''_i)\subset Z_{i-1}$ (as sets). Indeed, $(X',B'+\bbeta+\lambda_i(D'+\overline{\eta'})-Z_{i-1})$ is sub-plt, i.e. there are no non-klt places other than $S_i$, and so $Z''_i-q'^*Z_{i-1}\le S_i''$. 

Therefore $\nu''(Z''_i-S''_i)\subset V_{i-1}$. By running the argument from Case 2, starting from $f''_{i-1}$ and $h_{i-1}$ we obtain a morphism $h^-_{i}$ satisfying our requirements. In addition, considering the last jumping number, we obtain $$\nu''_*\OO_{Z^-_i}(E''_i)=\nu''_*\OO_{Z^-_i}=\OO_{V^-_i}$$ as in Claim \ref{claim: surj from O_X}.

\end{proof}

We display the following diagram for the reader's convenience:

\[\begin{tikzcd}
  Z''_{i-1}\arrow[r,hook]\arrow[d] & Z^-_i \arrow[d] \arrow[r,hook]& Z''_i\arrow[d]\arrow[r,hook] & X''\arrow[d,"\nu''"] \\
 V_{i-1}\arrow[r,hook]\arrow[d,"h_{i-1}"] & V^-_i\arrow[r,hook]\arrow[d,"h^-_i"] & V_i\arrow[r,hook] & X \\
 T_{i-1}\arrow[r,hook] & T^-_i & & 
\end{tikzcd}
\]

Let $f^-_{i}: Z^-_i\to T^-_i$ be the composition of $\nu''|_{Z^-_i}$ and $h^-_i$, and recall that $Z_{S''_i}=Z^-_i\cap S''_i$,

\begin{claim} The Stein factorization of $f^-_i|_{Z_{S''_i}}: Z_{S''_i}\to T^-_i$ is given by $Z_{S''_i}\to Z_{\tilde T_i}\to T^-_i$. In particular, there is a finite morphism $Z_{\tilde T_i}\to T^-_{i}$ whose composition with $Z_{S''_i}\to Z_{\tilde T_i}$ equals to $f^-_i|_{Z_{S''_i}}$.
\end{claim}
\begin{proof}
Let $g''_i|_{Z_{S''_i}}\times f^-_i|_{Z_{S''_i}}: Z_{S''_i}\to \tilde T_{i}\times T^-_i$ and $\tilde W_i\to \tilde T_{i}\times T^-_i$ be the Stein factorization, then the composition with the projections $\tilde W_i\to \tilde T_i$ and $\tilde W_i\to T^-_i$ are both finite (onto their images) by our property of the morphisms (i.e. the curves contracted by these morphisms are precisely the $\alpha$-trivial curves $\int_C \alpha=\alpha\cdot C=0$). By definition, $\tilde W_i$ is both the Stein factorization of $g''_{i}|_{Z_{S''_i}}$ and $f^-_i|_{Z_{S''_i}}$
\[
g''_{i}|_{Z_{S''_i}}: Z_{S''_i}\to \tilde W_i\to \tilde T_i~~\text{ and }~ f^-_i|_{Z_{S''_i}}: Z_{S''_i}\to \tilde W_i\to T^-_i,
\]
and so we have $Z_{\tilde T_i}=\tilde W_i$ from the construction of $Z_{\tilde T_i}$.
\end{proof}

Let $T_i$ be the push-out of the finite morphism $Z_{\tilde T_i}\to T^-_{i}$ and the closed embedding $Z_{\tilde T_i}\to\tilde{T}_i$, then by Lemma \ref{lem: push-out diagram for analytic} we get a commutative diagram

\[\begin{tikzcd}
Z_{\tilde T_i} \arrow[r,hook]\arrow[d] & \tilde T_i \arrow{d}{\pi} \\
T^-_i\arrow[r,hook,"j"] & T_i
\end{tikzcd}
\]
such that $\pi$ is finite, $j:T^-_i\to T_i$ is a closed embedding,
and the natural map between the ideal sheaves $\mathcal{I}_{T^-_i\subset T_i}\to\pi_* \mathcal{I}_{Z_{\tilde T_i}\subset \tilde T_i}$ is an isomorphism.
 
\begin{claim}
We get a morphism $f''_i: Z''_i\to T_i$ such that $f''_i$ factors through $V_i\subset X$ and $f''_*\OO_{Z''_i}=\OO_{T_i}$.
\end{claim}

\begin{proof}
We have the following push-out diagram on $X''$
\[\begin{tikzcd}
Z_{S''_i} \arrow[r,hook]\arrow[d,hook] & S''_i \arrow[d,hook] \\
Z^-_i\arrow[r,hook] & Z''_i
\end{tikzcd}
\]
Consider the morphisms $f^-_i:Z_i^-\to T_i^-$, $g''_i: S''_i\to \tilde T_i$. The previous claim says that they share the same Stein factorization $Z_{S''_i}\to Z_{\tilde T_i}$ when restricting to $Z_{S''_i}$, therefore by the universal property of the push-out we get an induced morphism $f''_i:Z''_i\to T_i$. Similar arguments show that $f''_i$ factors through $V_i$ as $h_i:V_i\to T_i$. Arguing as in Claim \ref{claim: surj from O_X} we have $$\nu''_*\OO_{Z''_i}(E''_i)=\nu''_*\OO_{Z''_i}=\OO_{V_i}.$$
Consider the short exact sequence
\[
0\to \OO_{S''_i}(E''_i-Z^-_i)\to\OO_{Z''_{i}}(E''_i)\to\OO_{Z^-_i}(E''_i)\to 0.
\]
Pushing forward by $f''_i$, since $\pi$ is finite,  by the vanishing theorem \ref{t-kvv}, we get $R^1f''_{i,*}\OO_{S''_i}(E''_i-Z^-_i)=\pi_*R^1g''_{i,*}\OO_{S''_i}(E''_i-Z^-_i)=0$ and hence we have a short exact sequence
\[
0\to \pi_*g''_{i,*}\OO_{S''_i}(E''_i-Z^-_i)\to f''_{i,*}\OO_{Z''_{i}}(E''_i)\to f^-_{i,*}\OO_{Z^-_i}(E''_i)\to 0.
\]
Recall that we have 
\begin{itemize}
\item $f^-_{i}=h^-_{i}\circ\nu''|_{Z^-_i}$, $f''_i=h_i\circ \nu''|_{Z''_i}$,
\item $\OO_{S''_i}(E''_i-Z^-_i)=\OO_{X''}(E''_i-Z^-_i)|_{S''_i}=\OO_{S''_i}(E_{S''_i}-Z_{S''_i})$,
\item $g''_{i,*}\OO_{S''_i}(E''_i-Z^-_i)=\mathcal{I}_{Z_{\tilde T_i}\subset \tilde T_i}$, $\nu''_*\OO_{Z^-_i}(E''_i)=\OO_{V^-_i}$,
\end{itemize}
hence we get 
\[
0\to \pi_*\mathcal{I}_{Z_{\tilde T_i}\subset \tilde T_i}=\mathcal{I}_{T^-_i\subset T_i}\to h_{i,*}\OO_{V_i}\to h^-_{i,*}\OO_{V^-_i}=\OO_{T^-_i}\to 0
\]
and we deduce that $\OO_{T_i}\to h_{i,*}\OO_{V_i}=f''_{i,*}\OO_{Z''_i}$ is an isomorphism.
\end{proof}

Let $f_i:Z_i\to T_i$ be the composition of $\nu|_{Z_i}:Z_i\to V_i$ and $h_i: V_i\to T_i$, then it is easy to see that $f_{i,*}\OO_{Z_i}=\OO_{T_i}$ and $f_i$ contracts any curve $C$ iff $\int_C\alpha=0$. As above, $f_i$ is the contraction defined by $[\alpha]|_{Z_i}$, $f_i$ is projective and $-D'$ is $f_i$-ample.
\end{proof}
\vspace{.5em}

Now we can prove Theorem \ref{t-ind1} by extending $f_i$ to $X'$:

\begin{proof}[Proof of Theorem \ref{t-ind1}]
By Proposition \ref{prop: construct f_i for Z_i} we have defined $f_i:Z_i \to T_i$. We claim that we can extend $f_i$ to a proper morphism $f:X'\to Y$ such that $f|_{Z_i}=f_i$.
To see this, note that for any subvariety $Z\subset X'$ whose support is contained in ${\rm Supp}(D')$, then 
there is an inclusion $Z\hookrightarrow Z _i$ for any $i\gg 0$ and hence an induced holomorphic map $f_Z:=f_i|_Z$ (which is independent of $i$). In particular, we let $f^r:=f_i|_{\Supp(D')}$ for $i\gg 0$.

From Proposition \ref{prop: construct f_i for Z_i} we know $-D'|_{Z_i}$ is $f_i$-ample for any $i\ge 0$. We can approximate $D'$ with an effective $\Q$-divisor $\hat D$ such that $-\hat D|_{{\rm Supp}(D')}$ is $f^r$-ample. We then let $G=k\hat D$ for some sufficiently divisible $k>0$ so that $G$ is an integral divisor supported on $X'$ such that
\begin{itemize}
\item ${\rm Supp}(G)={\rm Supp}(D')$ and,
\item $-G|_{{\rm Supp}(D')}$ is $f^r$-ample. 
\end{itemize}

Notice that $\OO _G(-G)$ is $f_G$-ample since ampleness can be checked on the reduced space. By Serre vanishing and an argument on short exact sequences (see \cite[Proof of Claim 5.10]{DH23}), after replacing $G$ by a multiple, we may assume 
\[
R^1f_{G,*}\OO_G(-kG)=0, \text{ for all } k>0.
\]
Then by \cite[Theorem 2]{Fuj75} (see also \cite[Theorem 4.2]{DH23}) it follows that $f_G$ extends to a proper birational morphism $f:X'\to Y$ such that 
\[
f|_G=f_G \text{ and } f|_{X'\setminus {\rm Supp}(G)}={\rm id}_{X'\setminus {\rm Supp}(G)} .
\]

\end{proof}

\section{The transcendental base-point-free theorem}\label{s-semiample}

In this section we will prove the following base-point-free theorem, and we will show that  we can run the MMP for gklt K\"ahler varieties.

\begin{theorem}\label{t-ind2}Theorems \ref{t-bpf}$_{n-1}$, \ref{t-MMPscaling}$_{n-1}$, and \ref{t-nkltcontraction}$_{n,{\rm big}}$ imply Theorem \ref{t-bpf}$_{n}$: {\it 
Let $(X,B+\bbeta)$ be a compact K\"ahler, $n$-dimensional gklt pair such that  $\alpha=K_X+B+\bbeta _X$ is nef and $B+\bbeta_X$ is modified big. Then $\alpha$ is semiample.}
\end{theorem}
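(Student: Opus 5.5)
We split into two cases according to whether $\alpha$ is big, following the sketch in the introduction.

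\emph{Case 1: $\alpha$ is big.} By Lemma \ref{lem: gklt pair+Kahler is NQC} we want $\alpha$ NQC. Since $(X,B+\bbeta)$ is gklt and $B+\bbeta_X$ is modified big, Lemma \ref{lem: g-kltperturb} lets us replace $(X,B+\bbeta)$ by $(X,G+\ddelta+\bar\omega')$, with the same class and with $\ddelta$ b-nef and $\omega'$ Kähler. Then $\alpha=K_X+G+\ddelta_X+\omega$ for a Kähler-type class $\omega$, and the NQC argument (cone theorem with finitely many extremal rays, Theorem \ref{t-cone}) applies. We may first pass to a small $\Q$-factorial model by Theorem \ref{t-small Q factorial model}; pulling back is harmless by Lemma \ref{l-contraction}(4) and Lemma \ref{l-NQCpullback}. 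Since the pair is gklt, $\mathcal J(X,B+\bbeta)=\OO_X$, so ${\rm Nklt}=\emptyset$ and the hypothesis of Theorem \ref{t-nkltcontraction} is vacuous. Theorem \ref{t-nkltcontraction}$_{n,{\rm big}}$ then gives a Moishezon birational contraction $f:X\to Y$ contracting exactly the $\alpha$-trivial curves. Next we must show $\alpha\equiv f^*\gamma$ with $\gamma$ Kähler.
\begin{enumerate}
\item Since $f$ is birational with $\alpha\equiv_Y 0$, Lemma \ref{l-birgklt} makes $(Y,B_Y+\bbeta)$ gklt, and Lemma \ref{l-rccfibres} gives $\alpha=f^*\gamma$ with $\gamma=K_Y+B_Y+\bbeta_Y\in H^{1,1}_{\rm BC}(Y)$.
\item $\gamma$ is nef and big and positive on every curve of $Y$, since any curve is dominated by a non-$\alpha$-trivial curve.
\item $Y$ is in class $\mathcal C$, so Theorem \ref{t-bigKahler} shows $\gamma$ is Kähler.
\end{enumerate}
This also shows $Y$ is Kähler. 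When $X$ is strongly $\Q$-factorial, $f$ is projective by Lemma \ref{lem: Moishezon to strong Moishezon}, since $\alpha-\omega$ is $f$-numerically an $\R$-line bundle by Lemma \ref{l-rel}.

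\emph{Case 2: $\alpha$ is not big.} Then $K_X+B+\bbeta_X$ is nef but not big, while $B+\bbeta_X$ is modified big. Pass to a resolution $X'$ with $K_{X'}+B'+\bbeta_{X'}=\nu^*\alpha+E$, $E\ge0$ exceptional, as in Lemma \ref{l-mms}. Then $K_{X'}$ is not pseudo-effective: otherwise $\nu^*\alpha+(\text{big})$ would force $\alpha$ big after perturbation, using Lemma \ref{l-modbig} and that $\alpha$ is nef. By Theorem \ref{t-ou}, $X'$ is uniruled. Take the MRC fibration and resolve it to a projective morphism $X'\to W$ with rationally connected fibres (Lemma \ref{l-rccfibres}). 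Run the relative MMP over $W$ from Proposition \ref{p-relMMP} for the perturbed dlt pair. On the general fibre $\nu^*\alpha$ is nef and the fibre is rationally connected, and we want to conclude that $\alpha$ restricted to these fibres is numerically trivial, i.e. $\alpha$ is trivial on them. I expect this to be the main obstacle. What I would try first is to exploit that $K_{X'}+B'+\bbeta_{X'}$ is not pseudo-effective over $W$, which yields a Mori fibre space $X''\to Z$ over $W$ that is $\alpha$-trivial, via the $\alpha$-trivial MMP of Lemma \ref{lem: NQC can run alpha-trivial MMP}, scaling with $K+B+\bbeta+t\alpha$ for $t\gg0$. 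NQC of $\alpha$ is again available after the perturbation of Case 1. Theorem \ref{t-MMPscaling}$_{n-1}$ is used for the relative runs in lower dimension if needed.

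Given $\alpha$-triviality, Lemma \ref{l-rccfibres} gives $\nu^*\alpha\equiv g^*\alpha_Z$ for the contraction $g:X''\to Z$ with $\dim Z<n$. The canonical bundle formula, Theorem \ref{t-gcbf}, then writes $\alpha_Z=K_Z+\DDelta_Z+\ddelta_Z$ with $(Z,\DDelta+\ddelta)$ gklt and $\ddelta_Z$ modified big. Here $\alpha_Z$ is nef by \cite[Lemma 2.38]{DHP24}. After passing to a Kähler model of $Z$ (class $\mathcal C$, with a bimeromorphic modification handled via Lemma \ref{l-contraction}), Theorem \ref{t-bpf}$_{n-1}$ gives $\alpha_Z\equiv h^*\gamma$ with $\gamma$ Kähler. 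Finally, Lemma \ref{l-contraction}(4) descends the composite contraction from $X''$ back to $X$, since the birational maps involved are $\alpha$-trivial or $\alpha$-non-positive. Stein factorization then gives the desired $f$.
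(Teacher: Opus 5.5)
\textbf{Case 1} is essentially the paper's argument: perturb via Lemma \ref{lem: g-kltperturb} to get NQC, apply Theorem \ref{t-nkltcontraction}$_{n,{\rm big}}$ with empty non-klt locus, descend $\alpha$ to the birational base, and conclude with Theorem \ref{t-bigKahler}. The paper descends $\alpha$ using $R^if_*\OO_X=0$ from Kawamata--Viehweg vanishing, which is cleaner than your Lemma \ref{l-birgklt} route. For projectivity, note that $f$ is birational, hence strongly Moishezon, and apply Lemma \ref{lem: sm Kahler+Moishezon=proj mor}; Lemma \ref{l-rel} is stated for manifolds.

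\textbf{Case 2} has a genuine gap. The two steps you plan to lean on are false as written.
\begin{itemize}
\item $\alpha$ need not be numerically trivial on the MRC fibres. Take $X=\mathbb P^1\times\mathbb P^1$, $B=0$, $\bbeta=\overline{\OO(3,2)}$. Then $\alpha=\OO(1,0)$ is nef and not big, while the MRC fibration is the constant map.
\item $K_{X'}+B'+\bbeta_{X'}=\nu^*\alpha+E$ is nef plus effective, hence pseudo-effective over $W$, so it cannot produce a Mori fibre space.
\end{itemize}
Even a Mori fibre space of a $(K_{X'}+t\alpha')$-MMP would not make $K_{X''}+\Delta''+\bbeta_{X''}$ relatively trivial, and Theorem \ref{t-gcbf} needs that.

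The paper instead runs the $\alpha'$-trivial MMP over the MRC base $Z$ for the pseudo-effective class $K_{X'}+\Delta'+\bbeta_{X'}+a\alpha'$, with $\Delta'=B_{X'}^{\geq 0}$ and $a\gg 0$, to a good minimal model. Its semiample fibration $g:X''\to Z''$ is then both $\alpha''$-trivial and $(K_{X''}+\Delta''+\bbeta_{X''})$-trivial. Theorem \ref{t-psef}, together with pseudo-effectivity of $K_Z$, shows this class is not big over $Z$, so $\dim Z''<n$.

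The more serious missing idea is how to handle the negative part $F'=B_{X'}^{\leq 0}$, which is your $E$. On $X''$ you only have $K_{X''}+\Delta''+\bbeta_{X''}\equiv\alpha''+F''$, with $F''=g^*F_{Z''}$ vertical but possibly nonzero. So Theorem \ref{t-gcbf} gives $K_{Z''}+\Delta_{Z''}+\ggamma_{Z''}\equiv\alpha_{Z''}+F_{Z''}$, not $\alpha_{Z''}$. This class need not be nef, so Theorem \ref{t-bpf}$_{n-1}$ does not apply. Using the crepant sub-pair instead yields a base boundary with negative coefficients, which is not gklt.

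The paper removes $F$ in four steps.
\begin{itemize}
\item After a small $\Q$-factorialization of $Z''$, it runs the $(K_{Z''}+\Delta_{Z''}+\ggamma_{Z''}+a\alpha_{Z''})$-MMP by Theorem \ref{t-MMPscaling}$_{n-1}$. This MMP is $\alpha_{Z''}$-trivial by Lemmas \ref{l-NQCpullback} and \ref{lem: NQC can run alpha-trivial MMP}.
\item It lifts each step to $X''$ by relative MMPs over the contraction targets (Proposition \ref{p-relMMP}).
\item Since $F'$ is the negative part of the Boucksom--Zariski decomposition of $(a+1)\nu^*\alpha+F'$, the pushforward $F''_n$ on the semiample end model must vanish, so $F_{Z_n}=0$.
\item Hence $(a+1)\alpha_{Z_n}$ is semiample by Theorem \ref{t-bpf}$_{n-1}$.
\end{itemize}
This is exactly where Theorem \ref{t-MMPscaling}$_{n-1}$ is needed, and your proposal has no substitute for it.
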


\begin{proof} By taking a small modification (cf. Theorem \ref{t-small Q factorial model} and Lemma \ref{l-modbig}) we may assume $X$ is strongly $\Q$-factorial. 

\vspace{.5em}
\noindent\textbf{Case 1:} Theorem \ref{t-ind2} holds if $\alpha$ is big.
By Lemma \ref{lem: g-kltperturb}, it follows easily that we may assume there is a gklt pair $(X,G+\ggamma )$ and a K\"ahler form $\omega$ such that $K_X+B+\bbeta _X\equiv K_X+G+\ggamma_X+\omega$.  By Lemma \ref{lem: gklt pair+Kahler is NQC} $\alpha$ is NQC. By Theorem \ref{t-nkltcontraction}$_{n,{\rm big}}$ (see Theorem \ref{t-ind1}), the contraction $f:X\to Z$ defined by $[\alpha]$ exists and is birational, hence automatically strongly Moishezon. Let $\nu: X'\to X$ be a projective log resolution such that $h:=f\circ\nu: X'\to Z$ is projective, and write
\[
\nu^*(K_X+B+\bbeta_X)=K_{X'}+B'+\bbeta_{X'}.
\] By Lemma \ref{lem: g-kltperturb}, we may assume that $\bbeta _{X'}$ is K\"ahler.
By relative Kawamata-Viehweg
vanishing (see Theorem  \ref{t-kvv}) we have
\[
R^i\nu_*\OO_{X'}(-\lfloor B'\rfloor)=0\qquad {\rm and} \qquad R^ih_*\OO_{X'}(-\lfloor B'\rfloor)=0,~i>0.
\]
Notice that $-\lfloor B'\rfloor\ge 0$ is exceptional over $X$, therefore by the Leray spectral sequence we get 
\[
R^if_*\OO_X=R^if_*(\nu_*\OO_{X'}(-\lfloor B'\rfloor))=R^ih_*\OO_{X'}(-\lfloor B'\rfloor)=0, ~i>0.
\]
Since $X$ has rational singularities (see Lemma \ref{L-gklt-rational}), then $Z$ has rational singularities, and by \cite[Lemma 3.3]{HP16} we know $[\alpha]=f^*[\gamma]$ for some smooth real locally exact $(1,1)$-form $\gamma$. If $K_Z+B_Z+\bbeta_Z=f_*(K_X+B+\bbeta_X)$, then  $(Z,B_Z+\bbeta)$ is a gklt pair. Since $f: X\to Z$ is defined by $[\alpha]$, we know there are no $[\gamma]$-trivial curves on $Z$. By Theorem \ref{t-bigKahler},  $[\gamma]$ is a K\"ahler class and so $\alpha$ is semiample. 
\vspace{.5em}

\noindent\textbf{Case 2:} Theorem \ref{t-ind2} holds if $\alpha$ is not big.

Since $B+\bbeta _X$ is big, then $K_X$ is not pseudo-effective. By Theorem \ref{t-ou}, $X$ is uniruled. Let $f:X\dasharrow Z $ be the MRC. 
We may assume that $Z$ is smooth and K\"ahler.  
Let $\nu :X'\to X$ be a resolution such that
\begin{enumerate}
\item $f'=f\circ \nu:X'\to Z$ is a projective morphism \cite{CH24},
\item $\alpha ':=\nu ^*\alpha\equiv \nu ^*(K_X+B+\bbeta _X)=K_{X'}+B'+\bbeta _{X'}$.
\end{enumerate}
By Lemma \ref{lem: g-kltperturb}, we may assume that $\bbeta _{X'}$ is K\"ahler.
Write $B'=B'^{\ge  0}-B'^{\le 0} $ where $B'^{\ge 0},B'^{\le 0}\geq 0$ and $B'^{\ge 0}\wedge B'^{\le 0}=0$. Let $\Delta' :=B'^{\ge 0}$, and $F':=B'^{\le 0}$. 
 
\begin{claim}  We may run the $\alpha'$-trivial $(K_{X'}+\Delta '+\bbeta _{X'})$-MMP over $Z$ and its outcome $ X'\dasharrow X''\to Z''$
 is a good minimal model for $K_{X'}+\Delta '+\bbeta _{X'}+a\alpha '$ over $Z$ for any $a\gg 0$, such that  $K_{X''}+\Delta ''+\bbeta _{X''}+a\alpha ''$ is semiample over $Z$ and the corresponding morphism $g:X''\to Z''$ is $\alpha''$-trivial.
 In particular $F''$ is $g$-vertical.
\end{claim}
\begin{proof} 
As observed above, Lemma \ref{lem: gklt pair+Kahler is NQC} implies that $\alpha$ is NQC.  By Lemma \ref{lem: NQC can run alpha-trivial MMP}, since $a\gg 0$, if $\eta=a\alpha ' $, then
any sequence of steps of the $(K_{X'}+\Delta '+\bbeta _{X'}+\eta )$-MMP is $\eta$-trivial. Note that $(K_{X'}+\Delta '+\bbeta _{X'}+\eta )\equiv (a+1)\alpha '+F'$ is pseudo-effective and so this MMP can not end with a Mori-fiber space.
By Lemmas \ref{l-rel} and \ref{l-rccfibres}, $\bbeta _{X'}+\eta\equiv _Z c_1(N)$ where $N$ is an $\R$-Cartier divisor. By Proposition \ref{p-relMMP}, we can run the $(K_{X'}+\Delta '+\bbeta _{X'}+\eta )$-MMP with scaling over $Z$ which ends with a good log terminal model.
Let \[X'\dasharrow X_1 \dasharrow \ldots \dasharrow X_n=X''\] be the corresponding sequence of $\alpha'$-trivial $K_{X'}+\Delta '+\bbeta _{X'}$ flips and divisorial contractions over $Z$.

If $\phi:X'\dasharrow X''$ is a log terminal model (over $Z$), then $K_{X''}+\Delta ''+\bbeta _{X''}+\eta''$ is semiample (over $Z$) and hence we have a morphism $g :X''\to Z''$ over $Z$ so that $K_{X''}+\Delta ''+\bbeta _{X''}+\eta''=g ^*\eta _{Z''}$ where $\eta_{Z''}$ is K\"ahler over $Z$. Note that for $\epsilon >0$, $K_{X''}+\Delta ''+\bbeta _{X''}+(a+\epsilon)\alpha ''\equiv _{Z''}\epsilon \alpha ''$ is also nef over $Z''$ and hence semiample over $Z''$ and if $h:X''\to Z'''$ is the K\"ahler model over $Z''$, then $\alpha _{Z'''}$ is K\"ahler over $Z''$ and so $h$ is the K\"ahler model for $K_{X'''}+\Delta '''+\bbeta _{X'''}+(a+\epsilon)\alpha '''$  over $Z$ for $0<\epsilon \ll 1$ and
$\alpha ''\equiv _{Z''} h^*\alpha _{Z'''}$. But it then follows easily that $h$ is the K\"ahler model for  $K_{X''}+\Delta ''+\bbeta _{X''}+(a+t)\alpha ''$ over $Z$ for any $t>0$.
Since $a\gg 0$, we may assume that $Z''=Z'''$ and $\alpha'' =g^*\alpha _{Z''}$.

Finally note that as $f$ is quasi-holomorphic, 
$F'|_{X'_z}$ is $\nu _z$-exceptional where $\nu _z:X'_z\to X_z$ is the corresponding morphism on general fibers. It follows that $F'_z=N(F'_z+a\alpha '_z)$ and hence $F'_z$ is contracted by $X'_z\dasharrow X''_z$ (here $F'_z=F'|_{X'_z}$ and $\alpha '_z=\alpha '|_{X'_z}$).
\end{proof}

Therefore, from now on we will assume that 
$K_{X''}+\Delta ''+\bbeta _{X''}+a\alpha ''$ is semiample over $Z$ for any $a\gg 0$.

 \begin{claim} $K_{X''}+\Delta ''+\bbeta _{X''}+\eta''$ is not big over $Z$. 
 \end{claim}
 \begin{proof} 
 By Theorem \ref{t-psef}, if $K_{X''}+\Delta ''+\bbeta _{X''}+\eta''$ is big over $Z$ then $K_{X''/Z}+\Delta ''+\bbeta _{X''}+\eta ''$ is big. 
 Since $K_{Z}$ is pseudo-effective, it follows that  $K_{X''}+\Delta ''+\bbeta _{X''}+\eta''$ is big.
 Since $X'\dasharrow X''$ is part of a $(K_{X'}+\Delta '+\bbeta _{X'}+\eta')$-MMP, then
 \[ (a+1)\nu ^*\alpha +F'\equiv K_{X'}+\Delta '+\bbeta _{X'}+\eta'\] is big.
 Since $F'$ is $\nu$-exceptional, then $\alpha$ is big, contradicting our assumptions.
 \end{proof}
 Since $\alpha'$ is not big over $Z$, then $\dim (X''/Z'')>0$. By the canonical bundle formula (Theorem \ref{t-gcbf}), we have \[K_{X''}+\Delta ''+\bbeta _{X''}\equiv g ^*(K_{Z''}+\Delta _{Z''}+\ggamma_{Z''})\] where $(Z'',\Delta _{Z''}+\ggamma )$ is gklt, $\ggamma _{Z''}$ is big, and $K_{Z''}+\Delta _{Z''}+\ggamma_{Z''}+a\alpha ''$ is K\"ahler over $Z$ for any $a\gg 0$.  Since 
 $F''$ is vertical over $Z''$, we can write
 \[\alpha _{Z''}=K_{Z''}+\Delta _{Z''}+\ggamma_{Z''}-F_{Z''}\]
 where $g^*(F_{Z''})=F''$. 
 \begin{claim} $K_{Z''}+\Delta _{Z''}+\ggamma_{Z''}+a\alpha _{Z''}$ is pseudo-effective.
 \end{claim}
 \begin{proof} This follows from Theorem \ref{t-psef}, since $K_Z$ is pseudo-effective and $K_{Z''}+\Delta _{Z''}+\ggamma_{Z''}+a\alpha_{Z''}$ is K\"ahler over $Z$.
 \end{proof}

Next let $(\hat Z'',\hat\Delta''+\ggamma)\to Z''$ be a small $\Q$-factorial modification (cf. Theorem \ref{t-small Q factorial model}) and $\hat\nu:W\to X''$ be a log resolution such that $\hat h: W\to \hat Z''$ is a projective morphism, then let 
\[
K_W+\Delta_W+\bbeta_W+a\alpha_W=\hat \nu^*(K_{X''}+\Delta''+\bbeta_{X''}+a\alpha'').
\] Since \[K_W+\Delta^{\ge 0}_W+\bbeta_W+a\alpha_W+\epsilon{\rm Ex}(\nu') \equiv_{\hat Z''} \Delta^{\leq 0}_W+\epsilon{\rm Ex}(\nu') \] then by Proposition \ref{p-relMMP}
we can run  $(K_W+\Delta^{\ge 0}_W+\bbeta_W+a\alpha_W+\epsilon{\rm Ex}(\hat \nu))$-MMP over $\hat{Z}''$,  which terminates with a log terminal model $\hat g: \hat{X''}\to \hat Z''$. Then
\[
K_{\hat X''}+(\hat\Delta '')^{\geq 0}+\bbeta _{\hat X''}+a\hat\alpha''\equiv \hat g ^*(K_{\hat Z''}+\Delta _{\hat Z''}+\ggamma_{\hat Z''}+a\alpha_{Z''})
+(\hat\Delta '')^{\leq 0}+\epsilon {\rm Ex}(\hat \nu)\] is nef over $\hat Z''$. 
\begin{claim} $(\hat\Delta '')^{\leq 0}+\epsilon {\rm Ex}(\hat \nu)=0$.\end{claim}
\begin{proof}
Since $\varphi:\hat Z''\to Z''$ is a small birational map, then there is a big open subset $U\subset Z''$ such that $\hat U:=\varphi ^{-1}(U)\to U$ is an isomorphism.
Since both $X''_U\to U$ and $\hat X''_{\hat U}\to \hat U$ are minimal models for $K_{X''}+\Delta ''+\bbeta _{X''}$ over $U$ it follows that  $\psi:   \hat X''_{\hat U}\dasharrow X''_U$ is a small birational map.
Since $\psi_*((\hat\Delta '')^{\leq 0}+\epsilon {\rm Ex}(\hat \nu))=0$, then $\hat g({\rm Supp}((\hat\Delta '')^{\leq 0}+\epsilon {\rm Ex}(\hat \nu)))\subset \hat Z''\setminus \hat U$. Since $\hat U$ is a big open subset of $\hat Z''$, then 
$(\hat\Delta '')^{\leq 0}+\epsilon {\rm Ex}(\hat \nu)$ is degenerate over $\hat Z''$. But $(\hat\Delta '')^{\leq 0}+\epsilon {\rm Ex}(\hat \nu)$ is nef over $\hat Z''$ and so the claim follows from Lemma \ref{l-neg}. 
\end{proof}

Note that $X''\dasharrow \hat X''$ is small and $\alpha$-trivial (i.e. $\alpha''$ and $\hat\alpha''$ agree up to pullbacks on a common resolution). Replacing $X''\to Z''$ by $\hat X''\to \hat Z''$, we may assume $Z''$ is strongly $\Q$-factorial from now on (we no longer assume that $K_{Z''}+\Delta _{ Z''}+\ggamma_{ Z''}+a\alpha_{Z''}$ is K\"ahler over $Z$).

 By Theorem \ref{t-MMPscaling}$_{n-1}$, there is a $(K_{Z''}+\Delta _{ Z''}+\ggamma_{ Z''}+a\alpha_{Z''})$-MMP
 \[Z''\dasharrow Z_1 \dasharrow \ldots \dasharrow Z_n\]
 such that $K_{Z_n}+\Delta _{Z_n}+\ggamma_{Z_n}+a\alpha _{Z_n}$ is nef, and hence semiample (globally) by Theorem \ref{t-bpf}$_{n-1}$. By Lemma \ref{l-NQCpullback}, $\alpha _{Z''}$ is NQC, and by Lemma \ref{lem: NQC can run alpha-trivial MMP}, we may assume that this MMP is $\alpha _{Z''}$-trivial. 
 \begin{claim} We can lift the above MMP to $X''$ so that we get a sequence 
 \[X''\dasharrow X''_1 \dasharrow \ldots \dasharrow X''_n\]
 where $X''_i\dasharrow X''_{i+1}$ is a sequence of $K_{X''}+\Delta ''+\bbeta _{X''}+a\alpha ''$ flips and divisorial contractions, and $g_i:X''_i\to Z_i$ is a morphism such that \[ K_{X''_i}+\Delta _{X''_i}+\bbeta _{X''_i}+a\alpha _{X''_i}\equiv g_i ^*(K_{Z_i}+\Delta _{Z_i}+\ggamma_{Z_i}+a\alpha _{Z_i}).\]\end{claim}
 \begin{proof} We proceed by induction.
 Suppose that $X''\dasharrow X''_1 \dasharrow \ldots \dasharrow X''_{i-1}$ has been constructed with the above properties. Let $Z_{i-1}\to W_{i-1}$ be the corresponding flipping or divisorial contraction, then $Z_i$ is the relative K\"ahler model for $K_{Z_{i-1}}+\Delta _{Z_{i-1}}+\ggamma_{Z_{i-1}}+a\alpha _{Z_{i-1}}$ over $W_{i-1}$. Since $\alpha _{X''_{i-1}}\equiv _{W_{i-1}}0$, then $K_{X''_{i-1}}+\Delta _{X''_{i-1}}+\bbeta _{X''_{i-1}}+a\alpha _{X''_{i-1}}\equiv_{W_{i-1}} F''_{i-1}$ where $F''_{i-1}$ is the pushforward of $F''$, and so by Proposition \ref{p-relMMP} we can run the relative MMP to obtain $X''_{i-1}\dasharrow X''_{i}$ a sequence of flips and divisorial contractions over $W_{i-1}$ such that $K_{X''_{i}}+\Delta _{X''_{i}}+\bbeta _{X''_{i}}+a\alpha _{X''_{i}}$ is semiample over $W_{i-1}$. Since $Z_i$ is the corresponding K\"ahler model, then $g_i:X_i''\to Z_i$ is a morphism and $K_{X''_{i}}+\Delta _{X''_{i}}+\bbeta _{X''_{i}}+a\alpha _{X''_{i}}\equiv g_i ^*(K_{Z_i}+\Delta _{Z_i}+\ggamma_{Z_i}+a\alpha _{Z_i})$.
 \end{proof}
 It follows that \[K_{X''_n}+\Delta _{X''_n}+\bbeta _{X''_n}+a\alpha _{X''_n}\equiv (a+1)\alpha _{X''_n} +F''_{n} \] is semiample where $F''_n$ is the pushforward of $F'$.
 Since $F'$ is the negative part of the Boucksom-Zariski decomposition of $(a+1)\alpha ' +F'\equiv (a+1)\nu ^*\alpha + F'$ (see \cite[Lemma A.5]{DHY23}), it follows that 
 $F''_n$ is contained in the negative part of the Boucksom-Zariski decomposition of $(a+1)\alpha _{X''_n} +F''_n$ and therefore $F''_n=0$ (see \cite[Lemma A.11]{DHY23}). Since $F''_n=g_n^*F_{Z_n}$, then $F_{Z_n}=0$. 
 Thus $(a+1)\alpha _{Z_n}\equiv K_{Z_n}+\Delta _{Z_n}+\ggamma _{Z_n}+a\alpha _{Z_n}$ is semiample, and hence  $\alpha_{X''_n}=g_n^*\alpha _{Z_n}$ is semiample.
 
Finally, since $X\dasharrow X''\dasharrow X''_n$ is $\alpha$-trivial, then $\alpha$ is semiample. Since $X''_n\to Z_n$ is projective and the contraction defined by $[\alpha_{Z_n}]$ is also projective by Theorem \ref{t-bpf}$_{n-1}$, the contraction defined by $[\alpha_{X''_n}]$ (which is the composition of these two contractions) is also projective. By taking a common resolution of $X$ and $X''_n$, we know the contraction defined by $[\alpha]$ is strongly Moishezon, hence projective by Lemma \ref{lem: sm Kahler+Moishezon=proj mor}.

\end{proof}
 
Theorem \ref{t-bpf} implies that we can run the  MMP for K\"ahler gklt pairs.

\begin{proposition}\label{prop: MMP can be run}
Assuming Theorem \ref{t-bpf}$_n$. Let $(X,B+\bbeta)$ be a compact gklt pair and $X$ is an $n$-dimensional strongly $\Q$-factorial K\"ahler variety. Let $R=\R^+[\Sigma ]\subset\overline{\rm NA}(X)$ be a $(K_X+B+\bbeta_X)$-negative extremal ray. Then the contraction $f: X\to Y$ associated to $R$ exists and 
\begin{enumerate}
\item $f$ is projective and $Y$ is K\"ahler with rational singularities.
\item If $f$ is a divisorial contraction, then $Y$ is strongly $\Q$-factorial. There is a canonical pushforward map: 
\[
f_*: H^{1,1}_{\rm BC}(X)\to H^{1,1}_{\rm BC}(Y) \text{ such that } f_*\circ f^*={\rm Id}
\]
\item If $f$ is a flipping contraction, then the flip $f^+:X^+\to Y$ exists, $X^+$ is strongly $\Q$-factorial, and there is a canonical isomorphism
\[
\varphi_*: H^{1,1}_{\rm BC}(X)\to H^{1,1}_{\rm BC}(X^+) \text{ induced by } \varphi: X\dasharrow X^+
\]
\end{enumerate}
\end{proposition}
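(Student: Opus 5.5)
To produce the contraction of $R$, I will build a nef supporting class of the form $K_X+B'+\bbeta'_X$ whose boundary is modified big, and then apply Theorem \ref{t-bpf}$_n$ to it. By the cone theorem (Theorem \ref{t-cone}) applied after a perturbation, the extremal ray $R$ admits a nef class $\alpha_R$ with $\alpha_R^\perp\cap\overline{\rm NA}(X)=R$. Concretely, I take a K\"ahler form $\omega$ and use the finiteness of $(K_X+B+\bbeta_X+\omega/2)$-negative extremal rays. I then choose $t>0$ such that $\alpha:=K_X+B+\bbeta_X+t\omega'$ is nef and vanishes exactly on $R$, where $\omega'$ is a small K\"ahler perturbation of a class in $H^{1,1}_{\rm BC}(X)$. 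Writing $\ggamma:=\bbeta+t\overline{\omega'}$, the pair $(X,B+\ggamma)$ is gklt and $B+\ggamma_X$ is big, because it contains a K\"ahler summand. Theorem \ref{t-bpf}$_n$ then gives a Moishezon contraction $f:X\to Y$ with $\alpha\equiv f^*\gamma$ for a K\"ahler class $\gamma$ on $Y$. Since $X$ is strongly $\Q$-factorial, $f$ is projective. The curves contracted by $f$ are exactly the $\alpha$-trivial ones, namely those whose class lies in $R$, so $f$ is the contraction associated to $R$ and $Y$ is K\"ahler.

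For the remaining parts of (1), namely rational singularities, I will use relative Kawamata--Viehweg vanishing (Theorem \ref{t-kvv}). Because $-(K_X+B+\bbeta_X)$ is $f$-ample, it gives $R^if_*\OO_X=0$ for $i>0$, and $X$ has rational singularities by Lemma \ref{L-gklt-rational}; these two facts pass rational singularities to $Y$. Lemma \ref{l-rccfibres} then identifies $f^*H^{1,1}_{\rm BC}(Y)$ with the classes in $H^{1,1}_{\rm BC}(X)$ that are trivial on $R$, so $\rho_{\rm BC}(X/Y)=1$.

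For (2), Remark \ref{r-div} shows there is a unique exceptional divisor $E$. Strong $\Q$-factoriality of $Y$ follows from the standard argument. Given a reflexive rank one sheaf $\mathcal F$ on $Y$, take its strict transform $\mathcal F_X$, which is $\Q$-Cartier, and twist it by a suitable multiple $cE$ so that it becomes $f$-numerically trivial; here we use $E\cdot\Sigma<0$. By the relative base-point-free theorem (Theorem \ref{t-projcone}, or more precisely its relative bpf input), the twisted sheaf descends to a $\Q$-Cartier sheaf on $Y$, and that descent agrees with $\mathcal F$. The pushforward $f_*$ on $H^{1,1}_{\rm BC}$ is defined by $f_*\beta:=$ the unique $\beta'$ with $f^*\beta'=\beta-cE$, where $c$ is chosen so that $(\beta-cE)\cdot\Sigma=0$. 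With this definition $f_*f^*={\rm Id}$ is immediate.

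For (3), Theorem \ref{t-flip} produces the flip $X^+\to Y$, and $X^+$ is strongly $\Q$-factorial by the same descent argument applied over $Y$. For the isomorphism $\varphi_*$, I will combine $f^*$ and $(f^+)^*$ with the fact that both $\rho_{\rm BC}(X/Y)$ and $\rho_{\rm BC}(X^+/Y)$ equal $1$. A class $\beta$ on $X$ is written as $f^*\beta_Y+c\,D$ for a fixed divisor $D$ with $D\cdot\Sigma\neq0$, and it is sent to $(f^+)^*\beta_Y+c\,D^+$. This is well defined and bijective because the map is small. I expect the main obstacle to be the first step: arranging the supporting class so that its boundary is genuinely modified big and it lies in $H^{1,1}_{\rm BC}$, and verifying that the bpf contraction contracts precisely $R$ and nothing more. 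I would handle this by choosing $\omega'$ generically via the finiteness of the ray decomposition.
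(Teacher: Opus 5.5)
Parts (1) and (2) of your proposal follow the paper's proof. The shared ingredients are the supporting class $K_X+B+\bbeta_X+\omega$, Theorem~\ref{t-bpf}$_n$ with its projectivity clause, $\rho_{\rm BC}(X/Y)=1$ via Lemma~\ref{l-rccfibres}, descent of $R$-trivial $\Q$-Cartier sheaves, and the same formula for $f_*$. Deriving rational singularities of $Y$ from $Rf_*\OO_X=\OO_Y$ instead of from Lemma~\ref{l-birgklt} is fine: it is immediate for birational $f$, and needs a Koll\'ar--Kov\'acs type result when $f$ is of fiber type. For the descent in (2), citing \cite[Proposition 12.1.4]{KM92}, which only needs $R^1f_*\OO_X=0$, saves you from checking the hypothesis $\bbeta\equiv_Y\mathbf N$ of Theorem~\ref{t-projcone}. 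The step you single out as the main obstacle is routine. The genuine gap is in (3).

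In (3) you assert $\rho_{\rm BC}(X^+/Y)=1$, and say that $f^*\beta_Y+cD\mapsto (f^+)^*\beta_Y+cD^+$ is bijective because $\varphi$ is small. Smallness identifies Weil divisors on $X$ and $X^+$. But on a K\"ahler variety $H^{1,1}_{\rm BC}$ is not spanned by divisor classes, so smallness gives no control over $H^{1,1}_{\rm BC}(X^+)$. Your map is well defined and injective. Its surjectivity, however, is equivalent to $\dim H^{1,1}_{\rm BC}(X^+)\le\dim H^{1,1}_{\rm BC}(X)$, i.e.\ to $\rho_{\rm BC}(X^+/Y)\le 1$, and you give no argument for this. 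The paper does not read this off from Theorem~\ref{t-flip}; it proves it, and this is the substantive part of (3).

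The paper's argument runs as follows.
\begin{enumerate}
\item Choose a K\"ahler form $\omega$ with $(K_X+B+\bbeta_X+\omega)^\perp\cap\overline{\rm NA}(X)=R$, and push it forward to obtain the crepant gklt pair $(X^+,B_{X^+}+\bbeta+\overline{\omega})$.
\item For $0<\epsilon\ll 1$ the pair $(X^+,B_{X^+}+\bbeta+(1+\epsilon)\overline{\omega})$ is still gklt. Its K\"ahler model over $Y$ is $X$, because $K_X+B+\bbeta_X+(1+\epsilon)\omega\equiv_Y\epsilon\omega$.
\item Running the corresponding MMP over $Y$ (Proposition 3.11 of \cite{HLX26}) ends with the strongly $\Q$-factorial $X$. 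All its steps are flips, since $X^+\dasharrow X$ is small.
\item Each such flip induces an injection on $H^{1,1}_{\rm BC}$ by the construction above. This yields the missing inequality $\dim H^{1,1}_{\rm BC}(X^+)\le\dim H^{1,1}_{\rm BC}(X)$.
\end{enumerate}

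There is also a smaller omission in (3). ``The same descent argument'' only shows that strict transforms of $R$-trivial $\Q$-Cartier sheaves are $\Q$-Cartier on $X^+$. To conclude that $X^+$ is strongly $\Q$-factorial, you also need one divisor on $X$ that is nontrivial on $R$ and whose strict transform on $X^+$ is $\Q$-Cartier. The paper gets it from $X^+=\Proj_Y\bigoplus_{m\ge 0}f_*(\mathcal L_X^{\otimes m})$ for a line bundle $\mathcal L_X$ with $\mathcal L_X\cdot\Sigma<0$, so that, after passing to a multiple, $\varphi_*\mathcal L_X=\OO_{X^+}(1)$.

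With this in hand, every divisorial sheaf on $X^+$ is $\Q$-Cartier. Twisting its strict transform on $X$ by a multiple of $\mathcal L_X$ makes it $R$-trivial, hence a pullback from $Y$. The corresponding twist on $X^+$ agrees with the pullback of that sheaf in codimension one, so the two reflexive sheaves coincide.
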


\begin{proof}
(1) By the cone theorem (see Theorem \ref{t-cone}) we can find a K\"ahler form $\omega$ on $X$ such that $\alpha:=K_X+B+\bbeta_X+\omega$ is the nef supporting class of $R$. By Theorem \ref{t-bpf}$_n$, $f$ is the contraction defined by $[\alpha]$, which  is projective, and $Y$ is K\"ahler. By Lemma \ref{l-birgklt}, $K_Y+B_Y+\bbeta _Y+\omega _Y:=f_*(K_X+B+\bbeta_X+\omega)$ is gklt.  By Lemma \ref{L-gklt-rational}, $Y$ has rational singularities. Let $V=R^\perp \subset H^{1,1}_{\rm BC}(X)$ be the codimension 1 subspace perpendicular to the ray $R$, then for every $f$-exceptional curve $C$ and any $\alpha\in V$
we have $\alpha \cdot C=0$.
By Lemma \ref{l-rccfibres}, $\alpha =f^*\alpha _Y$ for some $\alpha _Y\in H^{1,1}_{\rm BC}(Y)$. It follows that $\rho _{\rm BC}(X/Y)=1$.

(2) There is a unique irreducible $f$-exceptional divisor $E$ and $E\cdot R<0$ (see Remark \ref{r-div}). Let $\mathcal{L}$ be any rank one reflexive sheaf on $Y$, then $\mathcal{L}_X:=(f^*\mathcal{L})^{\vee\vee}$ is a rank one reflexive sheaf on $X$. Since $X$ is strongly $\Q$-factorial, there exists integers $m\in\N^*,~l\in\Z$ such that 
\begin{itemize}
\item $\mathcal{L}_X^{\otimes m}$ is a line bundle,
\item $\OO_X(lE)$ is a line bundle, and 
\item $c_1(\mathcal{L}_X^{\otimes m}(lE))\cdot R=0$.
\end{itemize}
Since $X$ and $Y$ have rational singularities, then $R^if_*\OO_X=0$ for $i>0$. By \cite[Proposition 12.1.4]{KM92}, $\mathcal{L}_X^{\otimes mk}(lkE)$ descends to a line bundle $\mathcal{L}_Y$ on $Y$ for some $k\in\N^*$ (this can be checked locally over $Y$). It follows that $\mathcal{L}^{[mk]}=\mathcal{L}_Y$ since both sheaves are reflexive and the equality holds on a big open set on $Y$. Thus $\mathcal{L}$ is a $\Q$-line bundle. By Lemma \ref{l-rccfibres} we can regard $H^{1,1}_{\rm BC}(Y)$ as a subspace of $H^{1,1}_{\rm BC}(X)$ via the injection $f^*$, and we can define
\[
f_*: H^{1,1}_{\rm BC}(X)\to H^{1,1}_{\rm BC}(Y)~,~\gamma\mapsto \gamma _Y \qquad {\rm where}\qquad \gamma-\frac{\gamma\cdot \Sigma }{E\cdot \Sigma } [E]=f^*\gamma _Y
\] (the last equality follows from Lemma \ref{l-rccfibres}).
In particular, $H^{1,1}_{\rm BC}(X)\simeq H^{1,1}_{\rm BC}(Y)\oplus\langle E\rangle$.

(3) The flip $f^+:X^+\to Y$ exists by Theorem  \ref{t-flip}. Note that $f^+$ is a small birational morphism.
Let $\mathcal{L}_{X}$ be a line bundle on $X$, then we claim that the reflexive sheaf $\mathcal{L}_{X^+}:=\varphi_*\mathcal{L}_{X}$ is a $\Q$-line bundle on $X^+$, where $\varphi:X\dasharrow X^+$ is the induced map and we can naturally pushforward a reflexive sheaf. There are two cases to consider. If $c_1(\mathcal{L}_{X})\cdot R=0$, then by \cite[Proposition 12.1.4]{KM92} again, possibly after passing to some multiple, we may assume that $\mathcal {L}_{X}=f^*\mathcal {L}_{Y}$ for some line bundle $\mathcal {L}_{Y}$ on $Y$. Then $\mathcal {L}_{X^+}=(f^+)^*\mathcal {L}_{Y}$ is also a line bundle.
Suppose now that $c_1(\mathcal{L}_{X})\cdot R\ne 0$. Possibly replacing $\mathcal{L}_{X}$ by its dual, we may assume that $(\mathcal{L}_{X})^{\vee}$ is $f$-ample. Then the flip is given by \[
f^+: X^+=\Proj_Y\oplus_{m\ge 0}~f_*(\mathcal{L}_X^{\otimes m})\to Y 
\]
(this can be checked locally over $Y$ using \cite{Fuj22}). Replacing $\mathcal{L}_X$ by a multiple we may assume $f_*\mathcal{L}_X$ generates the $\OO_Y$-algebra $\oplus_{m\ge 0}~f_*(\mathcal{L}_X^{\otimes m})$, then $\mathcal{L}_{X^+}:=\OO_{X^+}(1)$ is an $f^+$-(very) ample line bundle, which agrees with $\varphi _*\mathcal{L}_X$ on a big open set and hence $\mathcal{L}_{X+}=\varphi _* \mathcal{L}_X$. 

Suppose now that $\mathcal{L}_{X^+}$ is a divisorial sheaf on $X^+$. Since $\varphi$ is a small birational map, $\mathcal{L}_{X}:=(\varphi ^{-1}_* \mathcal{L}_{X^+})^{\vee\vee}$ is a divisorial sheaf on $X$ and hence a $\Q$-line bundle as $X$ is strongly $\Q$-factorial. By what we have seen above, $\varphi _*\mathcal{L}_{X}$ is a $\Q$-line bundle. Thus $\varphi _*\mathcal{L}_{X}^{[m]}$ is a line bundle for some $m>0$. Since $\varphi _*\mathcal{L}_{X}^{[m]}$ is isomorphic to the reflexive sheaf $\mathcal{L}_{X^+}^{[m]}$ on a big open set, then the two sheaves are isomorphic and hence $\mathcal{L}_{X^+}$ is a $\Q$-line bundle. 
Thus $X^+$ is strongly $\Q$-factorial.

We now focus on $H^{1,1}_{\rm BC}$. Since $f$ is projective, we may fix $\mathcal L_X$ a relatively ample line bundle so that $\mathcal L_X \cdot \Sigma >0$, and $\mathcal L_{X ^+}:=\varphi _*\mathcal L_X$.
Let $\gamma\in H^{1,1}_{\rm BC}(X)$, then $\gamma-\frac{\gamma\cdot \Sigma }{\mathcal L_X \cdot \Sigma}[\mathcal L_X ]=f^*(\gamma_Y)$ for some $\gamma_Y\in H^{1,1}_{\rm BC}(Y)$ by Lemma \ref{l-rccfibres}. Now we define
\[
\varphi_*: H^{1,1}_{\rm BC}(X)\to H^{1,1}_{\rm BC}(X^+)~,~ \gamma\mapsto f^{+,*}(\gamma_Y)+\frac{\gamma\cdot \Sigma }{\mathcal L_X \cdot \Sigma }[\mathcal L_{X^+}].
\]
$\varphi_*$ is canonical since one can check it does not depend on the choice of $\mathcal L_X$. It is easy to see that $\varphi_*$ is injective and so we only need to show $\dim H^{1,1}_{\rm BC}(X)\geq \dim H^{1,1}_{\rm BC}(X^+)$. This will follow if $X^+\dasharrow X$ is induced by a sequence of  gklt flips. In order to see this, let $\omega$ be a K\"ahler form on $X$ such that $(K_X+B+\bbeta_X+\omega)^\perp \cap \overline {\rm NA}(X)= R$ and $K_{X^+}+B_{X^+}+\bbeta_{X^+}+\overline{\omega}_{X^+}:=\varphi _* (K_X+B+\bbeta_X+\omega)$, which is crepant by Lemma \ref{l-rccfibres}. Since $(X^+,B_{X^+}+\bbeta +\overline{\omega})$ is gklt and $\overline{\omega}_{X^+}$ is locally exact, it follows easily that $(X^+,B_{X^+}+\bbeta +(1+\epsilon)\overline{\omega})$ is gklt for $0<\epsilon \ll 1$, and since $K_X+B+\bbeta_X+(1+\epsilon)\omega\equiv _Y\epsilon \omega$ is K\"ahler over $Y$, it follows that 
 $X$ is the K\"ahler model of $(X^+,B_{X^+}+\bbeta_{X^+}+(1+\epsilon)\overline{\omega})$ over $Y$. By \cite[Proposition 3.11]{HLX26} we can run a $(K_{X^+}+B_{X^+}+\bbeta_{X^+}+(1+\epsilon)\overline{\omega}_{X^+})$-MMP over $Y$ such that it terminates with $X$ since $X$ is strongly $\Q$-factorial, and it is a sequence of flips since $X^+\dasharrow X$ is small.

\end{proof}

\section{Existence of log terminal models}\label{s-MMPS}

The goal of this section is to prove Theorem \ref{t-MMPscaling}. More precisely, we will show the following.
\begin{theorem} Assuming Theorems \ref{t-bpf}$_{n}$ and \ref{t-MMPscaling}$_{n-1}$, then Theorem \ref{t-MMPscaling}$_{n}$ holds: 

Let $(X,B+\bbeta )$ be a compact, $n$-dimensional, strongly $\Q$-factorial, K\"ahler, gklt pair such that $B+\bbeta _X$ is big and $\omega$ a modified K\"ahler form such that $K_X+B+\bbeta_X +\omega$ is nef. Then we can run the $K_X+B+\bbeta _X$  minimal model program with scaling which terminates with either a good log terminal model or a Mori fiber space.\end{theorem}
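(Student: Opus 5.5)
The argument follows the scheme of \cite{BCHM10}: first show that each step of the MMP with scaling exists, then prove termination via finiteness of models. For existence, set $\lambda_0=\inf\{t\geq 0: K_X+B+\bbeta_X+t\omega \text{ is nef}\}$. If $\lambda_0=0$, then $K_X+B+\bbeta_X$ is nef and we stop; Theorem \ref{t-bpf}$_n$ then shows the model is good. Otherwise, Theorem \ref{t-cone} gives only finitely many $(K_X+B+\bbeta_X)$-negative extremal rays, because $B+\bbeta_X$ is big. So there is an extremal ray $R$ with $(K_X+B+\bbeta_X+\lambda_0\omega)\cdot R=0$ and $(K_X+B+\bbeta_X)\cdot R<0$. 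Proposition \ref{prop: MMP can be run}, which uses Theorem \ref{t-bpf}$_n$, provides the contraction of $R$, the flip when needed, strong $\Q$-factoriality of the output, and the K\"ahler property of the target (Corollary \ref{c-contractions}). Bigness of $B+\bbeta$ and the gklt condition are preserved, and $K+B+\bbeta+\lambda_0\omega$ remains nef on the new model. We therefore obtain a sequence of divisorial contractions and flips with nonincreasing scaling numbers $\lambda_i$, which stops either at a Mori fiber space or at a model where $K+B+\bbeta$ is nef. The number of divisorial contractions is bounded by $\rho_{\rm BC}(X)$ (Proposition \ref{prop: MMP can be run}(2)), so it remains to show that no infinite sequence of flips can occur.

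For termination, I would first treat the case $K_X+B+\bbeta_X$ not pseudo-effective. Since $B+\bbeta_X$ is big, we may write $K_X+B+\bbeta_X\equiv K_X+B'+\bbeta'_X+\epsilon\omega'$ with $\omega'$ K\"ahler, using Lemma \ref{lem: g-kltperturb}, so that the problem is reduced to the pseudo-effective threshold setting. Set $\lambda=\lim\lambda_i$. If $\lambda_i=\lambda$ for all large $i$, then all later steps are $(K+B+\bbeta+\lambda\omega)$-trivial. Moreover, $K+B+\bbeta+\lambda\omega$ is nef on some model and hence semiample by Theorem \ref{t-bpf}$_n$, so these steps form an MMP over the base $Z$ of the associated contraction. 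Over $Z$ the MMP is projective (Lemma \ref{lem: sm Kahler+Moishezon=proj mor}, Proposition \ref{p-relMMP}), so it terminates by \cite{Fuj22}, Theorem E, since $B+\bbeta$ is big over $Z$. If instead the $\lambda_i$ strictly decrease infinitely often, we need the geography result, Theorem \ref{t-MMPgeography}. Consider the compact polytope-like family of boundaries $B+\bbeta+t\omega$ with $t\in[\lambda,\lambda_0]$. For each such $t$, the pair $(X,B+\bbeta+t\omega)$ has a weak log canonical model, namely the MMP output at the step where $t\in[\lambda_{i+1},\lambda_i]$. There should be only finitely many such models up to isomorphism in codimension one. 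Since every model appears in the MMP at most once, because each step strictly decreases the discrepancies of the corresponding $t$-pair (negativity, Lemma \ref{l-negbir}), this gives termination.

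The main obstacle is proving that finiteness of models. I would follow \cite{BCHM10} and argue by induction on the dimension of the boundary space. Near a fixed $t_0$, a model for $t_0$ is a good log terminal model whose $K+B+\bbeta+t_0\omega$ is semiample by Theorem \ref{t-bpf}$_n$, say defining $g:X_{t_0}\to Z_{t_0}$. For $t$ close to $t_0$, the models of $(X,B+\bbeta+t\omega)$ are relative models over $Z_{t_0}$. These are governed by the relative projective MMP (Theorem \ref{t-projcone}), where finiteness of models over a base is available from \cite{Fuj22} together with the projective generalized theory \cite{BCHM10,HL22}. The delicate point is that $\omega$ is only modified K\"ahler and transcendental, so the relative problem must be rewritten with an $\R$-Cartier class. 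I would do this with Lemmas \ref{l-rel} and \ref{l-rccfibres}, writing $\omega\equiv_{Z_{t_0}}c_1(N)$, and using NQC-type approximation (Lemma \ref{lem: NQC can run alpha-trivial MMP}) to control the finitely many relevant rays. Compactness of $[\lambda,\lambda_0]$ then reduces everything to finitely many local pieces. Theorem \ref{t-MMPscaling}$_{n-1}$ should be needed only when the existence of good models for the lower-dimensional bases, through the canonical bundle formula, is invoked in the non-big case. Confirming that this relative reduction really works in the K\"ahler setting is where I expect the main difficulty.
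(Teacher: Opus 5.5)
There is a genuine gap at the limit value $\lambda=\lim\lambda_i$ when the scaling numbers decrease strictly infinitely often, and that is the only case that matters. You claim every $t\in[\lambda,\lambda_0]$ has a weak log canonical model given by some step of the MMP. But if $\lambda_i>\lambda$ for all $i$, no $X_i$ has $K_{X_i}+B_i+\bbeta_{X_i}+\lambda\omega_i$ nef, so $t=\lambda$ is exactly the point not covered. Your local finiteness argument near $t_0$ starts from a good log terminal model of $(X,B+\bbeta+t_0\omega)$. At $t_0=\lambda$ this is the existence of a log terminal model for a gklt pair with big boundary and pseudo-effective adjoint class, i.e.\ Theorem \ref{t-KahlerBCHM}(2) in dimension $n$. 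That is not among your hypotheses, since Theorem \ref{t-bpf}$_n$ only applies once a nef model exists. Without it, compactness of $[\lambda,\lambda_0]$ gives nothing: local finiteness on $(\lambda,\lambda_0]$ is compatible with infinitely many models accumulating at $\lambda$, which is exactly what a non-terminating MMP with scaling looks like. The same existence input is needed at every point and every face in the inductive proof of finiteness of models, so Theorem \ref{t-MMPgeography} is not available to you either. Note also that finiteness ``up to isomorphism in codimension one'' would be useless, since flips are isomorphisms in codimension one. You need finiteness of the maps $X\dasharrow X_i$ themselves, as in Theorem \ref{t-MMPgeography}(2).

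Supplying this existence result is the main content of the paper's proof, and it needs two ideas your proposal lacks.

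\emph{Big adjoint class.} One reduces to a log smooth pair with $K_X+B+\bbeta_X\equiv\omega+D$, where $\omega$ is K\"ahler and ${\rm Supp}(D)=N_+(K_X+B+\bbeta_X)$. One then inducts, following Birkar's approach, on the number $\theta$ of components of $D$ not in $\lfloor B\rfloor$. Enlarging $B$ by a suitable $C$ lowers $\theta$. The log terminal model of $(X,B+C+\bbeta)$ is then shown to be a weak log canonical model of $(X,B+\bbeta)$, via Lemmas \ref{l-N+C}, \ref{l-contdiv} and negativity (Proposition \ref{p-Md}). The base case $\theta=0$ is special termination for an MMP with scaling of $\omega$ whose flipping loci lie in $\lfloor B\rfloor$. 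It uses finiteness of models in dimension $n-1$ on the components of $\lfloor B\rfloor$, together with Lemmas \ref{l-special} and \ref{l-codim1iso} (Lemma \ref{l-t=0}). So the lower-dimensional theory enters the big case too, not only through the canonical bundle formula.

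\emph{Non-big adjoint class.} One uses Theorem \ref{t-ou} to get the MRC fibration. One then runs a relative MMP to a Mori fibre space $X'\to Z'$ with $K_{X'}+B'+\bbeta_{X'}\equiv_{Z'}0$ and applies Theorem \ref{t-gcbf}. Finally one runs the MMP on $Z'$ by Theorem \ref{t-MMPscaling}$_{n-1}$ and lifts it (Proposition \ref{p-notbigmm}).

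Only after this, in relative form (Theorem \ref{t-relMMP}), does Theorem \ref{t-MMPgeography} follow. Termination is then exactly your final step, and it handles the stabilizing and non-stabilizing cases uniformly, so your separate projective argument for the stabilizing case is not needed.
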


Note that Theorem \ref{t-MMPscaling}$_{n-1}$ immediately implies Theorem \ref{t-KahlerBCHM}$_{n-1}$ by taking a small $\Q$-factorial dlt modification see the beginning of \S \ref{s-application}. 
We will first show Theorem \ref{t-KahlerBCHM}$_n$ (the existence of log terminal models). There are two cases to consider depending on whether $K_X+B+\bbeta _X$ is big, or $K_X+B+\bbeta _X$ is pseudo-effective and $B+\bbeta _X$ is big.
\subsection{The big case} 
Let $X$ be a compact normal K\"ahler variety.
Recall that if $\alpha\in H^{1,1}_{\rm BC}(X)$ is pseudo-effective, then one defines   $\alpha=N(\alpha)+P(\alpha)$  the Boucksom-Zariski decomposition (BZ-decomposition for short) as in \cite[Definition 3.7]{Bou04} and \cite[Appendix A]{DHY23}.
Here \[N(\alpha)=\sum _{Q\subset X}\nu (\alpha ,Q)Q\geq 0\] is the negative part, where $Q$ are prime divisors and $\nu(\alpha ,Q)$ are the corresponding Lelong numbers.
The positive part $P(\alpha):=\alpha -N(\alpha)$ is modified nef (hence nef in codimension 1), $\alpha=P(\alpha)$ iff $\alpha$  is modified nef, and $P(\alpha)$ is big iff so is $\alpha$ (\cite[Proposition 3.8]{Bou04}).
The above decomposition is continuous on the big cone and if $\beta$ is modified nef, then $N(\alpha+\beta)\leq N(\alpha)$.
In particular, if $\omega $ is K\"ahler, then $\lim _{\epsilon \to 0}N(\alpha +\epsilon\omega)=N(\alpha)$.
We recall the following properties.
\begin{lemma}\label{l-BZ-prop} Let $X$ be a compact normal K\"ahler variety, and $\alpha,\beta \in H^{1,1}_{\rm BC}(X)$  pseudo-effective classes. Then
\begin{enumerate}
\item $N(\alpha)$ is an exceptional divisor (in the sense of \cite[Section 3]{Bou04}), and $N(N(\alpha))=N(\alpha)$,
\item $N(t\alpha)=tN(\alpha)$ and $P(t\alpha)=tP(\alpha)$ for all $t\geq 0$,
\item $N(\alpha+\beta)\leq N(\alpha)+N(\beta)$, and
    \item $N(P(\alpha)+C)=C$ for all $0\leq C$ such that ${\rm Supp}(C)\subset {\rm Supp}N(\alpha)$.
\end{enumerate}
\end{lemma}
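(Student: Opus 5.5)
These four properties are essentially formal consequences of Boucksom's definition of the negative part, $N(\alpha)=\sum_Q \nu(\alpha,Q)Q$ with $\nu(\alpha,Q)=\lim_{\epsilon\to 0}\nu(\alpha+\epsilon\omega,Q)$. Here $\nu(\gamma,Q)$ is the minimal generic Lelong number along $Q$ of positive currents in a big class $\gamma$, and $\omega$ is a fixed K\"ahler class. On a normal compact K\"ahler variety I would pass to a resolution $\mu:X'\to X$ and use the conventions of \cite[Appendix A]{DHY23}, where the BZ-decomposition on $X$ is defined by pushing forward that of $\mu^*\alpha$. The statements are stable under $\mu_*$, because pushforward of effective divisors is monotone and linear. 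So the plan is to reduce to the smooth case and cite or reprove \cite[Section 3]{Bou04} there.

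For (2), I would use $\nu(t\gamma,Q)=t\,\nu(\gamma,Q)$ for $t>0$. This holds because $T\mapsto tT$ is a bijection between positive currents in $\gamma$ and in $t\gamma$, and Lelong numbers scale linearly. Passing to the limit with $\epsilon\omega$ replaced by $t\epsilon\omega$ gives $N(t\alpha)=tN(\alpha)$, and then $P(t\alpha)=tP(\alpha)$. The case $t=0$ is trivial.

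For (3), given positive currents $T\in\alpha+\epsilon\omega$ and $S\in\beta+\epsilon\omega$, the current $T+S$ lies in $\alpha+\beta+2\epsilon\omega$. Lelong numbers are additive, so $\nu(\alpha+\beta+2\epsilon\omega,Q)\le \nu(T,Q)+\nu(S,Q)$. Taking infima over $T,S$ and then $\epsilon\to 0$ gives the inequality coefficient-wise.

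For (1), exceptionality of $N(\alpha)$ and the fact that the components $Q_1,\dots,Q_r$ of $N(\alpha)$ have linearly independent classes are \cite[Theorem 3.12]{Bou04}. I would simply quote this, transferring it to normal $X$ through the resolution. For $N(N(\alpha))=N(\alpha)$: an exceptional effective divisor $E$ satisfies $N(E)=E$ by the definition of exceptional in \cite[Definition 3.10]{Bou04}, namely that the only positive current in $[E]$ is $[E]$ itself. Then $\nu([E],Q)=\mathrm{mult}_Q E$, and the $\epsilon$-limit is handled by upper semicontinuity \cite[Proposition 3.5]{Bou04}.

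Property (4) is the main point and the step I expect to need the most care. Let $C\ge 0$ with $\mathrm{Supp}(C)\subset\mathrm{Supp}\,N(\alpha)$. By (3), $N(P(\alpha)+C)\le N(P(\alpha))+N(C)=0+N(C)$. Since $C$ is supported on the exceptional divisor $N(\alpha)$, $C$ is itself exceptional, because sub-divisors of an exceptional divisor are exceptional \cite[Proposition 3.13]{Bou04}; hence $N(C)=C$ by (1). This gives $N(P(\alpha)+C)\le C$.

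For the reverse inequality, write $P(\alpha)+C=P'+N'$ with $N'=N(P(\alpha)+C)\le C$ and $P'$ modified nef. Then $C-N'\ge 0$ is supported on $\mathrm{Supp}\,N(\alpha)$ and satisfies $P(\alpha)+(C-N')=P'$. Choosing $t>0$ large with $tN(\alpha)\ge C-N'$, the class $P(\alpha)+tN(\alpha)=t\alpha+(1-t)P(\alpha)$ has negative part $tN(\alpha)$. This is where the argument becomes delicate; the cleanest route I would try is Boucksom's characterization that $N(\gamma)$ is the minimal effective divisor $D$ such that $\gamma-D$ is modified nef \cite[Proposition 3.8]{Bou04}. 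Applied to $\gamma=P(\alpha)+tN(\alpha)$ and $D=tN(\alpha)-(C-N')$, we get $\gamma-D=P'$ modified nef, so minimality forces $tN(\alpha)\le tN(\alpha)-(C-N')$. Hence $C-N'=0$, i.e.\ $N(P(\alpha)+C)=C$.

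Verifying that minimality characterization, and the identity $N(P(\alpha)+tN(\alpha))=tN(\alpha)$, in the singular setting via pullback to a resolution is the technical heart. I would handle it by noting that $\mu^*P(\alpha)$ plus $\mu$-exceptional corrections stays modified nef, and that all the divisors involved are pushforwards of their counterparts on $X'$.
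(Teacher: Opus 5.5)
Your treatment of (1)--(3) is essentially the paper's, which just cites \cite{Bou04}. Your upper bound $N(P(\alpha)+C)\le N(P(\alpha))+N(C)\le C$ in (4) is also fine. You do not even need $C$ to be exceptional there: $N(C)\le C$ holds for every effective $C$, because the current of integration along $C$ is a positive current in its class.

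The gap is in the reverse inequality. Your argument rests on the identity $N(P(\alpha)+tN(\alpha))=tN(\alpha)$, which you state, call delicate, and propose to settle by pulling back to a resolution with $\mu$-exceptional corrections. That misplaces the difficulty. The identity is needed, and is just as unproved, when $X$ is smooth, and nothing in the resolution sketch produces it. What is missing is a one-line application of (2) and (3) to the decomposition you already wrote down. For $t\ge 1$ we have $t\alpha=(t-1)P(\alpha)+\bigl(P(\alpha)+tN(\alpha)\bigr)$ with $N((t-1)P(\alpha))=0$, hence
\[
tN(\alpha)=N(t\alpha)\le N(P(\alpha)+tN(\alpha))\le N(P(\alpha))+N(tN(\alpha))\le tN(\alpha).
\]
This is exactly the middle step of the paper's proof. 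Likewise, the ``minimality characterization'' you invoke needs no separate verification, on $X$ or on a resolution. If $\gamma\equiv P'+D$ with $P'$ modified nef and $D\ge 0$, then (3) gives $N(\gamma)\le N(P')+N(D)\le D$.

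With these two observations your argument is complete, and it is a mild reorganization of the paper's. The paper proceeds in three steps:
\begin{itemize}
\item it proves (4) for $0\le C\le N(\alpha)$ by a sandwich argument, using that $N(\alpha)-C$ is exceptional;
\item it proves the identity above;
\item it treats general $C$ by applying the first case to $\alpha'=P(\alpha)+tN(\alpha)$, for which $N(\alpha')=tN(\alpha)\ge C$ and $P(\alpha')=P(\alpha)$.
\end{itemize}
You instead compare the decomposition $P'+N'$ of $P(\alpha)+C$ with $\gamma$.

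Both detours can in fact be skipped. Choose $t\ge 1$ with $tN(\alpha)\ge C$ and apply (3) to $t\alpha=(t-1)P(\alpha)+(P(\alpha)+C)+(tN(\alpha)-C)$. This gives $tN(\alpha)\le N(P(\alpha)+C)+tN(\alpha)-C$, i.e.\ $C\le N(P(\alpha)+C)$, with no use of exceptionality at all.

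One minor point on (1). Your semicontinuity sketch for $N(E)=E$ when $E$ is exceptional only yields the trivial inequality $N(E)\le E$, since Lelong numbers can only jump up in weak limits. The reverse inequality is the content of \cite[Proposition 3.11]{Bou04}, which is what the paper cites.
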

\begin{proof} This lemma is proved in \cite{DH26}. We include the proof for the convenience of the reader.

(1) By \cite[Theorem 3.12]{Bou04}  $N(\alpha)$ is exceptional and so by \cite[Proposition 3.11]{Bou04},  $N(N(\alpha))=N(\alpha)$.

(2-3)  See \cite[Proposition 3.9]{Bou04}.

(4) Suppose first that $0\leq C\leq N(\alpha)$. Since  $N(\alpha)$ is exceptional, so is $N(\alpha) -C$. By (1) we have $N(N(\alpha)-C)=N(\alpha)-C$ and $N(C)=C$.
Since $N(P(\alpha))=0$, by property (3) we have\[N(P(\alpha)+C)+N(\alpha) -C\leq N(P(\alpha))+N(C)+N(\alpha) -C=N(\alpha).\]
Properties (3) and (1) imply that \[N(P(\alpha)+N(\alpha))\leq N(P(\alpha)+C)+N(N(\alpha)-C)=N(P(\alpha)+C)+(N(\alpha)-C).\] Putting this together we then have a sequence of inequalities \[N(\alpha)=N(P(\alpha)+N(\alpha))\leq N(P(\alpha)+C)+(N(\alpha)-C) \leq N(\alpha).\]
It follows that the above inequalities are equalities and hence (4) holds if $0\leq C\leq N(\alpha)$. 

Suppose now that $C=tN(\alpha)$.
By what we have shown above, we may assume that $t>1$. By (2) we have $tN(\alpha)=N(t\alpha)$. (1-3) imply that
\[N(t\alpha)\leq N((t-1)P(\alpha))+N(P(\alpha)+tN(\alpha))\leq N(P(\alpha))+N(N(t\alpha))= N(t\alpha).\] Therefore all inequalities are equalities and in particular $N(P(\alpha)+tN(\alpha))=N(t\alpha)$.

Fix $t>0$ such that $C\leq tN(\alpha)$, then $C\leq N(P(\alpha)+tN(\alpha))$ and $P(P(\alpha)+tN(\alpha))=P(\alpha)$. By what we have seen above, it follows that 
\[N(P(\alpha)+C)=N(P(P(\alpha)+tN(\alpha))+C)=C.\]
\end{proof}
\begin{lemma}\label{l-N+} Let $X$ be a compact normal K\"ahler variety, $\alpha\in H^{1,1}_{\rm BC}(X)$ be a big class, and  $N_+(\alpha ):=\cap_{\epsilon >0} {\rm Supp}N(\alpha -\epsilon\omega)$. Then $N_+(\alpha )\geq {\rm Supp}(N(\alpha))$ is the reduced divisor supported on the divisorial components of $E_{\rm nK}(\alpha)$ and there exists $\epsilon _0>0$ such that $N_+(\alpha )={\rm Supp}N(\alpha -\epsilon\omega)$ for all $0<\epsilon<\epsilon _0$.
\end{lemma}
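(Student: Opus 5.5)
The plan is to study the family of divisors $N(\alpha-\epsilon\omega)$ for $0<\epsilon<\epsilon_1$, where $\epsilon_1$ is chosen so that $\alpha-\epsilon_1\omega$ is still big. The first point is monotonicity in $\epsilon$. For $\epsilon<\epsilon'$ we have $\alpha-\epsilon\omega=(\alpha-\epsilon'\omega)+(\epsilon'-\epsilon)\omega$, and $\omega$ is K\"ahler, hence modified nef. By the property recalled before Lemma \ref{l-BZ-prop}, $N(\alpha-\epsilon\omega)\leq N(\alpha-\epsilon'\omega)$. So the supports ${\rm Supp}N(\alpha-\epsilon\omega)$ form a family of reduced divisors that is non-increasing as $\epsilon\downarrow 0$.

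Next I would show that the number of components is uniformly bounded. By Lemma \ref{l-BZ-prop}(1) each $N(\alpha-\epsilon\omega)$ is an exceptional divisor in Boucksom's sense. By \cite[Theorem 3.12]{Bou04}, its prime components are linearly independent in $H^{1,1}_{\rm BC}(X)$ (more precisely in $N^1$ modulo the relevant equivalence). Hence the number of components is at most $\rho=\dim H^{1,1}_{\rm BC}(X)$. A non-increasing family of reduced divisors with boundedly many components can change only finitely many times, so it is eventually constant. This gives $\epsilon_0>0$ with ${\rm Supp}N(\alpha-\epsilon\omega)$ constant for $0<\epsilon<\epsilon_0$. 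That constant divisor equals the intersection $N_+(\alpha)$, which is independent of the choice of $\epsilon_1$. The inclusion $N_+(\alpha)\geq {\rm Supp}N(\alpha)$ follows from monotonicity together with $N(\alpha)\le N(\alpha-\epsilon\omega)$.

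It remains to identify $N_+(\alpha)$ with the divisorial part of $E_{\rm nK}(\alpha)$. By Boucksom's description (see \cite{Bou04}, and \cite{CT15} in the singular setting), $E_{\rm nK}(\alpha)=\bigcap_T E_+(T)$, where $T$ runs over K\"ahler currents in $\alpha$. Equivalently, $E_{\rm nK}(\alpha)=\bigcup_{\epsilon>0}E_{\rm nn}(\alpha-\epsilon\omega)$, with $E_{\rm nn}$ the non-nef locus. The divisorial part of $E_{\rm nn}(\beta)$ is exactly ${\rm Supp}N(\beta)$, because a prime divisor $Q$ lies in $E_{\rm nn}(\beta)$ iff $\nu(\beta,Q)>0$.

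With this, both inclusions follow. If $Q\subset N_+(\alpha)$, then $\nu(\alpha-\epsilon\omega,Q)>0$, so $Q\subset E_{\rm nn}(\alpha-\epsilon\omega)\subset E_{\rm nK}(\alpha)$. Conversely, if a prime divisor $Q$ is contained in $E_{\rm nK}(\alpha)=\bigcup_\epsilon E_{\rm nn}(\alpha-\epsilon\omega)$, then $Q\subset E_{\rm nn}(\alpha-\epsilon\omega)$ for some $\epsilon$. This is a countable increasing union of analytic sets, so by Baire the irreducible $Q$ lies in one of them. Then $\nu(\alpha-\epsilon\omega,Q)>0$, and by monotonicity the same holds for all larger $\epsilon$, hence also for $\epsilon<\epsilon_0$ after shrinking. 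Monotonicity alone goes the wrong way here: as $\epsilon\downarrow0$ the Lelong number decreases. The correct argument is that the union over $\epsilon$ is increasing as $\epsilon\downarrow 0$, so $Q\subset E_{\rm nn}(\alpha-\epsilon'\omega)$ for all $\epsilon'\le\epsilon$, giving $Q\subset N_+(\alpha)$.

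The main obstacle is justifying the comparison $E_{\rm nK}(\alpha)=\bigcup_\epsilon E_{\rm nn}(\alpha-\epsilon\omega)$ on a singular normal $X$. I would handle it by passing to a resolution $\nu:X'\to X$, using $\nu^*\omega$ plus a small exceptional correction to get a K\"ahler class. I would then push the statement forward, checking that non-exceptional prime divisors have the same Lelong numbers upstairs and downstairs.
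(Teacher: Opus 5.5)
Your stabilization argument is fine. It also makes explicit the choice of $\epsilon_0$, which the paper leaves implicit. You do not need the bound by $\rho$, which is awkward on singular $X$: for $\epsilon<\epsilon'$ all the supports already lie among the finitely many components of ${\rm Supp}N(\alpha-\epsilon'\omega)$.

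\textbf{The gap is in the identification with $E_{\rm nK}(\alpha)$.} The identity $E_{\rm nK}(\alpha)=\bigcup_{\epsilon>0}E_{\rm nn}(\alpha-\epsilon\omega)$ is false. By the same monotonicity you used, $E_{\rm nn}(\alpha-\epsilon\omega)\subseteq E_{\rm nn}(\alpha-\epsilon'\omega)$ for $\epsilon<\epsilon'$, so the union is governed by the \emph{largest} admissible $\epsilon$, not the smallest. A concrete counterexample:
\begin{itemize}
\item Take $X=\mathrm{Bl}_p\mathbb P^2$, $\alpha=2H-E$ and $\omega=3H-2E$, both ample.
\item Then $E_{\rm nK}(\alpha)=\emptyset$.
\item But $\alpha-\frac35\omega=\frac15(H+E)$ is big with negative part $\frac15E$, so $E\subset E_{\rm nn}(\alpha-\frac35\omega)$.
\end{itemize}
The correct relation is $E_{\rm nK}(\alpha)=\bigcap_{\epsilon>0}E_{\rm nn}(\alpha-\epsilon\omega)$, the decreasing limit as $\epsilon\downarrow 0$. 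Both of your inclusions rest on the wrong version:
\begin{itemize}
\item In the first inclusion you use $E_{\rm nn}(\alpha-\epsilon\omega)\subset E_{\rm nK}(\alpha)$ for a fixed $\epsilon$, which is false in general.
\item In the converse you must pass from $Q\subset E_{\rm nn}(\alpha-\epsilon\omega)$ for \emph{one} $\epsilon$ to all smaller $\epsilon'$. That is exactly the direction monotonicity does not give. Your ``correct argument'' asserts the opposite of the monotonicity you had noted one sentence earlier.
\end{itemize}
So the inclusion of the divisorial part of $E_{\rm nK}(\alpha)$ into $N_+(\alpha)$ is not proved.

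\textbf{The missing step is a direct comparison with K\"ahler currents.} This is what the paper's chain of equivalences does.
\begin{itemize}
\item Suppose $T\in\alpha$ is a K\"ahler current with $T\geq\delta\omega$ and $Q\not\subset E_+(T)$. For $\epsilon\leq\delta$, $T-\epsilon\omega\geq 0$ lies in $\alpha-\epsilon\omega$ and has generic Lelong number $0$ along $Q$. So $Q\not\subset{\rm Supp}N(\alpha-\epsilon\omega)$ for small $\epsilon$, i.e. $Q\not\subset N_+(\alpha)$.
\item Conversely, suppose $\nu(\alpha-\epsilon\omega,Q)=0$ for \emph{some} $\epsilon>0$. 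Take minimal-singularity currents in $\alpha-\epsilon\omega$ that are $\geq-\delta\omega$ with $\delta<\epsilon$, and add $\epsilon\omega$. This gives K\"ahler currents in $\alpha$ with generic Lelong number $0$ along $Q$, so $Q\not\subset E_{\rm nK}(\alpha)$. Hence $Q\subset E_{\rm nK}(\alpha)$ forces $Q\subset{\rm Supp}N(\alpha-\epsilon\omega)$ for every $\epsilon$, so $Q\subset N_+(\alpha)$.
\end{itemize}
No Baire argument over $\epsilon$ is needed. Since $X$ is normal, the generic point of a prime divisor $Q$ is a smooth point of $X$, so the singularities of $X$ are not the real obstacle you identified.
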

\begin{proof} Since $N(\alpha -\epsilon\omega)\geq N(\alpha)$ for any $\epsilon>0$, then $N_+(\alpha )\geq {\rm Supp}(N(\alpha))$.   By definition (see \cite[Definition 3.1]{DH23}) $E_{\rm nK}(\alpha)=\cap_{T\in \alpha} E_+(T)$ where $T$ runs over all K\"ahler currents contained in the class $\alpha$.
Thus $E_{\rm nK}(\alpha)=\cap_{T\in \alpha} E_+(T)$ where $[T]\in \alpha $ runs over all currents $T\geq \epsilon\omega$ for some $\epsilon>0$, and $E_+(T)\subset X$ is the (analytic) subset of $X$ for which the Lelong numbers of $T$ are strictly positive.
Therefore, a divisor $P$ is contained in $E_{\rm nK}(\alpha)$ iff there exists an $\epsilon _0>0$ such that $P$ is contained in $E_+(T)$ for every $T\in \alpha-\epsilon \omega$ for all $0<\epsilon\leq \epsilon _0$ iff $P$ is contained in ${\rm Supp}E_{\rm nK}(\alpha -\epsilon \omega)$  for all $0<\epsilon<\epsilon _0$ iff $P$ is contained in $N(\alpha -\epsilon \omega)$  for all $0<\epsilon<\epsilon _0$ iff
$P$ is contained in $N_+(\alpha)$. In particular $N_+(\alpha )={\rm Supp}N(\alpha -\epsilon\omega)$ for all $0<\epsilon<\epsilon _0$. 

\end{proof}
\begin{lemma}\label{l-N+C} Let $X$ be a compact normal K\"ahler variety, $\alpha\in H^{1,1}_{\rm BC}(X)$ a big class. Suppose that $C\geq 0$ is an $\R$-divisor supported on $N_+(\alpha)$, then \begin{enumerate} \item $N_+(\alpha+C)=N_+(\alpha)$, 
\item $N(\alpha+C)=N(\alpha)+C$.\end{enumerate}
\end{lemma}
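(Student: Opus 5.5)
We prove (2) first in the range where it is cleanest, and then deduce (1) from it. The plan is to reduce everything to Lemma \ref{l-BZ-prop}(4) applied to the perturbed classes $\alpha-\epsilon\omega$, and then let $\epsilon\to 0$ using Lemma \ref{l-N+}.

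\textbf{Step 1 (perturbation).} By Lemma \ref{l-N+}, fix $\epsilon_0>0$ such that ${\rm Supp}N(\alpha-\epsilon\omega)=N_+(\alpha)$ for all $0<\epsilon<\epsilon_0$. For such $\epsilon$ the class $\alpha-\epsilon\omega$ is big, hence pseudo-effective. Since ${\rm Supp}(C)\subset N_+(\alpha)={\rm Supp}N(\alpha-\epsilon\omega)$, we also have ${\rm Supp}(N(\alpha-\epsilon\omega)+C)\subset{\rm Supp}N(\alpha-\epsilon\omega)$. Lemma \ref{l-BZ-prop}(4), applied to the class $\alpha-\epsilon\omega$ with the effective divisor $N(\alpha-\epsilon\omega)+C$, then gives
\[N(\alpha-\epsilon\omega+C)=N\big(P(\alpha-\epsilon\omega)+N(\alpha-\epsilon\omega)+C\big)=N(\alpha-\epsilon\omega)+C.\]
Taking supports, ${\rm Supp}N((\alpha+C)-\epsilon\omega)=N_+(\alpha)$ for all $0<\epsilon<\epsilon_0$.

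\textbf{Step 2 (part (1)).} The class $\alpha+C$ is big, since $\alpha$ is big and $C$ is effective. Lemma \ref{l-N+} applied to $\alpha+C$ says that $N_+(\alpha+C)={\rm Supp}N(\alpha+C-\epsilon\omega)$ for all sufficiently small $\epsilon>0$. By Step 1 this support equals $N_+(\alpha)$, so (1) follows.

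\textbf{Step 3 (part (2)).} Let $\epsilon\to 0$ in the identity of Step 1. The BZ decomposition is continuous on the big cone, so $N(\alpha-\epsilon\omega)\to N(\alpha)$ and $N(\alpha+C-\epsilon\omega)\to N(\alpha+C)$. Both sides of the identity converge, and the limit is $N(\alpha+C)=N(\alpha)+C$.

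\textbf{Main obstacle.} The delicate point is justifying the use of continuity at $\alpha$ and at $\alpha+C$. Continuity on the big cone needs both classes to lie in the open big cone, which holds by assumption for $\alpha$ and by Step 2's argument for $\alpha+C$. If one prefers to avoid continuity, there is a direct alternative. We have $N(\alpha+C)\le N(\alpha)+N(C)$, and $N(C)\le C$ trivially, since $C$ itself is a positive current in its class; this gives $N(\alpha+C)\le N(\alpha)+C$. For the reverse inequality, the monotonicity $N(\gamma+\epsilon\omega)\le N(\gamma)$ for modified nef perturbations, together with lower semicontinuity of Lelong numbers, gives
\[N(\alpha+C)\ge\limsup_{\epsilon\to 0}N(\alpha+C-\epsilon\omega)=\lim_{\epsilon\to 0}\big(N(\alpha-\epsilon\omega)+C\big)\ge N(\alpha)+C.\]
Combining the two inequalities yields equality and proves (2).
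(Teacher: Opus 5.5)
Your Steps 1--3 are correct and follow the paper's proof. You use Lemma \ref{l-N+} to get ${\rm Supp}(C)\subset{\rm Supp}N(\alpha-\epsilon\omega)$ for small $\epsilon$, then apply Lemma \ref{l-BZ-prop}(4) to obtain $N(\alpha-\epsilon\omega+C)=N(\alpha-\epsilon\omega)+C$; your choice of the divisor $N(\alpha-\epsilon\omega)+C$ makes that application explicit. Part (1) is then read off from supports, and part (2) follows by letting $\epsilon\to 0$ using continuity of $N$ on the big cone.

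The only flaw is in the optional ``continuity-free'' alternative at the end. Monotonicity gives
\[N(\alpha+C)=N\big((\alpha+C-\epsilon\omega)+\epsilon\omega\big)\le N(\alpha+C-\epsilon\omega),\]
and lower semicontinuity of $N$ also only bounds $N(\alpha+C)$ from above by $\liminf_{\epsilon\to 0}N(\alpha+C-\epsilon\omega)$. So the inequality $N(\alpha+C)\ge\limsup_{\epsilon\to 0}N(\alpha+C-\epsilon\omega)$ that you need is exactly the nontrivial half of continuity on the big cone, which is what you were trying to avoid. You should either drop that paragraph or justify the inequality directly, for instance by taking convex combinations of a minimal current in $\alpha+C$ with a K\"ahler current in $\alpha+C$.
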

\begin{proof} (1) By Lemma \ref{l-N+}, $N_+(\alpha )={\rm Supp} (N(\alpha -\epsilon \omega))$ for any $0<\epsilon \ll 1$ and in particular ${\rm Supp}(C)\subset {\rm Supp}(N(\alpha -\epsilon \omega))$. By Lemma \ref{l-BZ-prop}, $N(\alpha -\epsilon \omega+C)=N(\alpha -\epsilon \omega)+C$ and the claim follows.

(2) By Lemma \ref{l-N+} there exists $\epsilon _0>0$ such that ${\rm Supp}(C)\subset {\rm Supp}(N(\alpha -\epsilon \omega))$ for any $0<\epsilon < \epsilon _0$. By Lemma \ref{l-BZ-prop}, $N(\alpha -\epsilon \omega+C)=N(\alpha -\epsilon \omega)+C$. By continuity \[N(\alpha +C)=\lim _{\epsilon \to 0^+}N(\alpha -\epsilon \omega+C)=\lim _{\epsilon \to 0^+}N(\alpha -\epsilon \omega)+C=N(\alpha)+C.\]
\end{proof}

\begin{lemma}\label{l-N+birational} Let $f:X'\to X$ be a birational morphism of compact normal $\Q$-factorial K\"ahler varieties, $F\geq 0$ an $f$-exceptional divisor such that $-F$ is $f$-ample, $E\geq 0$ an $f$-exceptional divisor, and $\alpha\in H^{1,1}_{\rm BC}(X)$. Then
\begin{enumerate}
\item $N(f^*\alpha+E)=f^*(N(\alpha))+E$,
\item if $\alpha$ is big, then $N_+(f^*\alpha+E)=f^{-1}_*N_+(\alpha)+{\rm Ex}(f)$.
\end{enumerate}
\end{lemma}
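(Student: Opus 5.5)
Part (1) reduces to showing that $N(f^*\alpha)=f^*N(\alpha)$ and that $E$ lies in the negative part of $f^*\alpha+E$.

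\emph{Step 1: $N(f^*\alpha)=f^*N(\alpha)$.} I would show $P(f^*\alpha)=f^*P(\alpha)$. For $\le$: any positive current $T\in\alpha$ pulls back to $f^*T\in f^*\alpha$. Hence the Lelong number along a prime divisor $Q\subset X'$ satisfies
\[\nu(f^*\alpha,Q)\le \nu(f^*T,Q)=\mathrm{mult}_Q f^*(\text{divisorial part}).\]
Minimizing over $T$ (with the usual $\epsilon\omega$ approximation) gives $N(f^*\alpha)\le f^*N(\alpha)$. For $\ge$: any positive current $S\in f^*\alpha$ pushes forward to a positive current $f_*S\in\alpha$. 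Also $f^*f_*S=S+G$ with $G$ exceptional, and $G\geq0$ by the negativity lemma (Lemma \ref{l-negbir}), applied in the smooth-form/Siu-decomposition sense. Comparing generic Lelong numbers on non-exceptional divisors, and using $\nu(S,Q)\ge \nu(f^*f_*S,Q)-\mathrm{mult}_Q G$, gives the reverse inequality. A cleaner alternative is the standard fact \cite[Proposition 2.3]{Bou04} that $f^*$ of a modified nef class is modified nef plus nothing. I would argue directly that $f^*P(\alpha)$ is modified nef. It is nef in codimension 1 on non-exceptional divisors. On an exceptional $Q$, write $f^*P(\alpha)=f^*P(\alpha)-\delta F+\delta F$; here $f^*\omega-\delta F$ is Kähler, so $f^*(P(\alpha)+\epsilon\omega)$ admits Kähler-current representatives with arbitrarily small Lelong number along $Q$. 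Hence $N(f^*P(\alpha))=0$. Then Lemma \ref{l-BZ-prop}(4), applied after noting ${\rm Supp} f^*N(\alpha)$ is handled by exceptionality of $N$, gives $N(f^*P(\alpha)+f^*N(\alpha))=f^*N(\alpha)$. This needs $f^*N(\alpha)$ to be an exceptional divisor in Boucksom's sense, which follows because $N(\alpha)$ is exceptional and exceptional divisors pulled back plus $f$-exceptional divisors remain exceptional (their Gram-type negativity is preserved).

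\emph{Step 2: adding $E$.} Here $f^*N(\alpha)+E$ is exceptional: $E$ is $f$-exceptional, and any $f$-exceptional effective divisor is exceptional, since pushforward kills it and the only positive current in its class is the divisor itself by the negativity lemma. Then $N(f^*P(\alpha)+f^*N(\alpha)+E)=f^*N(\alpha)+E$. To show this I would use subadditivity (Lemma \ref{l-BZ-prop}(3)) for $\le$. For $\ge$, take any positive current $S$ in the class and apply the pushforward argument: $f^*f_*S - S - E$ is exceptional and $f$-nef-negative, so by negativity $S\ge$ its divisorial part. This is the main obstacle, namely making the negativity lemma work for currents rather than divisors. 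I would handle it via Siu decomposition and the $-F$ $f$-ample perturbation.

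\emph{Part (2).} By Lemma \ref{l-N+}, $N_+(f^*\alpha+E)={\rm Supp}\,N(f^*\alpha+E-\epsilon\omega')$ for a Kähler $\omega'$ on $X'$ and small $\epsilon$. Choose $\omega'=f^*\omega-\delta F$; the value of $N_+$ is independent of the Kähler form by Lemma \ref{l-N+}, since it is the divisorial part of $E_{nK}$. Then $f^*\alpha+E-\epsilon\omega'=f^*(\alpha-\epsilon\omega)+(E+\epsilon\delta F)$. By (1) its negative part is $f^*N(\alpha-\epsilon\omega)+E+\epsilon\delta F$. We may take $F$ to have full exceptional support, as in the Remark on big classes. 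The support of this divisor is $f^{-1}_*N_+(\alpha)+{\rm Ex}(f)$.
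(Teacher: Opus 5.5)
\textbf{There is a genuine gap in Step~1 of part~(1).} You claim $N(f^*\alpha)=f^*N(\alpha)$, equivalently that $f^*P(\alpha)$ is modified nef. This is not a standard fact, and it is false in general: modified nefness is not preserved under pullback. Both of your justifications fail on exceptional divisors.
\begin{enumerate}
\item For $Q\subset {\rm Ex}(f)$ one only has $\nu(f^*T,Q)\ge {\rm mult}_Q f^*(\text{divisorial part of }T)$. The residual part in the Siu decomposition of $T$ may have positive Lelong numbers along the codimension $\ge 2$ centre $f(Q)$, and these contribute to $\nu(f^*T,Q)$.
\item The $-\delta F$ perturbation cannot lower Lelong numbers along $Q$. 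Every positive current in $f^*(P(\alpha)+\epsilon\omega)$ is of the form $f^*T$, and $\nu(f^*T,Q)\ge \nu(T,f(Q))$. This stays bounded below as $\epsilon\to 0$ whenever $f(Q)$ lies in the non-nef locus of $P(\alpha)$.
\end{enumerate}

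A concrete counterexample: let $C$ be a $(-1,-1)$-curve on a smooth projective threefold $X$, and $\phi\colon X\dashrightarrow X^+$ its flop with flopped curve $C^+$. Let $A^+$ be ample on $X^+$ and $D=\phi^{-1}_*A^+$. Let $f\colon X'\to X$ be the blow-up of $C$ with exceptional divisor $Q$ (so $-Q$ is $f$-ample), and $g\colon X'\to X^+$ the other contraction. Then $D$ is movable, so $N(D)=0$. But $f^*D=g^*A^++aQ$ with $a=A^+\cdot C^+>0$, hence $N(f^*D)=aQ\neq f^*N(D)$ (by the nef case applied to $g$). So (1) as stated needs an extra hypothesis such as $\alpha$ nef, or $P(\alpha)$ nef. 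That is how the paper uses it elsewhere, and its proof of (1) is only the citation of \cite[Lemma A.5]{DHY23}.

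\textbf{What your current-theoretic idea does give, with less work than you feared.} Suppose $S\ge 0$ lies in the class $f^*\alpha+E$. Then $S-f^*f_*S$ is a closed normal current supported on ${\rm Ex}(f)$ in the class of $E$. The support theorem and the linear independence of exceptional classes then give $S=f^*f_*S+[E]$ exactly; no negativity lemma for currents is needed. With the usual $\epsilon\omega$ bookkeeping this yields
\begin{itemize}
\item $N(f^*\alpha+E)=N(f^*\alpha)+E$,
\item $f_*N(f^*\alpha)=N(\alpha)$, and
\item $N(f^*\alpha)\ge f^*N(\alpha)$, with equality when $P(\alpha)$ is nef (by subadditivity).
\end{itemize}

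\textbf{Part (2) survives.} The weaker statement above is all (2) needs, and your argument for (2) is the paper's. The only difference is that you absorb $E$ directly, whereas the paper first reduces to $E=0$ via Lemma \ref{l-N+C}. Note that ${\rm Supp}(F)={\rm Ex}(f)$ holds automatically: every exceptional prime divisor is covered by $f$-contracted curves, on which $F$ is negative. Hence the $f$-exceptional discrepancy $N(f^*(\alpha-\epsilon\omega))-f^*N(\alpha-\epsilon\omega)$ does not change the support, which is still $f^{-1}_*N_+(\alpha)+{\rm Ex}(f)$. The paper's proof of (2) also applies (1) to the non-nef class $\alpha-\epsilon\omega$, and is repaired in the same way.
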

\begin{proof}
(1) See \cite[Lemma A.5]{DHY23}. 

(2) Since $\alpha$ is big, then so is $f^*\alpha$. By Lemma \ref{l-N+C}, we may assume that $E=0$. Rescaling $F$ we may assume that $\omega ':=f^*\omega -F$ is K\"ahler. By Lemma \ref{l-N+}, \[N_+(f^*\alpha)={\rm Supp}N(f^*\alpha-\epsilon \omega ')={\rm Supp}N(f^*(\alpha-\epsilon \omega)+ \epsilon F )\] for any $0<\epsilon \ll 1$. By (1), \[{\rm Supp}N(f^*(\alpha-\epsilon \omega)+ \epsilon F )={\rm Supp}(f^*(N(\alpha-\epsilon \omega))+\epsilon F) =f^{-1}_*N_+(\alpha)+ {\rm Ex}(f).\]
\end{proof}
\begin{lemma}\label{l-contdiv} If $(X,B+\bbeta)$ is a compact gdlt pair  and $\phi :X\dasharrow X'$ a log terminal model (resp. $K_X+B+\bbeta _X$ is moreover big and $\nu:X'\to X^c$ is the log canonical model). Then ${\rm Supp}N (K_X+B+\bbeta _X)$ (resp.  $N_+ (K_X+B+\bbeta _X)$) is the set of divisors contracted by $\phi$ (resp. contracted by $\psi=\nu\circ \phi$).
\end{lemma}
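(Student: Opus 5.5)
Fix a common resolution $p:W\to X$, $q:W\to X'$. The plan is to compare the Boucksom--Zariski decompositions on the two sides of the formula
\[p^*(K_X+B+\bbeta_X)=q^*(K_{X'}+B'+\bbeta_{X'})+E,\]
where $E\geq 0$ is $q$-exceptional. By Definition \ref{d-models}(3), the support of $E$ contains the strict transforms on $W$ of all $\phi$-exceptional divisors. Indeed, the discrepancy of each such divisor strictly increases, so its coefficient in $E$ is positive. Lemma \ref{l-np} gives non-positivity, and strict inequality at the exceptional divisors then gives negativity.

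Since $K_{X'}+B'+\bbeta_{X'}$ is nef, its pullback is nef, hence modified nef. Lemma \ref{l-N+birational}(1), applied in the form $N(q^*\gamma+E)=q^*N(\gamma)+E$ with $N(\gamma)=0$, gives $N(p^*(K_X+B+\bbeta_X))=E$. Applying Lemma \ref{l-N+birational}(1) again, now to $p$ with $E'=0$, gives $N(p^*\alpha)=p^*N(\alpha)$, where $\alpha=K_X+B+\bbeta_X$. Hence $p_*E=N(\alpha)$. The divisors on $X$ in ${\rm Supp}\, p_*E$ are exactly the divisors of $X$ that appear in $E$, i.e.\ the $\phi$-exceptional ones. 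Here we use that $\phi$ is a birational contraction, so $q$-exceptional divisors that are not $p$-exceptional are exactly $\phi$-exceptional, and each of these has positive coefficient. This proves the first statement.

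For the big case, let $\nu:X'\to X^c$ be the log canonical model, so $K_{X'}+B'+\bbeta_{X'}\equiv\nu^*\omega_c$ with $\omega_c$ K\"ahler. This exists by Theorem \ref{t-bpf}; the contraction is birational because the class is big. Put $r=\nu\circ q$, so that $p^*\alpha=r^*\omega_c+E$, where $E\geq 0$ is $r$-exceptional and its support contains all $\psi$-exceptional divisors. These are the $\phi$-exceptional divisors (which appear in $E$) together with the $\nu$-exceptional divisors (which are $r$-exceptional). The plan is to compute $N_+(p^*\alpha)$ and push down. Using Lemma \ref{l-N+birational}(2) in the form $N_+(r^*\omega_c+E)=r^{-1}_*N_+(\omega_c)+{\rm Ex}(r)$, with $N_+(\omega_c)=0$ since $\omega_c$ is K\"ahler, we get $N_+(p^*\alpha)={\rm Ex}(r)$. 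On the other side, Lemma \ref{l-N+birational}(2) for $p$ gives $N_+(p^*\alpha)=p^{-1}_*N_+(\alpha)+{\rm Ex}(p)$. Comparing the non-$p$-exceptional components shows that $N_+(\alpha)$ consists exactly of the divisors on $X$ that are $r$-exceptional, i.e.\ contracted by $\psi$.

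The main obstacle is that Lemma \ref{l-N+birational} is stated for $\Q$-factorial varieties and requires an $f$-exceptional $F$ with $-F$ relatively ample. For this I will take $W$ to be obtained by blowups so that $W\to X$, $W\to X'$ and $W\to X^c$ admit such $F$, and note that $X'$ is strongly $\Q$-factorial. If $X$ or $X^c$ is not $\Q$-factorial, I will instead argue directly from Lemma \ref{l-N+}, realizing $N_+$ as the divisorial part of $E_{\rm nK}$. For this I use that $E_{\rm nK}(r^*\omega_c)$ is the exceptional locus of $r$ (Theorem \ref{thm:null-non-kahler}) and that adding $E$ supported there does not change $N_+$ (Lemma \ref{l-N+C}).
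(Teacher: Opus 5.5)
Your proposal is correct and is essentially the paper's argument: on a common resolution you write $p^*(K_X+B+\bbeta_X)=q^*(K_{X'}+B'+\bbeta_{X'})+E$, compute $N$ (resp.\ $N_+$) of this class on $W$ via Lemma \ref{l-N+birational}, obtaining $E$ (resp.\ ${\rm Ex}(\nu\circ q)$), and then push forward by $p$. One small slip: $E$ is $q$-exceptional, so its support does not contain the strict transforms of the $\nu$-exceptional divisors; you never use that claim, since those divisors enter through ${\rm Ex}(r)$, and your extra care about the $\Q$-factoriality hypotheses of Lemma \ref{l-N+birational} goes beyond the paper, which applies that lemma without comment.
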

\begin{proof} Let $p:W\to X$ and $q:W\to X'$ be a common resolution. By assumption $K_{X'}+B'+\bbeta _{X'}$ is nef (respectively $K_{X'}+B'+\bbeta _{X'}\equiv \nu ^*(K_{X^c}+B^c+\bbeta _{X^c})$ where $K_{X^c}+B^c+\bbeta _{X^c}$ is K\"ahler) and if we write $p^*(K_X+B+\bbeta _X)=q^*(K_{X'}+B'+\bbeta _{X'})+E$ then $E\geq 0$ and its support contains all $\phi$-exceptional divisors. 

Assume that $\phi$ is a log terminal model, then by Lemma \ref{l-N+birational}, \[N(p^*(K_X+B+\bbeta _X))=N(q^*(K_{X'}+B'+\bbeta _{X'})+E)=E\] as $K_{X'}+B'+\bbeta _{X'}$ is nef and $E\geq 0$ is exceptional. 
It follows that \[N (K_X+B+\bbeta _X)=p_*(N(p^*(K_X+B+\bbeta _X)))=p_*(E).\]
This proves the first claim, since the set of divisors contracted by $\phi$ is the support of $p_*E$.

Assume now that $K_X+B+\bbeta _X$ is moreover big. 
By Lemma \ref{l-N+birational} \[N_+(p^*(K_X+B+\bbeta _X))=N_+((\nu\circ q)^*(K_{X'}+B'+\bbeta _{X'})+E)=\]\[(\nu\circ q)^{-1}_*N_+(K_{X^c}+B^c+\bbeta _{X^c})+{\rm Ex}(\nu\circ q)={\rm Ex}(\nu\circ q).\] 
This implies the second claim since 
\[N_+(K_X+B+\bbeta _X)=p_*{\rm Ex}(\nu\circ q)={\rm Ex}(\psi).\]

\end{proof}

\begin{definition}\label{d-M}
We let $\mathcal M_d$ be the set of all $d$-dimensional compact generalized pairs $(X,B+\bbeta)$ such that
\begin{enumerate}
\item $(X,B+\bbeta )$ is gdlt, $\bbeta=\overline{\bbeta_X}$, and $[\bbeta _X]\in H^{1,1}_{\rm BC}(X)$ is K\"ahler, 
\item $K_X+B+\bbeta _X\equiv \omega +D$ where $D\geq 0$, $\omega \in H^{1,1}_{\rm BC}(X)$ is K\"ahler, and  ${\rm Supp}(D)=N_+ (K_X+B+\bbeta _X)$, 
\item $X$ is smooth and $B+D$ has simple normal crossings support, and
\item $(X,B+\bbeta )$ does not have a log terminal model.
\end{enumerate}
We define $\theta (X,B+\bbeta , \omega +D )$ to be the number of components of $D$ not contained in $\lfloor B\rfloor$.
\end{definition} We aim to show that $\mathcal M_d=\emptyset$. 

\begin{lemma}\label{l-t=0}Assume Theorems \ref{t-bpf}$_n$ and  \ref{t-KahlerBCHM}$_{n-1}$ hold. Then $\theta (X,B+\bbeta,\omega +D)>0$ for all $(X,B+\bbeta)\in \mathcal M_d$.
    \end{lemma}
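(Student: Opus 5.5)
We argue by contradiction: suppose $(X,B+\bbeta)\in\mathcal M_d$ has $\theta(X,B+\bbeta,\omega+D)=0$, so every component of $D$ lies in $\lfloor B\rfloor$. The idea, as in \cite[Lemma 5.2]{BCHM10}, is to run an MMP with scaling of $D$, which amounts to an MMP on the gdlt boundary $\lfloor B\rfloor$. First set $\alpha:=K_X+B+\bbeta_X$ and note that $K_X+B+\bbeta_X+tD\equiv \omega+(1+t)D$. Since $\omega$ is K\"ahler and ${\rm Supp}(D)=N_+(\alpha)$, Lemma \ref{l-N+C} gives $N(\alpha+tD)=N(\alpha)+tD$ and $N_+(\alpha+tD)=N_+(\alpha)$ for all $t\ge 0$. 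The plan is to run the $(K_X+B+\bbeta_X)$-MMP with scaling of $D$, starting from $t_0\gg 0$, where $K_X+B+\bbeta_X+t_0D$ is nef; nefness is obtained by rescaling, since $\omega+(1+t)D\equiv(1+t)(\frac{1}{1+t}\omega+D)$ and $\omega$ dominates after a perturbation. Each step contracts an extremal ray $R$ with $\alpha\cdot R<0$ and $D\cdot R>0$. The contraction and the flips exist by Proposition \ref{prop: MMP can be run}, using Theorem \ref{t-bpf}$_n$ after perturbing the gdlt pair to a gklt pair by means of the K\"ahler class $\bbeta_X$, in the same way as in Theorem \ref{t-small Q factorial model}.

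The key observation is that since $D\cdot R>0$, the ray $R$ is spanned by curves lying in some component $S$ of $D$, and $S\subset\lfloor B\rfloor$. Hence every flipping or divisorial locus is contained in $\lfloor B\rfloor$. Termination should then follow by special termination: by adjunction, each step restricts to a step of an MMP with scaling for the $(d-1)$-dimensional gdlt pairs $(S,B_S+\bbeta_S)$ on the strata of $\lfloor B\rfloor$. Since $\bbeta_X$ is K\"ahler, the boundaries $B_S+\bbeta_S$ are big, so Theorem \ref{t-KahlerBCHM}$_{n-1}$, via its MMP-with-scaling version (Theorem \ref{t-MMPscaling}$_{n-1}$), gives termination on each stratum. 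After finitely many steps the flipping locus is therefore disjoint from $\lfloor B\rfloor$, which is impossible by the observation above, so the MMP terminates. Divisorial contractions can occur only finitely often, since the Picard number drops each time.

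At termination either the scaling parameter reaches $0$, giving a log terminal model and contradicting condition (4) of Definition \ref{d-M}, or we reach a step where $\alpha$ is negative on a ray without having reached $t=0$. A Mori fiber space cannot occur, because $\alpha$ is big. One must also check that the output really is a log terminal model: the discrepancy inequality holds by negativity, and the divisors contracted are exactly components of $N(\alpha)\subseteq{\rm Supp}\,D$, consistently with Lemma \ref{l-contdiv}.

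The main obstacle is special termination in this K\"ahler gdlt setting. One must verify that the restricted steps on the strata of $\lfloor B\rfloor$ are genuinely steps of a $(K_S+B_S+\bbeta_S)$-MMP with scaling, with the induced maps on $S$ eventually being isomorphisms in codimension one. This requires the standard discrepancy bookkeeping of \cite{Fuj07}-type arguments, carried over to currents and classes. I would first try to follow \cite[Lemma 5.1]{BCHM10} verbatim, replacing divisorial adjunction by the generalized-pair adjunction already used in Case 3 of Proposition \ref{prop: construct f_i for Z_i}.
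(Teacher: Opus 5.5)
\textbf{There are two genuine gaps: the MMP you propose cannot be started, and the termination step is unproved.}

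\textbf{The scaling divisor and the localization of the rays.} Your MMP with scaling of $D$ cannot be started. If $\alpha=K_X+B+\bbeta_X$ is not nef, pick $\gamma\in\overline{\rm NA}(X)$ with $\alpha\cdot\gamma<0$. Since $\omega$ is K\"ahler, $\omega\cdot\gamma>0$, hence $D\cdot\gamma<0$, and so $(\alpha+tD)\cdot\gamma<0$ for every $t\geq 0$. Your rescaling argument runs the wrong way, because $\frac{1}{1+t}\omega\to 0$ as $t$ grows. If instead $\alpha$ is nef, $X$ is already a log terminal model.

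Your key observation also has the sign reversed. $D\cdot R>0$ says nothing about where the curves of $R$ lie; it is $D\cdot R<0$ that forces them into ${\rm Supp}(D)\subset\lfloor B\rfloor$. The paper scales by $\omega$ instead: $\alpha+t\omega$ is K\"ahler for $t\gg 0$. At each step, $(\alpha_i+t_i\omega_i)\cdot R_i=0$ and $\alpha_i\cdot R_i<0$ give $\omega_i\cdot R_i>0$. Then $\alpha_i\equiv\omega_i+D_i$ gives $D_i\cdot R_i<0$, which is what puts the flipping loci inside $\lfloor B_i\rfloor$. Since $\omega_i$ is no longer K\"ahler after the first step, the scaling is exactly what supplies $\omega_i\cdot R_i>0$.

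\textbf{Special termination.} The induced maps $S_i\dashrightarrow S_{i+1}$ on a component $S$ of $\lfloor B\rfloor$ are not steps of an MMP with scaling for a fixed pair on $S$. They need not be extremal contractions or flips of $S_i$, they may contract or extract divisors of $S$, and the boundary coming from adjunction (the different) changes. So Theorem \ref{t-MMPscaling}$_{n-1}$ cannot be applied to them directly. Note also that Theorem \ref{t-KahlerBCHM}$_{n-1}$ does not by itself give Theorem \ref{t-MMPscaling}$_{n-1}$; the paper proves the reverse implication.

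The missing idea is special finiteness via finiteness of models, which the paper carries out as follows:
\begin{enumerate}
\item It reduces to the gplt case $\lfloor B\rfloor=S$ by absorbing $\epsilon(\lfloor B\rfloor-S)$ into the K\"ahler class $\bbeta_X$.
\item It passes to a log resolution on which $(S',B_{S'})$ is terminal.
\item By Lemma \ref{l-special}, each $S'\dashrightarrow S_i$ is a weak log canonical model of $(S',C'_i+\ggamma'+t\omega'|_{S'})$ with $0\leq C'_i\leq B'_{S'}$, so the boundaries range over a compact polyhedral set.
\item Theorem \ref{t-MMPgeography} in dimension $n-1$ then yields a subsequence with $S_{i_1}\cong S_{i_k}$.
\item The coefficients of the different lie in a finite set and the Cartier index at codimension one points of $S_{i_k}$ is bounded. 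Together with \cite[Lemma 4.3]{BCHM10}, this makes $X_{i_1}\dashrightarrow X_{i_k}$ an isomorphism near codimension one points of $S_{i_1}$.
\item Lemma \ref{l-codim1iso} upgrades this to an isomorphism near $S_{i_1}$, contradicting the fact that the flipping loci lie in $\lfloor B_i\rfloor$.
\end{enumerate}
You explicitly leave this step as the main obstacle, so as written your termination argument is not established.
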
 
  \begin{proof} Suppose by contradiction that $\theta (X,B+\bbeta,\omega +D)=0$ for some $(X,B+\bbeta)\in \mathcal M_d$. We will proceed by running a minimal model program with scaling of $\omega$. Recall that $(X,B+\bbeta)$ is gdlt and $K_X+B+\bbeta _X\equiv D+\omega$ where $D\geq 0$, ${\rm Supp}(D)\subset \lfloor B\rfloor$, and $\omega$ is K\"ahler. Clearly $K_X+B+\bbeta _X+t\omega$ is K\"ahler for $t\gg 0$. We will denote by \[X=X_0\dasharrow X_1\dasharrow X_2\dasharrow \ldots \] the corresponding sequence of flips and
divisorial contractions so that $X_i$ is strongly $\Q$-factorial, $K_{X_i}+B_i+\bbeta_{X_i}+t\omega _i$ is nef for $t_{i-1}\geq t\geq t_i$, where $\phi_i:X\dasharrow X_i$ and $K_{X_i}+B_i+\bbeta_{X_i}+t\omega _i=(\phi_i)_*(K_{X}+B+\bbeta_{X}+t\omega )$. We let
$R_i$ be the corresponding $(K_{X_i}+B_i+\bbeta_{X_i}+t_i\omega _i)$-trivial, $(K_{X_i}+B_i+\bbeta_{X_i})$-negative extremal ray.
We must show that this minimal model program terminates and in particular we may assume that $t_i>0$.

Since $R_i\cdot \omega _i>0$, it follows that $R_i\cdot D_i<0$ where $D_i=(\phi_i)_*D$, so that the flipping locus is contained in the support of $D_i$, and hence in the support of $\lfloor B_i\rfloor$. We will show, via special termination, that after finitely many steps of this minimal model program, each flip is disjoint from the support of $D_i$. Since this is impossible, the minimal model program will terminate after finitely many steps.

Turning to the details of special termination, we first note that after finitely many steps, we may assume that all $\psi_i:X_i\dasharrow X_{i+1}$ are flips and the flipping loci do not contain any stratum of $\lfloor B_i\rfloor$. Let $S$ be a component of $\lfloor B\rfloor $ which is not contracted by $\phi _i:X\dasharrow X_i$ for $i\gg 0$, then we must show that $\psi _i$ is an isomorphism on a neighborhood of $S_i:=(\phi_i)_*S$ for all $i\gg 0$.

Note that \[K_X+B+\bbeta_X\equiv K_X+B'+\bbeta'_X\] where 
$B'=B-\epsilon(\lfloor B\rfloor -S)$, $\bbeta '=\bbeta +\epsilon(\overline{\lfloor B\rfloor -S})$ for $0<\epsilon \ll 1$ so that $\bbeta '_X$ is K\"ahler.
Replacing $(X,B+\bbeta)$ by $(X, B'+\bbeta')$, we may assume that 
$(X,B+\bbeta)$ is gplt and $\lfloor B\rfloor =S$ (the condition $\theta (X,B+\bbeta,\omega +D)=0$ no longer holds and will not be used in the sequel).

We let $\nu :X'\to X$ be a log resolution such that if $K_{X'}+B_{X'}+\bbeta _{X'}=\nu ^*(K_X+B+\bbeta_X)$, $S'=\nu ^{-1}_*S$, and 
\[(K_{X'}+(B_{X'})^{\geq 0}+\bbeta _{X'} )|_{S'}=K_{S'}+B_{S'}+\ggamma_{S'},\]
then $(S', B_{S'}+\ggamma_{S'})$ is generalized terminal (in fact $(S',B _{S'})$ is terminal and $\ggamma _{S'}=\bbeta _{X'}|_{S'}$ is nef and descends to $S'$). Let $F\geq 0$ be a $\nu$-exceptional divisor such that $-F$ is relatively ample. We then have that, for $0<\epsilon \ll 1$,  $[\bbeta _{X'}-\epsilon F]\in H^{1,1}_{\rm BC}(X')$ is K\"ahler and $(X',B':=(B_{X'})^{\geq 0}+\epsilon F)$ is plt, $(S',B _{S'}+\epsilon F|_{S'})$ is terminal, and $\beta '_{X'}:=\bbeta _{X'}-\epsilon F$  and $\gamma'_{S'}:=\ggamma _{S'}-\epsilon F|_{S'}$ are K\"ahler. 
Let $\omega '=\nu ^*\omega$, $\bbeta '=\overline{\beta '_{X'}}$, and  $\ggamma '=\overline{\gamma '_{S'}}$, then  $K_{S'}+B'_{S'}+\ggamma' :=(K_{X'}+B'+\bbeta ')|_{S'}$ is gklt and $\ggamma' _{S'}$ is K\"ahler.
Since \[K_{X'}+B'+\bbeta '_{X'}\equiv \nu ^*(K_X+B+\bbeta _X)+(B_{X'})^{\leq 0}\]
where $(B_{X'})^{\leq 0}$ is effective and $\nu$-exceptional, then the induced rational maps $\phi '_i:X'\dasharrow X_i$ are weak log canonical models for 
$K_{X'}+B'+\bbeta '_{X'}+t\omega '$ for any $t_{i-1}\leq t\leq t_i$.  

By Lemma \ref{l-special}, there are divisors $0\leq C'_i\leq B'_{S'_i}$ such that $\varphi '_i:=\phi '_i|_{S'}:S'\dasharrow S_i$ is a weak log canonical model for $(S',C'_i+\ggamma '_{S'}+t\omega '|_{S'})$ where $t_{i-1}\leq t\leq t_i$, \[(\varphi'_i)_*(K_{S'}+C'_i+\ggamma '_{S'})=K_{S_i}+C_i+\ggamma '_{S_i}=(K_{X_i}+B_i+\bbeta '_{X_i})|_{S_i},\]
$B_i=(\phi _i)_* B=(\phi ' _i)_* B'$, and $\bbeta '_{X_i}=\bbeta _{X_i}$. By Theorem \ref{t-MMPgeography}, we have an infinite subsequence $\varphi '_{i_k}:S'\dasharrow S_{i_k}$ 
such that the induced maps $\sigma _k:S_{i_1}\to S_{i_k}$ are isomorphisms for all $k>0$. 
We also write $(K_{X_i}+S_i)|_{S_i}=K_{S_i}+\Phi _i$ where $0\leq \Phi _i\leq C_i$ and the coefficients of $\Phi _i$ are of the form $1-\frac 1 r$ and bounded from above so that they belong to a finite set. Passing to a subsequence, we may assume that $\sigma ^* _k(\Phi _{i_k})$ is independent of $k$ and hence $X_{i_1}\dasharrow X_{i_k}$ is an isomorphism in a neighborhood of codimension 1 points of $S_{i_1}$. In particular the Cartier index at codimension 1 points of $S_{i_k}$ is bounded (see \cite[Definition-Lemma 3.4.1]{BCHM10}). 
 Therefore, there exists an integer $r>0$ such that if $Q$ is a Weil divisor on $X_{i_k}$, then the coefficients of $Q|_S$ belong to $\frac 1r {\mathbb N}$. 
 If $P$ is a component of $B'-S'$, then the coefficients of $((\phi'_i)_*P)|_{S_{i_k}}$ belong to the finite set $\{0,\frac 1r,\frac 2r,\ldots ,\frac {r-1}r\}$ and therefore there are only finitely many possibilities for the coefficients of $((\phi'_{i_k})_*(B'-S'))|_{S_{i_k}}$. Passing to a subsequence of $\{i_k\}$, we may assume that $\sigma _k^*(((\phi'_{i_k})_*(B'-S'))|_{S_{i_k}})$ is independent of $k$.
By \cite[Lemma 4.3]{BCHM10}, it follows that $X_{i_1}\dasharrow X_{i_k}$ is an isomorphism in a neigbourhood of codimension 1 points of $S_{i_1}$. By Lemma \ref{l-codim1iso}, 
$X_{i_1}\dasharrow X_{i_k}$ is an isomorphism in a neigbourhood of $S_{i_1}$, which is impossible. The lemma is proved.

  \end{proof}
  \begin{lemma}\label{l-special} Let $(X,S+B+\bbeta)$ be a compact generalized pair such that $X$ is smooth, $S+B$ has simple normal crossings support, $S$ is irreducible, $\lfloor B|_S\rfloor=0$, $\bbeta=\overline{\bbeta_X}$, and $\bbeta _X$ is nef. Let $\phi:X\dasharrow X'$ be a weak log canonical model for $(X,S+B+\bbeta_X)$ such that $\phi _*S\ne 0$, and denote
  $(X,S+B+\bbeta )|_S=(S,B_S+\ggamma)$ and $(X',S'+B'+\bbeta )|_{S'}=(S',C_{S'} +\ggamma)$ where $K_{X'}+S'+B'+\bbeta _{X'}=\phi _*(K_{X}+S+B+\bbeta _{X})$.
  
  If $(S,B_S)$ is terminal, then there is a divisor $0\leq C_S \leq B|_S$ such that $\psi :S\dasharrow S'$ is a weak log canonical model for $(S,C_S +\ggamma )$ and $\psi_* C_S =C_{S'}$ where $\psi=\phi|_S$.
  \end{lemma}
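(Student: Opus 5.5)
The approach is the analogue of \cite[Lemma 3.7.5]{BCHM10}: compare discrepancies on $S$ and $S'$ through a common resolution, and let $C_S$ record only the part of $B|_S$ that survives on $S'$.

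First take a common resolution $p:W\to X$, $q:W\to X'$ with $W$ smooth, and let $S_W$ be the strict transform of $S$. Write
\[p^*(K_X+S+B+\bbeta_X)=q^*(K_{X'}+S'+B'+\bbeta_{X'})+E.\]
By Lemma \ref{l-np} and the inequality of discrepancies in the definition of weak log canonical model, $E\geq 0$ and $E$ is $q$-exceptional. Since $\phi_*S\neq 0$, $S_W$ is not a component of $E$. Restricting to $S_W$ and using adjunction gives
\[p_S^*(K_S+B_S+\ggamma_S)=q_S^*(K_{S'}+C_{S'}+\ggamma_{S'})+E|_{S_W},\]
where $p_S,q_S$ are the induced maps and $E|_{S_W}\geq 0$. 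So, apart from how the boundary transforms, $\psi$ is already $(K_S+B_S+\ggamma)$-non-positive.

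Next define $C_S:=B_S-\Theta$, where $\Theta$ is the part of $B_S=B|_S$ supported on $\psi$-exceptional divisors of $S$. Equivalently, $C_S$ is the strict transform $\psi^{-1}_*C_{S'}$ restricted to non-exceptional components. To check $\psi_*C_S=C_{S'}$, we need that every component $Q$ of $C_{S'}$ that is not the image of a divisor of $S$ has coefficient $0$. This is where terminality of $(S,B_S)$ enters. Such a $Q$ is exceptional over $S$, so $a(Q,S,B_S)>0$. The displayed equality then gives
\[-\mathrm{mult}_Q C_{S'}=a(Q,S',C_{S'}+\ggamma)\geq a(Q,S,B_S+\ggamma)>0.\]
This forces $\mathrm{mult}_Q C_{S'}<0$, which is impossible since $C_{S'}\geq 0$; here $C_{S'}\geq 0$ because $(X',S'+B')$ is glc and $B'\geq 0$. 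Hence $C_{S'}$ has no such components. For components that are divisors on $S$, $\psi_*$ of $B_S$ agrees with $C_{S'}$, because the equality on $W$ identifies coefficients at non-exceptional places. Hence $\psi_*C_S=C_{S'}$ and $0\leq C_S\leq B_S$.

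Finally, we verify the weak lc model conditions for $(S,C_S+\ggamma)$. Nefness of $K_{S'}+C_{S'}+\ggamma_{S'}$ is inherited by restriction from $X'$. The pair $(S',C_{S'}+\ggamma)$ is glc by generalized adjunction, using normality of $S'$; if $S'$ is not normal we pass to its normalization as usual. For the discrepancy inequality along $\psi$-exceptional divisors $P$ of $S$, replacing $B_S$ by $C_S$ only lowers the boundary coefficient at $P$ to zero. Therefore
\[a(P,S,C_S+\ggamma)\leq a(P,S,B_S+\ggamma)+\mathrm{mult}_P\Theta\leq a(P,S',C_{S'}+\ggamma)\]
still holds, since $a(P,S,C_S)=-\mathrm{mult}_PC_S=0$ and $a(P,S',C_{S'})\geq 0$ follows from the identity on $W$ and $E\geq 0$. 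The definition requires $C_S\leq$ the given boundary and $\psi$ a birational contraction or with reduced exceptional boundary. $\psi$ might extract divisors, but we showed these carry coefficient zero, consistent with $E=0$ in the definition's convention only when none exist. So I would check that $\psi^{-1}$ has no exceptional divisors: any such $Q$ would have $a(Q,S',C_{S'}+\ggamma)\leq 0$ while $a(Q,S,B_S+\ggamma)>0$, contradicting the displayed inequality.

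The main obstacle is this last point together with the careful bookkeeping of discrepancies under restriction to $S$ (normality of $S'$ and the generalized adjunction). I would handle it via the inequality $E|_{S_W}\geq 0$.
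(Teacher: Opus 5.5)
Your common-resolution setup and your argument that $\psi$ extracts no divisors (terminality of $(S,B_S)$ together with $C_{S'}\geq 0$) are correct and agree with the paper. The gap is in the definition of $C_S$.

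\textbf{The claim $\psi_*B_S=C_{S'}$ is false.} You claim that at places which are divisors on both $S$ and $S'$, the identity $p_S^*(K_S+B_S+\ggamma_S)=q_S^*(K_{S'}+C_{S'}+\ggamma_{S'})+E|_{S_W}$ forces $\psi_*B_S=C_{S'}$. This tacitly assumes that $E|_{S_W}$ is $q_S$-exceptional. That need not hold: a $q$-exceptional divisor can map onto a codimension-two subvariety of $X'$ lying in $S'$, i.e.\ onto a divisor of $S'$. For example, let $X'=C_1\times C_2\times C_3$ with $g(C_i)\geq 2$, $S'=\{c_1\}\times C_2\times C_3$, and $V=\{c_1\}\times\{c_2\}\times C_3$. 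Let $X=\mathrm{Bl}_VX'$ with exceptional divisor $G$, and take $B=bG$ with $0<b<1$. Then:
\begin{itemize}
\item $\phi$ is a weak log canonical model and $E=bG$;
\item $\psi$ is an isomorphism and $B_S=bP$ with $P=G\cap S$;
\item but $C_{S'}=0$.
\end{itemize}
So in general only $\psi_*B_S\geq C_{S'}$ holds. Your two descriptions of $C_S$ are therefore not equivalent, and the first one, $C_S=B_S-\Theta$, violates $\psi_*C_S=C_{S'}$.

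\textbf{The other description breaks condition (3).} Take $C_S=\psi^{-1}_*C_{S'}$ with coefficient $0$ along ${\rm Ex}(\psi)$. This does give $\psi_*C_S=C_{S'}$ and $C_S\leq B_S$. However, for $\psi$-exceptional $P$ the identity only yields $a(P,S',C_{S'}+\ggamma)=-\mathrm{mult}_PB_S+\mathrm{mult}_P(E|_{S_W})$. This can be negative, while $a(P,S,C_S+\ggamma)=0$. Terminality does not help here, since it constrains only divisors exceptional over $S$, not divisors on $S$.

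For example, on the same $X'$ let $D_1=C_1\times\{c_2\}\times C_3$ and $D_2=C_1\times C_2\times\{c_3\}$. Blow up the point $x'=S'\cap D_1\cap D_2$ and take $B=0.6\tilde D_1+0.6\tilde D_2+0.3G$. Then $\phi$ is a weak log canonical model, $E=0.1G$, and $(S,B_S)$ is terminal. Yet for the exceptional curve $P$ one has $a(P,S',C_{S'})=-0.2<0=a(P,S,C_S)$.

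\textbf{The missing idea.} The paper uses a hybrid choice,
$$C_S:=(\psi^{-1}_*C_{S'}+{\rm Ex}(\psi))\wedge B_S,$$
which takes the strict transform of $C_{S'}$ on non-exceptional components and keeps the coefficients of $B_S$ on ${\rm Ex}(\psi)$. Set $F:=E|_{S_W}-p_S^*(B_S-C_S)$, so that $p_S^*(K_S+C_S+\ggamma_S)=q_S^*(K_{S'}+C_{S'}+\ggamma_{S'})+F$. Then:
\begin{itemize}
\item $(p_S)_*F$ vanishes on non-$\psi$-exceptional components and equals $E|_{S_W}\geq 0$ on ${\rm Ex}(\psi)$;
\item $-F$ is $p_S$-nef, because $K_{S'}+C_{S'}+\ggamma_{S'}$ is nef.
\end{itemize}
The negativity lemma then gives $F\geq 0$, so $\psi$ is a weak log canonical model for $(S,C_S+\ggamma)$.
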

  \begin{proof} Let $p:W\to X$ and $q:W\to X'$ be a common resolution and write \[K_W+S_W+B_W+\bbeta _W=p^*(K_X+S+B+\bbeta _X),\]\[ K_W+S_W+C_W+\bbeta _W=q^*(K_{X'}+S'+B'+\bbeta _{X'})\]
  where $S_W=p^{-1}_*S$. Since $\phi$ is $(K_X+S+B+\bbeta _X)$-non-positive, then $(X',S'+B'+\bbeta _{X'})$ is gplt, and  $B_W-C_W=E\geq 0$ is $q$-exceptional. We now write \[K_{S_W}+B_{S_W}+\ggamma _{S_W}=(K_W+S_W+B_W+\bbeta _W)|_{S_W},\]
\[K_{S_W}+C_{S_W}+\ggamma _{S_W}=(K_W+S_W+C_W+\bbeta _W)|_{S_W}.\] Then $K_S+B_S+\ggamma _S=(p|_{S_W})_*(K_{S_W}+B_{S_W}+\ggamma _{S_W})$ and  $K_{S'}+C_{S'}+\ggamma _{S'}=(q|_{S_W})_*(K_{S_W}+C_{S_W}+\ggamma _{S_W})$. Since the support of $B_W-C_W$ does not contain $S$, then $B_{S_W}-C_{S_W}=(B_W-C_W)|_{S_W}\geq 0$. 

Let $P$ be a divisor on $S_W$ which is $p|_{S_W}$-exceptional, but not $q|_{S_W}$-exceptional, then ${\rm mult}_P(C_{S_W})\geq 0$ and so \[{\rm mult}_P(B_{S_W})\geq {\rm mult}_P(C_{S_W})\geq 0\] contradicting the fact that $(S,B_{S}+\ggamma)$ is a generalized terminal pair. Thus $\psi :S\dasharrow S'$ is a birational contraction. We also observe that as $B_{S_W}-C_{S_W}\geq 0$, then $\psi _* B_{S}\geq C_{S'}$. We now define $C_S =(\psi ^{-1}_*C_{S'}+{\rm Ex}(\psi))\wedge B_S$. 
We then have \[(p|_{S_W})^*(K_S+C_S+\ggamma _S)=(q|_{S_W})^*(K_{S'}+C_{S'}+\ggamma _{S'})+F\]
where $F=B_{S_W}-C_{S_W}-(p|_{S_W})^*(B_S-C_S)$. Note that if $P$ is a component of ${\rm Ex}(\psi)$, then ${\rm mult }_P(B_S-C_S)=0$ so that ${\rm mult }_P(F)\geq 0$ and if $P$ is contained in the support of $C_{S'}$, then ${\rm mult }_P(C_{S_W}-\psi ^{-1}_*C_S)=0$ so that if $P$ is also not $p|_{S_W}$-exceptional, then ${\rm mult }_P(F)= 0$. But then  $(p|_{S_W})_*F\geq 0$, and by the negativity lemma (see Lemma \ref{l-negbir}) $F\geq 0$ so that $\psi $ is a weak log canonical model for $(S,C_S+\ggamma _S)$.
  \end{proof}
  \begin{lemma}\label{l-codim1iso} Let $(X,S+B+\bbeta)$ be a compact gplt pair and $\phi :X\dasharrow X'$ be a proper small birational map given by a sequence of the $K_X+S+B+\bbeta_X$ minimal model program.
  If $\phi|_S:S\dasharrow S'$ is an isomorphism and $\phi$ is an isomorphism on a neighborhood of every codimension 1 point of $S$, then $\phi$ is an isomorphism on a neighborhood of $S$.
  \end{lemma}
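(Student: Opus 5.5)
This is the analytic analogue of \cite[Lemma 4.3]{BCHM10} / Kawamata's special termination lemma; I would follow the same approach. The idea is to compare the two sides along $S$ using the ampleness that a flipped side carries relative to its contraction.

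First reduce to a single step. Write $\phi$ as a composition $X=X_0\dasharrow X_1\dasharrow\cdots\dasharrow X_m=X'$. Since $\phi$ is small, no step is a divisorial contraction, so each step is a flip $X_j\to Y_j\leftarrow X_{j+1}$. It suffices to show that the set $U\subset X$ on which $\phi$ is an isomorphism contains $S$.

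Let $p:W\to X$, $q:W\to X'$ be a common resolution (say the normalization of the graph). By the negativity lemma, as in Lemma \ref{l-np},
\[p^*(K_X+S+B+\bbeta_X)=q^*(K_{X'}+S'+B'+\bbeta_{X'})+E,\]
where $E\geq 0$ and ${\rm Supp}(E)$ consists exactly of the divisors over $X$ along which some flip is not an isomorphism; more precisely, $p({\rm Supp}\,E)$ equals the indeterminacy/non-isomorphism locus of $\phi$. So it is enough to show $p({\rm Supp}\,E)\cap S=\emptyset$.

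Restrict to $S_W=p^{-1}_*S$. Because $\phi|_S$ is an isomorphism and $\phi$ is an isomorphism near codimension-one points of $S$, adjunction gives $(K_X+S+B+\bbeta_X)|_S=K_S+B_S+\ggamma_S$, and $\phi|_S$ identifies this with $K_{S'}+B_{S'}+\ggamma_{S'}$. The equality of the different and boundary in codimension one on $S$ is exactly what the codimension-one isomorphism hypothesis provides. Hence $E|_{S_W}$ is pulled back from a divisor which is both $p|_{S_W}$- and $q|_{S_W}$-exceptional. By the negativity lemma (Lemma \ref{l-negbir}) applied on $S_W\to S\cong S'$, we get $E|_{S_W}\equiv 0$ over $S$, so $E|_{S_W}=0$; that is, $S_W\cap{\rm Supp}\,E=\emptyset$.

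Finally, pass from "$E$ does not meet $S_W$" to "$p({\rm Supp}\,E)$ does not meet $S$". Here the main obstacle is that $p^{-1}(S)$ can be larger than $S_W$. I would use the connectedness of fibers of $p$ over points of $S$. If $x\in S\cap p({\rm Supp}\,E)$, then $p^{-1}(x)$ is connected and meets both $S_W$ and ${\rm Supp}\,E$. Argue by induction along the flips instead, as BCHM do. At the first step $j$ where $S_j$ meets the flipping locus of $X_j\to Y_j$, the exceptional locus is covered by curves $C$ with $(K+S+B+\bbeta)\cdot C<0$. If such a curve meets $S_j$ but is not contained in it, then $S_j\cdot C>0$. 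If it is contained in $S_j$, then $\phi|_S$ being an isomorphism forces $C$ to be contracted by $S_j\to S_{j+1}$, which is impossible. In the first case, the flipped side gives curves $C^+$ with $S_{j+1}\cdot C^+<0$, so $C^+\subset S_{j+1}$ while the flipped locus is not an isomorphism there. This contradicts $\phi|_S$ being an isomorphism onto $S'$, using that $S_{j}\to S_{j+1}$ is a bijection and that the fiber of $Y_j$ over the point would be collapsed. To run this I need the flips to be projective over $Y_j$ and the relative Picard number to be one (Proposition \ref{prop: MMP can be run}), so that every curve in a fiber is proportional to the extremal ray. I expect this step, tracking the intersection numbers of $S$ with flipping and flipped curves, to be the core of the argument.
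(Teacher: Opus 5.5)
Your first three steps are the paper's proof: the common resolution, $E\geq 0$ with $p({\rm Supp}\,E)$ equal to the non-isomorphism locus, and $E|_{S_W}=0$. The last identity holds because $q|_{S_W}=\phi|_S\circ p|_{S_W}$ and the codimension-one hypothesis makes $\phi|_S$ identify the adjoint pairs $(S,B_S+\ggamma)$ and $(S',B_{S'}+\ggamma)$. So $E|_{S_W}$ is literally a difference of two equal pullbacks, and the negativity-lemma detour is unnecessary but harmless.

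The gap is in your final paragraph. The induction along the flips rests on the assertion that each restricted step $S_j\dasharrow S_{j+1}$ is an isomorphism (``a bijection''). You use it to dismiss a flipping curve $C\subset S_j$. You use it again in the case $S_j\cdot C>0$, after showing that the flipped curves lie in $S_{j+1}$, so that $S_j\dasharrow S_{j+1}$ is not an isomorphism over that point of $Y_j$. But the hypothesis only says that the composite $\phi|_S$ is an isomorphism. Nothing in your argument prevents a later step from undoing what happened to $S_j$ at step $j$. What rules this out is the monotonicity of discrepancies of the adjoint pairs on the $S_j$ along the MMP. With that tool, however, the induction merely reproduces, step by step, your global identity $E|_{S_W}=0$, and at each step you would still face the same passage from the strict transform of $S$ to $S$ itself. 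The connectedness of $p^{-1}(x)$ does not help either: a connected fibre can meet both $S_W$ and ${\rm Supp}\,E$ even though $S_W\cap{\rm Supp}\,E=\emptyset$.

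The missing ingredient, which the paper uses implicitly in its one-line conclusion, is that ${\rm Supp}\,E=p^{-1}(p({\rm Supp}\,E))$. This holds because, for a sequence of flips, every divisor $F$ over $X$ whose centre on $X$ lies in the non-isomorphism locus satisfies $a(F,X,S+B+\bbeta)<a(F,X',S'+B'+\bbeta)$. For a single flip this is \cite[Lemma 3.38]{KM98}. For a sequence, induct: if the centre is not in the first flipping locus, then its centre on $X_1$ lies in the non-isomorphism locus of $X_1\dasharrow X'$, and discrepancies never decrease.

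Now suppose $x\in S\cap p({\rm Supp}\,E)$. Choose $w\in S_W\cap p^{-1}(x)$, which exists because $S_W\to S$ is surjective. Let $F$ be the exceptional divisor of the blow-up $r:W'\to W$ at $w$. The centre of $F$ on $X$ is $x$, so ${\rm mult}_F(r^*E)=a(F,X',S'+B'+\bbeta)-a(F,X,S+B+\bbeta)>0$, that is, $w\in{\rm Supp}\,E$. This contradicts $S_W\cap{\rm Supp}\,E=\emptyset$. This argument replaces your whole final paragraph and needs neither connectedness nor intersection numbers of $S_j$ with flipping and flipped curves.
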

  \begin{proof}
  Let $p:W\to X$ and $q:W\to X'$ be a common resolution and $E:=p^*(K_X+S+B+\bbeta _X)-q^*(K_{X'}+S'+B'+\bbeta _{X'})$ where $E\geq 0$ and $p(E)$ is the set where $\phi$ is not an isomorphism. We write $(X,S+B+\bbeta )|_S=(S,B_S+\ggamma)$ and $(X',S'+B'+\bbeta )|_{S'}=(S',B_{S'}+\ggamma)$. Since $\phi$ is an isomorphism around codimension 1 points of $S$ and $\phi |_S$ is an isomorphism, then $\phi |_S$ gives an equivalence $(S,B_S+\ggamma)\cong (S',B_{S'}+\ggamma)$. 
  Let $S_W=p_*^{-1}S$, it then follows that \[0=(p|_{S_W})^*(K_S+B_S+\ggamma _S)-(q|_{S_W})^*(K_{S'}+B_{S'}+\ggamma _{S'})=E|_{S_W}.\]
Thus $E\cap S_W=\emptyset$, which implies that $p(E)\cap S=\emptyset$ and hence $\phi $ is an isomorphism on a neighborhood of $S$.
  \end{proof}

\begin{proposition}\label{p-Md} Assume Theorems \ref{t-bpf}$_n$ and  \ref{t-KahlerBCHM}$_{n-1}$ hold. Then  $ \mathcal M_n=\emptyset$.
\end{proposition}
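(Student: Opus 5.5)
We argue by contradiction, following \cite[Lemma 5.2/Theorem 5.?]{BCHM10}: suppose $\mathcal M_n\neq\emptyset$ and choose $(X,B+\bbeta)\in\mathcal M_n$ with $K_X+B+\bbeta_X\equiv\omega+D$ such that $\theta:=\theta(X,B+\bbeta,\omega+D)$ is minimal. By Lemma \ref{l-t=0}, $\theta>0$. The goal is to build a new pair in $\mathcal M_n$ with strictly smaller $\theta$, or to show the original pair has a log terminal model.

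First, define $\lambda:=\sup\{t\geq 0 \mid (X,B+tD+\bbeta)\ \text{is gdlt}\}$ and set $C:=B+\lambda D$. Because $\theta>0$, some component of $D$ lies outside $\lfloor B\rfloor$, and $B+D$ has snc support on a smooth $X$. Hence $\lambda$ is finite, $(X,C+\bbeta)$ is gdlt, and $\lfloor C\rfloor$ contains at least one more component of $D$ than $\lfloor B\rfloor$. Next we check that $(X,C+\bbeta)\in\mathcal M_n$ up to replacing $D$. We have
\[K_X+C+\bbeta_X\equiv \omega+(1+\lambda)D,\]
and by Lemma \ref{l-N+C}, $N_+(K_X+C+\bbeta_X)=N_+(K_X+B+\bbeta_X)={\rm Supp}(D)$. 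Thus properties (1)–(3) of Definition \ref{d-M} hold, after rescaling by Lemma \ref{l-dltmultiples} if necessary. The count satisfies $\theta(X,C+\bbeta,\omega+(1+\lambda)D)<\theta$. By minimality, $(X,C+\bbeta)\notin\mathcal M_n$, so it has a log terminal model $\phi:X\dasharrow Y$.

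The second step is to descend from $C$ back to $B$. By Lemma \ref{l-contdiv}, $\phi$ contracts exactly ${\rm Supp}\,N(K_X+C+\bbeta_X)\subset{\rm Supp}(D)$. On $Y$, by Theorem \ref{t-bpf}$_n$, the class $K_Y+C_Y+\bbeta_Y$ is semiample, giving a contraction $Y\to Z$. Write
\[K_Y+B_Y+\bbeta_Y\equiv \tfrac{1}{1+\lambda}\bigl(K_Y+C_Y+\bbeta_Y\bigr)+\tfrac{\lambda}{1+\lambda}\,\omega_Y',\]
where $\omega_Y'$ is the pushforward of $K_X+B+\bbeta_X-\lambda D$ suitably rewritten. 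More concretely, $K_Y+B_Y+\bbeta_Y+\mu D_Y\equiv_Z 0$-type relations show that $K_Y+B_Y+\bbeta_Y\equiv_Z -\lambda D_Y$ up to $\omega$. We then run a $(K_Y+B_Y+\bbeta_Y)$-MMP over $Z$ with scaling, using Theorem \ref{t-MMPscaling}$_n$ in the relative setting or Proposition \ref{p-relMMP}. This is relatively a $-D_Y$ MMP, and $K_Y+B_Y+\bbeta_Y$ is big over $Z$ because $\bbeta_Y$ is big. The relative MMP terminates with a relative good minimal model $Y\dasharrow Y'$ over $Z$. For large multiples, $K_{Y'}+B_{Y'}+\bbeta_{Y'}+t\cdot(\text{pullback of an ample from }Z)$ is then nef. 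A further scaling argument via Lemma \ref{l-wmm} and Lemma \ref{l-np} then produces a log terminal model for $(X,B+\bbeta)$, contradicting property (4) of Definition \ref{d-M}.

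The main obstacle is this second step. We must show that nefness of the relative model over $Z$ upgrades to a global log terminal model of $(X,B+\bbeta)$, and we must verify that the composite map is $K_X+B+\bbeta_X$-negative on exactly the divisors in $N(K_X+B+\bbeta_X)$. In the transcendental setting there is no ample divisor on $Z$, so we would use the K\"ahler class on $Z$ from Theorem \ref{t-bpf}$_n$. We would perturb along the segment between $K_X+B+\bbeta_X$ and $K_X+C+\bbeta_X$, choosing the parameter near $\lambda$ so that each relative step over $Z$ remains globally negative. This is the analogue of the "scaling near the boundary" argument in \cite{BCHM10}. We would also use Lemma \ref{l-contdiv} to match the contracted divisors, via $N_+$ and the Zariski decomposition properties of Lemma \ref{l-BZ-prop}.
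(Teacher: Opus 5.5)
There is a genuine gap, and it affects both of your steps.

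\textbf{Step 1.} Your choice $\lambda=\sup\{t\mid (X,B+tD+\bbeta)\ \text{gdlt}\}$ gives $\lambda=0$ as soon as $D$ shares a component with $\lfloor B\rfloor$. Definition~\ref{d-M} permits this; that is exactly why $\theta$ only counts components of $D$ outside $\lfloor B\rfloor$. In that case your new boundary equals $B$ and $\theta$ does not drop. You must cap coefficients at $1$. Choose $\lambda$ minimal so that capping the coefficients of $B+\lambda D$ at $1$ creates a new coefficient-one component, and write the capped divisor as $B+C$. Here $C$ denotes the \emph{added} divisor, not the new boundary as in your notation. Then $C\geq 0$ is supported on the components of $D$ not in $\lfloor B\rfloor$, and $K_X+B+C+\bbeta_X\equiv D+C+\omega$. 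Since $\theta$ drops, minimality gives a log terminal model $\phi\colon X\dasharrow X'$ of $(X,B+C+\bbeta)$.

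\textbf{Step 2.} Your descent back to $(X,B+\bbeta)$ misses the key observation and substitutes an argument that is not available.

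Because ${\rm Supp}(C)\subset{\rm Supp}(D)=N_+(D+\omega)$, Lemma~\ref{l-N+C} gives $N(K_X+B+C+\bbeta_X)=N(K_X+B+\bbeta_X)+C$. By Lemma~\ref{l-contdiv}, $\phi$ therefore contracts every component of $C$. Hence $\phi_*(K_X+B+\bbeta_X)=\phi_*(K_X+B+C+\bbeta_X)$ is already nef, and no MMP over $Z$ is needed. In your uncapped version the same computation gives $\phi_*D=0$, so your $D_Y$ is zero and your relative MMP is vacuous.

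What remains is the discrepancy check, which is a negativity-lemma argument. On a common resolution,
\[p^*(K_X+B+\bbeta_X)=q^*(K_{X'}+B'+\bbeta_{X'})+E-p^*C.\]
Here $p_*(E-p^*C)=p_*E-C=N(K_X+B+\bbeta_X)\geq 0$. Also $p^*C-E\equiv_X q^*(K_{X'}+B'+\bbeta_{X'})$ is $p$-nef. So $E-p^*C\geq 0$ by Lemma~\ref{l-negbir}, and $\phi$ is a weak log canonical model of $(X,B+\bbeta)$. Extracting the divisors $Q$ with ${\rm mult}_Q(p_*E-C)=0$ (Corollary~\ref{c-ext}, Lemma~\ref{l-wmm}) gives a log terminal model, contradicting property (4) of Definition~\ref{d-M}.

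The route you propose instead does not work as written:
\begin{itemize}
\item It invokes Theorem~\ref{t-MMPscaling}$_n$, which is circular: this proposition feeds into its proof through Proposition~\ref{p-bigmm} and Theorem~\ref{t-MMPgeography}.
\item Alternatively it invokes Proposition~\ref{p-relMMP}, but you do not verify its hypotheses (gklt, and $\bbeta\equiv_Z\mathbf N$ for an $\R$-Cartier b-divisor) for your gdlt pair on $Y$.
\item The passage from a relative model over $Z$ to a global log terminal model, which you call the main obstacle, is left unresolved.
\end{itemize}
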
\begin{proof}Proceeding by induction, we will assume that $(X,B+\bbeta ,D+\omega )\in \mathcal M_n$ and
$\theta (X,B+\bbeta ,D+\omega )$ is minimal. In particular $(X,B+\bbeta )$ does not have a log terminal model.  By Lemma \ref{l-t=0}, we may assume that $\theta (X,B+\bbeta  ,D+\omega )>0$. We follow the approach of \cite{Bir12}. Define
\[\lambda={\rm min}\{t\geq 0|\lfloor (B+tD )^{\leq 1}\rfloor\ne \lfloor B\rfloor\}\] so that we can write \[(B+\lambda D)^{\leq 1}=B+C,\qquad C\geq 0,\qquad {\rm Supp}(C)\subset {\rm Supp}(D),\qquad {\rm and}\] 
\[\lambda D=C+A,\qquad A\geq 0,\qquad {\rm Supp}(A)\subset {\rm Supp}(\lfloor B\rfloor ),\qquad A\wedge C=0.\]
In other words, the components of $A$ are the common components of $\lfloor B\rfloor$ and $D$, and the components of $C$ are the components of $D$ which are not components of $\lfloor B\rfloor$.
The pair $(X,B+C+\bbeta )$ is log smooth, gdlt, and we may write
\[K_X+B+C+\bbeta _X \equiv D+C+\omega .\]
 Note that ${\rm Supp}(C)\subset {\rm Supp}(D)$ where ${\rm Supp}(D)=N_{+} (D+\omega)$ and so
\[N_{+}(D+\omega )=N_{+} (D+C+\omega)={\rm Supp}(D+C),\]  and $N(D+C+\omega)=N(D+\omega)+C$ (see Lemma \ref{l-N+C}). It follows easily  that $(X,B+C+\bbeta )$ satisfies conditions (1-3) of Definition \ref{d-M}. 
Since  \[\theta (X,B+C+\bbeta  , D+C+\omega )<\theta (X,B+\bbeta  ,D+\omega ),\]  $(X,B+C+\bbeta )$ has a log terminal model, say $\phi :X\dasharrow X'$.
Let $X'\to X^c$ be the corresponding log canonical model (which exists by Theorem \ref{t-bpf}$_n$) and $\psi :X\dasharrow X^c$ the induced birational map. By Lemma \ref{l-contdiv}, $\phi$ contracts \[N(K_X+B+C+\bbeta _X)=N(D+C+\omega )=N(D+\omega )+C\] and hence $\phi _*C=0$, i.e. $\phi _*(K_X+B+\bbeta _X)=\phi _*(K_X+B+C+\bbeta _X)$ is nef.
Let $p:W\to X$ and $q:W\to X'$ be a common resolution, then we may write \[p^*(K_X+B+C+\bbeta _X)=q^*(K_{X'}+B'+\bbeta _{X'})+E\] where $K_{X'}+B'+\bbeta _{X'}$ is nef and $E$ is $q$-exceptional. Thus $p_*E=N(K_X+B+C+\bbeta _X)=N(K_X+B+\bbeta _X)+C$ and in particular $p_*E-C$ is effective.
But then \[p^*(K_X+B+\bbeta _X)=q^*(K_{X'}+B'+\bbeta _{X'})+E-p^*C.\] Since $p^*C-E\equiv _X q^*(K_{X'}+B'+\bbeta _{X'})$ is $p$-nef and $p_*(E-p^*C)\geq 0$, then by the negativity lemma (see Lemma \ref{l-negbir}) we have $E-p^*C\geq 0$.
It follows that $\phi$ is a $\Q$-factorial weak log canonical model for $(X,B+\bbeta )$. Let $X^\sharp\to X'$ be the birational morphism of strongly $\Q$-factorial varieties obtained by extracting the divisors $Q\subset X$ such that ${\rm mult}_Q(p_*E-C)=0$ (see Corollary \ref{c-ext}), then $\phi ^\sharp:X\dasharrow X^\sharp$ is a log terminal model for $(X,B+\bbeta )$ (see Lemma \ref{l-wmm}). This is the required contradiction. 
\end{proof}
\begin{proposition}\label{p-bigmm} Assume Theorems \ref{t-bpf}$_n$ and  \ref{t-KahlerBCHM}$_{n-1}$ hold.
Let $(X,B+\bbeta )$ be a compact, K\"ahler, $n$-dimensional gklt pair such that $K_X+B+\bbeta _X$ is big, then $(X,B+\bbeta )$ has a good log terminal model.
\end{proposition}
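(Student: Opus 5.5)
We reduce Proposition \ref{p-bigmm} to Proposition \ref{p-Md}, i.e. to the emptiness of $\mathcal M_n$. The plan is to pass to a log smooth model, show that model lies in $\mathcal M_n$ unless it has a log terminal model, and then descend the log terminal model back to $X$.

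\emph{Step 1: reduce to a log smooth gdlt pair in the setting of Definition \ref{d-M}.} Since $K_X+B+\bbeta_X$ is big, Lemma \ref{lem: g-kltperturb} lets us replace $(X,B+\bbeta)$ by a numerically equivalent gklt pair $(X,G+\ggamma)$ whose b-divisor descends to a resolution with $\ggamma$ K\"ahler there. By Lemma \ref{l-dltmultiples}, it is harmless to change the pair up to $\equiv$.

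Next, choose a K\"ahler current in $[K_X+B+\bbeta_X]$ with analytic singularities and a log resolution $\nu:X'\to X$ on which $\nu^*(K_X+B+\bbeta_X)\equiv \omega'+D'$. Here $\omega'$ is K\"ahler, $D'\geq 0$, and $B_{X'}+D'+{\rm Ex}(\nu)$ is snc. Set $B'=\nu^{-1}_*B+{\rm Ex}(\nu)$ (or use coefficients $1-\epsilon$ on exceptional divisors so that it stays klt, as in Lemma \ref{l-mms}). Then $K_{X'}+B'+\bbeta_{X'}=\nu^*(K_X+B+\bbeta_X)+F$ with $F\geq 0$ and ${\rm Supp} F={\rm Ex}(\nu)$. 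By Lemma \ref{l-mms}, $(X,B+\bbeta)$ has a log terminal model iff $(X',B'+\bbeta)$ does.

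\emph{Step 2: arrange condition (2) of Definition \ref{d-M}.} We need ${\rm Supp}(D)=N_+(K_{X'}+B'+\bbeta_{X'})$. By Lemma \ref{l-N+}, $N_+(\alpha)={\rm Supp}N(\alpha-\epsilon\omega)$ for small $\epsilon$. Writing $\alpha-\epsilon\omega=N+P$, the positive part $P$ is big, so (after a further blow-up, using \cite{Bou04}) $P\equiv\omega''+D''$ with ${\rm Supp}D''$ controllable. The aim is to take $D=N(\alpha-\epsilon\omega)+\delta D''$ and absorb the remaining K\"ahler part.

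This is the \textbf{main obstacle}. The support of the effective part of the K\"ahler current must be exactly the divisorial part of the non-K\"ahler locus, so any extra components must be traded into $\omega$. I would try first to choose the current $T\in\alpha-\epsilon\omega$ with minimal singularities, whose divisorial Lelong part is $N(\alpha-\epsilon\omega)$, and then handle the remaining non-divisorial singularities by blowing up. On the blow-up, Lemma \ref{l-N+birational} (together with Lemma \ref{l-N+C}) shows that $N_+$ becomes the strict transform of $N_+$ plus all exceptional divisors. This matches the support of $D$ once exceptional divisors are included with small positive coefficients coming from $-F$ being relatively ample.

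With this arranged, either $(X',B'+\bbeta)$ has a log terminal model or it lies in $\mathcal M_n$. The latter is impossible by Proposition \ref{p-Md}.

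\emph{Step 3: conclude.} A log terminal model of $(X',B'+\bbeta)$ yields one for $(X,B+\bbeta)$ by Lemma \ref{l-mms}; call it $(X^m,B^m+\bbeta)$. Here $K_{X^m}+B^m+\bbeta_{X^m}$ is nef and big, and $B^m+\bbeta_{X^m}$ is modified big: in the big case, after the perturbation of Step 1, $\bbeta$ can be taken K\"ahler on a model. Therefore Theorem \ref{t-bpf}$_n$ makes it semiample, so the model is good.
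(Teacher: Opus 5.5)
Your Steps 1 and 3 are essentially the paper's reductions. The gap is in Step 2, which you correctly flag as the crux but do not carry out, and as set up it can genuinely fail.

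In Step 1 you resolve an \emph{arbitrary} K\"ahler current with analytic singularities in $\alpha=[K_X+B+\bbeta_X]$. The big cone is open, so $\alpha-c[Q]$ is big for every prime divisor $Q$ and $0<c\ll 1$. Hence such a current may contain $c[Q]$ for any $Q$, and then $\nu^{-1}_*Q\subset{\rm Supp}(D')$. But by Lemma \ref{l-N+birational}, $N_+$ of the pullback is only $\nu^{-1}_*N_+(\alpha)+{\rm Ex}(\nu)$. So condition (2) of Definition \ref{d-M} fails and Proposition \ref{p-Md} does not apply.

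Your proposed repairs do not close this gap:
\begin{itemize}
\item The claim ``$P\equiv\omega''+D''$ with ${\rm Supp}\,D''$ controllable'' is the same problem moved to $P=P(\alpha-\epsilon\omega)$.
\item Taking $D=N+\delta D''$ with $\delta<1$ and absorbing the rest fails, because $(1-\delta)P$ is only modified nef. So $\epsilon\omega+(1-\delta)P+\delta\omega''$ need not be K\"ahler.
\item A current with minimal singularities does not in general have analytic singularities, so its residual codimension $\geq 2$ part cannot be removed by blowing up.
\end{itemize}

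The missing input is the right choice of current. By \cite{Bou04} there is a K\"ahler current $T\in\alpha$ with analytic singularities such that $E_+(T)=E_{\rm nK}(\alpha)$. Your sketch can be salvaged this way: Demailly-regularize $T_{\min}\in\alpha-\epsilon\omega$ to currents $T_k\geq-\epsilon_k\omega$ with analytic singularities and $E_+(T_k)\subseteq E_+(T_{\min})$. Then $T_k+\epsilon\omega$ is a K\"ahler current in $\alpha$ whose divisorial components lie in ${\rm Supp}\,N(\alpha-\epsilon\omega)=N_+(\alpha)$.

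The paper resolves such a $T$ as follows.
\begin{itemize}
\item It writes $\nu^*\alpha=G'+\omega'$ with $\nu(G')=E_{\rm nK}(\alpha)\cup\nu({\rm Ex}(\nu))$.
\item Since $-G'\equiv_X\omega'$, one gets ${\rm Supp}\,G'=\nu^{-1}\nu(G')\supset{\rm Ex}(\nu)$.
\item By \cite[Lemma 3.8]{DH23}, $E_{\rm nK}(\nu^*\alpha)=\nu^{-1}(E_{\rm nK}(\alpha))\cup{\rm Ex}(\nu)={\rm Supp}\,G'$, which is purely divisorial. So $N_+(G'+\omega')={\rm Supp}\,G'$ by Lemma \ref{l-N+}.
\item It then takes $B'=B_{X'}^{\geq 0}+\epsilon B_{X'}^{\leq 0}+\epsilon(G'+{\rm Ex}(\nu))$ and the b-divisor $\bbeta+\epsilon\bar\omega'$, so that $\bbeta_{X'}+\epsilon\omega'$ is K\"ahler. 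With your $B'$ and $\bbeta$, the class $\bbeta_{X'}$ is only nef and big on the resolution, so Definition \ref{d-M}(1) also needs this adjustment.
\item Now $K_{X'}+B'+\bbeta_{X'}+\epsilon\omega'\equiv(1+\epsilon)\omega'+D'$ with ${\rm Supp}\,D'={\rm Supp}\,G'=N_+$, by Lemma \ref{l-N+C}.
\item Lemmas \ref{l-mms} and \ref{l-dltmultiples} transfer the existence of a log terminal model back to $(X,B+\bbeta)$.
\end{itemize}
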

\begin{proof} Let $(X,B+\bbeta )$ be a $n$-dimensional gklt pair where $K_X+B+\bbeta _X$ is big. By standard reductions, we may assume that $(X,B)$ is log smooth and $\bbeta =\bar \bbeta _X$ where $\bbeta _X$ is K\"ahler (cf. Lemma \ref{lem: g-kltperturb}). 
Pick $\nu :X'\to X$ a log resolution such that $\nu ^*(K_X+B+\bbeta _X)=G'+\omega '$ where $G'\geq 0$, $\nu (G')=E_{\rm nK}(K_X+B+\bbeta _X)+\nu ({\rm Ex}(\nu))$, and $\omega '$ is K\"ahler. Since $-G'\equiv _X\omega '$, then ${\rm Supp}G'=\nu ^{-1}(\nu (G'))$ and ${\rm Ex}(\nu)\subset {\rm Supp}G'$. Note that by \cite[Lemma 3.8]{DH23}, \[E_{\rm nK}(\nu ^*(K_X+B+\bbeta _X))=\nu ^{-1}(E_{\rm nK}(K_X+B+\bbeta _X))\cup {\rm Ex}(\nu)=\nu ^{-1}(\nu (G'))\cup {\rm Ex}(\nu).\] Since $\nu ^{-1}(\nu (G'))\cup {\rm Ex}(\nu)={\rm Supp}(G')$, it follows that $N_+(G'+\omega ')={\rm Supp}(G')$. 
We may assume that $G'+{\rm Ex}(\nu)$ has simple normal crossings support.
We write \[K_{X'}+B_{X'}+\bbeta _{X'}=\nu ^*(K_X+B+\bbeta _X).\]
If $B'=B_{X'}^{\geq 0}+\epsilon B_{X'}^{\leq 0} +\epsilon (G'+{\rm Ex}(\nu))$ where $0<\epsilon \ll 1$, then $(X',B'+\bbeta +\epsilon \bar \omega')$ is gklt and
\begin{equation}\label{eq-1}K_{X'}+B'+\bbeta _{X'}+\epsilon \omega '\equiv (1+\epsilon)\nu ^*(K_X+B+\bbeta _{X})+(1+\epsilon)B_{X'}^{\leq 0}+\epsilon {\rm Ex}(\nu) .\end{equation}
Since $K_{X'}+B_{X'}^{\geq 0}+\frac \epsilon {1+\epsilon} {\rm Ex}(\nu)+\bbeta _{X'}=\nu ^*(K_X+B+\bbeta _X)+B_{X'}^{\leq 0}+\frac \epsilon {1+\epsilon} {\rm Ex}(\nu)$ where $B_{X'}^{\leq 0}+\frac \epsilon {1+\epsilon} {\rm Ex}(\nu)$ is $\nu$-exceptional, then
by Lemma \ref{l-mms}, $(X,B+\bbeta )$ has a log terminal model iff so does $(X',B_{X'}^{\geq 0}+\frac \epsilon {1+\epsilon} {\rm Ex}(\nu)+\bbeta )$.
Since \[K_{X'}+B'+\bbeta _{X'}+\epsilon \omega '\equiv (1+\epsilon)(K_{X'}+B_{X'}^{\geq 0}+\frac \epsilon {1+\epsilon} {\rm Ex}(\nu)+\bbeta _{X'})\]
then $(X',B_{X'}^{\geq 0}+\frac \epsilon {1+\epsilon} {\rm Ex}(\nu)+\bbeta )$ has a log terminal model iff so does $(X',B'+\bbeta +\epsilon\bar \omega ')$ (see Lemma \ref{l-dltmultiples}).
Note that $(X',B'+\bbeta +\epsilon\bar \omega ')$ is gdlt, $\bbeta _{X'}+\epsilon \omega '$ is K\"ahler and \[K_{X'}+B'+\bbeta _{X'}+\epsilon \omega '\equiv (1+\epsilon)G'+(1+\epsilon)B_{X'}^{\leq 0}+\epsilon {\rm Ex}(\nu) +(1+\epsilon)\omega '\] where 
$D':=(1+\epsilon)G'+(1+\epsilon)B_{X'}^{\leq 0}+\epsilon {\rm Ex}(\nu)$ has simple normal crossings and $(1+\epsilon)\omega '$ is K\"ahler.

We now claim that $N_+(K_{X'}+B'+\bbeta _{X'} + \epsilon \omega ')={\rm Supp}(D')={\rm Supp}(G')$. To see this note that by
\eqref{eq-1} \[K_{X'}+B'+\bbeta _{X'} + \epsilon \omega '\equiv (1+\epsilon)(G'+\omega ')+(1+\epsilon)B_{X'}^{\leq 0}+\epsilon {\rm Ex}(\nu).\]
By what we have seen above $N_+((1+\epsilon)(G'+\omega '))=N_+(G'+\omega ')={\rm Supp}(G')$. Since \[{\rm Supp}((1+\epsilon)B_{X'}^{\leq 0}+\epsilon {\rm Ex}(\nu))\subset {\rm Supp}(G')\]
then by Lemma \ref{l-N+C}
\[N_+((1+\epsilon)(G'+\omega ')+(1+\epsilon)B_{X'}^{\leq 0}+\epsilon {\rm Ex}(\nu))={\rm Supp}(G')\] and the claim follows.

Replacing $(X,B+\bbeta, D+\omega )$ by \[(X',B'+\bbeta +\epsilon \bar \omega ', D' +(1+\epsilon) \omega '),\] we may assume that $(X,B+\bbeta,D+\omega)$ satisfies properties (1-3) of Definition \ref{d-M}, and hence $(X,B+\bbeta, D+\omega )$ belongs to $\mathcal M _n$ iff $(X,B+\bbeta_X)$ does not have a log terminal model. By Proposition \ref{p-Md},  $\mathcal M _n$ is empty and so
$(X,B+\bbeta_X)$ has a log terminal model. By Theorem \ref{t-bpf} this is a good log terminal model.
\end{proof}

\subsection{The non big case} 
\begin{proposition}\label{p-notbigmm} Assume Theorems \ref{t-bpf}$_n$ and  \ref{t-KahlerBCHM}$_{n-1}$ hold.
Let $(X,B+\bbeta )$ be a compact, K\"ahler, $d$-dimensional gklt pair such that $B+\bbeta _X$ is modified big and $K_X+B+\bbeta _X$ is pseudo-effective but not big, then $(X,B+\bbeta )$ has a good log terminal model.
\end{proposition}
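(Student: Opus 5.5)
We will mimic Case 2 of the proof of Theorem \ref{t-ind2}, reducing via the MRC fibration and the canonical bundle formula to a lower-dimensional pair where $K+\Delta+\ggamma$ is big. We then conclude using Theorem \ref{t-KahlerBCHM}$_{n-1}$ and Theorem \ref{t-bpf}$_n$.

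\textbf{Step 1: the MRC fibration.} Since $B+\bbeta_X$ is modified big and $\alpha:=K_X+B+\bbeta_X$ is not big, $K_X$ is not pseudo-effective on a resolution. By Theorem \ref{t-ou}, $X$ is uniruled. Let $X\dasharrow Z$ be the MRC fibration, with $Z$ smooth K\"ahler and $\dim Z<\dim X$. Choose a log resolution $\nu:X'\to X$ such that $f':X'\to Z$ is projective and $\bbeta_{X'}$ is K\"ahler (Lemma \ref{lem: g-kltperturb}). Write $\nu^*\alpha=K_{X'}+\Delta'+\bbeta_{X'}-F'$ with $\Delta'=B_{X'}^{\ge0}$ and $F'=B_{X'}^{\le0}$. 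Adding $\epsilon{\rm Ex}(\nu)$ to $\Delta'$ and using Lemma \ref{l-mms}, it suffices to produce a good log terminal model of $(X',\Delta'+\epsilon{\rm Ex}(\nu)+\bbeta)$.

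\textbf{Step 2: relative MMP and the canonical bundle formula.} The restriction to general fibers is pseudo-effective, so $K_{X'}+\Delta'+\bbeta_{X'}$ is pseudo-effective over $Z$. By Lemmas \ref{l-rel} and \ref{l-rccfibres}, $\bbeta_{X'}$ is $\R$-Cartier modulo pullbacks from $Z$. Proposition \ref{p-relMMP} then gives a good minimal model $X''\to Z''$ over $Z$, with $K_{X''}+\Delta''+\bbeta_{X''}\equiv g^*(K_{Z''}+\Delta_{Z''}+\ggamma_{Z''})$, where $g:X''\to Z''$ is projective.

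By Theorem \ref{t-psef}(1) we have $K_{X''/Z}+\ldots$ pseudo-effective, and $K_Z$ is pseudo-effective because $Z$ is not uniruled (Theorem \ref{t-ou}). Hence $\dim Z''<\dim X$, for otherwise $\alpha$ would be big, exactly as in the claim in Case 2 of Theorem \ref{t-ind2}. Theorem \ref{t-gcbf} shows that $(Z'',\Delta_{Z''}+\ggamma)$ is gklt with $\ggamma_{Z''}$ modified big, and after a small $\Q$-factorialization (Theorem \ref{t-small Q factorial model}) we may take $Z''$ strongly $\Q$-factorial. Its log canonical class is pseudo-effective, since it pulls back to a pseudo-effective class.

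\textbf{Step 3: descend to $Z''$, run the MMP there, and lift.} By Theorem \ref{t-KahlerBCHM}$_{n-1}$, $(Z'',\Delta_{Z''}+\ggamma)$ has a good log terminal model $Z''\dasharrow Z_m$. We realize it as an MMP with scaling via Theorem \ref{t-MMPscaling}$_{n-1}$ and lift each step to $X''$ by relative MMPs over the contraction bases, exactly as in the lifting claim of Case 2 of Theorem \ref{t-ind2}. This uses Proposition \ref{p-relMMP} and the fact that $K_{X''}+\Delta''+\bbeta$ is numerically trivial over each base.

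The result is $X''\dasharrow X''_m\to Z_m$ with $K_{X''_m}+\Delta''_m+\bbeta\equiv g_m^*(\text{nef class})$. This class is semiample by Theorem \ref{t-bpf}$_n$, or by pulling back a semiample class.

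\textbf{Step 4: extract a log terminal model, the main obstacle.} The composite $X'\dasharrow X''_m$ is $(K_{X'}+\Delta'+\bbeta)$-non-positive, so it is a weak log canonical model. Lemma \ref{l-wmm} then gives a log terminal model, which is good by Theorem \ref{t-bpf}$_n$. The delicate point is to verify non-positivity of the full composite, including the divisors contracted over $Z''$, using Lemma \ref{l-np} together with the negativity lemmas \ref{l-negbir} and \ref{l-neg}.

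A second delicate point is whether $F'$ and the added $\epsilon{\rm Ex}(\nu)$ disappear on $X''_m$. We handle this with the Boucksom--Zariski argument already used: $F'+\epsilon{\rm Ex}(\nu)$ lies in the negative part $N$ of the class on $X'$, hence is contracted by any minimal model (Lemma \ref{l-contdiv}). We expect the bookkeeping for vertical divisors over $Z''$ (degenerate divisors, Lemma \ref{l-neg}) to be where most of the care is needed.
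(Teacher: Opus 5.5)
Your proposal is correct and follows essentially the paper's route: reduce to a smooth model mapping to the MRC base, run a relative MMP over $Z$ to get a $(K+B+\bbeta)$-trivial fibration, apply the canonical bundle formula, run Theorem \ref{t-MMPscaling}$_{n-1}$ on the base, and lift as in Theorem \ref{t-ind2}. The one variation is that the paper obtains the fibration from the $(K_X+B)$-MMP with scaling of the K\"ahler class $\bbeta_X$ over $Z$, which ends in a Mori fiber space with $K+B+\bbeta\equiv_{Z'}0$, rather than from a relative good minimal model and its ample model; either works. The ``delicate points'' in your Step 4 are automatic: every map in the composite is a $(K+B+\bbeta)$-MMP step or a small crepant modification, so the output is already a log terminal model of $(X',\Delta'+\epsilon{\rm Ex}(\nu)+\bbeta)$, and Lemma \ref{l-mms} finishes without any Boucksom--Zariski bookkeeping.
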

\begin{proof}
By standard arguments we may replace $X$ by a resolution and so we may assume that $X$ is smooth (see Lemma \ref{l-mms}). 
Since $B+\bbeta _X$ is big, but $K_X+B+\bbeta _X$ is not big, then $K_X$ is not pseudo-effective and hence the MRC $f:X\dasharrow Z$ is a pseudo-holomorphic map such that $\dim (X/Z)>0$ and $K_Z$ is pseudo-effective (Theorem \ref{t-ou}). Replacing $Z$ by a higher model, we may assume that $Z$ is a K\"ahler manifold. Let $\nu :X'\to X$ be a resolution such that $f':X'\to Z$ is a morphism and write
\[K_{X'}+B_{X'}+\bbeta _{X'}=\nu ^*(K_X+B+\bbeta _X).\]
Let $B'=B_{X'}^{\geq 0}+\epsilon {\rm Ex}(\nu)$ for some $0<\epsilon \ll 1$, then $(X',B'+\bbeta)$ is also gklt and,
by Lemma \ref{l-mms}, $(X,B+\bbeta )$ has a log terminal model iff so does $(X',B'+\bbeta )$. Note that since $K_{X'}+B'+\bbeta _{X'}$ is not big and $K_{Z}$ is pseudo-effective, then $K_{X'/Z}+B'+\bbeta _{X'}$ is not big. By Theorem \ref{t-psef}, $K_{X'}+B'+\bbeta _{X'}$ is not big over $Z$.
Replacing  $(X,B+\bbeta _X)$ by $(X',B'+\bbeta_{X'})$, we may therefore assume that $f:X\to Z$ is a projective morphism of K\"ahler manifolds and $K_{X}+B+\bbeta _{X}$ is not big over $Z$.
We may also assume that $\bbeta _{X}$ is K\"ahler (cf. Lemma \ref{lem: g-kltperturb}) and in particular $K_X+B$ is not pseudo-effective over $Z$ and $K_X+B+t\bbeta _X$ is K\"ahler for $t\gg 0$. Since general fibers of $f$ are rationally connected, then by Lemma \ref{l-rccfibres}, $f^*:H^0(\Omega ^2_Z)\to H^0(\Omega _X^2)$ is an isomorphism, and so by Lemma \ref{l-rel} $\bbeta _X\equiv c_1(L)+f^*\gamma$ where $L$ is an $\R$-line bundle and $\gamma \in H^{1,1}(Z)$. 
We now run the $K_X+B$ minimal model program with scaling of $\bbeta _{X}$ over $Z$. By Proposition \ref{p-relMMP}, this MMP \[X\dasharrow X_1\dasharrow \ldots  \dasharrow X_n=X'\]terminates with a $(K_X+B)$-Mori fiber space $g:X'\to Z'$ over $Z$ such that $K_{X'}+B'+ \bbeta _{X'}\equiv _{Z'}0$. 
By Theorem \ref{t-gcbf}, \[K_{X'}+B'+\bbeta _{X'}\equiv g^*(K_{Z'}+B_{Z'}+\ggamma _{Z'})\]
where $(Z',B_{Z'}+\ggamma)$ is a gklt pair and $B_{Z'}+\ggamma_{Z'}$ is modified big.

Since $K_X+B+\bbeta _X$ is pseudo-effective, $K_{Z'}+B_{Z'}+\ggamma _{Z'}$ is also pseudo-effective. As in the proof of Theorem \ref{t-ind2}, after replacing $X'\to Z'$ with a birational model $\hat{X}'\to \hat{Z}'$ such that $\hat Z'\to Z'$ is a small strongly $\Q$-factorial modification and $\hat X'\dasharrow X'$ is small, we may assume that $\hat X'$ and $\hat Z'$ are strongly $\Q$-factorial.

By Theorem \ref{t-MMPscaling}$_{n-1}$ there is a $(K_{Z'}+B_{Z'}+\ggamma _{Z'})$-MMP 
\[Z'\dasharrow Z_1\dasharrow \ldots  \dasharrow Z_n\] so that 
$K_{Z_n}+B_{Z_n}+\ggamma _{Z_n}$ is nef.
Following the proof of Theorem \ref{t-ind2}, we can lift this minimal model program to a sequence \[\hat X'\dasharrow X'_1\dasharrow \ldots  \dasharrow X'_n\] such
that each $X'_i\dasharrow X'_{i+1}$ is a sequence of steps of the $K_{\hat X'}+B_{\hat X'}+\bbeta _{\hat X'}$ minimal model program,  $g_i:X'_i\to Z_i$ is a morphism and $K_{X'_i}+B_{X'_i}+\bbeta _{X'_i}=g_i^*(K_{Z_i}+B_{Z_i}+\ggamma _{Z_i})$. In particular $K_{X'_n}+B_{X'_n}+\bbeta _{X'_n}$ is nef
and $(X'_n,B_{X'_n}+\bbeta _{X'_n})$ is the sought log terminal model. By Theorem \ref{t-bpf} this is a good log terminal model.\end{proof}

The above proof also gives relative log terminal models.
\begin{theorem}\label{t-relMMP} Assume Theorems \ref{t-bpf}$_n$ and  \ref{t-KahlerBCHM}$_{n-1}$ hold. Let $f:X\to S$ be a morphism of normal compact K\"ahler varieties where $n=\dim X$. \begin{enumerate}
\item If $(X,B+\bbeta )$ is a gklt pair such that $B+\bbeta _X$ is modified big over $S$ and $K_X+B+\bbeta _X$ is pseudo-effective over $S$, then there exists a $K_X+B+\bbeta _X$ log terminal model over $S$.
\item If $(X,B+\bbeta )$ is a gklt pair such that $B+\bbeta _X$ is modified big over $S$ and $K_X+B+\bbeta _X$ is nef over $S$, then $K_X+B+\bbeta _X$ is semiample over $S$.
\end{enumerate}
\end{theorem}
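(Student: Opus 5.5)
The proof of Theorem \ref{t-relMMP} will follow the absolute arguments of Propositions \ref{p-bigmm} and \ref{p-notbigmm}, made relative over $S$. The standard device is to absorb a large pullback of a K\"ahler class from $S$ into the nef part. Fix a K\"ahler form $\omega_S$ on $S$. For $m\gg 0$ we replace $\bbeta$ by $\bbeta+m\overline{f^*\omega_S}$. The pair $(X,B+\bbeta+m\overline{f^*\omega_S})$ is still gklt, and the following should hold for $m$ large:
\begin{itemize}
\item[(a)] $B+\bbeta_X+mf^*\omega_S$ becomes modified big (globally), using that $B+\bbeta_X$ is modified big over $S$;
\item[(b)] $K_X+B+\bbeta_X+mf^*\omega_S$ becomes pseudo-effective (resp. nef) globally, using that it is pseudo-effective (resp. nef) over $S$.
\end{itemize}
For (b) in the nef case, nefness over $S$ means $K_X+B+\bbeta_X\equiv_S\alpha'$ with $\alpha'$ nef. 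I would need a version of the relative-to-global statement; I intend to use the cone theorem (Theorem \ref{t-cone}) together with boundedness of the lengths of extremal rational curves, $-(K_X+B+\bbeta_X)\cdot\Gamma_i\le 2n$. Choosing $m$ with $mf^*\omega_S\cdot\Gamma>2n$ for every non-$f$-vertical rational curve $\Gamma$ kills all negative extremal rays not contracted by $f$. For pseudo-effectivity over $S$ I would argue similarly via limits of K\"ahler-plus-pullback classes.

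For (1), once $K_X+B+\bbeta_X+mf^*\omega_S$ is globally pseudo-effective and $B+\bbeta_X+mf^*\omega_S$ is modified big, we are in the setting of the absolute existence result. We apply Proposition \ref{p-bigmm} if the class is big, and Proposition \ref{p-notbigmm} otherwise, both of which assume Theorems \ref{t-bpf}$_n$ and \ref{t-KahlerBCHM}$_{n-1}$. This produces a log terminal model $\phi:X\dasharrow X'$. It remains to check that $\phi$ is a model over $S$, i.e. that $X'\to S$ is a morphism. Here I will use the length bound again: with $m$ chosen as above, every step of the MMP (via Lemma \ref{lem: NQC can run alpha-trivial MMP}, noting that $f^*\omega_S$ is nef and can be handled by the same length argument as the NQC case) is $f^*\omega_S$-trivial. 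Hence each extremal contraction is over $S$, and $X'$ maps to $S$.

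Since Proposition \ref{p-bigmm} is not phrased as an MMP, I would instead rerun its proof, namely the $\mathcal M_d$ argument, verifying that every log terminal model appearing there is $f^*\omega_S$-trivial. Alternatively, I would pass to a log terminal model and apply Lemma \ref{l-np} and the negativity lemma to see that the $f^*\omega_S$ part pulls back. A log terminal model over $S$ then follows because nefness over $S$ is implied by global nefness.

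For (2), after the same replacement, $K_X+B+\bbeta_X+mf^*\omega_S$ is globally nef and $B+\bbeta_X+mf^*\omega_S$ is modified big. Theorem \ref{t-bpf}$_n$ then gives a contraction $g:X\to Y$ with $K_X+B+\bbeta_X+mf^*\omega_S\equiv g^*\gamma$ and $\gamma$ K\"ahler. To obtain semiampleness over $S$, we must show that $g$ factors through $f$. Any curve contracted by $g$ is trivial for the sum, and for $m\gg0$ the two summands can be forced to be separately trivial by the same length/NQC argument. Then Lemma \ref{l-contraction}(3) gives the factorization $Y\to S$ for $m$ large. Finally, $K_X+B+\bbeta_X\equiv_S g^*\gamma$ with $\gamma$ K\"ahler over $S$, so the class is semiample over $S$.

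The main obstacle is the uniform choice of $m$. We need a single $m$ such that all relevant curves (the extremal rays, and the $g$-contracted curves in (2)) are $f^*\omega_S$-trivial. I expect this to require an NQC-type argument for $f^*\omega_S$, since $\omega_S$ need not be rational, combined with the $2n$ length bound from Theorem \ref{t-cone}.
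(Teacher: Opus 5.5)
Your proposal is correct and is essentially the paper's proof. Both replace $\bbeta$ by $\bbeta+\overline{f^*\omega_S}$ for a sufficiently positive K\"ahler form $\omega_S$ on $S$, use the length bound $\le 2\dim X$ from Theorem~\ref{t-cone} to see that every relevant $(K_X+B+\bbeta_X)$-negative extremal ray is $f^*\omega_S$-trivial, and then invoke Propositions~\ref{p-bigmm} and \ref{p-notbigmm} for (1) and Theorem~\ref{t-bpf} for (2).

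The ``uniform choice of $m$'' is not a real obstacle and needs no NQC argument. The paper simply fixes $\omega_S$ with $\int_C\omega_S>2\dim X$ for every curve $C\subset S$, which is possible because curve volumes on a compact K\"ahler space are bounded below. In (2), a $g$-contracted curve $C$ that is not $f$-vertical would satisfy $(K_X+B+\bbeta_X)\cdot C<-2\dim X$. The cone theorem then gives a $(K_X+B+\bbeta_X)$-negative extremal ray of length $\le 2\dim X$ inside the face $(K_X+B+\bbeta_X+f^*\omega_S)^\perp$, which is impossible. This cone-theorem decomposition is the step your ``same length/NQC argument'' needs to spell out.
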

\begin{proof} (1) This follows by the same proof as above, simply by replacing $(X,B+\bbeta )$ by $(X,B+\bbeta +\bar \eta)$ where $\eta =f^*\omega _S$ and $\omega _S$ is a K\"ahler form on $S$ such that $\int _C\omega _S> 2\dim X$ for any curve $C\subset S$. Then the arguments of Propositions \ref{p-bigmm} and \ref{p-notbigmm} apply.

(2) By Theorem \ref{t-small Q factorial model}, we may assume that $X$ is strongly $\Q$-factorial. Let $R=\R^+[C]$ be a $K_X+B+\bbeta _X+\eta$ negative extremal ray, then we may assume that $0>(K_X+B+\bbeta _X)\cdot C\geq -2\dim X$. By our choice of $\eta$, we must have $\eta \cdot R=0$. Since $K_X+B+\bbeta _X$ is nef over $S$, this is a contradiction and so there are no $K_X+B+\bbeta _X+\eta$ negative extremal rays. By Theorem \ref{t-cone}, $K_X+B+\bbeta _X+\eta$ is nef.  Replacing $\eta$ by a multiple, we may assume that $B+\bbeta _X+\eta$ is big. By Theorem \ref{t-bpf}$_n$ we know $K_X+B+\bbeta _X+\eta$ is semiample, and hence there is a morphism $g:X\to Z$ such that $K_X+B+\bbeta _X+\eta=g^*\omega _Z$ where $\omega _Z$ is K\"ahler.
Since $-K_X\equiv _Z B+\bbeta _X+\eta$ is big over $Z$, then $g:X\to Z$ is Moishezon, and in particular the fibers are covered by curves. It suffices to show that there is an induced morphism $Z\to S$. Suppose that this is not the case, then there is a curve $C\subset X$ such that $\int _C g^*\omega _Z=0$, but $\int _C f^*\omega _S\ne 0$ and hence \[-\int _C(K_X+B+\bbeta _X)=\int _C f^*\omega _S>2\dim X.\] By Theorem \ref{t-cone} there exists a $(K_X+B+\bbeta _X)$-negative, $(K_X+B+\bbeta _X+\eta )$-trivial extremal ray $R=\R^+[\Sigma ]$ such that $\int _\Sigma (K_X+B+\bbeta _X)\geq -2\dim X$. But then $\int _\Sigma (K_X+B+\bbeta _X+\eta )>0 $ which is a contradiction and the claim is proven.\end{proof}
Following \cite[Corollary 1.1.5]{BCHM10} and \cite[Theorem 3.18]{DHY23}, we prove the following result on the geography of log canonical models.

\begin{theorem}\label{t-MMPgeography} Assume Theorems \ref{t-bpf}$_n$ and  \ref{t-KahlerBCHM}$_{n-1}$ hold. Let $f:X\to S$ be a morphism of normal compact K\"ahler varieties where $n=\dim X$, and $\nu :X'\to X$ a resolution.
Let $\mathcal P\subset {\rm Div}_\R (X)\times H^{1,1}_{\rm BC}(X')$ be a compact polyhedral set such that for any $(B, \beta ')\in \mathcal P$ and $\bbeta =\bar \beta'$
\begin{enumerate}
\item $\beta '$ is nef over $S$,
\item $(X,B+\bbeta)$ is gklt, 
\item $B+\bbeta _X$ is modified big over $S$.
\end{enumerate}
Let \[\mathcal E=\{ (B,\bbeta _{X'})\in \mathcal P\ {\rm s.t.}\ K_X+B+\bbeta _X\ \text{is pseudo-effective over} \ S\},\]
then $\mathcal E$ is a compact polyhedral set and there exists a finite polyhedral decomposition $\mathcal E=\cup _{i\in I} (\mathcal E_i)^\circ$ and finitely many
bimeromorphic maps $\{\psi _i:X\dasharrow X_i\}_{i\in I}$, and $\{\varphi _{j}:X\dasharrow X_j\}_{j\in J}$ over $S$, such that \begin{enumerate}
\item if $\psi :X\dasharrow Y$ is a log canonical model for
$(X,B+\bbeta _X)$ over $S$ where $(B,\bbeta _{X'})\in (\mathcal E_i)^\circ $ then $\psi =\psi _i$, and
\item if $\varphi:X\dasharrow Y$ is a weak log canonical model for
$(X,B+\bbeta _X)$ over $S$ where $(B,\bbeta _{X'})\in \mathcal E$ then $\varphi =\varphi _{j}$ for some $j\in J$.\end{enumerate}
Conclusion (2) also holds if instead of condition (3) we assume the following

(3')  $K_X+B+\bbeta _X$ is big over $S$.

\end{theorem}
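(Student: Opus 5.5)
We follow the strategy of \cite[Corollary 1.1.5]{BCHM10} and \cite[Theorem 3.18]{DHY23}, arguing by compactness of $\mathcal P$: it suffices to show that every point $p_0=(B_0,\beta'_0)\in\mathcal E$ has a neighborhood $\mathcal U\cap\mathcal E$ on which only finitely many weak log canonical models occur, and on which the log canonical models determine a polyhedral decomposition. Polyhedrality of $\mathcal E$ itself will come out of the same local analysis, since locally $\mathcal E$ is cut out by the condition that the (finitely many) minimal models have nef canonical class.

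The first step is to reduce to a setting where Theorem \ref{t-relMMP} applies uniformly. Using Lemma \ref{lem: g-kltperturb} and Lemma \ref{l-modbig}, we perturb each pair to one where the moduli part is K\"ahler over $S$ plus a fixed nef part. Then, by a standard convexity trick, we can choose finitely many vertices $p_1,\dots,p_k$ of a small simplex around $p_0$ in $\mathcal P$ so that every $(B,\beta')$ in the simplex is a convex combination of them. For $p_0$ itself, Theorem \ref{t-relMMP}(1) gives a log terminal model $\phi_0:X\dasharrow X_0$ over $S$, and Theorem \ref{t-relMMP}(2) makes $K_{X_0}+B_{0,X_0}+\bbeta_{0,X_0}$ semiample over $S$ with contraction $X_0\to Z_0$. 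Replacing $X$ by $X_0$ (which is $K_X+B+\bbeta_X$-negative for all $p$ near $p_0$ after shrinking, by Lemma \ref{l-np} and openness of strict inequalities on discrepancies), we may assume $K_X+B_0+\bbeta_{0,X}\equiv_S g^*\omega_{Z_0}$ is semiample over $S$.

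Next we work relative to $Z_0$. For $p$ near $p_0$, any $(K_X+B+\bbeta_X)$-MMP over $Z_0$ is also an MMP over $S$ near $p_0$, by the bounded length of extremal rays (Theorem \ref{t-cone}, $0<-(K+B+\bbeta)\cdot\Gamma\le 2n$) combined with a scaling argument as in the proof of Theorem \ref{t-relMMP}(2): if $\omega_{Z_0}$ is large on curves, rays that are not vertical over $Z_0$ are positive. Since $K_X+B_0+\bbeta_0\equiv_{Z_0}0$, the classes $K_X+B_i+\bbeta_{i,X}$ over $Z_0$ are linear combinations of the differences $p_i-p_0$. Thus we are reduced to the problem over $Z_0$ with a finite-dimensional rational polytope in which $p_0$ corresponds to the trivial class. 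Here we run induction on $\dim\mathcal P$: the boundary faces of the polytope through $p_0$ are handled by induction, and a ray argument (cone over the boundary from $p_0$) extends the finiteness to the interior, as in \cite[Lemma 7.1]{BCHM10}. Each resulting model is a weak log canonical model over $Z_0$, hence over $S$. Log canonical models are unique given the class, so the chambers $\mathcal E_i$ are the loci where the relevant pushed-forward classes are nef (respectively K\"ahler), which are polyhedral because these are finitely many linear conditions on $\mathcal P$ once the models are fixed. Under (3') the same argument runs with the bigness coming from $K_X+B+\bbeta_X$ itself, using Proposition \ref{p-bigmm} in place of Proposition \ref{p-notbigmm}.

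The main obstacle is the reduction step in the K\"ahler setting. We need the transcendental classes $\beta'$ to vary continuously while the model and contraction remain fixed, and we need to show that the local MMP over $Z_0$ is governed by finitely many extremal rays. I would handle this using the finiteness clause of Theorem \ref{t-cone} (valid since $B+\bbeta_X$ is big), Lemma \ref{l-rccfibres} to identify $H^{1,1}_{\rm BC}$ over $Z_0$ with line-bundle data (via Lemma \ref{l-rel}), and then the projective relative theory (Theorem \ref{t-projcone}, Proposition \ref{p-relMMP}) over $Z_0$.
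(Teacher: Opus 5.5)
\textbf{Where you match the paper.} Your route to conclusion (1), to the polyhedrality of $\mathcal E$, and to finitely many log terminal models is essentially the paper's. You localize at $p_0$ and pass to a good log terminal model $X^{\rm m}$ with log canonical model $X^{\rm c}$ (your $Z_0$). You use $K_{X^{\rm m}}+B^{\rm m}_0+(\bbeta_0)_{X^{\rm m}}\equiv_{X^{\rm c}}0$ to see the relevant set as a cone with vertex $p_0$ over the boundary. You induct on $\dim\mathcal P$ over $X^{\rm c}$, and use the length bound $2\dim X$ to turn models over $X^{\rm c}$ into models over $S$. Two corrections in that part:
\begin{itemize}
\item The length argument works because $p=(1-\lambda)p_0+\lambda q$ with $\lambda\le\bar\lambda=\delta/(\delta+2\dim X)$, where $\delta$ bounds the K\"ahler class on $X^{\rm c}$ from below on curves. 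That class is fixed, so you cannot make it ``large''.
\item The chambers are polyhedral because of the cone construction. They are not polyhedral because nefness on a fixed model is ``finitely many linear conditions''; transcendental nef cones need not be polyhedral.
\end{itemize}

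\textbf{The gap: conclusion (2).} The ray argument only shows that the finitely many models coming from the boundary are log canonical models (respectively log terminal models) along segments near $p_0$. It says nothing about an \emph{arbitrary} weak log canonical model, and these are not unique. Already at $p_0$ itself there may be several, for instance crepant models over $X^{\rm c}$ such as flops of $X^{\rm m}$. Equivalently, the base case $\dim\mathcal P=0$ has no boundary to induct from. Neither Theorem \ref{t-relMMP} nor your cone-over-the-boundary induction bounds their number.

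The missing idea, as in \cite[Corollary 1.1.5]{BCHM10}, is to enlarge the polytope so that every weak log canonical model becomes a log canonical model of a nearby pair. The paper does this as follows.
\begin{enumerate}
\item Reduce to $X$ smooth with $\bbeta_X-\omega$ nef for a K\"ahler form $\omega$, and with $\bbeta_X$ big after adding $\overline{f^*\omega_S}$.
\item Choose K\"ahler classes $\gamma_1,\dots,\gamma_\rho$ spanning $H^{1,1}_{\rm BC}(X)$ with $\omega-\sum\gamma_i$ K\"ahler. Set $\mathcal Q=\{(B,\bbeta_X+\sum t_i\gamma_i)\,:\,|t_i|\le 1\}$; its pairs still satisfy the hypotheses.
\item Let $\psi:X\dasharrow Y$ be a weak log canonical model, $\eta$ a K\"ahler class on $Y$, and $p:W\to X$, $q:W\to Y$ a resolution. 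Then $\eta_X:=p_*q^*\eta$ lies in $H^{1,1}(X)$. After rescaling, $\eta_X=\sum t_i\gamma_i$ with $|t_i|<1$.
\item Then $\psi$ is the log canonical model of $K_X+B+\bbeta_X+\eta_X$, so (2) follows from (1) applied to $\mathcal Q$.
\end{enumerate}
Without this step, or an equivalent one, your argument proves (1) and finiteness of log terminal models but not (2).
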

\begin{proof} The proof follows by the arguments of \cite[Theorem 3.18]{DHY23} which in turn are inspired by \cite[Corollary 1.1.5]{BCHM10}. 
Several of the arguments in this proof are by now fairly standard and we refer the reader to the proof of \cite[Theorem 3.18]{DHY23} for further details.
We also note that case (3') follows from case (3) by routine arguments.

We proceed by induction on the dimension of $\mathcal P$. 
The case when $\dim \mathcal P=0$ follows by Theorem \ref{t-relMMP}.
Since $\mathcal E\subset \mathcal P$ is closed, by compactness, it suffices to prove the statement locally around any given $(B_0,(\bbeta_0) _{X'})\in \mathcal E$. By Theorem \ref{t-relMMP}, we have a good log terminal model $\phi :X\dasharrow X^{\rm m}$ and a log canonical model $\psi : X^{\rm m}\to X^{\rm c}$ for $(X,B_0+\bbeta _0)$ over $S$. By standard arguments (see e.g. loc. cit.), shrinking $\mathcal P$, we may assume that $a_E(X,B+\bbeta )<a_E(X^{\rm m},B^{\rm m}+\bbeta )$ for every $\phi $-exceptional divisor $E$ on $X$ and every $(B,\bbeta _{X'})\in \mathcal P$. In particular if $X^{\rm m}\dasharrow Y$ is a $(K_{X^{\rm m}}+B^{\rm m}+\bbeta_{X^{\rm m}})$-log canonical model over $S$ (resp. log terminal model over $S$), then $X\dasharrow Y$ is a $(K_{X}+B+\bbeta_{X})$-log canonical model over $S$ (resp. log terminal model over $S$). 

Let $\mathcal P ^{\rm m}=\phi _*\mathcal P\subset {\rm Div}_\R(X^{\rm m})\times H^{1,1}_{\rm BC}(X')$. Let \[\mathcal F=\{ (B^{\rm m},\bbeta _{X'})\in  \mathcal P^{\rm m}\ |\ K_{X^{\rm m}}+B^m+\bbeta _{X^{\rm m}}\ {\rm is\ psef\ over}\   X^{\rm c}\}.\]
By induction, if  $\partial \mathcal P^{\rm m}$ is the boundary of $\mathcal P^{\rm m}$, then $\partial \mathcal P^{\rm m}\cap \mathcal F$ is a compact polyhedral set and there is a finite polyhedral decomposition $\partial \mathcal P^{\rm m}\cap \mathcal F=\cup _{i=1}^k(\mathcal Q_i)^\circ$ and finitely many meromorphic maps 
$g_i:X^{\rm m}\dasharrow X_i$ over $X^{\rm c}$ such that if $(B^{\rm m},\bbeta_{X'})\in (\mathcal Q_i)^\circ$, then $g_i:X^{\rm m}\dasharrow X_i$ is the log canonical model for $({X^{\rm m}},B^{\rm m}+\bbeta)$ over $X^{\rm c}$ (resp. a log terminal model  for $({X^{\rm m}},B^{\rm m}+\bbeta)$ over $X^{\rm c}$). 
Consider now the pairs $({X^{\rm m}},B^{\rm m}_\lambda +\bbeta_\lambda )$ where $(B^{\rm m}_\lambda +\bbeta_\lambda ):= (1-\lambda )(B^{\rm m}_0+\bbeta_0 )+\lambda (B^{\rm m}+\bbeta)$ for some $(B^m,\bbeta_{X'})\in \partial \mathcal P ^{\rm m}$. Note that
\[K_{X^{\rm m}}+B^{\rm m}_0+(\bbeta_0)_{X^{\rm m}}\equiv \psi ^* \omega \equiv _{X^{\rm c}}0\]
where $\omega $ is a K\"ahler form on $X^{\rm c}$.
It follows that $K_{X^{\rm m}} +B^{\rm m}_\lambda +(\bbeta_\lambda )_{X^{\rm m}}\equiv _{X^{\rm c}}\lambda (K_{X^{\rm m}} +B^{\rm m}+(\bbeta)_{X^{\rm m}})$, and so $K_{X^{\rm m}} +B^{\rm m}_\lambda +(\bbeta_\lambda )_{X^{\rm m}}$ is pseudo-effective over ${X^{\rm c}}$ iff so is $ (K_{X^{\rm m}} +B^{\rm m}+\bbeta_{X^{\rm m}})$.  It follows that $\mathcal F$ is the cone over $\partial \mathcal P^{\rm m}\cap \mathcal F$ with vertex $(B_0,(\bbeta _0)_{X'})$, and hence $\mathcal F$ is also a compact polyhedral set.

\begin{claim}\label{c-51} There exists a constant $\bar \lambda >0$ such that if $(B^{\rm m},\bbeta_{X'})\in (\mathcal Q_i)^\circ$, then $g_i$ is the log canonical model (resp. a log terminal model) for $K_{X^{\rm m}}+B^{\rm m}_\lambda +\bbeta_\lambda$ over $S$ for any $0<\lambda \leq \bar \lambda $. 
\end{claim}
\begin{proof} By definition of log canonical model (resp. of log terminal model), $a_E({X^{\rm m}},B^{\rm m}+\bbeta)\leq a_E(X_i,B_i+\bbeta)$ for every divisor $E$ over $X$ (resp. $a_E({X^{\rm m}},B^{\rm m}+\bbeta)< a_E(X_i,B_i+\bbeta)$ for all $g_i$-exceptional divisors $E$), and since $g_i$ is $(K_{X^{\rm m}}+B^{\rm m}_0+\bbeta_0)$-trivial, $a_E({X^{\rm m}},B^{\rm m}_0+\bbeta_0)=a_E(X_i,(B_0)_i+\bbeta_0)$ for every divisor $E$ over $X$. 
Therefore \[ a_E({X^{\rm m}},B^{\rm m}_\lambda +\bbeta_\lambda )\leq a_E(X_i,(B_\lambda)_i+\bbeta_\lambda)\] for $\lambda > 0$ and every divisor $E$ over $X$ (resp. $a_E({X^{\rm m}},B^{\rm m}_\lambda +\bbeta_\lambda )< a_E(X_i,(B_\lambda)_i+\bbeta_\lambda)$ for all $g_i$-exceptional divisors $E$). In particular $(X_i,(B_\lambda)_i+\bbeta_\lambda)$ is gklt.

Since $\omega $ is K\"ahler, then there exists $\delta >0$ such that $\omega \cdot C>\delta$ for any curve $C$ on $X^{\rm c}$.
Let $\bar \lambda :=\frac \delta{\delta +2\dim X}$.
Suppose that  $K_{X_i}+(B_\lambda)_i+(\bbeta_\lambda)_{X_i}$ is not K\"ahler (resp. nef) over $S$, then there exists an extremal ray $R=\R ^+[C]$ such that $(K_{X_i}+(B_\lambda)_i+(\bbeta_\lambda)_{X_i})\cdot C\leq 0$ (resp. $(K_{X_i}+(B_\lambda)_i+(\bbeta_\lambda)_{X_i})\cdot C< 0$) see \cite[Corollary 4.2]{HLX26}.
Since $K_{X_i}+B_i+\bbeta_{X_i}$ is K\"ahler over $X^{\rm c}$ (resp. nef over $X^{\rm c}$), then $(\psi _i)_*C\ne 0$ where $\psi _i:X_i\to X^{\rm c}$. By the cone theorem (see Theorem \ref{t-cone}), we may assume that $(K_{X_i}+B_i+\bbeta_{X_i})\cdot C\geq -2 \dim X$. Since $\lambda \leq \bar \lambda$, then 
\[ (K_{X_i}+(B_\lambda)_i+(\bbeta_\lambda)_{X_i})\cdot C= (1-\lambda)g _i^*\omega \cdot C+\lambda (K_{X_i}+B_i+\bbeta_{X_i}) \cdot C\]
\[> (1-\lambda )\delta -2\lambda \dim X \geq \delta - \lambda (\delta +2\dim X)\geq 0\] (resp. $(K_{X_i}+(B_\lambda)_i+(\bbeta_\lambda)_{X_i})\cdot C\geq 0$). This is impossible and the claim follows.
\end{proof}

\begin{claim} If $(B,\bbeta _{X'})\in \partial \mathcal P$ then for every $0\leq \lambda \leq \bar\lambda$, we have that $K_{X^{\rm m}}+B^{\rm m}_\lambda +(\bbeta_\lambda)_{X^{\rm m}}$ is pseudo-effective over $S$ iff it is pseudo-effective over $X^{\rm c}$. In particular $\mathcal E$ is a compact polyhedral set.
\end{claim}
\begin{proof} If $K_{X^{\rm m}}+B^{\rm m}_\lambda +(\bbeta_\lambda)_{X^{\rm m}}$ is pseudo-effective over $S$, then clearly it is pseudo-effective over $X^{\rm c}$. 
Conversely, suppose that  $K_{X^{\rm m}}+B^{\rm m}_\lambda +(\bbeta_\lambda)_{X^{\rm m}} $ is pseudo-effective over $X^{\rm c}$, then so is $K_{X^{\rm m}}+B^{\rm m}+\bbeta_{X^{\rm m}}$. Thus $(B^{\rm m},\bbeta _{X'})\in \partial  \mathcal P^{\rm m}\cap \mathcal F$ and hence  $(B^{\rm m},\bbeta _{X'})\in  \mathcal Q_i^\circ$ for some $i$. By Claim \ref{c-51}, $K_{X^{\rm m}}+B^{\rm m}_\lambda +(\bbeta_\lambda)_{X^{\rm m}}$ is pseudo-effective over $S$, for every $0\leq \lambda \leq \bar\lambda$.

By what we have seen above, in a neighborhood of $(B_0,(\bbeta _0)_{X'})$ we have that $\mathcal E\cong \mathcal F$. Since $\mathcal F$ is  polyhedral, it follows that $\mathcal E$ is a compact polyhedral set on a neigborhood of $(B_0,(\bbeta _0)_{X'})$.  Since $\mathcal E$ is a compact convex locally polyhedral set, it is in fact a compact polyhedral set.
\end{proof}

Claim (1) is now seen to hold by considering the decomposition $\mathcal P=\{(B,\bbeta_{X'})\}\cup_{i=1}^k \mathcal P_i^\circ$ where $\mathcal P_i$ is the polytope spanned by $\mathcal Q_i$ and 
$(B_0,(\bbeta _0)_{X'})$ and the maps $\psi _0=\psi\circ \phi , \{\phi _i=g_i\circ \phi\}$. 

The above arguments also prove the following.
\begin{claim}\label{c-ltms} After refining the polyhedral decomposition $\mathcal P=\cup \mathcal P_i$, we may assume that there are finitely many birational contractions $\phi _{i}:X\dasharrow Y_i$ such that if $(B,\bbeta _{X'})\in \mathcal P _i$ then $\phi_i:X\dasharrow Y_i$ is a log terminal model for
$(X,B+\bbeta _X)$ over $S$.
\end{claim}

We will now prove (2). 
Replacing $\bbeta$ by $\bbeta +\overline{f^*\omega}$ for an appropriate K\"ahler form $\omega$ on $S$, we may assume that $\bbeta _X$ is big. 
By standard reductions, we may assume that $X$ is smooth, $\bbeta =\overline{\bbeta _X}$ and $\bbeta _X-\omega$ is nef for some K\"ahler form $\omega$ (see Lemma \ref{lem: g-kltperturb}). Let $\{\gamma _1,\ldots, \gamma _\rho\}$ be K\"ahler forms that span $H^{1,1}_{\rm BC}(X)$ such that $\omega -\sum _{i=1}^\rho \gamma _i$ is K\"ahler. Let \[\mathcal Q=\{ (B,\bbeta _X+\sum t_i\gamma _i)| -1\leq t_i\leq 1\}.\]
By Claim \ref{c-ltms} there are finitely many birational contractions $\phi _i:X\dasharrow X_i$ and a decomposition $\mathcal Q=\cup \mathcal Q_i$ such that if $(X,B+\bbeta _X+\sum t_i\gamma _i)\in \mathcal Q_i$, then $\phi _i$ is a log terminal model for $(X,B+\bbeta _X+\sum t_i\bar \gamma _i)$. Let $g_i:X_i\to Y_i$ be the corresponding log canonical model, and $\psi _i=g_i\circ \phi _i$.

Suppose that $\psi :X\dasharrow Y$ is a weak log canonical model over $S$ for $(X,B+\bbeta _X)\in \mathcal P$. Let $\eta$ be a K\"ahler form on $Y$. If $p:W\to X$, $q:W\to Y$ is a resolution of the indeterminacies of $\psi$, then let $\eta _X=p_*(q^*\eta)$. Note that $q^*\eta \in H^{1,1}_{\rm BC}(W)=H^{1,1}(W)$ as $W$ is smooth. Since $X$ is smooth, $H^{1,1}_{\rm BC}(X)=H^{1,1}(X)$, $h^{0,2}(X)=h^{2,0}(X)=h^{2,0}(W)=h^{0,2}(W)$ whilst $h^2(W,\C)=h^2(X,\C)+r$ where $r$ is the number of exceptional divisors. But then $H^{1,1}(W)\cong H^{1,1}(X)\oplus\langle E_1,\ldots , E_r\rangle$ where $E_i$ are the exceptional divisors. Therefore $\eta_X:=p_*q^*\eta\in H^{1,1}(X)$. Since the $\gamma _i$ generate $H^{1,1}(X)$, we may write $\eta _X=\sum t_i\gamma _i$. Rescaling $\eta$, we may assume that $|t_i|<1$.
Since $\psi$ is the log canonical model over $S$ for $K_X+B+\bbeta _X+\eta _X$, then it is also the log canonical model over $S$ for $(X,B+\bbeta +\sum t_i\bar \gamma _i)$ and hence $\psi=\psi _i$ where $(B,\bbeta _X+\sum t_i \gamma _i)\in \mathcal Q_i$.

\end{proof}

\subsection{Running the minimal model program with scaling}
Let $(X,B+\bbeta )$ be a strongly $\Q$-factorial gklt pair where $B+\bbeta _X$ is big. We have seen above that $(X,B+\bbeta )$ admits a log terminal model or a Mori fiber space. We will now show that the minimal model program with scaling terminates with a log terminal model or a Mori fiber space.
\begin{theorem}\label{t-runMMP} Assume Theorems \ref{t-bpf}$_n$ and  \ref{t-KahlerBCHM}$_{n-1}$ hold. Let $(X,B+\bbeta )$ be a strongly $\Q$-factorial compact K\"ahler gklt pair and $\omega \in H^{1,1}_{\rm BC}(X)$ a modified K\"ahler form such that $K_X+B+\bbeta _X+\omega $ is nef. Let $f: X\to S$ be a proper morphism between compact normal K\"ahler varieties.

Then we may run the $K_X+B+\bbeta _X$ minimal model program with scaling of $\omega$ over $S$, which ends with \begin{enumerate} \item a log terminal model if $K_X+B+\bbeta _X$ is pseudo-effective over $S$ and $B+\bbeta _X$ is big over $S$, or
\item a Mori fiber space if $K_X+B+\bbeta _X$ is not pseudo-effective over $S$.\end{enumerate}
\end{theorem}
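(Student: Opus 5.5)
We would follow the strategy of \cite[Corollary 1.4.2]{BCHM10}: existence of each step comes from the cone theorem and Proposition \ref{prop: MMP can be run}, and termination comes from the finiteness of weak log canonical models in Theorem \ref{t-MMPgeography}.

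\textbf{Running the steps.} Replacing $\bbeta$ by $\bbeta+\overline{f^*\omega_S}$ for a K\"ahler form $\omega_S$ on $S$ with $\int_C\omega_S>2\dim X$ for every curve $C\subset S$, we reduce to the absolute case, as in the proof of Theorem \ref{t-relMMP}. Indeed, any $(K_X+B+\bbeta_X)$-negative extremal ray of length at most $2\dim X$ that is trivial for the new class must then be $f$-vertical. Set $t_0=1$, and given $X_i$ with $K_{X_i}+B_i+\bbeta_{X_i}+t_i\omega_i$ nef, let
\[t_{i+1}=\inf\{t\ge 0 : K_{X_i}+B_i+\bbeta_{X_i}+t\omega_i \text{ is nef}\}.\]
If $t_{i+1}>0$, Theorem \ref{t-cone} gives a $(K_{X_i}+B_i+\bbeta_{X_i})$-negative extremal ray $R_{i}$ that is trivial for $K_{X_i}+B_i+\bbeta_{X_i}+t_{i+1}\omega_i$. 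This uses bigness of $B+\bbeta_X$, so that only finitely many rays occur and the infimum is attained; in the non-pseudo-effective case we perturb $\omega$ by a small K\"ahler class to keep bigness. Proposition \ref{prop: MMP can be run} then gives the contraction, and either a divisorial contraction, a flip, or a Mori fibre space. Strong $\Q$-factoriality, the K\"ahler property, and the gklt condition are preserved.

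\textbf{Termination.} If a Mori fibre space occurs we stop. Otherwise, case (2) cannot run forever: $K_X+B+\bbeta_X+t\omega$ is not pseudo-effective for small $t>0$, so the $t_i$ are bounded below by some positive $t'$, and we apply the argument below to $K_X+B+\bbeta_X+t'\omega$, whose boundary is big. In case (1), suppose the sequence is infinite and let $t_\infty=\lim t_i$. Each $\phi_i:X\dasharrow X_i$ is a weak log canonical model of $(X,B+\bbeta+t\bar\omega)$ for $t\in[t_{i+1},t_i]$, because each step is $K+B+\bbeta+t\omega$-non-positive. Apply Theorem \ref{t-MMPgeography}(2) to the one-dimensional compact polytope
\[\mathcal P=\{(B,\bbeta_{X'}+t\omega')\ :\ t\in[t_\infty,1]\}\]
on a resolution $X'$. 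Its hypotheses hold because $B+\bbeta_X$ is big and $\omega$ is modified K\"ahler, so each pair is gklt after the usual perturbation (Lemma \ref{lem: g-kltperturb}). The theorem yields finitely many maps $\varphi_j$, so $\phi_i=\phi_k$ for some $i<k$. This contradicts the fact that each flip or divisorial contraction strictly increases some discrepancy: by the negativity lemma, an MMP step cannot return to an isomorphic model with the same boundary, because $a_E$ strictly increases along the flipped or contracted locus.

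\textbf{Remaining cases and main obstacle.} If $t_\infty=0$ and $t_i>0$ for all $i$, the same finiteness argument still applies on $[0,1]$. Here we need case (3) of Theorem \ref{t-MMPgeography}: in case (1), $K+B+\bbeta$ is pseudo-effective and $B+\bbeta_X$ is big, so every point of $\mathcal P$ lies in $\mathcal E$. Once the program stops with $t_{i+1}=0$, we have a log terminal model, which is good by Theorem \ref{t-bpf}$_n$. The main obstacle is verifying that Theorem \ref{t-MMPgeography} applies uniformly along the scaling segment when $\omega$ is only modified K\"ahler, i.e.\ when $\omega$ is nef on $X'$ but not a class on $X$. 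We would handle this by placing $\omega$ in the $H^{1,1}_{\rm BC}(X')$-coordinate as above, and we would check carefully that identical weak log canonical models cannot recur because of strict discrepancy increase.
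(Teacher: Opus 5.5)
Your proposal is correct and follows essentially the paper's proof: you reduce to the absolute case by adding $\overline{f^*\omega_S}$, run the steps via Theorem \ref{t-cone} and Proposition \ref{prop: MMP can be run}, and get termination from Theorem \ref{t-MMPgeography}(2) on the segment $[0,\omega']$ in $H^{1,1}_{\rm BC}(X')$ (resp.\ $[\epsilon\omega',\omega']$ when $K_X+B+\bbeta_X$ is not pseudo-effective over $S$), because every $X\dasharrow X_k$ is a weak log canonical model for some $t$ and strict increase of discrepancies prevents any of these finitely many maps from recurring. One cosmetic point: what you call ``case (3)'' of Theorem \ref{t-MMPgeography} is its hypothesis (3), modified bigness of the boundary over $S$, which is exactly what your choice of segment secures.
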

\begin{proof} 
We may replace $(X,B+\bbeta)$ by $(X,B+\bbeta+\overline{f^*\omega_S})$ for a  K\"ahler form $\omega_S$ such that $\omega _S\cdot C>2\dim X$ for any curve $C\subset S$. Then any $(K_X+B+\bbeta_X)$-MMP is automatically a MMP over $S$. Since the cone theorem holds and we can run the MMP by Proposition \ref{prop: MMP can be run}, 
it suffices to show that the given minimal model program terminates. To this end we follow \cite{DHY23}. Let \[X\dasharrow X_1\dasharrow \ldots \dasharrow X_n\dasharrow \ldots \]
be the corresponding minimal model program with scaling and let $t_0:=1\geq t_1\geq t_2\geq \ldots\geq 0$ be the corresponding constants such that $K_{X_i}+B_i+\bbeta _{X_i}+t \omega _i$ is nef for $t_i\geq t\geq t_{i+1}$.
Let $\nu:X'\to X$ be a birational map such that $\omega =\nu _*\omega '$ where $\omega '$ is nef and big and $\bbeta =\overline{\bbeta _{X'}}$. We let $\Omega '=[0,\omega ']$ in case (1) and $\Omega '=[\epsilon \omega ',\omega ']$ in case (2) where $0<\epsilon \ll 1$ so that $K_X+B+\bbeta _X+\epsilon \omega $ is not pseudo-effective over $S$. Let $\Omega '=\cup_{i\in I}(\Omega _i')^\circ$ be the polyhedral decomposition of Theorem \ref{t-MMPgeography} and $\phi _{i,j}$ the rational maps corresponding to weak log canonical models over $S$ of $(X,B+\bbeta _X+t\omega)$ where $t\omega '\in \Omega '$.
Since each $f _k:X\dasharrow X_k$ is a  weak log canonical model over $S$ for $K_X+B+\bbeta _X+t\omega$ where $t_k\geq t\geq t_{k+1}$, it follows that $f_k=\phi _{i,j}$. Note that $f_k\ne f_l$ for $k< l$ since in this case $a_{E}(X_k,B_k+\bbeta)<a_{E}(X_l,B_l+\bbeta)$ for any divisor $E$ whose center is contained in the indeterminacy locus of $X_k\dasharrow X_l$. Since $\{\phi _{i,j}\}$ is finite, so is $f_k:X\dasharrow X_k$. It follows that this minimal model program with scaling over $S$  ends with a log terminal model over $S$ for $K_X+B+\bbeta _X+t_n\omega$. If $t_n=0$, then we have a log terminal model over $S$ for $K_X+B+\bbeta _X$, and otherwise we have $g:X_n\to Z$ a $(K_X+B+\bbeta _X+t_n\omega)$-trivial, $(K_X+B+\bbeta _X)$-Mori fiber space over $S$.
\end{proof}

\section{Non-big contraction theorem}\label{s-contr-nonbig}

\begin{prop}
Assume that Theorem \ref{t-bpf}$_n$ and Theorem \ref{t-MMPscaling}$_n$ hold, then Theorem \ref{t-nkltcontraction}$_n$ holds.
\end{prop}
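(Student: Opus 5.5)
The big case is Theorem \ref{t-ind1}, since Theorem \ref{t-nkltcontraction}$_{n-1}$ is available by induction. So assume $\alpha=K_X+B+\bbeta_X$ is nef, NQC and not big. The plan is to reduce to a lower dimensional base via the MRC fibration, as in Case 2 of the proof of Theorem \ref{t-ind2}, and then conclude with Theorem \ref{t-nkltcontraction}$_{n-1}$ on the base. The new feature compared with Theorem \ref{t-ind2} is that $(X,B+\bbeta)$ is not gklt, so the non-klt locus $N$ has to be carried along.

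\textbf{Step 1: reduce to a fibration.} Since $B+\bbeta_X$ is modified big and $\alpha$ is not big, $K_X$ is not pseudo-effective, so $X$ is uniruled by Theorem \ref{t-ou}. Let $f:X\dasharrow Z$ be the MRC, with $Z$ a K\"ahler manifold. Take a log resolution $\nu:X'\to X$ such that $f'=f\circ\nu$ is projective. By Lemma \ref{lem: g-kltperturb}, I may take $\bbeta_{X'}$ K\"ahler. Write $\nu^*\alpha=K_{X'}+\Delta'+\bbeta_{X'}-F'$, where $\Delta'=B_{X'}^{\geq0}$ keeps the components with coefficient $\ge 1$, so $(X',\Delta')$ is only sub-lc in general.

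I would then run the $\alpha'$-trivial $(K_{X'}+\Delta'\wedge {\rm Supp}\Delta'+\bbeta_{X'}+a\alpha')$-MMP over $Z$ for $a\gg0$. For this I truncate coefficients to $1$, so the pair is gdlt, and use the dlt version of Proposition \ref{p-relMMP}/Theorem \ref{t-runMMP}, which is available since Theorem \ref{t-MMPscaling}$_n$ holds. Lemma \ref{lem: NQC can run alpha-trivial MMP} guarantees this MMP is $\alpha'$-trivial. It should end in a good minimal model $g:X''\to Z''$ over $Z$ with $\alpha''=g^*\alpha_{Z''}$. The claim that $K+\dots$ is not big over $Z$ is proved exactly as in Theorem \ref{t-ind2}, using Theorem \ref{t-psef}, so $\dim Z''<n$. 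The canonical bundle formula, Theorem \ref{t-gcbf} (gklt only over an open set), then gives a generalized pair $(Z'',\Delta_{Z''}+\ggamma)$ with $\ggamma_{Z''}$ modified big, and $\alpha_{Z''}$ nef and NQC by Lemma \ref{l-NQCpullback}.

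\textbf{Step 2: descend the hypothesis.} I must show that the contraction defined by $\alpha_{Z''}$ restricted to ${\rm Nklt}(Z'',\Delta_{Z''}+\ggamma)$ exists and is Moishezon. Non-klt centers on $Z''$ are images of non-klt centers of the pair upstairs. Those upstairs centers map into $N$, up to the $\alpha$-trivial birational modifications, since the MMP only modifies $\alpha$-trivially and the extracted divisors have discrepancy handled by $F'$. On the image $W={\rm Nklt}(Z'')$, the EMC on $N$ gives, by Lemma \ref{l-contraction}(2)--(4), a contraction of $g^{-1}(W)$ defined by $g^*\alpha_{Z''}$, which then descends to $W$. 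The scheme structure is handled by Lemma \ref{l-ses} and Kawamata--Viehweg vanishing, as in Claim \ref{claim: surj from O_X}, to identify $g_*\OO_{g^{-1}W}$ with $\OO_W$.

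\textbf{Step 3: conclude.} Apply Theorem \ref{t-nkltcontraction}$_{n-1}$ on $Z''$ to get the Moishezon contraction $Z''\to T$ defined by $\alpha_{Z''}$. Compose with $g$, which is projective, to get the contraction of $X''$ defined by $\alpha''$. Then push to $X$: on a common resolution the composite is Moishezon, and Lemma \ref{l-contraction}(4) descends it to $X$ because $\alpha$ pulls back.

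\textbf{Main obstacle.} I expect Step 2 to be the hardest: showing that the non-klt locus of the base pair lies under $N$, with compatible nonreduced structures, so that the EMC hypothesis genuinely transfers. An additional difficulty is that the MMP in Step 1 is run for a non-gklt pair. If the direct transfer fails, I would first try to replace $N$ by its $\alpha$-trivial saturation $g^{-1}g(N)$ and glue via the push-out of Lemma \ref{lem: push-out diagram for analytic}.
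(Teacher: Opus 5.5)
Your decomposition (big case via Theorem \ref{t-ind1}, reduce to a fibration, descend the hypothesis, apply induction on the base) matches the paper's in outline. There are, however, genuine gaps in both Step 2 and Step 1.

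\textbf{Step 2.} You correctly flag this as the crux, and this is where the proof actually happens; what you propose does not do the job. To apply Theorem \ref{t-nkltcontraction}$_{n-1}$ on the base, you need the hypothesis for ${\rm Nklt}(Z,B_Z+\ddelta)$ with its multiplier-ideal structure, which is in general non-reduced. That is, you need $f_*\OO_{{\rm Nklt}(X,B+\bbeta)}=\OO_{{\rm Nklt}(Z,B_Z+\ddelta)}$; Lemma \ref{l-contraction} then transports the contraction.

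Vanishing (after arranging $\bbeta=\bbeta'+\bar\omega$ so that Nadel applies over $Z$) only gives $0\to f_*\mathcal J(X,B+\bbeta)\to\OO_Z\to f_*\OO_{{\rm Nklt}(X,B+\bbeta)}\to 0$. The real content is $f_*\mathcal J(X,B+\bbeta)=\mathcal J(Z,B_Z+\ddelta)$. The paper proves this on resolutions as $f''_*\OO_{X''}(-\lfloor B''\rfloor)=\OO_{Z''}(-\lfloor B_{Z''}\rfloor)$, by an explicit computation with the discriminant:
\begin{itemize}
\item The threshold description of the coefficient $c_i$ of $D_i$ holds for all divisors mapping into $D_i$, because $\ddelta$ descends to $Z''$ (Claim \ref{c-cbf-coeff}). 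This gives one inclusion.
\item The other inclusion is checked on a big open set where $f''$ is flat. There $f''_*\OO_{X''}(-\lfloor B''\rfloor)=\OO_{Z''}(-\sum e_iD_i)$, and $e_i=\lfloor c_i\rfloor$ by the identity $\lfloor\frac{a-1}{m}\rfloor+1=\lceil\frac{\lfloor a\rfloor}{m}\rceil$.
\end{itemize}
Your set-theoretic remark about images of non-klt centres, and the identification $g_*\OO_{g^{-1}W}=\OO_W$, do not give this. Moreover, $g^{-1}(W)$ is typically strictly larger than the non-klt locus upstairs, since only one component over $D_i$ needs to be non-klt. So you have no contraction on $g^{-1}(W)$ to descend.

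You also omit the case where the non-klt locus dominates the base. There Theorem \ref{t-gcbf} (and Theorem \ref{t-psef}) is unavailable. The paper handles it separately: Nadel vanishing gives $\OO_Z\cong f_*\OO_{{\rm Nklt}(X,B+\bbeta)}$, so the given contraction of ${\rm Nklt}$ factors through $f$ and induces the contraction of $Z$ defined by $\gamma$.

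\textbf{Step 1.} This step also does not deliver the class you need on the base.
\begin{itemize}
\item With the truncation, $K_{X'}+\Delta'\wedge{\rm Supp}\Delta'+\bbeta_{X'}\equiv\alpha'+F'-G'$ with $G'=\Delta'-\Delta'\wedge{\rm Supp}\Delta'\ge 0$. So you are no longer running an MMP for $\alpha'$ plus an effective exceptional divisor, and the truncation also changes the multiplier ideal.
\item Even without truncation, the route of Theorem \ref{t-ind2} only gives $\alpha_{Z''}=K_{Z''}+\Delta_{Z''}+\ggamma_{Z''}-F_{Z''}$. Removing $F_{Z''}$ required a further MMP on $Z''$ and a Boucksom--Zariski argument, through which the non-klt hypothesis would also have to be carried.
\end{itemize}
Hence Theorem \ref{t-nkltcontraction}$_{n-1}$ cannot be applied to $\alpha_{Z''}$ as you state.

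The paper avoids the MRC route altogether:
\begin{itemize}
\item After a strongly $\Q$-factorial dlt modification, $X$ is klt. So the generalized pair with zero boundary and nef part $t_0\alpha$ is gklt.
\item The $(K_X+t_0\alpha)$-MMP with scaling (Theorem \ref{t-runMMP}) is $\alpha$-trivial for $t_0\gg0$ (Lemma \ref{lem: NQC can run alpha-trivial MMP}). It ends with a Mori fibre space, since $(t_0+1)\alpha$ is not big while $B+\bbeta_X$ is.
\item The output $(X',B'+\bbeta)$ is crepant to $(X,B+\bbeta)$, and its non-klt subspace corresponds to the original one, so the hypothesis transfers.
\item One has $K_{X'}+B'+\bbeta_{X'}\equiv f^*\gamma$ exactly. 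The canonical bundle formula therefore applies with no correction term, and the only remaining work is the multiplier-ideal comparison described above.
\end{itemize}
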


\begin{proof}
By Theorem \ref{t-ind1}, Theorem \ref{t-nkltcontraction}$_{n,{\rm big}}$ holds, and therefore, we may assume $\alpha=K_X+B+\bbeta_X$ is not big. By passing to a strongly $\Q$-factorial dlt modification (see Theorems \ref{t-dltmodel}, \ref{t-small Q factorial model}) we may assume $X$ is strongly $\Q$-factorial and klt. 

Since $\alpha=K_X+B+\bbeta _X$ is nef and NQC, then by Lemma \ref{lem: NQC can run alpha-trivial MMP}, there is a $t_0>0$ such that the $(K_X+t_0\alpha)$-MMP is $\alpha$-trivial. By Theorem \ref{t-runMMP} we may run the $(K_X+t_0\alpha)$-MMP with scaling.
Since $(t_0+1)\alpha$ is not big but $B+\bbeta_X$ is big, then $K_X+t_0\alpha$ is not pseudo-effective and so this MMP ends with a Mori-fiber space $X\dasharrow X'\xrightarrow{f'} Z$. By Theorem \ref{t-bpf}$_n$ we know the last step $f':X'\to Z$ is a projective contraction such that $Z$ is K\"ahler, and by the canonical bundle formula we know $Z$ has gklt and hence rational singularities (see Theorem \ref{t-gcbf} and Lemma \ref{L-gklt-rational}).

Let $X''$ be a common resolution of $X$ and $X'$ with morphisms $p: X''\to X$ and $q: X''\to X'$, then let $K_{X''}+B''+\bbeta_{X''}=p^*(K_X+B+\bbeta_{X})$ and $K_{X'}+B'+\bbeta_{X'}=q_*(K_{X''}+B''+\bbeta_{X''})$. Notice that 
$$K_{X''}+B''+\bbeta_{X''}=q^*(K_{X'}+B'+\bbeta_{X'})$$ since the $K_X$-MMP $X\dasharrow X'\to Z$ is $\alpha$-trivial. We see that $[K_X+B+\bbeta_X]$ is endowed with a Moishezon contraction if and only if $[K_{X'}+B'+\bbeta_{X'}]$ is. By the definition of multiplier ideals and the vanishing theorem (as in Case 2 of Proposition \ref{prop: construct f_i for Z_i}) we get
\[
p_*\OO_{\lfloor B''\rfloor^{\ge 0}}=\OO_{{\rm NKLT}(X,B+\bbeta)}, \text{ and } q_*\OO_{\lfloor B''\rfloor^{\ge 0}}=\OO_{{\rm NKLT}(X',B'+\bbeta)}.
\]
Therefore, $[\alpha']|_{\text{NKLT}(X',B'+\bbeta)}$ is also endowed with a contraction. After replacing $(X,B+\bbeta)$ by $(X',B'+\bbeta)$ we may assume there is a projective $\alpha$-trivial Mori fiber space $f: X\to Z$, with $Z$ K\"ahler. Then by the vanishing theorem we know $R^if_*\OO_X=0$ for $i>0$, which implies that $Z$ has rational singularities as well. By Lemma \ref{l-rccfibres}, we know $\alpha\equiv f^*\gamma$ for some $\gamma\in H^{1,1}_{\rm BC}(Z)$ and $[\gamma]$ is nef by \cite[Lemma 2.38]{DHP24}. Then $[\gamma]$ is NQC by Lemma \ref{l-NQCpullback} and the fact that $f^*: H^2(Z,\R)\to H^2(X,\R)$ is defined over $\Q$.

By perturbing $B+\bbeta_X$ (see Lemma \ref{lem: g-kltperturb}) we may rewrite 
$$
K_X+B+\bbeta_X\equiv K_X+\tilde B+\tilde\bbeta_X+\omega,
$$ where $\omega$ is a K\"ahler form, and a closed embedding   \[i:{\rm NKLT}(X,\tilde B+\tilde\bbeta)\hookrightarrow {\rm NKLT}(X, B+\bbeta).\] Then by taking the Stein factorization of the composition $\pi\circ i$, where $\pi$ is the contraction defined by $\alpha|_{{\rm NKLT}(X, B+\bbeta)}$, 
it follows that $[\alpha]|_{{\rm NKLT}(X,\tilde B+\tilde\bbeta)}$ is endowed with a contraction. Thus, replacing $(X, B+\bbeta)$ by $(X,\tilde B+\tilde\bbeta)$, we may assume that $\bbeta=\bbeta'+\overline{\omega}$ for some K\"ahler form $\omega$ on $X$.

\vspace{.5em}

\noindent\textbf{Case 1}: Suppose that $\text{NKLT}(X,B+\bbeta)$ dominates $Z$. The canonical injection
\[
\OO_Z\to f_*\OO_{\text{NKLT}(X,B+\bbeta)}
\]
is surjective by Nadel vanishing, and hence an isomorphism. Let 
$$g:\text{NKLT}(X,B+\bbeta)\to T$$ be the contraction defined by $[\alpha]|_{\text{NKLT}(X,B+\bbeta)}$, then we see that $g$ factors through $f|_{\text{NKLT}(X,B+\bbeta)}$ and we get a contraction $h: Z\to T$, which is the contraction defined by $[\gamma]$. Then  $h\circ f: X\to T$ is the contraction defined by $[\alpha]=f^*[\gamma]$ and we are done.

\vspace{.5em}

\noindent\textbf{Case 2}: Suppose that $\text{NKLT}(X,B+\bbeta)$ does not dominate $Z$, i.e. $(X,B+\bbeta)$ is  gklt over an open subset of $Z$. By the canonical bundle formula (see Theorem \ref{t-gcbf}) there is an induced generalized pair structure $(Z,B_Z+\ddelta)$ on $Z$ with $\ddelta$ big such that
\[
K_{X}+B+\bbeta_{X}\equiv  f^*(K_Z+B_Z+\ddelta_Z) .
\]
Therefore it suffices to show $\alpha_Z:=K_Z+B_Z+\ddelta_Z$ is endowed with a contraction. In order to apply the induction hypothesis we need to show that $[\alpha_Z]|_{\text{NKLT}(Z,B_Z+\ddelta)}$ is endowed with a contraction. Since $[\alpha]|_{\text{NKLT}(X,B+\bbeta)}$ is endowed with a contraction, it suffices to show that 
$$f_*\OO_{\text{NKLT}(X,B+\bbeta)}=\OO_{\text{NKLT}(Z,B_Z+\ddelta)}.$$

Replacing $X''$ by a higher resolution we may assume $\bbeta$ descends on $X''$ and the following diagram commutes:

\[\begin{tikzcd}
  X'' \arrow[d,"f''"] \arrow[r,"q"]& X\arrow[d,"f"] \\
  Z'' \arrow[r,"\phi"] & Z
\end{tikzcd}
\]
where
\begin{enumerate}
\item $\phi:Z''\to Z$ is a log resolution of $(Z,B_Z)$ and $\ddelta$ descends to $Z''$.
\item There is a Zariski open subset $U\subset Z$ such that $(X'',\Supp B''+\mathrm{Exc}(q))$ is log smooth over $U$.
\item $\Delta= Z''\backslash U$ is a simple normal crossing divisor and $\Supp f''^{-1}(\Delta)$ is also a simple normal crossing divisor.
\end{enumerate}

\begin{claim} It suffices to show that $f''_*\OO_{X''}(-\lfloor B''\rfloor)=\OO_{Z''}(-\lfloor B_{Z''}\rfloor)$.
\end{claim}
 \begin{proof} Suppose that $f''_*\OO_{X''}(-\lfloor B''\rfloor)=\OO_{Z''}(-\lfloor B_{Z''}\rfloor)$.
We have \[
q_*\OO_{X''}(-\lfloor B''\rfloor^{\ge 0})=q_*\OO_{X''}(-\lfloor B''\rfloor^{\ge 0}+\lfloor B''\rfloor^{\le 0})=\mathcal{I}_{\text{NKLT}(X,B+\bbeta)}
\]
and 
\[
\phi_* \OO_{Z''}(-\lfloor B_{Z''}\rfloor)=\phi_*\OO_{Z''}(-\lfloor B_{Z''}\rfloor^{\ge 0}+\lfloor B_{Z''}\rfloor^{\le 0})=\mathcal{I}_{\text{NKLT}(Z,B_Z+\ddelta)}.
\]
By Nadel vanishing, we also have a short exact sequence
\[
0\to f_*\mathcal{I}_{\text{NKLT}(X,B+\bbeta)}\to f_*\OO_{X}\to f_*\OO_{\text{NKLT}(X,B+\bbeta)}\to 0.
\]
Since \[f_*\mathcal{I}_{\text{NKLT}(X,B+\bbeta)}=\phi_* f''_*\OO_{X''}(-\lfloor B''\rfloor)=\phi_* \OO_{Z''}(-\lfloor B_{Z''}\rfloor),\]
we obtain the short exact sequence
\[0\to \mathcal J(Z,B_Z+\ddelta)\to \OO _Z\to f_*\OO_{\text{NKLT}(X,B+\bbeta)}\to 0\] 
and hence $\OO_{\text{NKLT}(Z,B_Z+\ddelta)}=f_*\OO_{\text{NKLT}(X,B+\bbeta)}$. \end{proof} 

First, we check $\OO_{Z''}(-\lfloor B_{Z''}\rfloor)\subseteq f''_*\OO_{X''}(-\lfloor B''\rfloor)$. We will need the following inequalities.
Let $K_{Z''}+B_{Z''}+\ddelta_{Z''}=\phi^*(K_Z+B_Z+\ddelta_Z)$, then by the assumption above the support of $B_{Z''}:=\sum_{i}c_i D_i$ (the prime decomposition) is simple normal crossing and
\[
1-c_i= \min_{f''(E_{i,j})=D_i}\{c\in\R~|~ \mult_{E_{i,j}}(B''+cf''^*D_i)= 1 \}.
\]
\begin{claim}\label{c-cbf-coeff} The above equality holds for all divisors over $Z''$, and in particular we have
\begin{align}
1-c_i= \min_{f''(E_{i,j})\subseteq D_i}\{c\in\R~|~ \mult_{E_{i,j}}(B''+cf''^*D_i)= 1 \}.
\end{align}\end{claim}
\begin{proof} Replacing $B''$ by $B''+\sum (1-c_i){f''}^*D_i$, we can assume that $B_{Z''}$ is reduced. If the equality below does not hold for all divisors on $X''$, then $(X'',B'')$ is not sub-lc over some point $Q\in Z''$. Let $\mu:Z'''\to Z''$ be a resolution such that $Q$ is a divisor on $Z'''$, then ${\rm mult}_Q(B_{Z'''})>1$. This contradicts the fact that $\ddelta$ descends to $Z''$ and hence $K_{Z'''}+B_{Z'''}=\mu ^*(K_{Z''}+B_{Z''})$ is sub-lc.
\end{proof}
The inclusion  $\OO_{Z''}(-\lfloor B_{Z''}\rfloor)\subseteq f''_*\OO_{X''}(-\lfloor B''\rfloor)$ can be checked locally on  open subsets of $Z''$, and so it follows from the fact that by Claim \ref{c-cbf-coeff} 
\[
\mult_{E_{i,j}}(-B''+\lfloor c_i\rfloor f''^*D_i)>\mult_{E_{i,j}}(-B''+ (c_i-1) f''^*D_i)\ge -1.
\]

For the other direction, note that since the horizontal part $(-\lfloor B''\rfloor)^{h}$ is an effective $q$-exceptional divisor, and hence over a Zariski open set the torsion free sheaf $f''_*\OO_{X''}(-\lfloor B''\rfloor)$ is contained in $\OO_{Z''}$. Thus there is a canonical inclusion $f''_*\OO_{X''}(-\lfloor B''\rfloor)\hookrightarrow\mathfrak{M}(Z'')$, where $\mathfrak{M}(Z'')$ is the sheaf of meromorphic functions on $Z''$. A local section $\sigma$ of $f''_*\OO_{X''}(-\lfloor B''\rfloor)$ corresponds to a meromorphic function on $Z''$ such that 
\[
f''^*\Div(\sigma)-\lfloor B''\rfloor\ge 0.
\]
It is enough to check over a Zariski big open set $U''$ of $Z''$, since then we will have
\[
f''_*\OO_{X''}(-\lfloor B''\rfloor)\subseteq (f''_*\OO_{X''}(-\lfloor B''\rfloor))^{\vee\vee}=\OO_{Z''}(-\lfloor B_{Z''}\rfloor).
\]
We may assume $f''$ is flat over $U''\supset U$ so that $f''(E_{i,j})=D_i$ for any $f''(E_{i,j})\subseteq D_i$ (over $U''$). Over $U$ this is clear, assume the prime components of $\Delta$ are $D_i, i \in I$, let $f''^*D_i=\sum_j m_{i,j} E_{i,j}$. Next we claim that (over $U''$)
\begin{claim}
$\OO_{Z''}(-\sum_i e_i D_i)= f''_*\OO_{X''}(-\lfloor B''\rfloor)$, where
\[
e_i:=\max_{f''(E_{i,j})=D_i}\{\lceil\frac{\mult_{E_{i,j}}(\lfloor B''\rfloor)}{m_{i,j}}\rceil\}
\]
\end{claim}
\begin{proof}
One can see $\OO_{Z''}(-\sum_i e_i D_i)\subseteq f''_*\OO_{X''}(-\lfloor B''\rfloor)$ easily. For any local section $\sigma\in f''_*\OO_{X''}(-\lfloor B''\rfloor)\subset\mathfrak{M}(Z'')$, we have $\Div(\sigma)^{\le 0}$ must be supported on $\Delta$ and
\[
m_{i,j}\cdot\mult_{D_i}(\Div(\sigma))\ge \mult_{E_{i,j}}(\lfloor B''\rfloor),~\forall i,j
\]
in particular $\Div(\sigma)\ge \sum_i e_i D_i$, i.e. $\sigma\in\OO_{Z''}(-\sum_ie_iD_i)$.
\end{proof}

Therefore we just need to show $\lfloor \sum_i c_i D_i\rfloor=\lfloor B_{Z''}\rfloor=\sum_i e_iD_i$. Recall again from the canonical bundle formula that
\[
\lfloor c_i\rfloor=\max_{f''(E_{i,j})=D_i}\{\lfloor\frac{\mult_{E_{i,j}}(B'')-1}{m_{i,j}} \rfloor+1\}.
\]
For any $a\in \R,m\in \mathbb{Z}^+$, the following equality holds
\[
\lfloor\frac{a-1}{m} \rfloor+1=\lceil \frac{\lfloor a\rfloor}{m}\rceil.
\]
Hence we get 
\[
\lfloor c_i\rfloor=\max_{f''(E_{i,j})=D_i}\{\lceil\frac{\lfloor\mult_{E_{i,j}}(B'')\rfloor}{m_{i,j}}\rceil\}=e_i,~\forall i.
\]
To summarize, we just showed that $f''_*\OO_{X''}(-\lfloor B''\rfloor)=\OO_{Z''}(-\lfloor B_{Z''}\rfloor)$, which implies 
\[
f_*\OO_{\text{NKLT}(X,B+\bbeta)}=\OO_{\text{NKLT}(Z,B_Z+\ddelta)}.
\]
Now by Lemma \ref{l-contraction} we know $[\alpha_Z]|_{{\rm NKLT}(Z,B_Z+\ddelta)}$ is endowed with a contraction, and then by Theorem \ref{t-nkltcontraction}$_{n-1}$ we know $\alpha_Z$ is endowed with a contraction. Hence $\alpha$ is also endowed with a contraction since $f$ is a projective contraction.
\end{proof}

\section{Applications}\label{s-application}
In this section, we prove Theorem \ref{t-KahlerBCHM} and several of its consequences.
\begin{proof}[Proof of Theorem \ref{t-KahlerBCHM}] If $K_X+B+\bbeta _X$ is not pseudo-effective, then there exists a K\"ahler form on $X$ such that $K_X+B+\bbeta _X+\omega$ is not pseudo-effective. Replacing $\bbeta$ by $\bbeta +\omega$, we may assume that (in both cases (1) and (2)) $B+\bbeta _X$ is modified big. By Theorem \ref{t-small Q factorial model} and Lemma \ref{l-modbig}, we may assume that $X$ is strongly $\Q$-factorial.
The claim now follows easily from Theorem \ref{t-MMPscaling}.
\end{proof}
\begin{theorem}[K\"ahler criterion]\label{t-Kahler} Let $(X,B+\bbeta)$ be a $n$-dimensional compact K\"ahler generalized pair such that $\alpha =K_X+B+\bbeta_X$ is NQC, $B+\bbeta _X$ is modified big, and $[\alpha]|_V$ is K\"ahler, where $V={\rm NKLT}(X,B+\bbeta)$. 
If $\alpha \cdot C>0$ for every curve $C\subset X$, then $\alpha$ is K\"ahler.
\end{theorem}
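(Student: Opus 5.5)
The plan is to deduce the K\"ahler criterion from the contraction theorem. Since $[\alpha]|_V$ is K\"ahler on $V={\rm NKLT}(X,B+\bbeta)$, the identity map ${\rm id}_V:V\to V$ is a Moishezon contraction. Because a K\"ahler class is positive on every curve, it contracts exactly the $[\alpha]|_V$-trivial curves, namely none. Hence the contraction defined by $[\alpha]|_V$ exists and is Moishezon. The remaining hypotheses of Theorem \ref{t-nkltcontraction} are that $\alpha$ is nef and NQC and that $B+\bbeta_X$ is modified big; NQC and modified bigness are assumed. For nefness I would argue as follows.
\begin{itemize}
\item If $B+\bbeta_X$ is modified big and $\alpha$ were not nef, the cone theorem (Theorem \ref{t-cone}, applied after passing to the gklt setting, or the non-klt version of it) would produce a curve $\Gamma$ with $\alpha\cdot\Gamma<0$.
\item Such a curve is excluded by the hypothesis $\alpha\cdot C>0$.
\end{itemize}
Once nefness holds, Theorem \ref{t-nkltcontraction} (proved in all dimensions by the induction of the paper) gives a Moishezon contraction $f:X\to Y$ with $f_*\OO_X=\OO_Y$ such that a curve $C$ is contracted iff $\alpha\cdot C=0$.

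Since no curve is $\alpha$-trivial, $f$ contracts no curve. By Lemma \ref{lem: Moi contr is curve-connected}, $f$ is curve-connected, so every fibre is a single point. Thus $f$ is finite with $f_*\OO_X=\OO_Y$, hence an isomorphism. In particular $\alpha$ is big: it is nef, and its restriction to every subvariety has positive top self-intersection, since otherwise ${\rm Null}(\alpha)\neq\emptyset$.

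It remains to upgrade ``contraction is the identity'' to ``$\alpha$ is K\"ahler''. Here I would use Theorem \ref{thm:null-non-kahler}: $E_{nK}(\alpha)={\rm Null}(\alpha)$. It therefore suffices to show ${\rm Null}(\alpha)=\emptyset$. Suppose $W\subset{\rm Null}(\alpha)$ is a positive-dimensional component. Then I need a curve $C\subset W$ with $\alpha\cdot C=0$. To find it, I would re-enter the construction of Theorem \ref{t-ind1}. The contraction $f$ is obtained by extending the maps $f_i:Z_i\to T_i$ built in Proposition \ref{prop: construct f_i for Z_i}, and $\nu(\bigcup Z_i)$ contains ${\rm Null}(\alpha)=E_+(\eta)$. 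Each $f_i$ contracts precisely the curves on which $\alpha$ is trivial. Since $\alpha|_{\nu(Z_i)}$ fails to be big on the null locus, the image $f(W)$ has dimension strictly less than $\dim W$. This would force curves in $W$ to be contracted, contradicting that $f$ is an isomorphism.

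Alternatively, and more cleanly, one can argue as in Case 1 of Theorem \ref{t-ind2}. Write $[\alpha]=f^*[\gamma]$, so that $\gamma$ is nef and big on $Y\cong X$ and positive on all curves. Then apply the reasoning of Theorem \ref{t-bigKahler}, which asserts K\"ahlerness for nef and big classes positive on rational curves, in the gklt setting. In the non-gklt setting I would handle this by perturbation via Lemma \ref{lem: g-kltperturb}, combined with the known K\"ahlerness of $\alpha|_V$.

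The main obstacle is this final step. It amounts to showing that a nef and big class whose associated contraction is the identity has empty null locus, specifically the dimension drop $\dim f(W)<\dim W$ for a component $W$ of ${\rm Null}(\alpha)$. I would try first to extract it directly from Theorem \ref{thm:null-non-kahler} together with the fact that $f|_{\nu(Z_i)}$ is defined by $\alpha$, via a relative ampleness argument. If $f$ is an isomorphism, then $\alpha|_{W}$ pulled back to $Z_i$ is $f_i$-trivial and hence numerically a pullback from $T_i$. But $W\to f(W)$ is an isomorphism, so $\alpha|_W$ would be the pullback of a K\"ahler class, giving $\alpha|_W^{\dim W}>0$ and contradicting $W\subset{\rm Null}(\alpha)$.
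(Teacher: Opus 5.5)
There is a genuine gap, and it sits exactly where the content of the theorem lies.

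\textbf{Bigness is not established.} Invoking Theorem~\ref{t-nkltcontraction} gives you no information. In this paper, ``the contraction defined by $[\alpha]$'' only records that the map is Moishezon and which curves it contracts. It does not say that $\alpha\equiv f^*\omega$ with $\omega$ K\"ahler. When no curve is $\alpha$-trivial, ${\rm id}_X$ already satisfies conditions (i)--(iii) of that definition, so ``$f$ is an isomorphism'' is a tautology. Your sentence ``in particular $\alpha$ is big \dots\ since otherwise ${\rm Null}(\alpha)\neq\emptyset$'' is therefore circular: emptiness of the null locus is what you are trying to prove. (Incidentally, nefness needs no cone theorem, since in this paper NQC presupposes nef.)

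The missing idea is how to produce $\alpha$-trivial curves when $\alpha$ is not big. The paper does this with the MMP:
\begin{enumerate}
\item Take a log resolution $X'\to X$ and let $\alpha'$ be the pullback of $\alpha$.
\item Since $B+\bbeta_X$ is modified big, $K_{X'}+t\alpha'$ is not pseudo-effective for any $t\ge 0$. Otherwise $\alpha'$ plus an exceptional divisor would be big, hence $\alpha$ would be big.
\item By Lemma~\ref{lem: NQC can run alpha-trivial MMP}, some $(K_{X'}+t_0\alpha')$-MMP is $\alpha'$-trivial.
\item By Theorem~\ref{t-MMPscaling}, this MMP with scaling terminates with a Mori fiber space, so its output is covered by $\alpha'$-trivial rational curves.
\item Because the MMP is $\alpha'$-trivial, the transforms of these curves give a covering family of $\alpha$-trivial curves on $X$, contradicting the hypothesis.
\end{enumerate}
This is precisely the first step of the non-big case in the proof of Theorem~\ref{t-nkltcontraction}, which the bare statement of that theorem hides.

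\textbf{The final upgrade to K\"ahler is also circular.} Saying that $\alpha|_W$ is ``the pullback of a K\"ahler class'' from $T_i$ presupposes that a class on $T_i$ which is positive on all curves is K\"ahler. That is the theorem itself.

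Moreover, $f$ and the $f_i$ are determined by curve data only. Under your hypothesis they are finite on every subvariety. So the claimed drop $\dim f(W)<\dim W$ is just a restatement of the existence of an $\alpha$-trivial curve in $W$, and you give no source for it. For a general nef and big class, a null component need not contain any curves at all; think of a non-algebraic torus.

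Nor can Lemma~\ref{lem: g-kltperturb} reduce you to the gklt statement Theorem~\ref{t-bigKahler}. That lemma only shrinks the non-klt locus; it does not remove it.

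The paper handles this step by citing \cite{HLX26}, which reduces the statement to showing that $\alpha$ is big. That bigness is exactly what your proposal does not establish.
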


\begin{proof}
By \cite{HLX26} it suffices to show that $\alpha$ is big. Suppose that $\alpha$ is not big, and consider a log resolution $f: X'\to X$ such that $X'$ is a compact K\"ahler manifold. If we write
\[
K_{X'}+B'+\bbeta_{X'}=f^*(K_X+B+\bbeta_X)
\]
 and  $B'=B'^{\ge 0}-B'^{\le 0}$, then $B'^{\le 0}$ is an $f$-exceptional divisor. Let $\alpha '=f^*\alpha$, then $\alpha'+E$ is not big for any $f$-exceptional divisor $E$. Since $B+\bbeta_X$ is modified big, we see that
\begin{itemize}
\item $K_{X'}+t\alpha'$ is not pseudo-effective for any $t\ge 0$, and
\item $[\alpha']$ is also NQC as $\alpha$ is NQC by assumption.
\end{itemize}
Then by Lemma \ref{lem: NQC can run alpha-trivial MMP} there exists a $t_0>0$ such that any $(K_{X'}+t_0\alpha')$-MMP is $\alpha'$-trivial. By Theorem \ref{t-MMPscaling} we know there is a $(K_{X'}+t_0\alpha')$-MMP with scaling which terminates with a Mori fiber space $X'\to Z$, then $X'$ is covered by $\alpha'$-trivial rational curves, and so $X$ is covered by $\alpha$-trivial rational curves. But this contradicts our assumption.
\end{proof}

\begin{remark}

Theorem \ref{t-Kahler} is known to be true even without the NQC assumption when $\alpha$ is big. In general it is implied by the termination of flips. In fact, if $\alpha$ is not big, we can consider a smooth K\"ahler resolution $f:X'\to X$ and then we run an $f^*\alpha$-trivial $K_{X'}$-MMP, which must terminate with a Mori fiber space $X''\to Z$, hence there is a dominant family of $\alpha$-trivial rational curves on $X''$. The strict transforms of these curves give a dominant family of $\alpha$-trivial rational curves on $X$. 

\end{remark}

\begin{theorem}\label{t-Fujiki bpf}
Let $(X,B+\bbeta)$ be a compact gklt pair such that $B+\bbeta _X$ is modified big and $\alpha=[K_X+B+\bbeta_X]$ is nef, then there exists a birational map $\phi: X\dasharrow Y$ such that
\begin{enumerate}
\item $Y$ is K\"ahler and strongly $\Q$-factorial,
\item $\phi$ is small, so $\alpha_Y:=\phi_*\alpha\in H^{1,1}_{\rm BC}(Y)$,
\item $\phi$ is crepant, i.e. $\alpha$ and $\alpha'$ are equal up to pullbacks on a common resolution.
\end{enumerate}
In particular, there is a Moishezon contraction $g: X\to Z$ such that $\alpha=g^*\omega_Z$ for a K\"ahler class $\omega_Z$ on $Z$.
\end{theorem}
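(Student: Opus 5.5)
The plan is to reduce to the K\"ahler case already handled by Theorem \ref{t-bpf}. Here $X$ is only assumed compact (implicitly in Fujiki's class $\mathcal C$), so the main point is to find a K\"ahler model $Y$ of $X$ that is small and crepant, with $Y$ strongly $\Q$-factorial. Once such a $Y$ exists, $\alpha_Y$ is nef. Moreover, $B_Y+\bbeta_Y$ is modified big and $(Y,B_Y+\bbeta)$ is gklt, since $\phi$ is small and crepant. Theorem \ref{t-bpf} then gives a projective contraction $g_Y:Y\to Z$ with $\alpha_Y\equiv g_Y^*\omega_Z$ and $\omega_Z$ K\"ahler.

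\emph{Constructing $Y$.} Let $\nu:X'\to X$ be a log resolution with $X'$ a compact K\"ahler manifold. Write $K_{X'}+B_{X'}+\bbeta_{X'}=\nu^*(K_X+B+\bbeta_X)$, and set $B'=B_{X'}^{\ge 0}+\epsilon\,{\rm Ex}(\nu)$. Then $(X',B'+\bbeta)$ is gklt, $B'+\bbeta_{X'}$ is big by Lemma \ref{l-modbig}, and
\[K_{X'}+B'+\bbeta_{X'}=\nu^*\alpha+F,\]
where $F\ge 0$ and ${\rm Supp}(F)={\rm Ex}(\nu)$. This class is pseudo-effective, so we run the MMP with scaling of Theorem \ref{t-MMPscaling} on $X'$. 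It terminates with a good log terminal model $\psi:X'\dasharrow Y$, with $Y$ strongly $\Q$-factorial and K\"ahler; the K\"ahler property is preserved at each step by Proposition \ref{prop: MMP can be run}.

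\emph{Checking that $F$ is contracted and the induced map is small and crepant.} First, $F$ is $\nu$-exceptional and $\nu^*\alpha$ is nef, so $F=N(\nu^*\alpha+F)$ by Lemma \ref{l-N+birational}. By Lemma \ref{l-contdiv}, $\psi$ contracts exactly ${\rm Supp}\,N$, which is ${\rm Supp}(F)={\rm Ex}(\nu)$, and no other divisor. Hence $\phi=\psi\circ\nu^{-1}:X\dasharrow Y$ contracts no divisors. In the other direction, $\phi^{-1}$ extracts no divisors: every divisor on $Y$ is the image of a non-exceptional divisor of $X'$. So $\phi$ is small, and $\alpha_Y=\phi_*\alpha$ makes sense.

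For crepancy, take a common resolution $p:W\to X'$, $q:W\to Y$. We have $p^*(\nu^*\alpha+F)=q^*\alpha_Y+E$ with $E\ge0$ and $p_*E=F$. Then $p^*\nu^*\alpha-q^*\alpha_Y=E-p^*F$ is exceptional over $X$. It is also anti-nef over $X$, because $q^*\alpha_Y$ is nef, and its pushforward to $X$ vanishes. The negativity lemma (Lemma \ref{l-negbir}), applied in both directions with $\alpha$ nef on $X$, gives $E=p^*F$, so the difference is $0$. I expect this step to be the main obstacle: controlling the BZ negative part on a non-K\"ahler $X$, and making the negativity lemma work for classes rather than divisors. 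If the direct argument fails, I would fall back on the $\alpha$-trivial MMP of Lemma \ref{lem: NQC can run alpha-trivial MMP}, provided NQC can be established.

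\emph{The Moishezon contraction on $X$.} To obtain $g:X\to Z$, compose the crepant birational map $X\dasharrow Y$ with $g_Y$. Since $\nu^*\alpha=p_*q^*g_Y^*\omega_Z$, the composite $X'\dasharrow Z$ is $\nu^*\alpha$-trivial on the fibres of $\nu$. Curves in these fibres are contracted, so by rigidity the map descends to a morphism $g:X\to Z$ with $\alpha\equiv g^*\omega_Z$. The map $g$ is Moishezon because $W\to Z$ is projective, using Lemma \ref{lem: sm Kahler+Moishezon=proj mor} where needed.
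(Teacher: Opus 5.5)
Your proposal is correct and follows essentially the same route as the paper: perturb the crepant pullback on a K\"ahler log resolution so that $K_{X'}+B'+\bbeta_{X'}=\nu^*\alpha+F$ with ${\rm Supp}(F)={\rm Ex}(\nu)$, and take a good log terminal model. Then use $N(\nu^*\alpha+F)=F$ and Lemma \ref{l-contdiv} to see that exactly ${\rm Ex}(\nu)$ is contracted, so $X\dasharrow Y$ is small; get crepancy from the negativity lemma; and descend the contraction as in Lemma \ref{l-contraction}.

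One small correction: Lemma \ref{l-N+birational} is stated for $\Q$-factorial K\"ahler targets, which $X$ need not be, so for $N(\nu^*\alpha+F)=F$ you should cite \cite[Lemma A.5]{DHY23} directly, as the paper does. The obstacle you anticipated does not arise. All Boucksom--Zariski computations take place on the K\"ahler manifold $X'$, and your explicit divisor $E-p^*F$ already lets the negativity lemma apply.
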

\begin{proof}
From the assumption we know $X$ is in the Fujiki's class $\mathcal{C}$, there exists a log resolution $f: X'\to X$ such that $X'$ is K\"ahler and $\bbeta$ descends on $X'$. Let $\Delta:=f^{-1}_*B+(1-\epsilon){\rm Ex}(f)$ for some $0<\epsilon\ll 1$, then we can write 
\[K_{X'}+\Delta+\bbeta_{X'}=f^*(K_X+B+\bbeta_X)+F,\]
where $F$ is an effective $f$-exceptional divisor such that $\Supp (F)={\rm Ex}(f)$. Then by Lemma \ref{l-modbig} we know $\Delta+\bbeta_{X'}$ is big, hence $(X',\Delta+\bbeta)$ has a good log terminal model $\phi': (X',\Delta+\bbeta)\dasharrow (Y,\Delta_Y+\bbeta)$ by Theorem \ref{t-KahlerBCHM}. We may assume $Y$ is K\"ahler and strongly $\Q$-factorial by Theorem \ref{t-MMPscaling} and Proposition \ref{prop: MMP can be run}.

Since $K_X+B+\bbeta_X$ is nef, we have $N(K_{X'}+\Delta+\bbeta_{X'})=F$ by \cite[Lemma A.5]{DHY23}, then by Lemma \ref{l-contdiv} we know $\Supp F={\rm Ex}(f)$ is exactly the set of divisors contracted by $\phi'$, which induce a small birational map $\phi: X\dasharrow Y$ and we have $\phi_*B=\Delta_Y$.

Now $\phi_*[K_X+B+\bbeta_X]=[K_Y+\Delta_Y+\bbeta_Y]\in H^{1,1}_{\rm BC}(Y)$ is also nef, therefore by the negativity lemma we can easily see that $\phi:(X,B+\bbeta)\dasharrow (Y,\Delta_Y+\bbeta)$ is crepant. By Theorem \ref{t-bpf} $\alpha_Y=\phi_*\alpha$ is endowed with a Moishezon contraction $g_Y:Y\to Z$ and $g_Y^*\omega_Z=\alpha_Y$ for some K\"ahler class $\omega_Z$, so $\alpha$ is endowed with a Moishezon contraction $g: X\to Z$ as well by Lemma \ref{l-contraction}, and we have $g^*\omega_Z=\alpha$ since $\alpha,\alpha_Y$ are crepant.
\end{proof}

\begin{corollary}
Let $(X,B+\bbeta)$ be a compact gklt pair such that $B+\bbeta_X$ is modified big and $\alpha=[K_X+B+\bbeta_X]$ is nef, then $\alpha$ is semiample.
\end{corollary}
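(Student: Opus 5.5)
This corollary should follow almost directly from Theorem \ref{t-Fujiki bpf} and the paper's definition of semiample. Recall that $\alpha\in H^{1,1}_{\rm BC}(X)$ is semiample if there is a holomorphic map $g:X\to Z$ of compact analytic varieties and a K\"ahler form $\omega$ on $Z$ with $\alpha\equiv g^*\omega$. Theorem \ref{t-Fujiki bpf} applies verbatim: it assumes exactly that $(X,B+\bbeta)$ is a compact gklt pair with $B+\bbeta_X$ modified big and $\alpha$ nef. Its ``in particular'' clause then supplies a Moishezon contraction $g:X\to Z$ and a K\"ahler class $\omega_Z$ on $Z$ with $\alpha=g^*\omega_Z$. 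That is precisely semiampleness.

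To make this explicit, I would proceed in the same order as that proof. First, pass to a K\"ahler log resolution $f:X'\to X$; this is possible because $X$ is in Fujiki's class $\mathcal C$, which is the implicit standing setting of the statement. Next, run the MMP for $(X',f^{-1}_*B+(1-\epsilon){\rm Ex}(f)+\bbeta)$. By Theorem \ref{t-KahlerBCHM}, Lemma \ref{l-modbig} and Theorem \ref{t-MMPscaling}, this produces a strongly $\Q$-factorial K\"ahler good log terminal model $Y$.

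The next step is to see that $Y$ is small and crepant over $X$. Lemma \ref{l-contdiv} together with the equality $N(K_{X'}+\Delta+\bbeta_{X'})=F$ shows that the divisors contracted are exactly ${\rm Ex}(f)$. Nefness of $\alpha$ and the negativity lemma then give crepancy. Applying Theorem \ref{t-bpf} on $Y$ gives $\alpha_Y=g_Y^*\omega_Z$. Finally, Lemma \ref{l-contraction}(4) descends the contraction to $X$, and crepancy gives $\alpha\equiv g^*\omega_Z$.

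The only point needing care is that the representative $\omega_Z$ be an honest K\"ahler form on $Z$ rather than just a class. On $Y$ this is what Theorem \ref{t-bpf} provides, since its conclusion (2) produces a K\"ahler class $\gamma$ on $Z$, which can be represented by a K\"ahler form. The equality $\alpha\equiv g^*\omega_Z$ in $H^{1,1}_{\rm BC}(X)$ follows by pulling both sides back to a common resolution of $X$ and $Y$, where they agree by crepancy, and then using injectivity of pullback on $H^{1,1}_{\rm BC}$. That injectivity holds because $X$ has rational singularities (Lemma \ref{L-gklt-rational}) and by Lemma \ref{l-rccfibres}. I expect no real obstacle beyond this.
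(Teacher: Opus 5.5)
Your proposal is correct and matches the paper. The corollary is exactly the ``in particular'' clause of Theorem~\ref{t-Fujiki bpf} read through the paper's definition of semiample: a Moishezon contraction $g:X\to Z$ is a holomorphic map, and a K\"ahler class on $Z$ is represented by a K\"ahler form. Your unpacking retraces that theorem's proof step by step, and your injectivity remark is only a harmless elaboration of the paper's appeal to crepancy.
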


\begin{corollary}\label{c-cyrc}  Let $(X,B+\bbeta )$ be a compact gklt pair such that $B+\bbeta _X$ is modified big and $K_X+B+\bbeta _X\equiv 0$, then $X$ is rationally connected.
\end{corollary}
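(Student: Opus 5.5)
Since $K_X+B+\bbeta_X\equiv 0$ and $B+\bbeta_X$ is modified big, $-K_X$ is modified big and so $K_X$ is not pseudo-effective. By Theorem \ref{t-Fujiki bpf} (or directly by Lemma \ref{l-modbig} and Theorem \ref{t-small Q factorial model}) we may replace $X$ by a small crepant model that is K\"ahler and strongly $\Q$-factorial. Rational connectedness is a birational invariant, so this is harmless. We may also perturb as in Lemma \ref{lem: g-kltperturb} so that $\bbeta$ descends to a resolution on which its trace dominates a K\"ahler form.

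The plan is to induct on $\dim X$ using the MRC fibration. By Theorem \ref{t-ou} applied to a resolution, $X$ is uniruled. Let $X\dasharrow Y$ be the MRC fibration, with $Y$ a K\"ahler manifold (after blowing up) and $K_Y$ pseudo-effective, again by Theorem \ref{t-ou}. We must show $\dim Y=0$; suppose instead $\dim Y>0$. Take a resolution $\nu:X'\to X$ with $f':X'\to Y$ a projective morphism whose general fibres are rationally connected. Write $K_{X'}+B'+\bbeta_{X'}=\nu^*(K_X+B+\bbeta_X)\equiv 0$ and set $\Delta'=B'^{\ge0}+\epsilon{\rm Ex}(\nu)$. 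Then $\Delta'+\bbeta_{X'}$ is big by Lemma \ref{l-modbig}. Moreover $K_{X'}+\Delta'+\bbeta_{X'}\equiv B'^{\le0}+\epsilon{\rm Ex}(\nu)$ is effective, hence pseudo-effective over $Y$.

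Theorem \ref{t-psef}(1) then gives that $K_{X'/Y}+\Delta'+\bbeta_{X'}$ is pseudo-effective. Adding $f'^*K_Y$, which is pseudo-effective, shows that $K_{X'}+\Delta'+\bbeta_{X'}$ is pseudo-effective; this is consistent, so we need a stronger inequality. For this, replace $\bbeta$ by $(1-\delta)\bbeta$ together with a small K\"ahler piece so that bigness is kept. More cleanly, apply Theorem \ref{t-psef}(2) to the pair with boundary $\Delta'+\bbeta_{X'}+\eta$, where $\eta=f'^*\omega_Y$ for a K\"ahler form $\omega_Y$ on $Y$. This class is big over $Y$ because it is $\equiv_Y$ to an effective exceptional divisor plus a big class, so Theorem \ref{t-psef}(2) gives that $K_{X'/Y}+\Delta'+\bbeta_{X'}$ is big.

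Adding back $f'^*K_Y$, we get that $B'^{\le0}+\epsilon{\rm Ex}(\nu)$, an effective $\nu$-exceptional divisor, is big. Pushing forward by $\nu$ shows $0$ is modified big on $X$, which is absurd; for instance, its pullback would have positive volume while it is supported on exceptional divisors, contradicting Lemma \ref{l-BZ-prop}(1) that such divisors satisfy $N(E)=E$ and hence $P=0$. Therefore $\dim Y=0$ and $X$ is rationally connected. This uses that the MRC base of a rationally connected fibration is a point exactly when general points are joined by rational curves, and the remark in \S\ref{s-rc} that gklt varieties are rationally connected iff rationally chain connected.

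The main obstacle is the bigness step. Theorem \ref{t-psef}(2) needs $K_{X'}+\Delta'+\bbeta_{X'}$ to be \emph{big over $Y$}, whereas we only know it is numerically an effective exceptional divisor. I would first try to show that relative bigness follows because $\bbeta_{X'}$ can be taken K\"ahler after perturbation: write $K_{X'}+\Delta'+(1-\delta)\bbeta_{X'}$ and use that $\bbeta_{X'}$ is relatively K\"ahler and $-K_{X'}$ is relatively big. If this fails, the fallback is to run the relative MMP over $Y$ as in the proof of Proposition \ref{p-notbigmm} to a Mori fibre space $X''\to Z'$ over $Y$, with $K+\Delta''+\bbeta\equiv_{Z'}0$. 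The canonical bundle formula, Theorem \ref{t-gcbf}, then produces a gklt pair $(Z',B_{Z'}+\ggamma)$ with $K_{Z'}+B_{Z'}+\ggamma_{Z'}\equiv 0$ after pushing the exceptional part to zero, and $\ggamma_{Z'}$ modified big. By induction on dimension $Z'$ is rationally connected. Since $X''\to Z'$ has rationally connected fibres, $X''$ is rationally connected by \cite{GHS03}, contradicting $\dim Y>0$.
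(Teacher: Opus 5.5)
Your main line fails at the relative bigness step, and the claim there is false, not merely unproved. Adding $\eta=f'^*\omega_Y$ changes nothing over $Y$, so $K_{X'}+\Delta'+\bbeta_{X'}+\eta\equiv_Y E:=B'^{\le 0}+\epsilon\,{\rm Ex}(\nu)$. There is no big class in this sum. A class that is big over $Y$ restricts to a big class on a general fibre $F$ of $f'$. But the MRC map is almost holomorphic, so $F\to\nu(F)$ is birational and $E|_F$ is an effective divisor exceptional over $\nu(F)$; this is the same observation used in Case 2 of the proof of Theorem \ref{t-ind2}. Hence $E|_F$ is not big, and Theorem \ref{t-psef}(2) never applies. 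Theorem \ref{t-psef} only gives pseudo-effectivity here, which you already saw yields no contradiction. The $(1-\delta)\bbeta$ variant is worse: once $\bbeta_{X'}$ has a K\"ahler part, $K_{X'}+\Delta'+(1-\delta)\bbeta_{X'}$ is not even pseudo-effective over $Y$. So everything rests on your fallback.

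The fallback is a genuinely different route and can be completed, but two steps are missing.

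First, the relative MMP only gives $K_{X''}+\Delta''+\bbeta_{X''}\equiv E''=g^*E_{Z'}$ with $E_{Z'}\ge 0$ possibly nonzero. So $K_{Z'}+B_{Z'}+\ggamma_{Z'}\equiv E_{Z'}$, not $\equiv 0$, and your inductive hypothesis does not apply. ``Pushing the exceptional part to zero'' requires a further MMP on $Z'$, lifting it to $X''$, and the Boucksom--Zariski argument at the end of the proof of Theorem \ref{t-ind2}. It is cleaner to drop the resolution and the MRC altogether:
on the strongly $\Q$-factorial K\"ahler model, where $X$ is klt, the class $K_X+c(B+\bbeta_X)\equiv(1-c)K_X$ with $0<c<1$ is not pseudo-effective. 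Each step of its MMP is crepant for $(X,B+\bbeta)$ because $K_X+B+\bbeta_X\equiv 0$. Theorem \ref{t-MMPscaling} then ends with a Mori fibre space $g:X_n\to Z$ whose general fibres are klt Fano, and Theorem \ref{t-gcbf} gives $K_Z+B_Z+\ggamma_Z\equiv 0$ directly.

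Second, \cite{GHS03} is a statement about projective varieties. You must observe that $Z$ is projective by \cite{Nam02}, since it is K\"ahler with rational singularities and a rationally connected resolution; then $X_n$ is projective too.

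The paper avoids all of this with one observation you did not use. On that model $-K_X\equiv B+\bbeta_X$ is big, so $X$ is Moishezon and hence projective by \cite{Nam02}. After perturbing to $K_X+B'+\bbeta'_X+\omega\equiv 0$ with $\omega$ K\"ahler, vanishing gives $H^i(\OO_X)=0$ for $i>0$, also on a resolution. So $\bbeta'$ is numerically a divisor and $(X,B'+\bbeta')$ is an ordinary generalized log Fano pair, hence of log Fano type, and \cite{HM07} concludes. No MRC fibration, MMP, canonical bundle formula or induction on dimension is needed.
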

\begin{proof}
Since $(X,B+\bbeta)$ is gklt, $X$ is rationally connected iff a resolution of $X$ is rationally connected. By Theorem \ref{t-Fujiki bpf} we may assume $X$ is K\"ahler and strongly $\Q$-factorial. By assumption $-K_X\equiv B+\bbeta_X$ admits a K\"ahler current, hence $X$ is Moishezon. By Lemma \ref{L-gklt-rational} we know $X$ has only rational singularities, then $X$ is actually projective by \cite{Nam02}. 

Arguing as in Lemma \ref{lem: g-kltperturb} we can show that there exists a gklt pair $(X,B'+\bbeta')$ and a K\"ahler form $\omega$ on $X$ such that 
\[
K_X+B'+\bbeta'_X+\omega\equiv 0. 
\]
By the vanishing theorem \ref{t-kvv} we have $H^i(X,\OO_X)=0,i>0$, which also holds on any resolution $X'\to X$ via the Leray spectral sequence. Hence $\bbeta'_{X'}\equiv N$ for a divisor $N$ and we may assume $(X,B'+\bbeta')$ is a usual log Fano generalized pair, where $\bbeta'$ is a b-nef b-divisor. One can easily show that $X$ is of log fano type and then the result is standard (see eg. \cite{HM07}).
\end{proof}

\begin{theorem} Let $(X,B+\bbeta)$ be a compact K\"ahler gklt pair such that either $K_X+B+\bbeta _X\in H^2(X,\Q)$ or $B+\bbeta _X$ is modified big. If $\phi _i:X\dasharrow X_i$ are log terminal models of $(X,B+\bbeta _X)$ for $i=1,2$, then there exists a sequence of $K_{X_1}+B_1+\bbeta _{X_1}$ flops $X_1\dasharrow X_2$ connecting $X_1$ and $X_2$. 
\end{theorem}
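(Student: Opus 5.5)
We follow Kawamata's argument for connecting minimal models by flops, as in \cite{BCHM10} and \cite{DHY23}. The first step is to compare the two models. Since $X_1$ and $X_2$ are both log terminal models, Lemma \ref{l-contdiv} shows that each $\phi_i$ contracts exactly the divisors in ${\rm Supp}\,N(K_X+B+\bbeta_X)$. Hence $\psi:=\phi_2\circ\phi_1^{-1}:X_1\dasharrow X_2$ is small, and $\psi_*(B_1)=B_2$. Using a common resolution and the negativity lemma (Lemma \ref{l-negbir}) twice, the pullbacks of $K_{X_i}+B_i+\bbeta_{X_i}$ agree on that resolution. So $\psi$ is crepant, $\alpha_2=\psi_*\alpha_1$ with $\alpha_i:=K_{X_i}+B_i+\bbeta_{X_i}$, and both $\alpha_i$ are nef. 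By Theorem \ref{t-bpf} (in the modified big case), or by abundance-type input in the rational-class case, the $\alpha_i$ are semiample and induce the same log canonical model $X_i\to Z$. In the second case, where $B+\bbeta_X$ is not assumed big, I would need a separate argument that $X_1,X_2$ share the K\"ahler model $Z$; I would first try to reduce to working relatively over $Z$ using Theorem \ref{t-relMMP}.

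The second step is to set up a relative MMP on $X_1$ over $Z$ that flops toward $X_2$. Choose a K\"ahler form $\omega_2$ on $X_2$ and let $H_1:=\psi^{-1}_*\omega_2$. Since $\psi$ is small, $H_1\in H^{1,1}_{\rm BC}(X_1)$ by Proposition \ref{prop: MMP can be run}(3), because small birational maps between strongly $\Q$-factorial models induce isomorphisms on $H^{1,1}_{\rm BC}$. For $0<\epsilon\ll1$ the pair $(X_1,B_1+\bbeta+\epsilon\overline{H_1})$ is gklt after rewriting $\epsilon H_1$ via Lemma \ref{lem: g-kltperturb}. Here the key point is that $H_1$ is big: it is the pushforward of a K\"ahler class under a small map. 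The class $\alpha_1+\epsilon H_1\equiv_Z\epsilon H_1$ is then big over $Z$ with modified big boundary, so Theorem \ref{t-runMMP} lets us run the $(\alpha_1+\epsilon H_1)$-MMP over $Z$ with scaling of a K\"ahler form, and it terminates.

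The third step is to check that this MMP consists of flops ending at $X_2$. Every step is $\alpha_1$-trivial, since $\alpha_1\equiv_Z0$, so each step is a $K_{X_1}+B_1+\bbeta_{X_1}$-flop. No divisor can be contracted: $\epsilon H_1$ is modified nef, and its pushforward to $X_2$ is K\"ahler. The output $X'$ has $\epsilon H'$ nef over $Z$ and is small over $X_2$. Applying the negativity lemma to $X'\dasharrow X_2$ together with the uniqueness of log canonical models shows that $X'\to X_2$ is an isomorphism, because $\omega_2$ is K\"ahler over $Z$. The main obstacle is making sure each step is a genuine flop in the sense of the paper's definitions, with flips existing for the perturbed gklt pair (Theorem \ref{t-flip}) and $\rho_{\rm BC}$ conditions preserved, and handling the case where only $K_X+B+\bbeta_X\in H^2(X,\Q)$ is assumed. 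In that case I would use NQC together with Lemma \ref{lem: NQC can run alpha-trivial MMP} to keep the steps $\alpha$-trivial, and rely on the relative statements of \S\ref{s-MMPS} rather than global bigness.
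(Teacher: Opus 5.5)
In the case where $B+\bbeta_X$ is modified big, your argument is essentially sound but takes a different route from the paper. You build the common log canonical model $Z$ via Theorem \ref{t-bpf}; it is the same for $X_1$ and $X_2$ because $\psi$ is small and crepant, cf.\ Lemma \ref{l-contraction}. You then run the $H_1$-MMP over $Z$, so every step is automatically $\alpha_1$-trivial. The paper never constructs $Z$. It observes that $\alpha$ is NQC in \emph{both} cases: trivially when the class is rational, and by Lemma \ref{lem: gklt pair+Kahler is NQC}, after the perturbation of Lemma \ref{lem: g-kltperturb}, in the modified big case. It then runs Kawamata's argument from \cite[Theorem 3.28]{DHY23}, where $\alpha_1$-triviality of the \emph{global} $(\alpha_1+\epsilon H_1)$-MMP comes from Lemma \ref{lem: NQC can run alpha-trivial MMP}. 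Your route trades the NQC lemma for semiampleness, which is why it only works where semiampleness is known.

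That is exactly where the gap lies. Suppose $K_X+B+\bbeta_X\in H^2(X,\Q)$ and $B+\bbeta_X$ is not modified big. Then no ``abundance-type input'' is available: the existence of $Z$ is the open abundance problem (Conjecture \ref{c-ab}). Theorem \ref{t-relMMP} is also unusable, since it needs modified bigness over the base. Your fallback looks for bigness in the wrong place. No base is needed, because the bigness required to run and terminate the MMP is supplied by $H_1$ itself.

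Concretely, let $p:W\to X_1$ and $q:W\to X_2$ be a common resolution. For $0<\delta\ll 1$ the generalized pair $(X_1,B_1+\bbeta+\delta\overline{q^*\omega_2})$ is gklt, since the negativity lemma gives $p^*H_1-q^*\omega_2\geq 0$. Note that the nef part is $\overline{q^*\omega_2}$, not $\overline{H_1}$, and Lemma \ref{lem: g-kltperturb} is not what is needed here. Its boundary $B_1+\bbeta_{X_1}+\delta H_1$ is globally big, because $H_1$ is big and $B_1+\bbeta_{X_1}$ is pseudo-effective. So Theorem \ref{t-MMPscaling} applies with no bigness assumption on $B+\bbeta_X$.

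Now apply Lemma \ref{lem: NQC can run alpha-trivial MMP} to this pair with $t>t_0$. The MMP for $(1+t)\alpha_1+\delta H_1$, i.e.\ for $\alpha_1+\epsilon H_1$ with $\epsilon=\delta/(1+t)$, is then $\alpha_1$-trivial. It contracts no divisors, since $N(\alpha_1+\epsilon H_1)=0$ (Lemma \ref{l-contdiv}). Its output is isomorphic to $X_2$, for three reasons:
\begin{enumerate}
\item $\alpha_2+\epsilon\omega_2$ is K\"ahler, so $X_2$ is the log canonical model of this big class.
\item By Theorem \ref{t-bpf} and uniqueness of log canonical models, the output maps onto $X_2$.
\item A small morphism between strongly $\Q$-factorial models is an isomorphism.
\end{enumerate}
This argument is uniform in the two cases, so $Z$ is never needed.
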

\begin{proof}  Note that in both cases $K_X+B+\bbeta _X$ is NQC, see Lemma \ref{lem: gklt pair+Kahler is NQC}. Then the rest follows from the proof of \cite[Theorem 3.28]{DHY23}.

\end{proof}
The next result shows that the MMP is well behaved in families over a curve (see \cite[Proposition 4.16]{HLR26} for the case when the central fiber is projective).
\begin{theorem}\label{t-PHLR} Let $g: X \rightarrow S$ be a flat proper morphism with connected fibers from a generalized pair $(X,B+ \bbeta)$ to a smooth, connected, and relatively compact curve. Fix a point $0\in S$ and assume that the support of the boundary divisor $B$ does not contain the fiber $X_0$. Suppose that $(X_0,B_0 + { \boldsymbol{\beta}}_0)$ is K\"ahler, with canonical singularities, and $\lfloor B_0 \rfloor = 0$, where $$(K_X+X_0+B+\bbeta _X)|_{X_0}=K_{X_0}+B_0+\bbeta _0,$$ and the negative part of Boucksom--Zariski decomposition satisfies the relation $$N(K_{X_0}+ B_0 + \bbeta_0) \wedge B_0 = 0.$$ Then every sequence of transcendental MMP-steps \[(X_0, B_0  +\bbeta_0) \dashrightarrow (X_0^{(1)}, B_0^{(1) }+  \bbeta_0^{(1)}) \dashrightarrow (X_0^{(2)},B_0^{(2)}+ \bbeta_0^{(2)}) \dashrightarrow \cdots\]  extends to a sequence of $(K_X+B+\bbeta _X)$-negative proper meromorphic maps 
$$ (X, B+ \bbeta)/U \dashrightarrow (X^{(1)},B^{(1)}+ \bbeta^{(1)})/U \dashrightarrow (X^{(2)},B^{(2)}+ \bbeta^{(2)}) /U\dashrightarrow \cdots,$$over some open neighborhood $U\subset S$ of $0$. 
\end{theorem}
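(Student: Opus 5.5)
The plan is to follow the strategy of \cite[Proposition 4.16]{HLR26}: extend each step inductively, where the base case is a single divisorial contraction or flip of $X_0$, and then replace $(X,B+\bbeta)$ by the extended family. Since everything is local over $0\in S$, I will freely shrink $S$ to a small disc $U$. By inversion of adjunction, $(X_0,B_0+\bbeta_0)$ gklt (indeed canonical) implies that $(X,X_0+B+\bbeta)$ is gplt near $X_0$. Hence $(X,B+\bbeta)$ is gklt, and in fact has canonical-type singularities, near $X_0$. Since $X_0\sim 0$ over $U$, the class $K_X+B+\bbeta_X$ restricts to $K_{X_0}+B_0+\bbeta_0$. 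Also, because $X_0$ is K\"ahler, $X$ is K\"ahler over a small $U$ after shrinking, by stability of the K\"ahler condition in the smooth-total-space sense, or via a relative K\"ahler form built from one on $X_0$. I would also arrange strong $\Q$-factoriality by Theorem \ref{t-small Q factorial model}, checking that the small modification does not change $X_0$ by using the canonical hypothesis on the fiber.

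For one step, let $R_0\subset\overline{\rm NA}(X_0)$ be the extremal ray contracted by $X_0\to Y_0$, supported by a nef class $\alpha_0=K_{X_0}+B_0+\bbeta_0+\omega_0$. The key point is that $H^{1,1}_{\rm BC}(X)\to H^{1,1}_{\rm BC}(X_0)$ is surjective up to shrinking $U$, since $X$ retracts to $X_0$ and $H^2(X)\cong H^2(X_0)$. So $\omega_0$ lifts to a relative K\"ahler class $\omega$. Then $\alpha=K_X+B+\bbeta_X+\omega$ is nef over $U$ near $X_0$: curves in nearby fibers deform, and negativity would have to specialize to $X_0$ by openness of the K\"ahler cone on the relative cone, using the relative cone Theorem \ref{t-projcone} and Theorem \ref{t-cone}. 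I then apply the relative base-point-free Theorem \ref{t-relMMP}(2) over $U$, using that $\omega$ makes $B+\bbeta+\bar\omega$ relatively big, to obtain a contraction $X\to Y$ over $U$ that restricts to $X_0\to Y_0$. Restriction is compatible because $R^1$ vanishing, by Theorem \ref{t-kvv} applied to $\OO_X(-X_0)$, gives $\OO_Y\to\OO_{Y_0}$ surjective. If the step is divisorial, the exceptional divisor $E_0$ is the restriction of a divisor $E$ on $X$; here the hypothesis $N(K_{X_0}+B_0+\bbeta_0)\wedge B_0=0$ ensures that $E_0\not\subset B_0$, so the negative part is not absorbed into the boundary. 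If the step is a flip, I take the relative lc model $X^+=\operatorname{Projan}$ of the relative canonical algebra over $Y$ (Theorem \ref{t-flip}) and check $X^+_0=X_0^+$ using the same vanishing and adjunction to $X_0$.

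The main obstacle is preserving the hypotheses after each step so that the induction continues. Canonicity of $X_0^{(i)}$ and $\lfloor B_0^{(i)}\rfloor=0$ are preserved by negativity. The condition $N\wedge B=0$ persists because the divisors contracted are exactly those in $\operatorname{Supp}N$ (Lemma \ref{l-contdiv}) and pushforward only removes components. The most delicate issue is the identification $X^{(1)}_0=(X_0)^{(1)}$ rather than just a birational model. For this I would argue as in Lemma \ref{l-codim1iso}: first compare the two pairs crepantly on codimension 1 points of the central fiber, using adjunction together with the canonical hypothesis to rule out new divisors in $X^{(1)}_0$; then upgrade to an isomorphism via normality of $X^{(1)}_0$, obtained from $S_2$-ness of the Cartier divisor plus $R_1$.
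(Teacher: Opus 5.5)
The paper does not reprove this statement: it runs the argument of \cite[Proposition 4.16]{HLR26} verbatim, with Theorem~\ref{t-KahlerBCHM} in place of \cite{DH24}. You invoke the same source, but the one-step argument you actually write down rests on a false claim. You assert that $H^{1,1}_{\rm BC}(X)\to H^{1,1}_{\rm BC}(X_0)$ is surjective because $X$ retracts onto $X_0$. The retraction gives $H^2(X,\R)\cong H^2(X_0,\R)$, but it does not respect Hodge type. A class $c\in H^2(X,\R)$ can come from $H^{1,1}_{\rm BC}(X)$ only if its image in $H^2(X,\OO_X)=H^0(U,R^2g_*\OO_X)$ vanishes, i.e.\ only if its parallel transport stays of type $(1,1)$ on every nearby fibre $X_t$. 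This fails for a general $(1,1)$-class on $X_0$ as soon as the Hodge structure varies. Already for a non-isotrivial family of K3 surfaces, only a proper subspace of $H^{1,1}(X_0,\R)$ stays of type $(1,1)$. So neither $\omega_0$ nor the supporting class $\alpha_0$ can be lifted this way, and your appeal to the relative base-point-free theorem (Theorem~\ref{t-relMMP}(2)) has no input. To extend a single step you should deform the contraction rather than the class. Since $-(K_{X_0}+B_0+\bbeta_0)$ is ample over $Y_0$, Theorem~\ref{t-kvv} gives $R^if_{0*}\OO_{X_0}=0$ for $i>0$. The deformation theory of contractions (cf.\ \cite{KM92}) then extends $f_0\colon X_0\to Y_0$ to some $f\colon X\to Y$ over a neighbourhood of $0$.

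The second, more serious gap is identifying the central fibre of the new family with $X_0^{(1)}$. You treat this as routine, but it is the real content of the statement. In the flipping case you say $X^+_0=X_0^+$ follows ``from the same vanishing and adjunction''. However, $X^+=\operatorname{Projan}_Y\bigoplus_m f_*\OO_X(mD)$ for a Cartier divisor $D\equiv_Y\lambda(K_X+B+\bbeta_X)$ with $\lambda>0$, so $D$ is $f$-anti-ample. The cokernel of $f_*\OO_X(mD)\to f_{0*}\OO_{X_0}(mD|_{X_0})$ is governed by $R^1f_*\OO_X(mD-X_0)$. Theorem~\ref{t-kvv} gives nothing here, because $mD-X_0-(K_X+B+\bbeta_X)$ is $f$-anti-ample for $m\gg0$. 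Proving that the relevant relative model of $(X,B+\bbeta)$ over $Y$ restricts to that of $(X_0,B_0+\bbeta_0)$ over $Y_0$ is where the hypotheses must be used: canonical singularities, $\lfloor B_0\rfloor=0$, and $N(K_{X_0}+B_0+\bbeta_0)\wedge B_0=0$. They enter together with the existence of log terminal models from Theorem~\ref{t-KahlerBCHM}, which is the one ingredient the paper changes and which your sketch never uses.

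There are also two smaller errors.
First, canonicity of $X_0^{(i)}$ is not preserved ``by negativity'' alone. Contracting a component $E$ of $B_0$ only yields $a(E,X_0^{(1)},B_0^{(1)})>-{\rm mult}_EB_0$, which may be negative; it is the condition $N\wedge B_0=0$ that protects canonicity.
Second, a small $\Q$-factorialization of $X$ can change the central fibre despite canonicity. For $X=\{xy=z^2+t^2\}$, the small resolution of the node replaces the $A_1$ surface $X_0$ by its minimal resolution.
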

\begin{proof} The proof follows by the arguments of \cite[Proposition 4.16]{HLR26} where instead of \cite{DH24}, we use Theorem \ref{t-KahlerBCHM}.
\end{proof}
We also have the following immediate consequence (\cite[Conjecture 1.4]{HLR26}).
\begin{corollary} Let $f : X \to S$ be a family from a K\"ahler variety $X$ onto a smooth,
connected, and relatively compact curve $S$. Assume that the fibers $X_t$ have canonical singularities
for all $t \in S$. Then $K_{X_0}$ is pseudo-effective if and only if $K_{X_t}$ are pseudo-effective for all
$t \in S \setminus \{ 0\}$, and $X_0$ is uniruled if and only if $X_t$ are uniruled for all $t \in S \setminus \{ 0\}$.
\end{corollary}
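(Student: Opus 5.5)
The plan is to reduce both equivalences to one statement: for every $s\in S$, the set of $t$ with $X_t$ uniruled is open and closed in the connected curve $S$.

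\textbf{Reduction to uniruledness.} Since $X_t$ has canonical singularities, let $\mu_t:\tilde X_t\to X_t$ be a resolution. Then $K_{\tilde X_t}=\mu_t^*K_{X_t}+E_t$ with $E_t\geq 0$ exceptional. Hence $K_{\tilde X_t}$ is pseudo-effective iff $K_{X_t}$ is: one direction is pushforward, the other is pullback plus an effective divisor. By Theorem \ref{t-ou}, $\tilde X_t$ is uniruled iff $K_{\tilde X_t}$ is not pseudo-effective, and uniruledness is a bimeromorphic invariant. So $K_{X_t}$ is pseudo-effective iff $X_t$ is not uniruled, and the first equivalence in the statement follows from the second. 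Here I use that each fibre $X_t$ is K\"ahler, being a closed subspace of $X$.

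\textbf{Closedness.} Suppose $X_{t_k}$ is uniruled for a sequence $t_k\to s$. Since $X$ is K\"ahler, the components of the relative Barlet cycle space of $X/S$ parametrizing curves of bounded $\omega$-degree are proper over $S$. On a uniruled K\"ahler $X_t$ I choose covering rational curves of degree bounded by $2\dim X_t$ against a fixed class, using bend-and-break on $\tilde X_t$; the bound needs checking but should follow from the cone theorem argument. Limits of covering families of rational curves are covering families of (chains of) rational curves, so $X_s$ is uniruled. Hence the uniruled locus is closed.

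\textbf{Openness (the main step).} Fix $s=0$ with $X_0$ uniruled, so $K_{X_0}$ is not pseudo-effective. Shrinking $S$, choose a K\"ahler form $\omega$ on $X$ such that $K_{X_0}+\omega|_{X_0}$ is still not pseudo-effective. Take $B=0$ and $\bbeta=\overline{\omega}$. Then $(X_0,\bbeta_0)$ is gklt, $\bbeta_0$ is big, $\lfloor B_0\rfloor=0$, and $N(\cdot)\wedge B_0=0$ holds trivially. After a small $\Q$-factorialization, Theorem \ref{t-KahlerBCHM}(1) and Theorem \ref{t-MMPscaling} give a $(K_{X_0}+\omega_0)$-MMP with scaling $X_0\dasharrow X_0^{(k)}$ ending with a Mori fibre space $X_0^{(k)}\to Z_0$. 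By Theorem \ref{t-PHLR}, these steps extend to $(K_X+\bbeta_X)$-negative bimeromorphic maps $X\dasharrow X^{(k)}$ over some neighbourhood $U\ni 0$, which are fibrewise bimeromorphic over $U$ after shrinking. Now $X^{(k)}_0$ is covered by rational curves $C$ with $K_{X^{(k)}_0}\cdot C<0$. Since $X^{(k)}_0\cdot C=0$, we get $K_{X^{(k)}}\cdot C=(K_{X^{(k)}}+X^{(k)}_0)\cdot C<0$. A general such $C$ lies in the locus where $X^{(k)}$ is smooth, or at least lci: $X_0$ is canonical, so $X$ has canonical singularities near $X_0$ by inversion of adjunction, and I would work in the smooth locus or on a resolution. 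Mori--Koll\'ar deformation theory then gives that the deformations of $C$ form a family of dimension at least $-K_{X^{(k)}}\cdot C+\dim X-3\geq \dim X_0-1$. This is strictly larger than the dimension of the deformations of $C$ inside $X_0^{(k)}$ through a point, so $C$ moves off the central fibre. Consequently the fibres $X_t^{(k)}$ are covered by rational curves for $t$ near $0$, and so are the $X_t$.

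The main obstacle I expect is this deformation step. One must justify the dimension estimate on a possibly singular total space, and show that the deformed curves sweep out whole nearby fibres rather than only a proper subset. I would handle the latter by noting that the locus swept out by the deformations is closed and dominates $U$. Combining openness with closedness, the uniruled locus is all of $S$ or empty, which proves both equivalences.
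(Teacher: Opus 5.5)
Your reduction to uniruledness (canonical singularities plus Theorem~\ref{t-ou} on a resolution) is fine. So is the engine of your openness step: you run the $(K_{X_0}+\omega_0)$-MMP on the central fibre with Theorems~\ref{t-KahlerBCHM} and~\ref{t-MMPscaling}, then extend it over a neighbourhood of $0$ by Theorem~\ref{t-PHLR}. These are the inputs the paper uses; it runs the argument of \cite[Corollary 1.3]{HLR26} with Theorem~\ref{t-KahlerBCHM} in place of \cite{DH24} and Theorem~\ref{t-PHLR} in place of \cite[Proposition 4.16]{HLR26}.

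The gap is the passage from $X_0^{(k)}$ to the nearby fibres, and it has two parts.

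\begin{enumerate}
\item Your dimension count yields no contradiction. Any covering family of curves of $X_0^{(k)}$ already has dimension at least $\dim X_0-1$. So a lower bound of $\dim X_0-1$ for the deformations of $C$ in $X^{(k)}$ does not force them off the central fibre, and comparing with deformations ``through a point'' is not the relevant comparison. The argument that works needs $C$ to be a free curve lying in the locus where $X^{(k)}\to U$ is smooth. Then the deformations of $C$ inside $X_0^{(k)}$ have dimension exactly $-K_{X_0^{(k)}}\cdot C+\dim X_0-3$. This is one less than Mori's bound $-K_{X^{(k)}}\cdot C+\dim X-3$ in the total space; equivalently, $\mathrm{Hom}_U(\mathbb P^1,X^{(k)})\to U$ is smooth at $[C]$.
\item More seriously, nothing supplies such curves.
\begin{itemize}
\item $X_0^{(k)}$ is only klt, and klt (even canonical) singularities need not be lci, so Mori's bound is unavailable at singular points.
\item A covering family may be forced through the singular locus. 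For the lc cone over an elliptic curve every rational curve passes through the vertex, and you have no result excluding this for klt Mori fibre spaces.
\item Passing to a resolution can make $K\cdot\tilde C\geq 0$.
\end{itemize}
\end{enumerate}

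Note that if you had covering rational curves in the smooth locus, the MMP would be superfluous: you could deform the covering curves of $X_0$ directly. The robust route is to transport non-pseudo-effectivity rather than individual curves. Use the extended MMP, for instance by spreading its Mori fibre contraction over a neighbourhood of $0$, to show that $K_{X_t^{(k)}}+\omega_t$ is not pseudo-effective for $t$ near $0$. Since $X_t\dasharrow X_t^{(k)}$ is a birational contraction, $K_{X_t}$ is then not pseudo-effective either, and Theorem~\ref{t-ou} gives uniruledness. You must also explain why Theorem~\ref{t-PHLR} applies to the MMP you actually run. Theorem~\ref{t-MMPscaling} needs strong $\Q$-factoriality, and a small $\Q$-factorialization of $X_0$ is not a fibre of $X\to S$.

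Your closedness step has an independent gap. The cone theorem bounds $-K\cdot C$ for extremal curves, not $\omega\cdot C$ for covering rational curves. Without a uniform bound on the $\omega$-degree you cannot extract a limit family along a sequence $t_k\to s$. You do not actually need this bound. Since $X$ is K\"ahler, the components of the relative Barlet space of $X/S$ are countably many and proper over $S$. Hence the uniruled locus is a countable union of closed analytic subsets of the curve $S$. Once openness is established, a nonempty uniruled locus is uncountable. So one of these closed analytic subsets is uncountable, hence equal to $S$, and the uniruled locus is all of $S$. This yields both equivalences.
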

\begin{proof} The proof follows by the same arguments of the proof of \cite[Corollary 1.3]{HLR26}, where instead of \cite[Proposition 4.16]{HLR26} we rely on Theorem \ref{t-PHLR} and instead of \cite{DH24}, we rely on Theorem \ref{t-KahlerBCHM}.
\end{proof}

\end{document}